\documentclass[11pt]{article}
\usepackage[english]{babel}
\usepackage[T1]{fontenc}
\usepackage[utf8]{inputenc}
\usepackage{amsbsy}
\usepackage{amsmath}
\usepackage{amssymb}
\usepackage{amsfonts}
\usepackage{amsthm}
\usepackage{bm}
\usepackage{graphicx}
\usepackage{hyperref}
\usepackage[active]{srcltx}
\usepackage{upgreek}
\usepackage{xcolor}

\paperheight=29.7cm
\paperwidth=21cm
\setlength\textwidth{16cm}
\hoffset=-1in
\setlength\marginparsep{0cm}
\setlength\marginparwidth{1cm}
\setlength\marginparpush{0cm}
\setlength\evensidemargin{2.5cm}
\setlength\oddsidemargin{2.5cm}
\setlength\topmargin{2.5cm}
\setlength\headheight{0cm}
\setlength\headsep{0cm}
\voffset=-1in
\setlength\textheight{24cm}
\setlength{\parindent}{0mm}
\setlength{\parskip}{1.5mm}

\newcommand{\C}{\mathbb{C}}
\newcommand{\N}{\mathbb{N}}

\newcommand{\R}{\mathbb{R}}
\renewcommand{\S}{\mathbb{S}}

\newcommand{\Z}{\mathbb{Z}}

\newcommand{\boC}{\mathcal{C}}

\newcommand{\boE}{\mathcal{E}}

\newcommand{\boH}{\mathcal{H}}
\newcommand{\boI}{\mathcal{I}}

\newcommand{\boK}{\mathcal{K}}

\newcommand{\boN}{\mathcal{N}}
\newcommand{\boO}{\mathcal{O}}

\newcommand{\boV}{\mathcal{V}}

\newcommand{\gE}{\mathfrak{E}}
\newcommand{\gI}{\mathfrak{I}}
\newcommand{\gS}{\mathfrak{S}}

\newcommand{\gj}{\mathfrak{j}}

\renewcommand{\div}{\mathop{{\rm div}}\nolimits}
\DeclareMathOperator{\diag}{{\rm diag}}

\DeclareMathOperator{\supp}{{\rm supp}}

\renewcommand{\div}{\operatorname{div}}

\newtheorem{cas}{Case}

\newtheorem{cor}{Corollary}
\newtheorem{lem}{Lemma}
\newtheorem{prop}{Proposition}
\newtheorem{step}{Step}
\newtheorem{thm}{Theorem}
\theoremstyle{definition}
\newtheorem*{merci}{Acknowledgments}
\newtheorem{rem}{Remark}
\theoremstyle{remark}

\begin{document}

\title{The Sine-Gordon regime of the Landau-Lifshitz equation with a strong easy-plane anisotropy}
\author{
\renewcommand{\thefootnote}{\arabic{footnote}}
Andr\'e de Laire\footnotemark[1]~ and Philippe Gravejat\footnotemark[2]}
\footnotetext[1]{Universit\'e de Lille, CNRS, UMR 8524 - Laboratoire Paul Painlev\'e, F-59000 Lille, France. E-mail: {\tt andre.de\-laire@univ-lille.fr}}
\footnotetext[2]{Universit\'e de Cergy-Pontoise, Laboratoire de Math\'ematiques (UMR 8088), F-95302 Cergy-Pontoise Cedex, France. E-mail: {\tt philippe.gravejat@u-cergy.fr}}
\maketitle

\begin{abstract}
It is well-known that the dynamics of biaxial ferromagnets with a strong easy-plane anisotropy is essentially governed by the Sine-Gordon equation. In this paper, we provide a rigorous justification to this observation. More precisely, we show the convergence of the solutions to the Landau-Lifshitz equation for biaxial ferromagnets towards the solutions to the Sine-Gordon equation in the regime of a strong easy-plane anisotropy. Moreover, we establish the sharpness of our convergence result.

This result holds for solutions to the Landau-Lifshitz equation in high order Sobolev spaces. We first provide an alternative proof for local well-posedness in this setting by introducing high order energy quantities with better symmetrization properties. We then derive the convergence from the consistency of the Landau-Lifshitz equation with the Sine-Gordon equation by using well-tailored energy estimates. As a by-product, we also obtain a further derivation of the free wave regime of the Landau-Lifshitz equation.

\medskip

\noindent {\it Keywords and phrases}. Landau-Lifshitz equation, Sine-Gordon equation, long-wave regimes.

\medskip

\noindent {\it 2010 Mathematics Subject Classification}. 35A01; 35L05; 35Q55; 35Q60; 37K40.
\end{abstract}

%%%%%%%%%%%%%%%%%%%%%%
%%%%%%%%%%%%%%%%%%%%%%
%%%%%%%%%%%%%%%%%%%%%%
\section{Introduction}
%%%%%%%%%%%%%%%%%%%%%%
%%%%%%%%%%%%%%%%%%%%%%
%%%%%%%%%%%%%%%%%%%%%%

The Landau-Lifshitz equation
\begin{equation}
\tag{LL}
\label{LL}
\partial_t m + m \times \big( \Delta m - J(m) \big) = 0,
\end{equation}
was introduced by Landau and Lifshitz~\cite{LandLif1} as a model for the magnetization $m : \R^N \times \R \to \S^2$ in a ferromagnetic material. The matrix $J := \diag(J_1, J_2, J_3)$ gives account of the anisotropy of the material (see e.g.~\cite{KosIvKo1}). The equation describes the Hamiltonian dynamics corresponding to the Landau-Lifshitz energy
$$E_{\rm LL}(m) := \frac{1}{2} \int_{\R^N} \big( |\nabla m|^2 + \lambda_1 m_1^2 + \lambda_3 m_3^2 \big).$$
The two values of the characteristic numbers $\lambda_1 := J_2 - J_1$ and $\lambda_3 := J_2 - J_3$ are non-zero for biaxial ferromagnets, while $\lambda_1$ is chosen to be equal to $0$ in the case of uniaxial ferromagnets. When $\lambda_3 < 0$, uniaxial ferromagnets own an easy-axis anisotropy along the vector $e_3 = (0, 0, 1)$, whereas the anisotropy is easy-plane along the plane $x_3 = 0$ when $\lambda_3 > 0$. The material is isotropic when $\lambda_1 = \lambda_3 = 0$, and the Landau-Lifshitz equation reduces to the well-known Schr\"odinger map equation (see e.g.~\cite{DingWan1, SulSuBa1, ChaShUh1, BeIoKeT1} and the references therein).

In the sequel, we are interested in the dynamics of biaxial ferromagnets in a regime of strong easy-plane anisotropy. The characteristic numbers $\lambda_1$ and $\lambda_3$ satisfy the inequalities $0 < \lambda_1 \ll 1 \ll \lambda_3$. More precisely, we assume that
\begin{equation}
\label{eq:lambda-sigma}
\lambda_1 := \sigma \varepsilon, \quad {\rm and} \quad \lambda_3 := \frac{1}{\varepsilon}.
\end{equation}
As usual, the parameter $\varepsilon$ is a small positive number, whereas $\sigma$ is a fixed positive constant. In this regime, the Landau-Lifshitz equation recasts as
$$\partial_t m + m \times \Big( \Delta m - \varepsilon \sigma m_1 e_1 - \frac{m_3 e_3}{\varepsilon} \Big) = 0,$$
with $e_1 := (1, 0, 0)$. In~\cite{Sklyani1}, Sklyanin observed that the solutions of this equation are governed by the Sine-Gordon equation in the limit $\varepsilon \to 0$ (see also~\cite{FaddTak0}). In the physical literature, this approximation is widely used for understanding the properties of the experimentally measurable quantities in ferromagnets (see e.g.~\cite{MikeSte1}). In order to clarify this approximation, it is useful to introduce the hydrodynamical formulation of the Landau-Lifshitz equation.

Assume that the map $\check{m} := m_1 + i m_2$ corresponding to a solution $m$ to~\eqref{LL} does not vanish. In this case, it can be written as
$$\check{m} = (1 - m_3^2)^\frac{1}{2} \big( \sin(\phi) + i\cos(\phi) \big).$$
The introduction of the phase function $\phi$ is reminiscent from the use of the Madelung transform~\cite{Madelun1} in the context of nonlinear Schr\"odinger equations (see e.g.~\cite{deLaGra1} for more details). This transform leads to a hydrodynamical version of the Landau-Lifshitz equation in terms of the variables $u := m_3$ and $\phi$, which is given by the system
\begin{equation}
\tag{{\rm HLL}}
\label{HLL}
\begin{cases} \partial_t u = \div \big( (1 - u^2) \nabla \phi \big) - \frac{\lambda_1}{2} (1 - u^2) \sin(2 \phi),\\[5pt]
\partial_t \phi = - \div \Big( \frac{\nabla u}{1 - u^2} \Big) + u \frac{|\nabla u|^2}{(1 - u^2)^2} - u |\nabla \phi|^2 + u \Big( \lambda_3 - \lambda_1 \sin^2(\phi) \Big). \end{cases}
\end{equation}

Under the scaling in~\eqref{eq:lambda-sigma}, this hydrodynamical system is related to the Sine-Gordon equation in the long-wave regime corresponding to the rescaled variables $(U_\varepsilon, \Phi_\varepsilon)$ given by the identities
$$u(x, t) = \varepsilon U_\varepsilon \big( \sqrt{\varepsilon} x, t \big), \quad {\rm and} \quad \phi(x, t) = \Phi_\varepsilon(\sqrt{\varepsilon} x, t).$$
The pair $(U_\varepsilon, \Phi_\varepsilon)$ indeed satisfies
\begin{equation}
\tag{${\rm HLL}_\varepsilon$}
\label{HLLeps}
\begin{cases} \partial_t U_\varepsilon = \div \big( (1 - \varepsilon^2 U_\varepsilon^2) \nabla \Phi_\varepsilon \big) - \frac{\sigma}{2} (1 - \varepsilon^2 U_\varepsilon^2) \sin(2 \Phi_\varepsilon),\\[5pt]
\partial_t \Phi_\varepsilon = U_\varepsilon \big( 1 - \varepsilon^2 \sigma \sin^2(\Phi_\varepsilon) \big) - \varepsilon^2 \div \Big( \frac{\nabla U_\varepsilon}{1 - \varepsilon^2 U_\varepsilon^2} \Big) +\varepsilon^4 U_\varepsilon \frac{|\nabla U_\varepsilon|^2}{(1 - \varepsilon^2 U_\varepsilon^2)^2} - \varepsilon^2 U_\varepsilon |\nabla \Phi_\varepsilon|^2. \end{cases}
\end{equation}
As $\varepsilon \to 0$, the limit system is formally given by 
\begin{equation}
\label{sys:SG}
\tag{SGS}
\begin{cases} \partial_t U = \Delta \Phi - \frac{\sigma}{2} \sin(2 \Phi),\\[5pt]
\partial_t \Phi = U. \end{cases}
\end{equation}
Therefore, the limit function $\Phi$ is a solution to the Sine-Gordon equation
\begin{equation}
\tag{SG}
\label{SG}
\partial_{tt} \Phi - \Delta \Phi + \frac{\sigma}{2} \sin(2 \Phi) = 0.
\end{equation}

Our main goal in the sequel is to provide a rigorous justification for this Sine-Gordon regime of the Landau-Lifshitz equation.

%%%%%%%%%%%%%%%%%%%%%%%%%
%%%%%%%%%%%%%%%%%%%%%%%%%
\subsection{Main results}
\label{sub:main}
%%%%%%%%%%%%%%%%%%%%%%%%%
%%%%%%%%%%%%%%%%%%%%%%%%%

In order to analyze rigorously this regime, we introduce a functional setting in which we can legitimate the use of the hydrodynamical framework. This condition is at least checked when the inequality $|m_3| < 1$ holds on $\R^N$. In terms of the hydrodynamical pair $(u, \varphi)$, this writes as
\begin{equation}
\label{eq:cond-u}
|u| < 1 \quad {\rm on} \ \R^N.
\end{equation}
Under this condition, it is natural to work in the Hamiltonian framework in which the solutions $m$ have finite Landau-Lifshitz energy. In the hydrodynamical formulation, the Landau-Lifshitz energy is given by
\begin{equation}
\label{eq:E-HLL}
E_{\rm LL}(u, \varphi) := \frac{1}{2} \int_{\R^N} \Big( \frac{|\nabla u|^2}{1 - u^2} + (1 - u^2) |\nabla \varphi|^2 + \lambda_1 (1 - u^2) \sin^2(\varphi) + \lambda_3 u^2 \Big).
\end{equation}
As a consequence of this formula, it is natural to work with the non-vanishing set
$$\boN\boV(\R^N) := \big\{ (u, \varphi) \in H^1(\R^N) \times H_{\sin}^1(\R^N) : |u| < 1 \ {\rm on} \ \R^N \big\}.$$
In this definition, we have set
$$H_{\sin}^1(\R^N) := \big\{ v \in L_{\rm loc}^1(\R^N) : \nabla v \in L^2(\R^N) \ {\rm and} \ \sin(v) \in L^2(\R^N) \big\}.$$
The set $H_{\sin}^1(\R^N)$ is an additive group. It is naturally endowed with the pseudometric distance
$$d_{\sin}^1(v_1, v_2) := \Big( \big\| \sin(v_1 - v_2) \big\|_{L^2}^2 + \big\| \nabla v_1 - \nabla v_2 \big\|_{L^2}^2 \Big)^\frac{1}{2},$$
which vanishes if and only if $v_1 - v_2 \in \pi \Z$. This quantity is not a distance on the group $H_{\sin}^1(\R^N)$, but it is on the quotient group $H_{\sin}^1(\R^N)/\pi \Z$. In the sequel, we identify the set $H_{\sin}^1(\R^N)$ with this quotient group when necessary, in particular when a metric structure is required. This identification is not a difficulty as far as we deal with the hydrodynamical form of the Landau-Lifshitz equation and with the Sine-Gordon equation. Both the equations are indeed left invariant by adding a constant number in $\pi \Z$ to the phase functions $\phi$, respectively, $\Phi$. This property is one of the motivations for introducing the pseudometric distance $d_{\sin}^1$. We refer to Appendix~\ref{sec:Hsink} for more details concerning this distance, as well as the set $H_{\sin}^1(\R^N)$.

Our derivation of the Sine-Gordon equation also requires to control the non-vanishing condition in~\eqref{eq:cond-u} along the flow of the Landau-Lifshitz equation. In dimension one, it follows from the Sobolev embedding theorem that the function $u$ is uniformly controlled in the non-vanishing set $\boN\boV(\R)$. This property does not remain in higher dimensions. In the sequel, we by-pass this difficulty by restricting our analysis to solutions $(u, \varphi)$ with additional regularity. There might be other ways to handle this problem. Requiring additional smoothness is also useful for our rigorous derivation of the Sine-Gordon regime.

Given an integer $k \geq 1$, we set 
\begin{equation}
\label{def:NVk}
\boN\boV^k(\R^N) := \big\{ (u, \varphi) \in H^k(\R^N) \times H_{\sin}^k(\R^N) : |u| < 1 \ {\rm on} \ \R^N \big\}.
\end{equation}
Here, the additive group $H_{\sin}^k(\R^N)$ is defined as
$$H_{\sin}^k(\R^N) := \big\{ v \in L_{\rm loc}^1(\R^N) : \nabla v \in H^{k - 1}(\R^N) \ {\rm and} \ \sin(v) \in L^2(\R^N) \big\}.$$
As before, we identify this group, when necessary, with the quotient group $H_{\sin}^k(\R^N)/\pi \Z$, and then we endow it with the distance
\begin{equation}
\label{def:dsink}
d_{\sin}^k(v_1, v_2) := \Big( \big\| \sin(v_1 - v_2) \big\|_{L^2}^2 + \big\| \nabla v_1 - \nabla v_2 \big\|_{H^{k - 1}}^2 \Big)^\frac{1}{2}.
\end{equation}
With this notation at hand, the vanishing set $\boN\boV(\R^N)$ identifies with $\boN\boV^1(\R^N)$.

We are now in position to state our main result.

\begin{thm}
\label{thm:conv-SG}
Let $N \geq 1$ and $k \in \N$, with $k > N/2 + 1$, and $0 < \varepsilon < 1$. Consider an initial condition $(U_\varepsilon^0, \Phi_\varepsilon^0) \in \boN\boV^{k + 2}(\R^N)$ and set
$$\boK_\varepsilon^0 := \big\| U_\varepsilon^0 \big\|_{H^k} + \varepsilon \big\| \nabla U_\varepsilon^0 \big\|_{H^k} + \big\| \nabla \Phi_\varepsilon^0 \big\|_{H^k} + \big\| \sin(\Phi_\varepsilon^0) \big\|_{H^k}.$$
Consider similarly an initial condition $(U^0, \Phi^0) \in L^2(\R^N) \times H_{\sin}^1(\R^N)$, and denote by $(U, \Phi) \in \boC^0(\R, L^2(\R^N) \times H_{\sin}^1(\R^N))$ the unique corresponding solution to~\eqref{sys:SG}. Then, there exists a positive number $C_*$, depending only on $\sigma$, $k$ and $N$, such that, if the initial data satisfy the condition
\begin{equation}
\label{cond:key}
C_* \, \varepsilon \, \boK_\varepsilon^0 \leq 1,
\end{equation}
we have the following statements.

$(i)$ There exists a positive number
\begin{equation}
\label{cond:Teps}
T_\varepsilon \geq \frac{1}{C_* (\boK_\varepsilon^0)^2},
\end{equation}
such that there exists a unique solution $(U_\varepsilon, \Phi_\varepsilon) \in \boC^0([0, T_\varepsilon], \boN\boV^{k + 1}(\R^N))$ to~\eqref{HLLeps} with initial datum $(U_\varepsilon^0, \Phi_\varepsilon^0)$.

$(ii)$ If $\Phi_\varepsilon^0 - \Phi^0 \in L^2(\R^N)$, then we have
\begin{equation}
\label{est:0}
\begin{split}
\big\| \Phi_\varepsilon(\cdot, t) - \Phi(\cdot, t) \big\|_{L^2} \leq C_* \, \Big( \big\| \Phi_\varepsilon^0 - \Phi^0 \big\|_{L^2} + \big\| U_\varepsilon^0 - U^0 \big\|_{L^2} + \varepsilon^2 \, \boK_\varepsilon^0 \, \big( 1 + \boK_\varepsilon^0 \big)^3 \Big) \, e^{C_* t},
\end{split}
\end{equation}
for any $0 \leq t \leq T_\varepsilon$.

$(iii)$ If $N \geq 2$, or $N = 1$ and $k > N/2 + 2$, then we have
\begin{equation}
\label{est:1}
\begin{split}
& \big\| U_\varepsilon(\cdot, t) - U(\cdot, t) \big\|_{L^2} + \big\| \nabla \Phi_\varepsilon(\cdot, t) - \nabla \Phi(\cdot, t) \big\|_{L^2} + \big\| \sin(\Phi_\varepsilon(\cdot, t) - \Phi(\cdot, t)) \big\|_{L^2}\\
\leq C_* \, \Big( & \big\| U_\varepsilon^0 - U^0 \big\|_{L^2} + \big\| \nabla \Phi_\varepsilon^0 - \nabla \Phi^0 \big\|_{L^2} + \big\| \sin(\Phi_\varepsilon^0 - \Phi^0) \big\|_{L^2} + \varepsilon^2 \, \boK_\varepsilon^0 \, \big( 1 + \boK_\varepsilon^0 \big)^3 \Big) \, e^{C_* t},
\end{split}
\end{equation}
for any $0 \leq t \leq T_\varepsilon$.

$(iv)$ Take $(U^0, \Phi^0) \in H^k(\R^N) \times H_{\sin}^{k + 1}(\R^N)$ and set
$$\kappa_\varepsilon^0 := \boK_\varepsilon^0 + \big\| U^0 \big\|_{H^k} + \big\| \nabla \Phi^0 \big\|_{H^k} + \big\| \sin(\Phi^0) \big\|_{H^k}.$$
There exists a positive number $A_*$, depending only on $\sigma$, $k$ and $N$, such that the solution $(U, \Phi)$ lies in $\boC^0([0, T_\varepsilon^*], H^k(\R^N) \times H_{\sin}^{k + 1}(\R^N))$ for a positive number
\begin{equation}
\label{cond:Teps*}
T_\varepsilon \geq T_\varepsilon^* \geq \frac{1}{A_* (\kappa_\varepsilon^0)^2}.
\end{equation}
Moreover, when $k > N/2 + 3$, we have
\begin{equation}
\label{est:k}
\begin{split}
& \big\| U_\varepsilon(\cdot, t) - U(\cdot, t) \big\|_{H^{k - 3}} + \big\| \nabla \Phi_\varepsilon(\cdot, t) - \nabla \Phi(\cdot, t) \big\|_{H^{k - 3}} + \big\| \sin(\Phi_\varepsilon(\cdot, t) - \Phi(\cdot, t)) \big\|_{H^{k - 3}}\\
\leq A_* \, & e^{A_* (1 + \kappa_\varepsilon^0)^2 t} \times\\
\times \Big( & \big\| U_\varepsilon^0 - U^0 \big\|_{H^{k - 3}} + \big\| \nabla \Phi_\varepsilon^0 - \nabla \Phi^0 \big\|_{H^{k - 3}} + \big\| \sin(\Phi_\varepsilon^0 - \Phi^0) \big\|_{H^{k - 3}} + \varepsilon^2 \kappa_\varepsilon^0 \big( 1 + \kappa_\varepsilon^0 \big)^3 \Big),
\end{split}
\end{equation}
for any $0 \leq t \leq T_\varepsilon^*$.
\end{thm}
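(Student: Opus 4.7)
The plan is to treat the four parts sequentially, leveraging the local well-posedness theory for~\eqref{HLLeps} in high Sobolev spaces established earlier in the paper. For part~(i), the existence and uniqueness of a solution in $\boC^0([0,T_\varepsilon],\boN\boV^{k+1}(\R^N))$ with time of existence bounded below by $1/(C_*(\boK_\varepsilon^0)^2)$ is obtained by running a continuity argument on a symmetrized high-order energy of the schematic form
$$\boE_\varepsilon^k := \tfrac{1}{2}\sum_{|\alpha|\leq k}\int_{\R^N}\Big(|\partial^\alpha U_\varepsilon|^2 + \varepsilon^2\frac{|\nabla\partial^\alpha U_\varepsilon|^2}{1-\varepsilon^2 U_\varepsilon^2} + (1-\varepsilon^2 U_\varepsilon^2)|\nabla\partial^\alpha\Phi_\varepsilon|^2\Big) + \sigma\,(\text{anisotropy terms}),$$
whose time derivative, after standard Moser and commutator estimates using $k>N/2+1$, satisfies $(d/dt)\boE_\varepsilon^k \lesssim (1+\boE_\varepsilon^k)^2$ with constants independent of~$\varepsilon$. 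The smallness assumption~\eqref{cond:key} then guarantees $\varepsilon\|U_\varepsilon\|_{L^\infty}<1$ throughout $[0,T_\varepsilon]$, so that the non-vanishing condition is preserved and the lower bound~\eqref{cond:Teps} follows by a standard bootstrap.

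For parts~(ii) and~(iii), I would set $\mathfrak{u}:=U_\varepsilon - U$ and $\mathfrak{p}:=\Phi_\varepsilon - \Phi$ and subtract~\eqref{sys:SG} from~\eqref{HLLeps}; using the identity $\sin(2\Phi_\varepsilon)-\sin(2\Phi)=2\cos(\Phi_\varepsilon+\Phi)\sin(\mathfrak{p})$, the differences satisfy a linear-type system
$$\partial_t\mathfrak{u} = \Delta\mathfrak{p} - \sigma\cos(\Phi_\varepsilon+\Phi)\sin(\mathfrak{p}) + \varepsilon^2\,\boR^1_\varepsilon, \qquad \partial_t\mathfrak{p} = \mathfrak{u} + \varepsilon^2\,\boR^2_\varepsilon,$$
where the consistency remainders $\boR^1_\varepsilon,\boR^2_\varepsilon$ are polynomial expressions in $U_\varepsilon,\nabla U_\varepsilon,\nabla\Phi_\varepsilon,\sin\Phi_\varepsilon$; thanks to the bounds from~(i), they are controlled in the relevant norm by $\boK_\varepsilon^0(1+\boK_\varepsilon^0)^3$ uniformly on $[0,T_\varepsilon]$. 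For~(iii), I would pair the system with $(\mathfrak{u},-\Delta\mathfrak{p})$, integrate by parts, bound the trigonometric difference by $\|\sin\mathfrak{p}\|_{L^2}$, and close a Grönwall estimate in the natural Sine-Gordon energy $\|\mathfrak{u}\|_{L^2}^2+\|\nabla\mathfrak{p}\|_{L^2}^2+\|\sin\mathfrak{p}\|_{L^2}^2$. Estimate~\eqref{est:0} is weaker and requires only a pairing with $(\mathfrak{p},\mathfrak{u})$, which does not differentiate $\boR^2_\varepsilon$ and thus works in all dimensions. The assumption $N\geq 2$ or $k>N/2+2$ in~(iii) is used precisely to place $\nabla\boR^2_\varepsilon$ in $L^2$: in one space dimension with $k$ borderline, products of two $H^{k-1}$ functions fail to sit in $H^{k-1}$, forcing the extra Sobolev derivative on the initial data.

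For part~(iv), the first step is to establish local well-posedness of~\eqref{sys:SG} in $H^k(\R^N)\times H_{\sin}^{k+1}(\R^N)$ by the same symmetrized-energy method used for~\eqref{HLLeps}, which yields the lifespan $T_\varepsilon^*$ with the lower bound~\eqref{cond:Teps*}. I would then apply $\partial^\alpha$ with $|\alpha|\leq k-3$ to the difference system, commute the derivatives through the nonlinearities using Moser tame estimates of the form $\|\partial^\alpha(fg)\|_{L^2}\lesssim \|f\|_{L^\infty}\|g\|_{H^{|\alpha|}}+\|g\|_{L^\infty}\|f\|_{H^{|\alpha|}}$, and close a Grönwall bound at the $H^{k-3}$ level, with exponential rate controlled by $(1+\kappa_\varepsilon^0)^2$. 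The three-derivative loss has a concrete origin: one derivative is absorbed by the quasilinear term $\varepsilon^2\div(\nabla U_\varepsilon/(1-\varepsilon^2 U_\varepsilon^2))$ entering $\boR^2_\varepsilon$, one by the need to control $\nabla U_\varepsilon$ in $L^\infty$ through Sobolev embedding, and one by the Moser estimates on the cubic nonlinearities.

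The main technical obstacle lies in part~(i): the construction of high-order energies whose time derivative absorbs the top-order cross-terms between $U_\varepsilon$ and $\Phi_\varepsilon$ generated by the quasilinear contributions $\varepsilon^2\nabla U_\varepsilon/(1-\varepsilon^2 U_\varepsilon^2)$ and $(1-\varepsilon^2 U_\varepsilon^2)\nabla\Phi_\varepsilon$, \emph{uniformly in} $\varepsilon\in(0,1)$. The naive $H^{k+1}$-norm is not propagated by~\eqref{HLLeps} with a rate independent of~$\varepsilon$, and one must symmetrize the top-order contributions of $\partial_t\partial^\alpha U_\varepsilon$ and $\partial_t\partial^\alpha\Phi_\varepsilon$ against one another to cancel the critical terms; this is the \emph{raison d'être} of the alternative well-posedness proof announced in the abstract. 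Once the right energy functional is in hand, the remaining work in parts~(ii)--(iv) reduces to systematic Moser and Grönwall bookkeeping, together with the specific trigonometric identity used above to extract the $\sin(\mathfrak{p})$ factor from $\sin(2\Phi_\varepsilon)-\sin(2\Phi)$.
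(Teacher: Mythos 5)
Your proposal is correct and follows essentially the same approach as the paper: symmetrized high-order energies (Proposition~\ref{prop:estimate}, Corollary~\ref{cor:T}) to control the lifespan for~(i), the difference system~\eqref{eq:sys-diff} with Grönwall for~(ii)--(iv), and high-regularity Sine-Gordon well-posedness (Lemma~\ref{lem:est-SG}) for~(iv). Two small discrepancies worth flagging: the paper handles~(ii) via the Duhamel formula for the linear wave equation rather than an energy pairing (a pairing with $(\mathfrak{p},\mathfrak{u})$ does not obviously close, since $\Delta\mathfrak{p}$ gives $-\|\nabla\mathfrak{p}\|_{L^2}^2$ rather than a sign-definite $\|\mathfrak{p}\|_{L^2}^2$); and the mechanism behind the restriction $k>N/2+2$ in~(iii) is not a failure of product estimates in $H^{k-1}$ but rather the need for $\nabla R_\varepsilon^\Phi\in L^2$, which requires $k\geq 3$ since $R_\varepsilon^\Phi$ carries two derivatives of $U_\varepsilon\in H^k$ — automatic for $N\geq 2$ but not for $N=1$, $k=2$.
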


In arbitrary dimension, Theorem~\ref{thm:conv-SG} provides a quantified convergence of the Landau-Lifshitz equation towards the Sine-Gordon equation in the regime of strong easy-plane anisotropy. Three types of convergence are proved depending on the dimension, and the levels of regularity of the solutions. This trichotomy is related to the analysis of the Cauchy problems for the Landau-Lifshitz and Sine-Gordon equations.

In its natural Hamiltonian framework, the Sine-Gordon equation is globally well-posed. Its Hamiltonian is the Sine-Gordon energy
\begin{equation}
\label{def:E-SG}
E_{\rm SG}(\phi) := \frac{1}{2} \int_{\R^N} \big( (\partial_t \phi)^2 + |\nabla \phi|^2 + \sigma \sin(\phi)^2 \big).
\end{equation}
Given an initial condition $(\Phi^0, \Phi^1) \in H_{\sin}^1(\R^N) \times L^2(\R^N)$, there exists a unique corresponding solution $\Phi \in \boC^0(\R, H_{\sin}^1(\R^N))$ to~\eqref{SG}, with $\partial_t \Phi \in \boC^0(\R, L^2(\R^N))$. Moreover, the Sine-Gordon equation is locally well-posed in the spaces $H_{\sin}^k(\R^N) \times H^{k - 1}(\R^N)$, when $k > N/2 + 1$. In other words, the solution $\Phi$ remains in $\boC^0([0, T], H_{\sin}^k(\R^N))$, with $\partial_t \Phi \in \boC^0([0, T], H^{k - 1}(\R^N))$, at least locally in time, when $(\Phi^0, \Phi^1) \in H_{\sin}^k(\R^N) \times H^{k - 1}(\R^N)$. We refer to Subsection~\ref{sub:Cauchy-SG} below for more details regarding these two results and their proofs.

In contrast, the Cauchy problem for the Landau-Lifshitz equation at its Hamiltonian level is far from being completely understood. Global weak and strong solutions are known to exist (see e.g.~\cite{GuoDing0, BeIoKeT1} and the references therein), but blow-up can occur (see~\cite{MerRaRo1}).

On the other hand, the Landau-Lifshitz equation is locally well-posed at the same level of high regularity as the Sine-Gordon equation. In the hydrodynamical context, this reads as the existence of a maximal time $T_{\max}$ and a unique solution $(U, \Phi) \in \boC^0([0, T_{\max}), \boN\boV^{k - 1}(\R^N))$ to~\eqref{HLL} corresponding to an initial condition $(U^0, \Phi^0) \in \boN\boV^k(\R^N)$, when $k > N/2 + 1$ (see Corollary~\ref{cor:HLL-Cauchy} in Subsection~\ref{sub:Cauchy-LL}). Note the loss of one derivative here. This loss explains why we take initial conditions $(U_\varepsilon^0, \Phi_\varepsilon^0)$ in $\boN\boV^{k + 2}(\R^N)$, though the quantity $\boK_\varepsilon^0$ is already well-defined when $(U_\varepsilon^0, \Phi_\varepsilon^0) \in \boN\boV^{k + 1}(\R^N)$.

In view of this local well-posedness result, we restrict our analysis of the Sine-Gordon regime to the solutions $(U_\varepsilon, \Phi_\varepsilon)$ to the rescaled system~\eqref{HLLeps} with sufficient regularity. A further difficulty then lies in the fact that their maximal times of existence possibly depend on the small parameter $\varepsilon$.

Statement $(i)$ in Theorem~\ref{thm:conv-SG} provides an explicit control on these maximal times. In view of~\eqref{cond:Teps}, these maximal times are bounded from below by a positive number depending only on the choice of the initial data $(U_\varepsilon^0, \Phi_\varepsilon^0)$. Note that, in case a family of initial data $(U_\varepsilon^0, \Phi_\varepsilon^0)$ converges towards a pair $(U^0, \Phi^0)$ in $H^k(\R^N) \times H_{\sin}^k(\R^N)$ as $\varepsilon \to 0$, it is possible to find a positive number $T$ such that all the corresponding solutions $(U_\varepsilon, \Phi_\varepsilon)$ are well-defined on $[0, T]$. This property is necessary in order to make possible a consistent analysis of the limit $\varepsilon \to 0$.

Statement $(i)$ only holds when the initial data $(U_\varepsilon^0, \Phi_\varepsilon^0)$ satisfy the condition in~\eqref{cond:key}. However, this condition is not a restriction in the limit $\varepsilon \to 0$. It is satisfied by any fixed pair $(U^0, \Phi^0) \in \boN\boV^{k + 1}(\R^N)$ provided that $\varepsilon$ is small enough, so that it is also satisfied by a family of initial data $(U_\varepsilon^0, \Phi_\varepsilon^0)$, which converges towards a pair $(U^0, \Phi^0)$ in $H^k(\R^N) \times H_{\sin}^k(\R^N)$ as $\varepsilon \to 0$.

Statements $(ii)$ and $(iii)$ in Theorem~\ref{thm:conv-SG} provide two estimates~\eqref{est:0} and~\eqref{est:1} between the previous solutions $(U_\varepsilon, \Phi_\varepsilon)$ to~\eqref{HLLeps}, and an arbitrary global solution $(U, \Phi)$ to~\eqref{sys:SG} at the Hamiltonian level. The first one yields an $L^2$-control on the difference $\Phi_\varepsilon - \Phi$, the second one, an energetic control on the difference $(U_\varepsilon, \Phi_\varepsilon) - (U, \Phi)$. Due to the fact that the difference $\Phi_\varepsilon - \Phi$ is not necessarily in $L^2(\R^N)$, statement $(ii)$ is restricted to initial conditions such that this property is satisfied.

Finally, statement $(iv)$ bounds the difference between the solutions $(U_\varepsilon, \Phi_\varepsilon)$ and $(U, \Phi)$ at the same initial Sobolev level. In this case, we also have to control the maximal time of regularity of the solutions $(U, \Phi)$. This follows from the control from below in~\eqref{cond:Teps*}, which is of the same order as the one in~\eqref{cond:Teps}.

We then obtain the Sobolev estimate in~\eqref{est:k} of the difference $(U_\varepsilon, \Phi_\varepsilon) - (U, \Phi)$ with a loss of three derivatives. Here, the choice of the Sobolev exponents $k > N/2 + 3$ is tailored so as to gain a uniform control on the functions $U_\varepsilon - U$, $\nabla \Phi_\varepsilon - \nabla \Phi$ and $\sin(\Phi_\varepsilon - \Phi)$ by the Sobolev embedding theorem.

A loss of derivatives is natural in the context of long-wave regimes (see e.g.~\cite{BeGrSaS2, BeGrSaS3} and the references therein). It is related to the terms with first and second-order derivatives in the right-hand side of~\eqref{HLLeps}. This loss is the reason why the energetic estimate in statement $(iii)$ requires an extra derivative in dimension one, that is the condition $k > N/2 + 2$. Using the Sobolev bounds~\eqref{borne-W-bis} in Corollary~\ref{cor:T} below, we can (partly) recover this loss by a standard interpolation argument, and deduce an estimate in $H^\ell(\R^N) \times H_{\sin}^{\ell + 1}(\R^N)$ for any number $\ell < k$. In this case, the error terms are no more of order $\varepsilon^2$ as in the right-hand sides of~\eqref{est:0},~\eqref{est:1} and~\eqref{est:k}. Our presentation of the convergence results in Theorem~\ref{thm:conv-SG} is motivated by the fact that a control of order $\varepsilon^2$ is sharp.

As a matter of fact, the system~\eqref{sys:SG} owns explicit travelling-wave solutions. Up to a suitable scaling for which $\sigma = 1$, and up to the geometric invariance by translation, they are given by the kink and anti-kink functions
\begin{equation}
\label{soliton-SG}
u_c^\pm(x, t) = \pm \frac{c}{\sqrt{1 - c^2} \cosh \big( \frac{x - c t}{\sqrt{1 - c^2}} \big)}, \quad {\rm and} \quad \phi_c^\pm(x, t) = 2 \arctan \Big( e^{\mp \frac{x - c t}{\sqrt{1 - c^2}}} \Big),
\end{equation}
for any speed $c \in (- 1, 1)$. The hydrodynamical Landau-Lifshitz system~\eqref{HLLeps} similarly owns explicit travelling-wave solutions $(U_{c, \varepsilon}, \Phi_{c, \varepsilon})$ with speed $c$, for which their exists a positive number $A$, depending only on $c$, such that
$$\| U_{c, \varepsilon} - u_c^+ \|_{L^2} + \| \nabla \Phi_{c, \varepsilon} - \nabla \phi_c^+ \|_{L^2} + \| \sin(\Phi_{c, \varepsilon} - \phi_c^+) \big\|_{L^2} \underset{\varepsilon \to 0}{\sim} A \varepsilon^2.$$
Hence, the estimate by $\varepsilon^2$ in~\eqref{est:0},~\eqref{est:1} and~\eqref{est:k} is indeed optimal. We refer to Appendix~\ref{sec:solitons} for more details about this topic, and more generally, about the travelling-wave solutions to the Landau-Lifshitz equation.

As a by-product of our analysis, we can also analyze the wave regime for the Landau-Lifshitz equation. This regime is obtained when the parameter $\sigma$ is allowed to vary so as to converge to $0$. At least formally, a solution $(U_{\varepsilon, \sigma}, \Phi_{\varepsilon, \sigma})$ to~\eqref{HLLeps} indeed satisfies the free wave system
\begin{equation}
\tag{FW}
\label{FW}
\begin{cases}
\partial_t U = \Delta \Phi,\\
\partial_t \Phi = U,
\end{cases}
\end{equation}
when $\varepsilon \to 0$ and $\sigma \to 0$. In particular, the function $\Phi$ is solution to the free wave equation
$$\partial_{tt} \Phi - \Delta \Phi = 0.$$
The following result provides a rigorous justification for this asymptotic approximation.

\begin{thm}
\label{thm:conv-wave}
Let $N \geq 1$ and $k \in \N$, with $k > N/2 + 1$, and $0 < \varepsilon, \sigma < 1$. Consider an initial condition $(U_{\varepsilon, \sigma}^0, \Phi_{\varepsilon, \sigma}^0) \in \boN\boV^{k + 2}(\R^N)$
and set
$$\boK_{\varepsilon, \sigma}^0 := \big\| U_{\varepsilon, \sigma}^0 \big\|_{H^k} + \varepsilon \big\| \nabla U_{\varepsilon, \sigma}^0 \big\|_{H^k} + \big\| \nabla \Phi_{\varepsilon, \sigma}^0 \big\|_{H^k} + \sigma^\frac{1}{2} \big\| \sin(\Phi_{\varepsilon, \sigma}^0) \big\|_{L^2}.$$
Let $m \in \N$, with $0 \leq m \leq k - 2$. Consider similarly an initial condition $(U^0, \Phi^0) \in H^m(\R^N) \times H^{m - 1}(\R^N)$, and denote by $(U, \Phi) \in \boC^0(\R, H^{m - 1}(\R^N) \times H^m(\R^N))$ the unique corresponding solution to~\eqref{FW}. Then, there exists a positive number $C_*$, depending only on $k$ and $N$, such that, if the initial datum satisfies the condition
\begin{equation}
\label{cond:small}
C_* \, \varepsilon \, \boK_{\varepsilon, \sigma}^0 \leq 1,
\end{equation}
the following statements hold true.

$(i)$ There exists a positive number
\begin{equation}
\label{cond:Teps-sigma}
T_{\varepsilon, \sigma} \geq \frac{1}{C_* \max \{ \varepsilon, \sigma \} (1 + \boK_{\varepsilon, \sigma}^0)^{\max \{ 2, \frac{k}{2} \} }},
\end{equation}
such that there exists a unique solution $(U_{\varepsilon, \sigma}, \Phi_{\varepsilon, \sigma}) \in \boC^0([0, T_{\varepsilon, \sigma}], \boN\boV^{k + 1}(\R^N))$ to~\eqref{HLLeps} with initial datum $(U_{\varepsilon, \sigma}^0, \Phi_{\varepsilon, \sigma}^0)$.

$(ii)$ If $\Phi_{\varepsilon, \sigma}^0 - \Phi^0 \in H^m(\R^N)$, then we have the estimate
\begin{equation}
\label{est:FW1}
\begin{split}
\big\| U_{\varepsilon, \sigma}(\cdot, t) & - U(\cdot, t) \big\|_{H^{m - 1}} + \big\| \Phi_{\varepsilon, \sigma}(\cdot, t) - \Phi(\cdot, t) \big\|_{H^m} \leq C_* \big( 1 + t^2 \big) \, \Big( \big\| U_{\varepsilon, \sigma}^0 - U^0 \big\|_{H^{m - 1}}\\
& + \big\| \Phi_{\varepsilon, \sigma}^0 - \Phi^0 \big\|_{H^m} + \max \big\{ \varepsilon^2,\sigma^\frac{1}{2} \big\} \, \boK_{\varepsilon, \sigma}^0 \, \big( 1 + \boK_{\varepsilon, \sigma}^0 \big)^{\max \{ 2, m \}} \Big),
\end{split}
\end{equation}
for any $0 \leq t \leq T_{\varepsilon, \sigma}$. In addition, we also have
\begin{equation}
\label{est:FW2}
\begin{split}
\big\| U_{\varepsilon, \sigma}(\cdot, t) & - U(\cdot, t) \big\|_{\dot{H}^{\ell - 1}} + \big\| \Phi_{\varepsilon, \sigma}(\cdot, t) - \Phi(\cdot, t) \big\|_{\dot{H}^\ell} \leq C_* \big( 1 + t \big) \, \Big( \big\| U_{\varepsilon, \sigma}^0 - U^0 \big\|_{\dot{H}^{\ell - 1}}\\
& + \big\| \Phi_{\varepsilon, \sigma}^0 - \Phi^0 \big\|_{\dot{H}^\ell} + \max \big\{ \varepsilon^2 , \sigma \big\} \, \boK_{\varepsilon, \sigma}^0 \big( 1 + \boK_{\varepsilon, \sigma}^0 \big)^{\max \{ 2, \ell \}} \Big),
\end{split}
\end{equation}
for any $1 \leq \ell \leq m$ and any $0 \leq t \leq T_{\varepsilon, \sigma}$.
\end{thm}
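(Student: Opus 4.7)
The strategy mirrors that for Theorem~\ref{thm:conv-SG}, but the analysis must now track the additional small parameter $\sigma$, which both lengthens the lifespan and sharpens the consistency rates. I envisage three steps.

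\emph{Step 1 (statement $(i)$).} I would first apply the local Cauchy theory of Subsection~\ref{sub:Cauchy-LL} to produce a maximal solution $(U_{\varepsilon,\sigma},\Phi_{\varepsilon,\sigma})\in\boC^0([0,T_{\max}),\boN\boV^{k+1}(\R^N))$. The condition~\eqref{cond:small} combined with the embedding $H^k(\R^N)\hookrightarrow L^\infty(\R^N)$ (recall $k>N/2+1$) ensures $\varepsilon\|U_{\varepsilon,\sigma}\|_{L^\infty}\leq 1/2$, so the hydrodynamical framework remains valid. Introduce the high-order energy
$$\mathcal{E}_k(t):=\|U_{\varepsilon,\sigma}\|_{H^k}^2+\varepsilon^2\|\nabla U_{\varepsilon,\sigma}\|_{H^k}^2+\|\nabla\Phi_{\varepsilon,\sigma}\|_{H^k}^2+\sigma\|\sin(\Phi_{\varepsilon,\sigma})\|_{L^2}^2,$$
whose square root at $t=0$ equals $\boK_{\varepsilon,\sigma}^0$ up to constants. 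Using the symmetrized energy identities underlying the well-posedness proof, every top-order term in $\tfrac{d}{dt}\mathcal{E}_k$ should be shown to carry an explicit $\varepsilon$ or $\sigma$ prefactor; this yields a differential inequality of the shape $\tfrac{d}{dt}\mathcal{E}_k\leq C\max\{\varepsilon,\sigma\}(1+\mathcal{E}_k)^{p}$ for a suitable exponent $p=p(k,N)$, which bootstraps to the lifespan bound~\eqref{cond:Teps-sigma} and the uniform a priori estimate $\boK_{\varepsilon,\sigma}(t)\lesssim\boK_{\varepsilon,\sigma}^0$ on $[0,T_{\varepsilon,\sigma}]$. This is the \emph{main obstacle}: naive energy estimates produce a lifespan of order $1/(\boK_{\varepsilon,\sigma}^0)^2$ (as in Theorem~\ref{thm:conv-SG}), which is too short — the extra $\max\{\varepsilon,\sigma\}$ gain is crucial and depends on verifying that every contribution competing with the free wave dynamics, whether a quasilinear correction from the hydrodynamical framework or an anisotropy term, comes with an explicit $\varepsilon$ or $\sigma$ prefactor once the appropriate symmetrization has been performed.

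\emph{Step 2 (consistency).} Rewrite~\eqref{HLLeps} as a perturbation of~\eqref{FW}:
$$\partial_t U_{\varepsilon,\sigma}=\Delta\Phi_{\varepsilon,\sigma}+R_1,\qquad \partial_t\Phi_{\varepsilon,\sigma}=U_{\varepsilon,\sigma}+R_2,$$
where $R_1:=-\varepsilon^2\div(U_{\varepsilon,\sigma}^2\nabla\Phi_{\varepsilon,\sigma})-\tfrac{\sigma}{2}(1-\varepsilon^2 U_{\varepsilon,\sigma}^2)\sin(2\Phi_{\varepsilon,\sigma})$ and $R_2$ collects the remaining $O(\varepsilon^2)$ terms from the second equation of~\eqref{HLLeps}. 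Moser-type product and composition estimates, together with the Step 1 bounds, should give $\|R_2\|_{H^m}\lesssim \varepsilon^2\boK_{\varepsilon,\sigma}^0(1+\boK_{\varepsilon,\sigma}^0)^{\max\{2,m\}}$, and for $\ell\geq 1$, $\|R_1\|_{\dot H^{\ell-1}}\lesssim \max\{\varepsilon^2,\sigma\}\boK_{\varepsilon,\sigma}^0(1+\boK_{\varepsilon,\sigma}^0)^{\max\{2,\ell\}}$. The $L^2$ norm of $R_1$ is more delicate: since $\boK_{\varepsilon,\sigma}^0$ only controls $\sigma^{1/2}\|\sin(\Phi_{\varepsilon,\sigma}^0)\|_{L^2}$, propagating this bound by Step 1 yields
$$\|\sigma\sin(2\Phi_{\varepsilon,\sigma})\|_{L^2}\leq 2\sigma^{1/2}\bigl(\sigma^{1/2}\|\sin(\Phi_{\varepsilon,\sigma})\|_{L^2}\bigr)\lesssim \sigma^{1/2}\boK_{\varepsilon,\sigma}^0,$$
so only a $\sigma^{1/2}$ prefactor (not $\sigma$) is available in $L^2$. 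This is precisely the source of the gap between the rates in~\eqref{est:FW1} and~\eqref{est:FW2}.

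\emph{Step 3 (linearised wave estimate).} Set $V:=U_{\varepsilon,\sigma}-U$ and $\Psi:=\Phi_{\varepsilon,\sigma}-\Phi$; the pair solves the inhomogeneous linear system $\partial_t V=\Delta\Psi+R_1$ and $\partial_t\Psi=V+R_2$. For~\eqref{est:FW2}, I would differentiate in time the homogeneous energy $\|V\|_{\dot H^{\ell-1}}^2+\|\nabla\Psi\|_{\dot H^{\ell-1}}^2$; the Laplacian cross-term cancels by integration by parts, leaving
$$\frac{d}{dt}\bigl(\|V\|_{\dot H^{\ell-1}}^2+\|\nabla\Psi\|_{\dot H^{\ell-1}}^2\bigr)^{1/2}\lesssim\|R_1\|_{\dot H^{\ell-1}}+\|R_2\|_{\dot H^\ell},$$
which integrates to a bound growing linearly in $t$; combined with Step 2 for $1\leq\ell\leq m$, this produces~\eqref{est:FW2}. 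For~\eqref{est:FW1}, the missing $L^2$-control of $\Psi$ is recovered by integrating the second equation $\partial_t\Psi=V+R_2$ once more in time; substituting the bound on $\|V(s)\|_{L^2}$ coming from the homogeneous estimate at $\ell=1$ forces a second time integral of $\|R_1\|_{L^2}+\|\nabla R_2\|_{L^2}$, explaining both the $(1+t^2)$ temporal factor and the $\sigma^{1/2}$ loss inherited from Step 2.
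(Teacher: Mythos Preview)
Your plan is correct and tracks the paper's argument closely: Step~1 is exactly Proposition~\ref{prop:est-FW} and Corollary~\ref{cor:T-bis} (the paper uses the modified energies $\tilde E_\varepsilon^\ell$, which coincide with your $\mathcal{E}_k$ up to the harmless $(1-\varepsilon^2 U_{\varepsilon,\sigma}^2)$ weights, and obtains precisely the differential inequality $[\tilde\Sigma_\varepsilon^{k+1}]'\leq C\max\{\varepsilon,\sigma\}(1+\tilde\Sigma_\varepsilon^{k+1})^{\max\{2,k/2\}}\tilde\Sigma_\varepsilon^{k+1}$), and your Step~2 reproduces the paper's remainder bounds~\eqref{est:sin}--\eqref{est-R}, including the key observation that only $\sigma^{1/2}$ is available for $\|\sin(2\Phi_{\varepsilon,\sigma})\|_{L^2}$.

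The one methodological difference is Step~3. The paper does not use energy estimates on $(V,\Psi)$; it writes the Duhamel formula for the free wave system and reads off~\eqref{est:H-m} directly from the mapping properties of $\cos(tD)$, $D\sin(tD)$ and $\frac{\sin(tD)}{D}$ (the latter gains one derivative at the cost of a factor $t$, whence the $(1+t^2)$ after time integration). Your energy route is equally valid for~\eqref{est:FW2} and for~\eqref{est:FW1} when $m\geq 1$, but note that for $m=0$ your scheme relies on the $\ell=1$ homogeneous estimate, which is not available; the Duhamel representation handles $H^{-1}\times L^2$ uniformly without this extra step, which is why the paper prefers it.
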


The wave regime of the Landau-Lifshitz equation was first derived rigorously by Shatah and Zeng~\cite{ShatZen1}, as a special case of the wave regimes for the Schr\"odinger map equations with values into arbitrary K\"ahler manifolds. The derivation in~\cite{ShatZen1} relies on energy estimates, which are similar in spirit to the ones we establish in the sequel, and a compactness argument. Getting rid of this compactness argument provides the quantified version of the convergence in Theorem~\ref{thm:conv-wave}. This improvement is based on the arguments developed by B\'ethuel, Danchin and Smets~\cite{BetDaSm1} in order to quantify the convergence of the Gross-Pitaevskii equation towards the free wave equation in a similar long-wave regime. Similar arguments were also applied in~\cite{Chiron9} in order to derive rigorously the (modified) Korteweg-de Vries and (modified) Kadomtsev-Petviashvili regimes of the Landau-Lifshitz equation (see also~\cite{GermRou1}).

In the remaining part of this introduction, we detail the main ingredients in the proof of Theorem~\ref{thm:conv-SG}. We first clarify the analysis of the Cauchy problems for the Sine-Gordon and Landau-Lifshitz equations.

%%%%%%%%%%%%%%%%%%%%%%%%%%%%%%%%%%%%%%%%%%%%%%%%%%%%%%%%%%%%
%%%%%%%%%%%%%%%%%%%%%%%%%%%%%%%%%%%%%%%%%%%%%%%%%%%%%%%%%%%%
\subsection{The Cauchy problem for the Sine-Gordon equation}
\label{sub:Cauchy-SG}
%%%%%%%%%%%%%%%%%%%%%%%%%%%%%%%%%%%%%%%%%%%%%%%%%%%%%%%%%%%%
%%%%%%%%%%%%%%%%%%%%%%%%%%%%%%%%%%%%%%%%%%%%%%%%%%%%%%%%%%%%

The Sine-Gordon equation is a semilinear wave equation with a Lipschitz nonlinearity. The well-posedness analysis of the corresponding Cauchy problem is classical (see e.g~\cite[Chapter 6]{ShatStr0} and~\cite[Chapter 12]{Evans0}). With the proof of Theorem~\ref{thm:conv-SG} in mind, we now provide some precisions about this analysis in the context of the product sets $H_{\sin}^k(\R^N) \times H^{k - 1}(\R^N)$.

In the Hamiltonian framework, it is natural to solve the equation for initial conditions $\phi(\cdot, 0) = \phi^0 \in H_{\sin}^1(\R^N)$ and $\partial_t \phi(\cdot, 0) = \phi^1 \in L^2(\R^N)$, which guarantees the finiteness of the Sine-Gordon energy in~\eqref{def:E-SG}. Note that we do not assume that the function $\phi^0$ lies in $L^2(\R^N)$. This is motivated by formula~\eqref{soliton-SG} for the one-dimensional solitons $\phi_c^\pm$, which lie in $H_{\sin}^1(\R)$, but not in $L^2(\R)$. In this Hamiltonian setting, the Cauchy problem for~\eqref{SG} is globally well-posed.

\begin{thm}
\label{thm:SG-Cauchy}
Let $\sigma \in \R^*$. Given two functions $(\phi^0, \phi^1) \in H_{\sin}^1(\R^N) \times L^2(\R^N)$, there exists a unique solution $\phi \in \boC^0(\R, \phi^0 + H^1(\R^N))$, with $\partial_t \phi \in \boC^0(\R, L^2(\R^N))$, to the Sine-Gordon equation with initial conditions $(\phi^0, \phi^1)$. Moreover, this solution satisfies the following statements.

$(i)$ For any positive number $T$, there exists a positive number $A$, depending only on $\sigma$ and $T$, such that the flow map $(\phi^0, \phi^1) \mapsto (\phi, \partial_t \phi)$ satisfies
$$d_{\sin}^1 \big( \phi(\cdot, t), \tilde{\phi}(\cdot, t) \big) + \big\| \partial_t \phi(\cdot, t) - \partial_t \tilde{\phi}(\cdot, t) \big\|_{L^2} \leq A \Big( d_{\sin}^1 \big( \phi^0, \tilde{\phi}^0 \big) + \big\| \phi^1 - \tilde{\phi}^1 \big\|_{L^2} \Big),$$
for any $t \in [- T, T]$. Here, the function $\tilde{\phi}$ is the unique solution to the Sine-Gordon equation with initial conditions $(\tilde{\phi}^0, \tilde{\phi}^1)$.

$(ii)$ When $\phi^0 \in H_{\rm sin}^2(\R^N)$ and $\phi^1 \in H^1(\R^N)$, the solution $\phi$ belongs to the space $\boC^0(\R, \phi^0 + H^2(\R^N))$, with $\partial_t \phi \in \boC^0(\R, H^1(\R^N))$ and $\partial_{tt} \phi \in \boC^0(\R, L^2(\R^N))$.

$(iii)$ The Sine-Gordon energy $E_{\rm SG}$ is conserved along the flow.
\end{thm}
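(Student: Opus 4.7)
The Sine-Gordon equation is a semilinear wave equation whose nonlinearity $\phi\mapsto\sin(2\phi)$ is globally Lipschitz on $\R$, so I would run a Picard/Duhamel argument in the energy space, adapted to the fact that $\phi^0$ need not lie in $L^2(\R^N)$. Setting $\phi(\cdot,t)=:\phi^0+\psi(\cdot,t)$ with $\psi(\cdot,0)=0$ and $\partial_t\psi(\cdot,0)=\phi^1$, the perturbation $\psi$ satisfies
\begin{equation*}
\partial_{tt}\psi-\Delta\psi = \Delta\phi^0 - \tfrac{\sigma}{2}\sin(2\phi^0+2\psi).
\end{equation*}
The hard part will be the source term $\Delta\phi^0\in H^{-1}(\R^N)$, which is one derivative short of the $L^2$-source required by the standard wave-equation energy theory. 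I would bypass this obstacle by writing the solution via the half-wave propagator $A:=\sqrt{-\Delta}$ and using the explicit identity
\begin{equation*}
\int_0^t \frac{\sin((t-s)A)}{A}\Delta\phi^0\, ds = \cos(tA)\phi^0-\phi^0,
\end{equation*}
whose right-hand side maps $H_{\sin}^1$ continuously into $H^1$ by the Fourier bound $|\cos(t|\xi|)-1|\leq t|\xi|$. Duhamel's formula then reads
\begin{equation*}
\psi(\cdot,t) = \bigl(\cos(tA)-I\bigr)\phi^0 + \frac{\sin(tA)}{A}\phi^1 - \frac{\sigma}{2}\int_0^t \frac{\sin((t-s)A)}{A}\sin\bigl(2\phi^0+2\psi(\cdot,s)\bigr)\,ds,
\end{equation*}
with every term valued in $H^1(\R^N)$.

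For local existence I would set up a Banach fixed-point argument based on this Duhamel formula in $\boC^0([0,T],H^1(\R^N))$, with $\partial_t\psi$ automatically landing in $\boC^0([0,T],L^2(\R^N))$. The nonlinear term makes sense thanks to the decomposition $\sin(2\phi^0+2\psi)=\sin(2\phi^0)\cos(2\psi)+\cos(2\phi^0)\sin(2\psi)$ combined with $\sin\phi^0\in L^2$ and $|\sin(2\psi)|\leq 2|\psi|$, which yield $\|\sin(2\phi^0+2\psi)\|_{L^2}\leq C(\|\sin\phi^0\|_{L^2}+\|\psi\|_{L^2})$; contraction follows from $|\sin(2\phi)-\sin(2\tilde\phi)|\leq 2|\phi-\tilde\phi|$ and the $L^2\to H^1$ mapping bound of $\sin(tA)/A$ on short intervals. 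Global existence then follows by re-injecting the linear-growth bound into Duhamel and invoking Gronwall to control $\|\psi(t)\|_{H^1}+\|\partial_t\psi(t)\|_{L^2}$ on every compact time interval.

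For the Lipschitz estimate in $(i)$, I would work directly with $w:=\phi-\tilde\phi$, which solves $\partial_{tt}w-\Delta w = -\tfrac{\sigma}{2}(\sin(2\phi)-\sin(2\tilde\phi))$. Although $w$ itself need not lie in $L^2$, its gradient, time-derivative and sine do. The product-to-sum identity $\sin(2\phi)-\sin(2\tilde\phi)=2\cos(\phi+\tilde\phi)\sin(w)$ gives $|\sin(2\phi)-\sin(2\tilde\phi)|\leq 2|\sin w|$, and multiplying the equation for $w$ by $\partial_t w$ and coupling with $\frac{d}{dt}\|\sin w\|_{L^2}^2=\int\sin(2w)\partial_t w$ (which uses $|\sin(2w)|\leq 2|\sin w|$) yields a closed differential inequality for the combined energy $E_w:=\|\partial_t w\|_{L^2}^2+\|\nabla w\|_{L^2}^2+\|\sin w\|_{L^2}^2$. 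Gronwall then delivers the claim, since $E_w(0)$ controls $d_{\sin}^1(\phi^0,\tilde\phi^0)^2+\|\phi^1-\tilde\phi^1\|_{L^2}^2$ and $E_w(t)$ dominates $d_{\sin}^1(\phi(t),\tilde\phi(t))^2+\|\partial_t\phi(t)-\partial_t\tilde\phi(t)\|_{L^2}^2$; uniqueness is the special case of identical data.

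The propagation of regularity $(ii)$ is obtained by differentiating the equation in space and running analogous energy estimates on $\nabla\psi$ in $H^1\times L^2$, using that $\sin(2\phi)$ has an $L^2$-gradient whenever $\nabla\phi\in L^2$, and then reading $\partial_{tt}\phi\in\boC^0(\R,L^2)$ off the equation itself. The energy conservation $(iii)$ is an immediate computation for $\boC^2$-solutions and is transferred to the general case by approximating $(\phi^0,\phi^1)$ in $H_{\sin}^2\times H^1$ and passing to the limit through the Lipschitz estimate of $(i)$.
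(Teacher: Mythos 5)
Your proposal is correct, and it departs from the paper's argument in two genuine ways that are worth spelling out.

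\textbf{The base decomposition.} The paper never works directly with the source $\Delta\phi^0$. Instead, Lemma~\ref{lem:decompose} splits $\phi^0 = f + \varphi^0$ into a low-frequency smooth part $f\in H_{\sin}^\infty(\R^N)$ and a high-frequency part $\varphi^0\in H^1(\R^N)$; then the paper writes $\phi=f+\varphi$ and the Duhamel source becomes $\Delta f\in H^\infty$, which is harmless. You instead take the canonical decomposition $\phi=\phi^0+\psi$ and absorb the problematic source through the identity $\int_0^t \frac{\sin((t-s)A)}{A}\Delta\phi^0\,ds=(\cos(tA)-I)\phi^0$, using the vanishing of the multiplier $\cos(t|\xi|)-1$ at $\xi=0$ to trade $\Delta\phi^0\in H^{-1}$ for one derivative of $\phi^0$. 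This is cleaner (no auxiliary $f$, no need to check at the end that the solution is independent of its choice, as the paper does in the proof of Theorem~\ref{thm:SG-Cauchy}), but you should say a word about why the multiplier $\cos(tA)-I$ is legitimately applicable to a function $\phi^0\notin L^2(\R^N)$. When $N\ge 2$, Lemma~\ref{lem:carac} gives $\phi^0=\ell\pi + g$ with $g\in H^1$ and the constant is annihilated, so there is no issue. When $N=1$, however, $\phi^0$ may have different limits $\ell^\pm\pi$ at $\pm\infty$ and is \emph{not} a constant plus $H^1$; the honest justification that $(\cos(tA)-I)\phi^0$ lands in $C^0(\R,H^1)$ is then again a low/high frequency split of $\phi^0$ — that is, exactly Lemma~\ref{lem:decompose}. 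So the paper's decomposition has not really disappeared; it reappears in disguise at the point where you need to make the Fourier multiplier rigorous. This is a technical caveat rather than a flaw, and once stated, the rest of your fixed-point argument is sound; note in particular that your $L^2\to H^1$ bound for $\sin(tA)/A$ uses $|\sin(t|\xi|)/|\xi||\le t$ for the $L^2$ piece and $|\sin(t|\xi|)|\le 1$ for the $\dot H^1$ piece, exactly as needed.

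\textbf{The Lipschitz estimate $(i)$.} Here your route is genuinely different and, in my view, tighter than the paper's. The paper derives $(i)$ by tracking the Lipschitz constant of the fixed-point map under perturbation of the data (and of the auxiliary $f$), then propagating over a covering of $[-T,T]$. You instead work directly with $w=\phi-\tilde\phi$, observe $\sin(2\phi)-\sin(2\tilde\phi)=2\cos(\phi+\tilde\phi)\sin(w)$ with $|\cdot|\le 2|\sin w|$, and close a Gronwall inequality for $E_w=\|\partial_t w\|_{L^2}^2+\|\nabla w\|_{L^2}^2+\|\sin w\|_{L^2}^2$ using also $\bigl|\tfrac{d}{dt}\|\sin w\|_{L^2}^2\bigr|\le 2\|\sin w\|_{L^2}\|\partial_t w\|_{L^2}$. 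This energy functional is precisely $d_{\sin}^1(\phi,\tilde\phi)^2+\|\partial_t w\|_{L^2}^2$, so the claim drops out directly, with an explicit exponential rate $e^{(1+|\sigma|)|t|}$; uniqueness is indeed the case $E_w(0)=0$. This single argument replaces the paper's two-step route (contraction estimate, then translation into $d_{\sin}^1$) and is a nice simplification. Two small points to make it airtight: justify the formal time-derivatives of $E_w$ by approximation in the $H_{\sin}^2\times H^1$ regularity class from $(ii)$ and pass to the limit using the estimate itself, and remark that $\sin w\in L^2$ at each time, which follows because $w=(\phi^0-\tilde\phi^0)+(\psi-\tilde\psi)$ with $\phi^0-\tilde\phi^0\in H_{\sin}^1$ and $\psi-\tilde\psi\in L^2$.

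Items $(ii)$ and $(iii)$ are handled essentially as in the paper.
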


The proof of Theorem~\ref{thm:SG-Cauchy} relies on a classical fixed-point argument. The only difficulty consists in working in the unusual functional setting provided by the set $H_{\sin}^1(\R^N)$. This difficulty is by-passed by applying the strategy developed by Buckingham and Miller in~\cite[Appendix B]{BuckMil1} (see also~\cite{Gallo1} for similar arguments in the context of the Gross-Pitaevskii equation). In dimension $N = 1$, they fix a function $f \in \boC^\infty(\R)$, with (possibly different) limits $\ell^{\pm} \pi$ at $\pm \infty$, and with a derivative $f'$ in the Schwartz class. Given a real number $p \geq 1$, they consider an initial datum $(\phi^0 = f + \varphi^0, \phi^1)$, with $(\varphi^0, \phi^1) \in L^p(\R)^2$, and they apply a fixed-point argument in order to construct the unique corresponding solution $\phi = f + \varphi$ to the Sine-Gordon equation, with $\varphi \in L^\infty([0, T], L^p(\R))$ for some positive number $T$. This solution is global when $\phi^0$ lies in $W^{1, p}(\R)$. In view 
of Lemmas~\ref{lem:decompose} and~\ref{lem:carac} below, this result includes all the functions $\phi^0$ in the space $H_{\sin}^1(\R)$ for $p = 2$. 

Our proof of Theorem~\ref{thm:SG-Cauchy} extends this strategy to arbitrary dimensions. We fix a smooth function $f \in H_{\sin}^\infty(\R^N) := \cap_{k \geq 1} H_{\sin}^k(\R^N)$, and we apply a fixed-point argument in order to solve the Cauchy problem for initial conditions $\phi^0 \in f + H^1(\R^N)$ and $\phi^1 \in L^2(\R^N)$. We finally check the local Lipschitz continuity in $H_{\sin}^1(\R^N) \times L^2(\R^N)$ of the corresponding flow.

With the proof of Theorem~\ref{thm:conv-SG} in mind, we also extend this analysis to the initial conditions $\phi^0 \in H_{\sin}^k(\R^N)$ and $\phi^1 \in H^{k - 1}(\R^N)$, with $k \in \N^*$. When the integer $k$ is large enough, we obtain the following local well-posedness result.

\begin{thm}
\label{thm:SG-Cauchy-smooth}
Let $\sigma \in \R^*$ and $k \in \N$, with $k > N/2 + 1$. Given two functions $(\phi^0, \phi^1) \in H_{\sin}^k(\R^N) \times H^{k - 1}(\R^N)$, there exist a positive number $T_{\max}^k$, and a unique solution $\phi \in \boC^0([0, T_{\max}^k), \linebreak[0] \phi^0 + H^k(\R^N))$, with $\partial_t \phi \in \boC^0([0, T_{\max}^k), H^{k - 1}(\R^N))$, to the Sine-Gordon equation with initial conditions $(\phi^0, \phi^1)$. Moreover, this solution satisfies the following statements.

$(i)$ The maximal time of existence $T_{\max}^k$ is characterized by the condition
$$\lim_{t \to T_{\max}^k} d_{\sin}^k \big( \phi(\cdot, t), 0 \big) = \infty \quad {\rm if} \ T_{\max}^k < \infty.$$

$(ii)$ Let $0 \leq T < T_{\max}^k$. There exist two positive numbers $R$ and $A$, depending only on $T$, $d_{\sin}^k(\phi^0, 0)$ and $\| \phi^1 \|_{H^{k - 1}}$, such that the flow map $(\phi^0, \phi^1) \mapsto (\phi, \partial_t \phi)$ is well-defined from the ball
$$B \big( (\phi^0, \phi^1), R) = \big\{ (\tilde{\phi}^0, \tilde{\phi}^1) \in H_{\sin}^k(\R^N) \times H^{k - 1}(\R^N) : d_{\sin}^k(\phi^0, \tilde{\phi}^0) + \| \phi^1 - \tilde{\phi}^1 \|_{H^{k - 1}} < R \big\},$$
to $\boC^0([0, T], H_{\sin}^k(\R^N)) \times H^{k - 1}(\R^N))$, and satisfies
$$d_{\sin}^k \big( \phi(\cdot, t), \tilde{\phi}(\cdot, t) \big) + \big\| \partial_t \phi(\cdot, t) - \partial_t \tilde{\phi}(\cdot, t) \big\|_{H^{k - 1}} \leq A \Big( d_{\sin}^k \big( \phi^0, \tilde{\phi}^0 \big) + \big\| \phi^1 - \tilde{\phi}^1 \big\|_{H^{k - 1}} \Big),$$
for any $t \in [0, T]$. Here, the function $\tilde{\phi}$ is the unique solution to the Sine-Gordon equation with initial conditions $(\tilde{\phi}^0, \tilde{\phi}^1)$.

$(iii)$ When $\phi^0 \in H_{\rm sin}^{k + 1}(\R^N)$ and $\phi^1 \in H^k(\R^N)$, the function $\phi$ is in $\boC^0([0, T_{\max}^k), \phi^0 + H^{k + 1}(\R^N))$, with $\partial_t \phi \in \boC^0([0, T_{\max}^k), H^k(\R^N))$ and $\partial_{tt} \phi \in \boC^0([0, T_{\max}^k), H^{k - 1}(\R^N))$. In particular, the maximal time of existence $T_{\max}^{k + 1}$ satisfies
$$T_{\max}^{k + 1} = T_{\max}^k.$$

$(iv)$ When $1 \leq N \leq 3$, the solution $\phi$ is global in time. Moreover, when $N \in \{ 2, 3 \}$, the flow remains continuous for $k = 2$.
\end{thm}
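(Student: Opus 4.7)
My plan is to extend the reference-function strategy already used for Theorem~\ref{thm:SG-Cauchy} to the higher regularity space $H_{\sin}^k(\R^N) \times H^{k-1}(\R^N)$. Fix once and for all a smooth representative $f \in H_{\sin}^\infty(\R^N)$ with the same asymptotic behaviour as $\phi^0$, so that $\phi^0 - f \in H^k(\R^N)$. Writing $\phi = f + \varphi$, the Sine-Gordon equation becomes the inhomogeneous nonlinear wave equation
\begin{equation*}
\partial_{tt}\varphi - \Delta \varphi = \Delta f - \frac{\sigma}{2} \sin\bigl(2f + 2\varphi\bigr), \qquad \bigl(\varphi(\cdot,0),\partial_t \varphi(\cdot,0)\bigr) = (\varphi^0, \phi^1) \in H^k(\R^N) \times H^{k-1}(\R^N).
\end{equation*}
I would then solve this equation by Picard iteration on the Duhamel formula for the linear wave propagator, in a closed ball of $\boC^0([0,T],H^k(\R^N)) \cap \boC^1([0,T],H^{k-1}(\R^N))$ whose radius is tuned to the initial data and $T$ is taken small. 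The decisive input is that under the assumption $k-1 > N/2$, the Sobolev space $H^{k-1}(\R^N)$ is a Banach algebra. Splitting
\begin{equation*}
\sin(2f+2\varphi) = \sin(2f)\cos(2\varphi) + \cos(2f)\sin(2\varphi)
\end{equation*}
and combining the Moser composition estimates with the pointwise Lipschitz property of $\sin$, one gets a tame bound $\|\sin(2f+2\varphi) - \sin(2f+2\tilde\varphi)\|_{H^{k-1}} \leq C(f,R)\|\varphi-\tilde\varphi\|_{H^{k-1}}$ on any ball of radius $R$, which closes the contraction. This delivers local existence, uniqueness, and the Lipschitz estimate in statement $(ii)$ after translating the differences back into the pseudometric $d_{\sin}^k$ via the identity $\|\sin(\varphi-\tilde\varphi)\|_{L^2} \leq \|\varphi-\tilde\varphi\|_{L^2}$ and the direct control of $\nabla(\varphi-\tilde\varphi)$ in $H^{k-1}$.

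Statement~$(i)$ is the standard continuation argument: the minimal Picard time depends only on $d_{\sin}^k(\phi^0,0)$ and $\|\phi^1\|_{H^{k-1}}$, so the solution extends as long as these norms remain bounded. Statement~$(iii)$ is the classical propagation-of-regularity loop: running the same Duhamel-plus-Moser scheme one level higher yields a local solution in $H^{k+1}$ on some interval, and since the $H_{\sin}^k$ blow-up criterion controls a strictly weaker quantity than $d_{\sin}^{k+1}$, the two maximal times must coincide; the regularity $\partial_{tt}\phi \in \boC^0([0,T_{\max}^k),H^{k-1}(\R^N))$ is then read off the equation itself.

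The hard part is statement~$(iv)$. Global existence in dimensions $1 \leq N \leq 3$ at threshold regularity would proceed from the conservation of $E_{\rm SG}$ (given by Theorem~\ref{thm:SG-Cauchy}), which furnishes uniform-in-time $L^2$ bounds on $\nabla\phi$, $\sin(\phi)$, and $\partial_t\phi$. Differentiating the equation, for each multi-index with $|\alpha| \leq k$, yields
\begin{equation*}
\partial_{tt} \partial^\alpha \varphi - \Delta \partial^\alpha \varphi = -\sigma \cos(2f+2\varphi) \partial^\alpha \varphi + R_\alpha,
\end{equation*}
where $R_\alpha$ collects commutator terms that, by Moser and Gagliardo-Nirenberg, are bounded in $L^2$ by polynomial expressions in $\|\varphi\|_{H^{|\alpha|}}$ and $\|\nabla\varphi\|_{L^\infty}$; a pairing with $\partial_t \partial^\alpha \varphi$, the Sobolev embedding $H^{k-1} \hookrightarrow L^\infty$, and a Grönwall bootstrap give at most exponential growth and hence global existence. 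The truly delicate case is $k = 2$ with $N \in \{2,3\}$, which lies below the $k > N/2+1$ threshold so that $H^1$ is no longer an algebra. Here I would exploit instead the Sobolev embedding $H^2 \hookrightarrow L^\infty$ (valid exactly for $N \leq 3$) and the pointwise Lipschitz property of $\sin$ to control $\|\sin(2f+2\varphi)-\sin(2f+2\tilde\varphi)\|_{H^1}$ by $\|\varphi-\tilde\varphi\|_{H^1}$ through an $L^\infty$ bound on $\varphi$ rather than through an algebra property, and I would combine this with the global $H^1$ energy bound to close a Grönwall argument at the $H^2$ level. This low-regularity Lipschitz bookkeeping, rather than the bounded nonlinearity itself, is the main technical obstacle of the proof.
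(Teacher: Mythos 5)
Your Step~1 (contraction on the Duhamel map after fixing a smooth representative $f \in H_{\sin}^\infty$) and your treatment of $(i)$, $(ii)$, $(iii)$ coincide with the paper's Step~1 of this proof. The interesting content is in $(iv)$, where one piece of your proposal has a genuine gap and another is a genuine improvement.

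The gap is in the global-existence Gr\"onwall. For $|\alpha| = k-1$ the Moser estimate gives $\|\sin(2f+2\varphi)\|_{\dot{H}^{k-1}} \leq C\bigl(1 + \|\nabla\varphi\|_{L^\infty}^{k-2}\bigr)\|\nabla\varphi\|_{H^{k-2}}$, and your choice of $H^{k-1}\hookrightarrow L^\infty$ to control $\|\nabla\varphi\|_{L^\infty}$ makes this coefficient of order $E_k^{(k-2)/2}$ --- the very quantity you are trying to bound --- so the resulting differential inequality for $E_k$ has degree $(k-1)/2 > 1$ as soon as $k \geq 4$ and can blow up in finite time; ``at most exponential growth'' is wrong as stated. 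You must either first reduce to $k = k_N$ via statement~$(iii)$, as the paper does, or use the \emph{minimal} embedding $H^{k_N-1}\hookrightarrow L^\infty$ so that $\|\nabla\varphi\|_{L^\infty}$ is controlled by lower-order energies already bounded in the bootstrap. The paper in fact does something sharper at the critical level $k_N = 3$, $N \in \{2,3\}$: it replaces any $L^\infty$ control of $\nabla\phi$ by the chain-rule inequality $\|D^2\sin(2\phi)\|_{L^2} \leq 2\|D^2\phi\|_{L^2} + 4\|\nabla\phi\|_{L^4}^2$ together with $H^1\hookrightarrow L^4$, so the coefficient depends only on the $H^2$-level quantities (already bounded by Theorem~\ref{thm:SG-Cauchy}), giving polynomial rather than exponential growth of $E_3$.

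The improvement is in the flow continuity at $k=2$, $N \in \{2,3\}$. The paper regularizes the data and proves a $2/3$-H\"older estimate for differences of smooth solutions by pairing the interpolation $\|\sin z\|_{L^3} \leq \|\sin z\|_{L^2}^{2/3}$ with $H^1\hookrightarrow L^6$, then passes to the limit. Your idea of coupling the pointwise bound $|\sin z| \leq |z|$ with $H^2\hookrightarrow L^\infty$ (or, more sharply, $H^1\hookrightarrow L^4$ paired with $L^4\times L^4 \to L^2$) produces a linear Gr\"onwall at the $H^2$ level, and in fact lets one run the contraction mapping of Step~1 directly in $\boC^0([0,T],H^2(\R^N))$ for $N\leq 3$; this yields Lipschitz rather than merely H\"older dependence on the data and bypasses the mollification step entirely --- a real simplification of the paper's Step~3. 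One caution: the $L^\infty$-route you invoke actually controls $\|\sin(2f+2\varphi) - \sin(2f+2\tilde\varphi)\|_{H^1}$ by $\|\varphi-\tilde\varphi\|_{H^2}$, not by $\|\varphi-\tilde\varphi\|_{H^1}$ as you claim (the $L^4$-route gives the latter); either closes the fixed point, but the written estimate should match the tool.
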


Theorem~\ref{thm:SG-Cauchy-smooth} follows from a fixed-point argument similar to the one of Theorem~\ref{thm:SG-Cauchy}. The control on the nonlinear terms is derived from a uniform bound on the gradient of the solutions. This is the origin of the condition $k > N/2 + 1$ for which the Sobolev embedding theorem guarantees a uniform control on the gradient. This condition is natural in the context of the spaces $H_{\sin}^k(\R^N)$. Indeed, a function $f \in H_{\sin}^k(\R^N)$ is not controlled uniformly (see Remark~\ref{rem:carac1}, and the discussion in Appendix~\ref{sec:Hsink}). At least in principle, the classical condition $k > N/2$ is not sufficient to handle the nonlinear terms of the Sine-Gordon equation.

The maximal time of existence $T_{\max}^k$ in statement $(i)$ can be estimated by performing standard energy estimates. When $1 \leq N \leq 3$, this leads to the global well-posedness of the Sine-Gordon equation in the space $H_{\sin}^k(\R^N) \times H^{k - 1}(\R^N)$ for $k > N/2 + 1$. Actually, it is possible to extend this global well-posedness result to dimensions $4 \leq N \leq 9$. This extension relies on the Strichartz estimates for the free wave equation (see e.g.~\cite{KeelTao1}), and the use of fractional Sobolev spaces. A blow-up in finite time is possible when $N \geq 10$. For the sake of simplicity, and since this is not our main goal, we do not address this question any further. We refer to~\cite{Tao2} for a detailed discussion on this topic, and for the construction of blowing-up solutions to related semilinear wave systems.

When $1 \leq N \leq 3$, the fixed-point arguments in the proofs of Theorems~\ref{thm:SG-Cauchy} and~\ref{thm:SG-Cauchy-smooth} provide the continuity of the flow with values in $\boC^0([0, T], H_{\sin}^k(\R^N) \times H^{k - 1}(\R^N))$ for any positive number $T$, except if $k = 2$ and $2 \leq N \leq 3$. We fill this gap by performing standard energy estimates. We conclude that the Sine-Gordon equation is globally well-posed in the spaces $H_{\sin}^k(\R^N) \times H^k(\R^N)$ for any $1 \leq N \leq 3$ and any $k \geq 1$.

Note finally that the previous well-posedness analysis of the Sine-Gordon equation translates immediately into the Sine-Gordon system~\eqref{sys:SG} by setting $u = \partial_t \phi$.

%%%%%%%%%%%%%%%%%%%%%%%%%%%%%%%%%%%%%%%%%%%%%%%%%%%%%%%%%%%%%%%%
%%%%%%%%%%%%%%%%%%%%%%%%%%%%%%%%%%%%%%%%%%%%%%%%%%%%%%%%%%%%%%%%
\subsection{The Cauchy problem for the Landau-Lifshitz equation}
\label{sub:Cauchy-LL}
%%%%%%%%%%%%%%%%%%%%%%%%%%%%%%%%%%%%%%%%%%%%%%%%%%%%%%%%%%%%%%%%
%%%%%%%%%%%%%%%%%%%%%%%%%%%%%%%%%%%%%%%%%%%%%%%%%%%%%%%%%%%%%%%%

The Landau-Lifshitz equation is an anisotropic perturbation of the Schr\"odinger map equation. Solving the Cauchy problem for this further equation is known to be intrinsically involved due to the geometric nature of the equation (see e.g.~\cite{BeIoKeT1}). The situation is similar for the Landau-Lifshitz equation, but one has also to handle the anisotropy of the equation.

The natural functional setting for solving the Landau-Lifshitz equation is given by the energy set
$$\boE(\R^N) := \big\{ v \in L_{\rm loc}^1(\R^N, \S^2) : \nabla v \in L^2(\R^N) \ {\rm and} \ (v_1, v_3) \in L^2(\R^N)^2 \big\}.$$
We endow this set with the metric structure provided by the norm
$$\| v \|_{Z^1} := \big( \| v_1 \|_{H^1}^2 + \| v_2 \|_{L^\infty}^2 + \| \nabla v_2 \|_{L^2}^2 + \| v_3 \|_{H^1}^2\big)^\frac{1}{2},$$
of the vector space
$$Z^1(\R^N) := \big\{ v \in L_{\rm loc}^1(\R^N, \R^3) : \nabla v \in L^2(\R^N), v_2 \in L^\infty(\R^N) \ {\rm and} \ (v_1, v_3) \in L^2(\R^N)^2 \big\}.$$
With this structure at hand, the Landau-Lifshitz energy is well-defined and continuous. The uniform control on the second component $v_2$ in the $Z^1$-norm is not necessary to guarantee these properties, but it is in order to ensure the map $\| \cdot \|_{Z^1}$ to be a norm. This uniform control is not the only possible choice. Our choice is motivated by the boundedness of the functions $v$ in $\boE(\R^N)$.

To our knowledge, establishing the global well-posedness of the Landau-Lifshitz equation for general initial data in the energy set $\boE(\R^N)$ remains an open question. We do not address this question any further in the sequel. However, it is possible to construct global weak solutions by adapting the construction by Sulem, Sulem and Bardos~\cite{SulSuBa1} in the case of the Schr\"odinger map equation. Due to the requirement of additional smoothness in order to handle the Sine-Gordon regime, we instead focus on the local well-posedness of smooth solutions. 

Given an integer $k \geq 1$, we introduce the set 
$$\boE^k(\R^N) := \big\{ v \in \boE(\R^N) : \nabla v \in H^{k - 1}(\R^N) \big\},$$
which we endow with the metric structure provided by the norm
$$\| v \|_{Z^k} := \Big( \| v_1 \|_{H^k}^2 + \| v_2 \|_{L^\infty}^2 + \| \nabla v_2 \|_{H^{k - 1}}^2 + \| v_3 \|_{H^k}^2 \big)^\frac{1}{2},$$
of the vector space
\begin{equation}
\label{def:Zk}
Z^k(\R^N) := \big\{ v \in L_{\rm loc}^1(\R^N, \R^3) : (v_1, v_3) \in L^2(\R^N)^2, v_2 \in L^\infty(\R^N) \ {\rm and} \ \nabla v \in H^{k - 1}(\R^N) \big\}.
\end{equation}
Observe that the energy space $\boE(\R^N)$ identifies with $\boE^1(\R^N)$.

When $k$ is large enough, we show the local well-posedness of the Landau-Lifshitz equation in the set $\boE^k(\R^N)$.

\begin{thm}
\label{thm:LL-Cauchy}
Let $\lambda_1$ and $\lambda_3$ be non-negative numbers, and $k \in \N$, with $k > N/2 + 1$. Given any function $m^0 \in \boE^k(\R^N)$, there exists a positive number $T_{\max}$ and a unique solution $m : \R^N \times[0, T_{\max}) \to \S^2$ to the Landau-Lifshitz equation with initial datum $m^0$, which satisfies the following statements.

$(i)$ The solution $m$ is in the space $L^\infty([0, T], \boE^k(\R^N))$, while its time derivative $\partial_t m$ is in $L^\infty([0, T], H^{k - 2}(\R^N))$, for any number $0 < T < T_{\max}$.

$(ii)$ If the maximal time of existence $T_{\max}$ is finite, then
\begin{equation}
\label{eq:cond-Tmax-LL}
\int_0^{T_{\max}} \| \nabla m(\cdot, t) \|_{L^\infty}^2 \, dt = \infty.
\end{equation}

$(iii)$ The flow map $m^0 \mapsto m$ is well-defined and locally Lipschitz continuous from $\boE^k(\R^N)$ to $\boC^0([0, T], \boE^{k - 1}(\R^N))$ for any number $0 < T < T_{\max}$.

$(iv)$ When $m^0 \in \boE^\ell(\R^N)$, with $\ell > k$, the solution $m$ lies in $L^\infty([0, T], \boE^\ell(\R^N))$, with $\partial_t m \in L^\infty([0, T], H^{\ell - 2}(\R^N))$ for any number $0 < T < T_{\max}$.

$(v)$ The Landau-Lifshitz energy is conserved along the flow.
\end{thm}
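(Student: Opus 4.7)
The plan is to establish Theorem~\ref{thm:LL-Cauchy} by the classical scheme of parabolic regularisation, uniform Sobolev a priori estimates derived from carefully symmetrised energy functionals, and passage to the limit. The starting point is the Landau-Lifshitz-Gilbert approximation
\[
\partial_t m_\nu + m_\nu \times \big( \Delta m_\nu - J(m_\nu) \big) + \nu \, m_\nu \times \big( m_\nu \times ( \Delta m_\nu - J(m_\nu)) \big) = 0, \qquad \nu > 0,
\]
which is parabolic on the constraint manifold (since $m \times (m \times v) = (m \cdot v)m - v$ when $|m|=1$), preserves the pointwise constraint $|m_\nu|=1$ along the flow, and admits a local classical solution with initial datum $m^0 \in \boE^k(\R^N)$ via a standard heat-semigroup fixed-point argument on $v_\nu := m_\nu - m^0 \in H^k(\R^N, \R^3)$.

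The core of the proof is to produce a priori estimates uniform in $\nu$ on a time interval depending only on $\|m^0\|_{Z^k}$. For each multi-index $\alpha$ with $|\alpha| \leq k$, I would differentiate the equation and assemble a symmetrised energy of the schematic form
\[
\mathcal{G}^k(m) := \|m_1\|_{L^2}^2 + \|m_3\|_{L^2}^2 + \sum_{1 \leq |\alpha| \leq k} \int_{\R^N} \Big( |\partial^\alpha m|^2 - \big(m \cdot \partial^\alpha m\big)^2 \Big) + \text{lower-order corrections},
\]
whose time derivative does not lose a derivative. The essential algebraic fact is that the leading top-order contribution reduces, modulo commutators, to $\langle m \times \partial^\alpha \Delta m, \partial^\alpha m \rangle$, which after integration by parts becomes $-\langle \nabla(m \times \partial^\alpha m), \partial^\alpha \nabla m\rangle$, a quantity of order zero rather than one. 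The commutators are controlled in $L^2$ by Moser and Kato-Ponce-type inequalities, using the Sobolev embedding $H^{k-1}(\R^N) \hookrightarrow L^\infty(\R^N)$ granted by $k > N/2 + 1$ together with the pointwise bound $|m_\nu|=1$. The anisotropy $J(m)$ is a bounded smooth perturbation contributing only lower-order terms, while the Gilbert dissipation produces the manifestly good term $-\nu \|\nabla \partial^\alpha m_\nu\|_{L^2}^2$ up to remainders it can absorb. Altogether this yields a differential inequality
\[
\frac{d}{dt}\, \mathcal{G}^k(m_\nu) \leq C \, \big( 1 + \| \nabla m_\nu \|_{L^\infty}^2 \big) \, \mathcal{G}^k(m_\nu),
\]
which closes on a time interval $[0, T_{\max})$ independent of $\nu$.

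Passing to the limit is then classical: the uniform bound on $m_\nu$ in $L^\infty([0,T], \boE^k)$, combined with the bound on $\partial_t m_\nu$ in $L^\infty([0,T], H^{k-2})$ read off the equation, yields via Aubin-Lions and a diagonal extraction a limit $m$ that still satisfies $|m|=1$ and solves~\eqref{LL} in the sense of distributions, with the regularity stated in $(i)$. Uniqueness and the local Lipschitz continuity in $(iii)$ follow from an energy estimate on the difference $m - \tilde m$ of two solutions at the lower $Z^{k-1}$ level, where no geometric cancellation is needed at top order because one factor in each bilinear term lies in $L^\infty$ by Sobolev embedding. The blow-up criterion $(ii)$ is a Beale-Kato-Majda-type refinement of the previous differential inequality integrated in time; the persistence of higher regularity $(iv)$ is a standard bootstrap based on $(ii)$; and $(v)$ follows from the Hamiltonian structure for the smooth solutions produced above, extended by the density argument afforded by $(iii)$.

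The main obstacle is the construction of $\mathcal{G}^k$: a naive $H^k$ estimate on~\eqref{LL} apparently loses one derivative, because $m \times \Delta m$ is of the same differential order as $\partial_t m$, and only a careful combined use of the geometric constraint $|m|=1$ and of the antisymmetry of the cross product produces the top-order cancellation. The precise form of the cross-term corrections in $\mathcal{G}^k$ must be tuned so as to absorb the non-vanishing commutators coming from the product rule and from the anisotropy $J(m)$; this is precisely the \emph{better symmetrisation} advertised in the abstract, and it is the step where the functional framework $\boE^k$ with its $L^\infty$-control on $m_2$ genuinely intervenes.
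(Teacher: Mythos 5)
Your proposal diverges from the paper at exactly the step both you and the paper identify as the crux, and the version you sketch does not close. Two concrete problems:

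\textbf{The claimed cancellation in $\mathcal{G}^k$ does not kill the top-order term.} For $|\alpha| = k$, you correctly reduce $\int \langle m \times \partial^\alpha \Delta m, \partial^\alpha m\rangle$ to $-\int \langle \nabla(m \times \partial^\alpha m), \partial^\alpha \nabla m\rangle$, and the piece $\langle m \times \partial^\alpha \nabla m, \partial^\alpha \nabla m\rangle$ vanishes by antisymmetry. But the remaining piece is $\int \langle \nabla m \times \partial^\alpha m, \partial^\alpha \nabla m\rangle$, which still contains $\partial^\alpha \nabla m$, i.e.\ $k+1$ derivatives of $m$, against only $k$ derivatives in the other factor. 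This is precisely the one-derivative loss characteristic of the quasilinear Schr\"odinger structure; it is controlled by $\|\nabla m\|_{L^\infty}\|m\|_{\dot H^k}\|m\|_{\dot H^{k+1}}$, and $\|m\|_{\dot H^{k+1}}$ is not majorised by $\mathcal{G}^k$. The tangential correction $-(m\cdot\partial^\alpha m)^2$ is genuinely lower order (since $m\cdot\nabla m = 0$) and cannot absorb it; you have not specified the ``cross-term corrections'' that would, and I do not believe a first-order energy of the schematic form you wrote can be tuned to make the loss disappear. The paper gets around this with a fundamentally different energy: $E^k_{\rm LL}$ in \eqref{def:E-LL-k} is a \emph{second-order, wave-type} functional built on $|\partial_t\partial_x^\alpha m|^2 + |\Delta\partial_x^\alpha m|^2$ (for $|\alpha|=k-2$), whose time derivative via \eqref{alabama}–\eqref{arkansas} is paired against the nonlinearity $F(m)$ of the second-order equation \eqref{eq:second-LL}. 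The dangerous $(k+1)$-derivative term $\partial_x^\alpha\partial_{ij}(\langle\partial_i m,\partial_j m\rangle)m$ is then killed through the \emph{geometric} orthogonality $\langle m \times w, m\rangle = 0$ applied after one substitution of the first-order equation, as in \eqref{carolina}–\eqref{connecticut}. That is a different cancellation than the antisymmetry $\langle m\times w, w\rangle = 0$ you invoke, and the second-order framework is what makes it available.

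\textbf{The uniqueness and Lipschitz estimate at level $Z^{k-1}$ also loses a derivative in the naive form.} Writing the difference $w = \tilde m - m$, the leading bilinear term is $m\times\Delta w$, and pairing $\partial^\alpha$ of it against $\partial^\alpha w$ (for $|\alpha|=k-1$) yields, after the same integration by parts, a term with $\partial^\alpha\nabla w \in \dot H^k$ in it, not controlled by $\|w\|_{H^{k-1}}$. Sobolev embedding $H^{k-1}\hookrightarrow W^{1,\infty}$ (valid for $k>N/2+1$) gives you $\nabla w\in L^\infty$ but not $\Delta w \in L^\infty$, so ``one factor lies in $L^\infty$'' does not rescue the top-order pairing. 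The paper resolves this with the specifically designed quantity $\gE_{\rm LL}^1$ in Proposition~\ref{prop:LL-diff-control}, built from the nonlinear variables $\nabla u$ and $u\times\nabla v + v\times\nabla u$, precisely to produce the ``better symmetrisation'' that cancels the offending term and closes at $k>N/2+1$ rather than $k>N/2+2$. You would also need to handle $\gE^0_{\rm LL}$, defined with $u - u_2^0 e_2$ subtracted, because $m_2$ is not in $L^2(\R^N)$; your plan is silent on this.

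The remainder of your outline (parabolic LLG regularisation in place of the finite-difference scheme, Aubin–Lions to pass to the limit, BKM-type blow-up criterion, density for energy conservation) is a legitimate alternative scaffolding and would carry through once the uniform-in-$\nu$ a priori estimate is in hand. But since you have not produced that estimate — and since the straightforward first-order energy you suggest provably does not yield it — the scaffolding rests on a missing foundation.
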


Theorem~\ref{thm:LL-Cauchy} provides the existence and uniqueness of a local continuous flow corresponding to smooth solutions of the Landau-Lifshitz equation. This kind of statement is standard in the context of hyperbolic systems (see e.g.~\cite[Theorem 1.2]{Taylor03}). The critical regularity for the equation is given by the condition $k = N/2$, so that local well-posedness is expected when $k > N/2 + 1$. As in the proof of Theorem~\ref{thm:SG-Cauchy-smooth}, this assumption is used to control uniformly the gradient of the solutions by the Sobolev embedding theorem.

In the isotropic case of the Schr\"odinger map equation, local well-posedness at the same level of regularity was established in~\cite{ChaShUh1} when $N = 1$ by using the Hasimoto transform, and in~\cite{McGahag1} in arbitrary dimensions by using parallel transport (see also~\cite{ZhouGuo1, SulSuBa1, DingWan1} for the construction of smooth solutions). Our proof of Theorem~\ref{thm:LL-Cauchy} is based on a more direct strategy. We introduce new quantities, which improve classical energy estimates. This approach applies to the isotropic situation of the Schr\"odinger map equation, as well as to the anisotropic setting of the Landau-Lifshitz equation.

In contrast with the proof of Theorem~\ref{thm:SG-Cauchy-smooth}, we do not rely on a fixed-point argument, but on a compactness argument. Due to this difference, the solutions $m$ corresponding to initial data $m^0 \in \boE^k(\R^N)$ are not necessarily continuous with values in $\boE^k(\R^N)$, but they remain bounded with values in this set. Continuity is recovered with a loss of one derivative, that is in $\boE^{k - 1}(\R^N)$, and the flow map is then locally Lipschitz continuous. By standard interpolation, the solutions are actually continuous with values in the fractional sets
$$\boE^s(\R^N) := \big\{ v \in \boE(\R^N) : \nabla v \in H^{s - 1}(\R^N) \big\},$$
as soon as $1 \leq s < k$.

More precisely, the construction of the solution $m$ in Theorem~\ref{thm:LL-Cauchy} is based on the strategy developed by Sulem, Sulem and Bardos~\cite{SulSuBa1} in the context of the Schr\"odinger map equation. The first step is to compute a priori energy estimates. Given a fixed positive number $T$ and a smooth solution $m : \R^N \times [0, T] \to \S^2$ to the Landau-Lifshitz equation, we define an energy of order $k \geq 2$ as
\begin{equation}
\label{def:E-LL-k}
\begin{split}
E_{\rm LL}^k(t) := \frac{1}{2} \sum_{|\alpha| = k - 2} \int_{\R^N} \Big( |\partial_t \partial_x^\alpha m|^2 + |\Delta \partial_x^\alpha m|^2 & + (\lambda_1 + \lambda_3) \big( |\nabla \partial_x^\alpha m_1|^2 + |\nabla \partial_x^\alpha m_3|^2 \big)\\
& + \lambda_1 \lambda_3 \big( |\partial_x^\alpha m_1|^2 + |\partial_x^\alpha m_3|^2 \big) \Big)(x, t) \, dx,
\end{split}
\end{equation}
for any $t \in [0, T]$. Here as in the sequel, we set $\partial_x^\alpha := \partial_{x_1}^{\alpha_1} \ldots \partial_{x_N}^{\alpha_N}$ for any $\alpha \in \N^N$. We can differentiate this quantity so as to obtain the following energy estimates.

\begin{prop}
\label{prop:LL-energy-estimate}
Let $\lambda_1$ and $\lambda_3$ be fixed non-negative numbers, and $k \in \N$, with $k > 1 + N/2$. Assume that $m$ is a solution to~\eqref{LL}, which lies in $\boC^0([0, T], \boE^{k + 2}(\R^N))$, with $\partial_t m \in \boC^0([0, T], H^k(\R^N))$.

$(i)$ The Landau-Lifshitz energy is well-defined and conserved along flow, that is
$$E_{\rm LL}^1(t) := E_{\rm LL} \big( m(\cdot, t) \big) = E_{\rm LL}^1(0),$$
for any $t \in [0, T]$.

$(ii)$ Given any integer $2 \leq \ell \leq k$, the energies $E_{\rm LL}^\ell$ are of class $\boC^1$ on $[0, T]$, and there exists a positive number $C_k$, depending only on $k$, such that their derivatives satisfy
\begin{equation}
\label{eq:energy-estimate-LL}
\big[ E_{\rm LL}^\ell \big]'(t) \leq C_k \big( 1 + \| m_1(\cdot, t) \|_{L^\infty}^2 + \| m_3(\cdot, t) \|_{L^\infty}^2 + \| \nabla m(\cdot, t) \|_{L^\infty}^2 \big) \, \Sigma_{\rm LL}^\ell(t),
\end{equation}
for any $t \in [0, T]$. Here, we have set $\Sigma_{\rm LL}^\ell := \sum_{j = 1}^\ell E_{\rm LL}^j$.
\end{prop}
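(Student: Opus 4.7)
For part (i), I would differentiate $E_{\mathrm{LL}}(m(\cdot,t))$ under the integral sign, which is legitimate under the assumed regularity ($m \in \boC^0([0,T], \boE^{k+2}(\R^N))$, $\partial_t m \in \boC^0([0,T], H^k(\R^N))$ with $k > 1 + N/2$), integrate by parts to obtain
$$[E_{\mathrm{LL}}^1]'(t) = -\int_{\R^N} \partial_t m \cdot \bigl(\Delta m - J(m)\bigr)\,dx,$$
and observe that~\eqref{LL} forces $\partial_t m = -m \times (\Delta m - J(m))$ to be pointwise orthogonal to $\Delta m - J(m)$, so the integrand vanishes identically.

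For part (ii), fix $2 \leq \ell \leq k$ and a multi-index $\alpha$ with $|\alpha|=\ell-2$, and set $u := \partial_x^\alpha m$. Differentiating each of the four densities in $E_{\mathrm{LL}}^\ell$ in time and performing two integrations by parts collects all contributions into
$$[E_{\mathrm{LL}}^\ell]'(t) = \sum_{|\alpha|=\ell-2} \int_{\R^N} \partial_t u \cdot \bigl(\partial_{tt} u + \boL u\bigr)\,dx, \qquad \boL v := \Delta^2 v - (\lambda_1+\lambda_3)\Delta\bigl(v_1 e_1 + v_3 e_3\bigr) + \lambda_1\lambda_3\,\bigl(v_1 e_1 + v_3 e_3\bigr),$$
whose action on the components $v_1$ and $v_3$ factorizes as $(-\Delta + \lambda_1)(-\Delta + \lambda_3)$. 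The algebraic content of the symmetrization announced in~\eqref{def:E-LL-k} is that iterating~\eqref{LL} twice in time produces exactly this operator at top order: differentiating $\partial_t m = -m \times (\Delta m - J m)$ once more in $t$, substituting~\eqref{LL} back into the result, and expanding the resulting double cross products via $a \times (b \times c) = (a\cdot c)b - (a\cdot b)c$ together with the pointwise identities $|m|^2 \equiv 1$, $m \cdot \nabla m \equiv 0$ and $m \cdot \Delta m = -|\nabla m|^2$, one obtains $\partial_{tt} m + \boL m = \boN(m)$, where $\boN(m)$ is a polynomial in $m$, $\nabla m$ and $\Delta m$ (with coefficients depending only on $\lambda_1$ and $\lambda_3$) containing no third- or fourth-order spatial derivative. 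Because $\boL$ has constant coefficients, $\partial_x^\alpha$ commutes with it, so that $\partial_{tt} u + \boL u = \partial_x^\alpha \boN(m)$.

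It then remains to bound $\int \partial_t u \cdot \partial_x^\alpha \boN(m)$ by the right-hand side of~\eqref{eq:energy-estimate-LL}. Each summand of $\boN(m)$ is a product of at most four factors chosen among $m, \nabla m, \Delta m$; applying the Leibniz rule for $\partial_x^\alpha$ and performing one further integration by parts when needed (to avoid an uncontrolled $L^\infty$-norm of $\Delta m$) leaves products of the same kind of factors of total differential order at most $\ell + 1$. Standard Moser / Kato--Ponce product estimates together with the Sobolev embedding $H^{k-1}(\R^N) \hookrightarrow L^\infty(\R^N)$ valid since $k > 1 + N/2$, and the fact that $|m| \equiv 1$, then bound the $L^2$-norm of each such product by $C_k\bigl(1 + \|m_1\|_{L^\infty} + \|m_3\|_{L^\infty} + \|\nabla m\|_{L^\infty}\bigr)\bigl(\Sigma_{\mathrm{LL}}^\ell\bigr)^{1/2}$; combining with $\|\partial_t u\|_{L^2} \leq (2\Sigma_{\mathrm{LL}}^\ell)^{1/2}$ and squaring the $L^\infty$-factors according to the form of the right-hand side of~\eqref{eq:energy-estimate-LL} yields the claim. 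The $\boC^1$-smoothness of $E_{\mathrm{LL}}^\ell$ follows immediately from the continuity in $t$ of each of the factors involved. The principal obstacle I foresee is the purely algebraic verification of the identity $\partial_{tt} m + \boL m = \boN(m)$: the exact cancellations among $(m \times H)\times H$, $m \times \Delta(m \times H)$ and $m \times J(m \times H)$ must reproduce the precise coefficients $\lambda_1 + \lambda_3$ and $\lambda_1\lambda_3$, and care must be taken to reorganise the cubic $|H|^2 m$-type terms so that no $L^\infty$-norm of $\Delta m$ is ever needed in the final Moser bound.
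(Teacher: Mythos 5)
Your outline of part (i) and the structural reduction at the start of part (ii) — differentiating $E_{\rm LL}^\ell$, integrating by parts, and landing on $\int \langle \partial_t \partial_x^\alpha m,\, \partial_x^\alpha(\partial_{tt} m + \boL m)\rangle$ with $\boL$ factoring as $(-\Delta+\lambda_1)(-\Delta+\lambda_3)$ on the first and third components — agree exactly with the paper, as does the plan to invoke the second-order evolution equation $\partial_{tt}m + \boL m = \boN(m)$. But the proposal then breaks down on a specific and load-bearing claim: that $\boN(m)$ ``is a polynomial in $m$, $\nabla m$ and $\Delta m$\ldots containing no third- or fourth-order spatial derivative.'' This is false. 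Working out $m \times \Delta(m\times(\Delta m - Jm))$, the piece $m\times(m\times\Delta^2 m)$ produces (after $|m|=1$ and its differentiated consequences) a genuinely third-order block of the schematic form $\sum_i(\partial_i m\cdot\partial_i\Delta m)\,m$; in the paper's notation this is precisely the term $-2\sum_{i,j}\partial_{ij}\bigl(\langle\partial_i m,\partial_j m\rangle\bigr)\,m$ singled out in~\eqref{arizona}. Consequently $\partial_x^\alpha\boN(m)$ contains spatial derivatives of $m$ of order $|\alpha|+3=\ell+1$, which cannot be put in $L^2$ by $\Sigma_{\rm LL}^\ell$; and a single integration by parts, while reducing that order by one, transfers the extra derivative onto $\partial_t\partial_x^\alpha m$, yielding $\langle\partial_t\partial_x^{\alpha}\partial_i m, m\rangle$ with $\ell-1$ spatial derivatives of $\partial_t m$ — one more than $\Sigma_{\rm LL}^\ell$ controls. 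Your ``one further integration by parts to avoid an $L^\infty$-norm of $\Delta m$'' neither targets nor resolves this; at that point the estimate simply does not close by Moser bounds alone.

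The missing ingredient is the cancellation the paper exploits after isolating the offending term: re-substitute the first-order equation~\eqref{LL} into $\partial_t\partial_x^{\alpha^*} m$ (with $\partial_x^{\alpha^*}=\partial_x^\alpha\partial_i$), expand by Leibniz, and observe that the $\beta=0$ term $\langle m\times\partial_x^{\alpha^*}(\Delta m - Jm),\, m\rangle$ vanishes identically because $\langle m\times v, m\rangle\equiv 0$. This kills the only uncontrollable top-order contribution, and all the surviving $\beta\neq 0$ terms are of order $\leq\ell$ and hence amenable to the Moser and Sobolev bounds you invoke. Without explicitly identifying this orthogonality cancellation — which is the real reason the estimate closes with only $\|\nabla m\|_{L^\infty}$ and $\|m_1\|_{L^\infty},\|m_3\|_{L^\infty}$ on the right-hand side — the argument as written is incomplete. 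The secondary concern you flag, avoiding $\|\Delta m\|_{L^\infty}$ (which indeed is not controlled for $N/2+1<k\leq N/2+2$), is legitimate but is handled within the same decomposition~\eqref{arizona}--\eqref{connecticut}; it is not a substitute for the cancellation above.
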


We next discretize the equation by using a finite-difference scheme. The a priori bounds remain available in this discretized setting. We then apply standard weak compactness and local strong compactness results in order to construct local weak solutions, which satisfy statement $(i)$ in Theorem~\ref{thm:LL-Cauchy}. Applying the Gronwall lemma to the inequalities in~\eqref{eq:energy-estimate-LL} prevents a possible blow-up when the condition in~\eqref{eq:cond-Tmax-LL} is not satisfied.

Finally, we establish uniqueness, as well as continuity with respect to the initial datum, by computing energy estimates for the difference of two solutions. More precisely, we show

\begin{prop}
\label{prop:LL-diff-control}
Let $\lambda_1$ and $\lambda_3$ be non-negative numbers, and $k \in \N$, with $k > N/2 + 1$. Consider two solutions $m$ and $\tilde{m}$ to~\eqref{LL}, which lie in $\boC^0([0, T], \boE^{k + 1}(\R^N))$, with $(\partial_t m, \partial_t \tilde{m}) \in \boC^0([0, T], H^{k - 1}(\R^N))^2$, and set $u := \tilde{m} - m$ and $v := (\tilde{m} + m)/2$.

$(i)$ The function
\begin{equation}
\label{def:gE0}
\gE_{\rm LL}^0(t) := \frac{1}{2} \int_{\R^N} \big| u(x, t) - u_2^0(x) \, e_2 \big|^2 \, dx,
\end{equation}
is of class $\boC^1$ on $[0, T]$, and there exists a positive number $C$ such that
\begin{equation}
\label{eq:control-diff-0-LL}
\begin{split}
\big[ & \gE_{\rm LL}^0 \big]'(t) \leq C \big( 1 + \| \nabla \tilde{m}(\cdot, t) \|_{L^2} + \| \nabla m(\cdot, t) \|_{L^2} + \| \tilde{m}_1(\cdot, t) \|_{L^2} + \| m_1(\cdot, t) \|_{L^2}\\
& + \| \tilde{m}_3(\cdot, t) \|_{L^2} + \| m_3(\cdot, t) \|_{L^2} \big) \, \big( \| u(\cdot, t) - u_2^0 \, e_2 \|_{L^2}^2 + \| u(\cdot, t) \|_{L^\infty}^2 + \| \nabla u(\cdot, t) \|_{L^2}^2 + \| \nabla u_2^0 \|_{L^2}^2 \big),
\end{split}
\end{equation}
for any $t \in [0, T]$.

$(ii)$ The function
$$\gE_{\rm LL}^1(t) := \frac{1}{2} \int_{\R^N} \big( |\nabla u|^2 + |u \times \nabla v + v \times \nabla u|^2 \big)(x, t) \, dx,$$
is of class $\boC^1$ on $[0, T]$, and there exists a positive number $C$ such that
\begin{equation}
\label{eq:control-diff-1-LL}
\begin{split}
\big[ \gE_{\rm LL}^1 \big]'(t) \leq C & \big( 1 + \| \nabla m(\cdot, t) \|_{L^\infty}^2 + \| \nabla \tilde{m}(\cdot, t) \|_{L^\infty}^2 \big) \, \big( \| u (\cdot, t) \|_{L^\infty}^2 + \| \nabla u (\cdot, t) \|_{L^2}^2 \big) \times\\
& \times \big( 1 + \| \nabla m(\cdot, t) \|_{L^\infty} + \| \nabla \tilde{m}(\cdot, t) \|_{L^\infty} + \| \nabla m(\cdot, t) \|_{H^1} + \| \nabla \tilde{m}(\cdot, t) \|_{H^1} \big).
\end{split}
\end{equation}

$(iii)$ Let $2 \leq \ell \leq k - 1$. The function
\begin{align*}
\gE_{\rm LL}^\ell(t) := \frac{1}{2} \sum_{|\alpha| = \ell - 2} \int_{\R^N} \Big( |\partial_t \partial_x^\alpha u|^2 + |\Delta \partial_x^\alpha u|^2 + (\lambda_1 & + \lambda_3) \big( |\nabla \partial_x^\alpha u_1|^2 + |\nabla \partial_x^\alpha u_3|^2 \big)\\
& + \lambda_1 \lambda_3 \big( |\partial_x^\alpha u_1|^2 + |\partial_x^\alpha u_3|^2 \big) \Big)(x, t) \, dx,
\end{align*}
is of class $\boC^1$ on $[0, T]$, and there exists a positive number $C_k$, depending only on $k$, such that
\begin{equation}
\label{eq:control-diff-l-LL}
\begin{split}
\big[ & \gE_{\rm LL}^\ell \big]'(t) \leq C_k \Big( 1 + \| \nabla m(\cdot, t) \|_{H^\ell}^2 + \| \nabla \tilde{m}(\cdot, t) \|_{H^\ell}^2 + \| \nabla m(\cdot, t) \|_{L^\infty}^2 + \| \nabla \tilde{m}(\cdot, t) \|_{L^\infty}^2\\
& + \delta_{\ell = 2} \big( \| \tilde{m}_1(\cdot, t) \|_{L^2} + \| m_1(\cdot, t) \|_{L^2} + \| \tilde{m}_3(\cdot, t) \|_{L^2} + \| m_3(\cdot, t) \|_{L^2} \big) \Big) \, \big( \gS_{\rm LL}^\ell(t) + \| u (\cdot, t) \|_{L^\infty}^2 \big).
\end{split}
\end{equation}
Here, we have set $\gS_{\rm LL}^\ell := \sum_{j = 0}^\ell \gE_{\rm LL}^j$.
\end{prop}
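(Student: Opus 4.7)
The plan is to write the equation for the difference $u := \tilde m - m$ and compute the time derivative of each energy, exploiting the cross-product structure to obtain enough cancellations. Setting $v := (\tilde m + m)/2$, we have $\tilde m = v + u/2$ and $m = v - u/2$, and subtracting the Landau--Lifshitz equations for $m$ and $\tilde m$ yields, after the symmetric terms cancel,
$$\partial_t u + v \times \Delta u + u \times \Delta v - v \times J(u) - u \times J(v) = 0. \qquad (\ast)$$
This identity is the starting point of every estimate.

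For $(i)$, I would differentiate $\gE_{\rm LL}^0$ in time, substitute $(\ast)$ and use the scalar triple product to rewrite
$(u - u_2^0 e_2) \cdot (v \times \Delta u) = \Delta u \cdot ((u - u_2^0 e_2) \times v)$ and $(u - u_2^0 e_2) \cdot (u \times \Delta v) = -u_2^0\, \Delta v \cdot (e_2 \times u)$ (since $u \times u = 0$). One integration by parts transfers one derivative of the Laplacian to the test function, producing terms of the generic form $\nabla u \cdot (\nabla u \times v)$, $\nabla u \cdot (u \times \nabla v)$, $\nabla v \cdot (\nabla u_2^0 \times u)$ and $u_2^0\, \nabla v \cdot (e_2 \times \nabla u)$, all of which close by a H\"older estimate against the $L^2$ and $L^\infty$ factors appearing in~\eqref{eq:control-diff-0-LL}. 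The $J$-contributions are algebraic and handled directly.

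For $(ii)$, the crucial observation is that $u \times v = \tilde m \times m$, so $u \times \nabla v + v \times \nabla u = \nabla(\tilde m \times m)$. The energy is therefore a sum of two $\dot H^1$-type quantities, and differentiating each with $(\ast)$ (and its spatial derivative) produces two second-order-in-$\Delta u$ contributions with opposite signs that cancel: the term $-\int \nabla u \cdot (v \times \Delta \nabla u)$ generated by $\partial_t \tfrac12 \int |\nabla u|^2$ is absorbed by the symmetric term generated by $\partial_t \tfrac12 \int |\nabla(\tilde m \times m)|^2$. The remaining pieces all contain at most one derivative of $\Delta u$ paired with an $L^\infty$ factor of $u$ or $\nabla v$, so the tame estimate~\eqref{eq:control-diff-1-LL} follows.

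For $(iii)$, I would mimic the energy estimate~\eqref{eq:energy-estimate-LL} but applied to $u$ rather than $m$. Applying $\partial_x^\alpha$ with $|\alpha| = \ell - 2$ to $(\ast)$, the principal part reads
$\partial_x^\alpha \partial_t u + v \times \Delta \partial_x^\alpha u + u \times \Delta \partial_x^\alpha v = v \times J(\partial_x^\alpha u) + u \times J(\partial_x^\alpha v) + R_\alpha,$
where $R_\alpha$ collects Leibniz commutators controlled by Moser/Kato--Ponce tame estimates in $H^{\ell-1}$. Differentiating each of the four pieces of $\gE_{\rm LL}^\ell$ and summing, the top-order contributions cancel by the same antisymmetry that drove $(ii)$ (the couplings between $\partial_t \partial_x^\alpha u$ and $\Delta \partial_x^\alpha u$ provided by $(\ast)$ compensate, just as in the derivation of~\eqref{eq:energy-estimate-LL}), and the cross-terms involving $(\lambda_1 + \lambda_3)$ and $\lambda_1 \lambda_3$ absorb the lower-order anisotropy. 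I expect the hard part to be $(iii)$, specifically the bookkeeping of commutators so that each top-order factor $\partial_x^\beta u$ or $\partial_x^\beta v$ with $|\beta| \geq \ell$ is always paired with an $L^\infty$ factor, avoiding a loss of derivatives. The case $\ell = 2$ is the most delicate because the anisotropic $J$-terms do not yet carry an $H^1$ derivative to distribute; this is the origin of the extra $\delta_{\ell = 2}$ contribution of $\| \tilde m_1 \|_{L^2}$, $\| m_1 \|_{L^2}$, $\| \tilde m_3 \|_{L^2}$, $\| m_3 \|_{L^2}$ on the right-hand side of~\eqref{eq:control-diff-l-LL}.
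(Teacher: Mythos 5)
Your starting equation $(\ast)$ is correct, the triple-product treatment of $(i)$ is sound and essentially the paper's argument, and your reading of $(iii)$ — reduce to the second-order equation for $u$ as in the proof of~\eqref{eq:energy-estimate-LL}, use the Leibniz formula and Moser estimates, and exploit a $\beta = 0$ cancellation in the expansion of $\langle \partial_x^{\alpha^*}\big(v \times (\Delta u - J(u))\big), v\rangle$ to protect the top-order factor — is close enough in spirit, including the correct diagnosis of the $\delta_{\ell = 2}$ term.

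The flaw is in $(ii)$, and it is not cosmetic. It is true that $u \times v = \tilde m \times m$, but differentiating gives
\begin{equation*}
\nabla(\tilde m \times m) = \nabla(u \times v) = \nabla u \times v + u \times \nabla v = u \times \nabla v - v \times \nabla u,
\end{equation*}
with a \emph{minus} sign, whereas $\gE_{\rm LL}^1$ contains $u \times \nabla v + v \times \nabla u$. The correct identification is
\begin{equation*}
u \times \nabla v + v \times \nabla u = \tilde m \times \nabla \tilde m - m \times \nabla m,
\end{equation*}
i.e.\ the difference of the tangent fields $m \times \nabla m$, not a gradient. This matters, because the whole point of the variable $X_i := u \times \partial_i v + v \times \partial_i u$ is that, using $(\ast)$ together with the constraints $\langle u, v\rangle = 0$ and $|v|^2 + \tfrac14|u|^2 = 1$, one computes
\begin{equation*}
\partial_t X_i = \partial_i \Delta u + (\text{lower order in derivatives of } u),
\end{equation*}
and this $\partial_i \Delta u$ cancels exactly against the $-\partial_i \Delta u$ coming from $-\langle \partial_t u, \Delta u\rangle$ after integrating by parts, leaving only lower-order terms. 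The quantity $\nabla(\tilde m \times m)$ does not satisfy the same identity (the analogous computation produces $-(\tilde m \cdot m)\,\nabla\Delta u$ plus lower order, with the wrong sign and a variable coefficient), so the cancellation you invoke between $\partial_t \tfrac12\int|\nabla u|^2$ and $\partial_t\tfrac12\int|\nabla(\tilde m \times m)|^2$ does not apply to the actual energy $\gE_{\rm LL}^1$. You need to work with $u \times \nabla v + v \times \nabla u$ as such and derive the key identity for $\partial_t X_i$ from $(\ast)$ and the pointwise constraints, rather than trying to identify the quantity as a gradient.
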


When $\ell \geq 2$, the quantities $\gE_{\rm LL}^\ell$ in Proposition~\ref{prop:LL-diff-control} are anisotropic versions of the ones used in~\cite{SulSuBa1} for similar purposes. Their explicit form is related to the linear part of the second-order equation in~\eqref{eq:second-LL}. The quantity $\gE_{\rm LL}^0$ is tailored to close off the estimates.

The introduction of the quantity $\gE_{\rm LL}^1$ is of a different nature. The functions $\nabla u$ and $u \times \nabla v + v \times \nabla u$ in its definition appear as the good variables for performing hyperbolic estimates at an $H^1$-level. They provide a better symmetrization corresponding to a further cancellation of the higher order terms. Without any use of the Hasimoto transform, or of parallel transport, this makes possible a direct proof of local well-posedness at an $H^k$-level, with $k > N/2 + 1$ instead of $k > N/2 + 2$. We refer to the proof of Proposition~\ref{prop:LL-diff-control} in Subsection~\ref{sub:LL-diff-control} for more details.

With the proof of Theorem~\ref{thm:conv-SG} in mind, we now translate the analysis of the Cauchy problem for the Landau-Lifshitz equation into the hydrodynamical framework. We obtain the following local well-posedness result, which makes possible the analysis of the Sine-Gordon regime in Theorem~\ref{thm:conv-SG}.

\begin{cor}
\label{cor:HLL-Cauchy}
Let $\lambda_1$ and $\lambda_3$ be non-negative numbers, and $k \in \N$, with $k > N/2 + 1$. Given any pair $(u^0, \phi^0) \in \boN\boV^k(\R^N)$, there exists a positive number $T_{\max}$ and a unique solution $(u, \phi) : \R^N \times[0, T_{\max}) \to (- 1, 1) \times \R$ to~\eqref{HLL} with initial datum $(u^0, \phi^0)$, which satisfies the following statements.

$(i)$ The solution $(u, \phi)$ is in the space $L^\infty([0, T], \boN\boV^k(\R^N))$, while its time derivative $(\partial_t u, \partial_t \phi)$ is in $L^\infty([0, T], H^{k - 2}(\R^N)^2)$, for any number $0 < T < T_{\max}$.

$(ii)$ If the maximal time of existence $T_{\max}$ is finite, then
$$\int_0^{T_{\max}} \Big( \Big\| \frac{\nabla u(\cdot, t)}{(1 - u(\cdot, t)^2)^\frac{1}{2}} \Big\|_{L^\infty}^2 + \Big\| (1 - u(\cdot, t)^2)^\frac{1}{2} \nabla \phi(\cdot, t) \Big\|_{L^\infty}^2 \Big) \, dt = \infty, \quad {\rm or} \quad \lim_{t \to T_{\max}} \| u(\cdot, t) \|_{L^\infty} = 1.$$

$(iii)$ The flow map $(u^0, \phi^0) \mapsto (u, \phi)$ is well-defined, and locally Lipschitz continuous from $\boN\boV^k(\R^N)$ to $\boC^0([0, T], \boN\boV^{k - 1}(\R^N))$ for any number $0 < T < T_{\max}$.

$(iv)$ When $(u^0, \phi^0) \in \boN\boV^\ell(\R^N)$, with $\ell > k$, the solution $(u, \phi)$ lies in $L^\infty([0, T], \boN\boV^\ell(\R^N))$, with $(\partial_t u, \partial_t \phi) \in L^\infty([0, T], H^{\ell - 2}(\R^N)^2)$ for any number $0 < T < T_{\max}$.

$(v)$ The Landau-Lifshitz energy in~\eqref{eq:E-HLL} is conserved along the flow.
\end{cor}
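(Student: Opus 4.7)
My plan is to transfer the results of Theorem~\ref{thm:LL-Cauchy} through the hydrodynamical change of variables introduced just before~\eqref{HLL}. The central object is the map
$$M(u,\phi) := \big(\sqrt{1-u^2}\sin\phi,\,\sqrt{1-u^2}\cos\phi,\,u\big),$$
together with its inverse, which sends $m$ with $|m_3|<1$ to $(m_3,\phi)$, where $\phi$ is the angular lift of $\check m/|\check m| = \sin\phi+i\cos\phi$. My first task is to verify that $M$ is a locally Lipschitz bijection from $\boN\boV^k(\R^N)$ onto the open subset of $\boE^k(\R^N)$ defined by $|m_3|<1$. This reduces to Moser-type composition estimates applied to the smooth functions $v\mapsto\sqrt{1-v^2}$, $\sin$ and $\cos$, all of whose derivatives are uniformly controlled once $\|u\|_{L^\infty}<1$; the embedding $H^k\hookrightarrow L^\infty$ provided by $k>N/2+1$ is crucial here and also ensures $\|u^0\|_{L^\infty}<1$ from the pointwise condition in $\boN\boV^k$.

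With $M$ in hand, I would apply Theorem~\ref{thm:LL-Cauchy} to $m^0:=M(u^0,\phi^0)$ to obtain a Landau-Lifshitz solution $m\in L^\infty([0,T],\boE^k(\R^N))$ for every $T$ less than some maximal time $T_{\max}^{\rm LL}$. The continuity in $\boE^{k-1}$ from Theorem~\ref{thm:LL-Cauchy}(iii) together with $H^{k-1}\hookrightarrow L^\infty$ makes $t\mapsto\|m_3(\cdot,t)\|_{L^\infty}$ continuous, with $\|m_3(\cdot,0)\|_{L^\infty}<1$. I then define $T_{\max}$ as the largest time in $(0,T_{\max}^{\rm LL}]$ on which $\|m_3(\cdot,t)\|_{L^\infty}<1$. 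On $[0,T_{\max})$, the function $\check m$ does not vanish, so the simple connectedness of $\R^N$ yields a continuous lift $\phi$ of $\check m/|\check m|$, unique modulo $\pi\Z$ and normalized by $\phi(\cdot,0)=\phi^0$. Its spatial derivatives and its sine admit explicit expressions in $m$ and $1-m_3^2$; Moser estimates, combined with the uniform positive lower bound on $1-m_3^2$ on each $[0,T]\subset[0,T_{\max})$, give the $\boN\boV^k$-regularity. A direct Madelung-type computation then confirms that $(u,\phi):=(m_3,\phi)$ solves~\eqref{HLL}.

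Statements (i), (iv) and (v) follow by applying Moser composition estimates to $M^{-1}$ in order to translate the corresponding Landau-Lifshitz bounds into $\boN\boV^k$ or $\boN\boV^\ell$ bounds, with the energy identity~\eqref{eq:E-HLL} giving (v). Uniqueness and local Lipschitz dependence in (iii) come from pushing two candidate hydrodynamical solutions through $M$ to produce two Landau-Lifshitz solutions sharing the same initial datum, which must coincide by Theorem~\ref{thm:LL-Cauchy}(iii); the associated phases then share the same gradient and the same value at $t=0$, hence agree modulo $\pi\Z$. The loss of one derivative in (iii) mirrors the one in the Landau-Lifshitz statement and is precisely what makes nearby solutions remain uniformly in $\boN\boV^{k-1}(\R^N)$ on $[0,T]$.

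The hard part will be the blow-up criterion (ii), which is not a direct consequence of~\eqref{eq:cond-Tmax-LL} since $T_{\max}$ can be strictly smaller than $T_{\max}^{\rm LL}$ if $|u|<1$ degrades before $\nabla m$ blows up. I would handle this by a dichotomy. In the case $T_{\max}=T_{\max}^{\rm LL}$, the identity
$$|\nabla m|^2=\frac{|\nabla u|^2}{1-u^2}+(1-u^2)|\nabla\phi|^2,$$
obtained by differentiating $M$, controls the $L^\infty$-norm of $\nabla m$ in terms of the two hydrodynamical quantities appearing in (ii), so that~\eqref{eq:cond-Tmax-LL} rewrites as the first alternative. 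In the case $T_{\max}<T_{\max}^{\rm LL}$, the continuity and maximality of $T_{\max}$ force $\|u(\cdot,t)\|_{L^\infty}\to 1$ as $t\to T_{\max}$, yielding the second alternative.
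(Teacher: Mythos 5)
Your proposal follows essentially the same route as the paper: form $m^0$ by the inverse Madelung transform, solve~\eqref{LL} via Theorem~\ref{thm:LL-Cauchy}, lift back on the maximal interval where $|m_3|<1$ holds, and obtain the blow-up dichotomy by comparing that interval with the Landau-Lifshitz maximal time, using the pointwise identity $|\nabla m|^2 = |\nabla u|^2/(1-u^2) + (1-u^2)|\nabla\phi|^2$. The one step you treat too lightly is the continuity of the flow map in $\boC^0([0,T],\boN\boV^{k-1}(\R^N))$: it is not enough to note that the phases ``agree modulo $\pi\Z$''; one must check that convergence of $m_n\to m$ in $\boC^0([0,T],\boE^{k-1}(\R^N))$ actually yields convergence $\phi_n\to\phi$ with respect to the pseudometric $d^{k-1}_{\sin}$, i.e.\ convergence of $\sin(\phi_n-\phi)$ and $\nabla(\phi_n-\phi)$ in the appropriate Sobolev norms. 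This is where the paper invokes the elementary trigonometric identities such as $\nabla\phi=\cos\phi\,\nabla(\sin\phi)-\sin\phi\,\nabla(\cos\phi)$ and $\sin(\phi(t_2)-\phi(t_1))=\big(\sin\phi(t_2)-\sin\phi(t_1)\big)\cos\phi(t_1)+\sin\phi(t_1)\big(\cos\phi(t_1)-\cos\phi(t_2)\big)$, and separately tracks the integer multiple of $\pi$ by which the lifts of nearby data may shift. Your sketch would be complete once this translation into the $H^{k-1}_{\sin}$ metric is spelled out.
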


\begin{rem}
\label{rem:bounded-Hsink}
Here as in the sequel, the set $L^\infty([0, T], H_{\sin}^k(\R^N))$ is defined as
$$L^\infty \big( [0, T], H_{\sin}^k(\R^N) \big) := \Big\{ v \in L_{\rm loc}^1(\R^N \times [0, T], \R) : \sup_{0 \leq t \leq T} \| \sin(v(\cdot, t)) \|_{L^2} + \| \nabla v(\cdot, t) \|_{H^{k - 1}} < \infty \Big\},$$
for any integer $k \geq 1$ and any positive number $T$. This definition is consistent with the fact that a family $(v(\cdot, t))_{0 \leq t \leq T}$ of functions in $H_{\sin}^k(\R^N)$ (identified with the quotient group $H_{\sin}^k(\R^N)/\pi \Z$) is then bounded with respect to the distance $d_{\sin}^k$. In particular, the set $L^\infty([0, T], \boN\boV^k(\R^N))$ is given by
\begin{align*}
L^\infty \big( [0, T], \boN\boV^k(\R^N) \big) := \Big\{ & (u, \phi) \in L_{\rm loc}^1(\R^N \times [0, T], \R^2) : |u| < 1 \ {\rm on} \ \R^N \times [0, T]\\
& {\rm and} \ \sup_{0 \leq t \leq T} \| u(\cdot, t) \|_{H^{k - 1}} + \| \sin(\phi(\cdot, t)) \|_{L^2} + \| \nabla \phi(\cdot, t) \|_{H^{k - 1}} < \infty \Big\}.
\end{align*}
\end{rem}

The proof of Corollary~\ref{cor:HLL-Cauchy} is complicated by the metric structure corresponding to the set $H_{\sin}^k(\R^N)$. Establishing the continuity of the flow map with respect to the pseudometric distance $d_{\sin}^k$ is not so immediate. We by-pass this difficulty by using some simple trigonometric identities. We refer to Subsection~\ref{sub:Cauchy-smooth-HLL} below for more details.

Another difficulty lies in controlling the non-vanishing condition in~\eqref{eq:cond-u}. Due to the Sobolev embedding theorem, this can be done at an $H^k$-level, with $k > N/2 + 1$. However, this does not prevent a possible break-up of this condition in finite time. Statement $(ii)$ exactly expresses this simple fact. In the hydrodynamical setting, blow-up can originate from either blow-up in the original setting, or the break-up of the non-vanishing condition.

%%%%%%%%%%%%%%%%%%%%%%%%%%%%%%%%%%%%%%%%%%%%%%%%%%%%%%%%%%%%%
%%%%%%%%%%%%%%%%%%%%%%%%%%%%%%%%%%%%%%%%%%%%%%%%%%%%%%%%%%%%%
\subsection{Sketch of the proof of Theorem~\ref{thm:conv-SG}}
\label{sub:deriv-SG}
%%%%%%%%%%%%%%%%%%%%%%%%%%%%%%%%%%%%%%%%%%%%%%%%%%%%%%%%%%%%%
%%%%%%%%%%%%%%%%%%%%%%%%%%%%%%%%%%%%%%%%%%%%%%%%%%%%%%%%%%%%%

When $(U_\varepsilon^0, \Phi_\varepsilon^0)$ lies in $\boN\boV^{k + 2}(\R^N)$, we deduce from Corollary~\ref{cor:HLL-Cauchy} above the existence of a positive number $T_{\max}$, and a unique solution $(U_\varepsilon, \Phi_\varepsilon) \in \boC^0([0, T_{\max}), \boN\boV^{k + 1}(\R^N))$ to~\eqref{HLLeps} with initial datum $(U_\varepsilon^0, \Phi_\varepsilon^0)$. The maximal time of existence $T_{\max}$ a priori depends on the scaling parameter $\varepsilon$. The number $T_{\max}$ might become smaller and smaller in the limit $\varepsilon \to 0$, so that analyzing this limit would have no sense.

As a consequence, our first task in the proof of Theorem~\ref{thm:conv-SG} is to provide a control on $T_{\max}$. In view of the conditions in statement $(ii)$ of Corollary~\ref{cor:HLL-Cauchy}, this control can be derived from uniform bounds on the functions $U_\varepsilon$, $\nabla U_\varepsilon$ and $\nabla \Phi_\varepsilon$. Taking into account the Sobolev embedding theorem and the fact that $k > N/2 + 1$, we are left with the computations of energy estimates for the functions $U_\varepsilon$ and $\Phi_\varepsilon$, at least in the spaces $H^k(\R^N)$, respectively $H_{\sin}^k(\R^N)$. 

In this direction, we recall that the Landau-Lifshitz energy corresponding to the scaled hydrodynamical system~\eqref{HLLeps} writes as
$$E_\varepsilon(U_\varepsilon, \Phi_\varepsilon) = \frac{1}{2} \int_{\R^N} \Big( \varepsilon^2 \frac{|\nabla U_\varepsilon|^2}{1 - \varepsilon^2 U_\varepsilon^2} + U_\varepsilon^2 + (1 - \varepsilon^2 U_\varepsilon^2) |\nabla \Phi_\varepsilon|^2 + \sigma (1 - \varepsilon^2 U_\varepsilon^2) \sin^2(\Phi_\varepsilon) \Big).$$
Hence, it is natural to define an energy of order $k \in \N^*$ according to the formula
\begin{equation}
\label{E-k}
\begin{split}
E_\varepsilon^k(U_\varepsilon, \Phi_\varepsilon) := \frac{1}{2} \sum_{|\alpha| = k - 1} \int_{\R^N} \Big( & \varepsilon^2 \frac{|\nabla \partial_x^\alpha U_\varepsilon|^2}{1 - \varepsilon^2 U_\varepsilon^2} + |\partial_x^\alpha U_\varepsilon|^2 + (1 - \varepsilon^2 U_\varepsilon^2) |\nabla \partial_x^\alpha \Phi_\varepsilon|^2\\
& + \sigma (1 - \varepsilon^2 U_\varepsilon^2) |\partial_x^\alpha \sin(\Phi_\varepsilon)|^2 \Big).
\end{split}
\end{equation}
The factors $1 - \varepsilon^2 U_\varepsilon^2$ in this expression, as well as the non-quadratic term corresponding to the function $\sin(\Phi_\varepsilon)$, are of substantial importance. As for the energy $E_\varepsilon(U_\varepsilon, \Phi_\varepsilon)$, they provide a better symmetrization of the energy estimates corresponding to the quantities $E_\varepsilon^k(U_\varepsilon, \Phi_\varepsilon)$ by inducing cancellations in the higher order terms. More precisely, we have

\begin{prop}
\label{prop:estimate}
Let $\varepsilon$ be a fixed positive number, and $k \in \N$, with $k > N/2 + 1$. Consider a solution $(U_\varepsilon,\Phi_\varepsilon)$ to~\eqref{HLLeps}, with $(U_\varepsilon, \Phi_\varepsilon) \in \boC^0([0, T], \boN\boV^{k + 3}(\R^N))$ for a fixed positive number $T$, and assume that 
\begin{equation}
\label{borne-W}
\inf_{\R^N \times [0, T]} 1 - \varepsilon^2 U_\varepsilon^2 \geq \frac{1}{2}. 
\end{equation}
There exists a positive number $C$, depending only on $k$ and $N$, such that
\begin{equation}
\label{der-E-j}
\begin{split}
\big[ E_\varepsilon^\ell \big]'(t) \leq C \, \max \big\{ 1, \sigma^\frac{3}{2} \big\} \, \big( 1 & + \varepsilon^4 \big) \, \Big( \| \sin(\Phi_\varepsilon(\cdot, t)) \|_{L^\infty}^2 + \| U_\varepsilon(\cdot, t) \|_{L^\infty}^2 + \| \nabla \Phi_\varepsilon(\cdot, t) \|_{L^\infty}^2\\
& + \| \nabla U_\varepsilon(\cdot, t) \|_{L^\infty}^2 + \| d^2 \Phi_\varepsilon(\cdot, t) \|_{L^\infty}^2 + \varepsilon^2 \| d^2 U_\varepsilon(\cdot, t) \|_{L^\infty}^2\\
& + \varepsilon \, \| \nabla \Phi_\varepsilon(\cdot, t) \|_{L^\infty} \, \big( \| \nabla \Phi_\varepsilon(\cdot, t) \|_{L^\infty}^2 + \| \nabla U_\varepsilon(\cdot, t) \|_{L^\infty}^2 \big) \Big) \, \Sigma_\varepsilon^{k + 1}(t),
\end{split}
\end{equation}
for any $t \in [0, T]$ and any $2 \leq \ell \leq k + 1$. Here, we have set $\Sigma_\varepsilon^{k + 1} := \sum_{j = 1}^{k + 1} E_\varepsilon^j$.
\end{prop}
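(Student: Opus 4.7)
The plan is to compute $[E_\varepsilon^\ell]'(t)$ by direct differentiation, substitute for $\partial_t U_\varepsilon$ and $\partial_t \Phi_\varepsilon$ using~\eqref{HLLeps}, and then integrate by parts so that the highest-order terms cancel thanks to the symmetrized weights $w := 1 - \varepsilon^2 U_\varepsilon^2$, $1/w$ and the substitution of $\sin(\Phi_\varepsilon)$ for $\Phi_\varepsilon$ built into the definition~\eqref{E-k}. The assumption~\eqref{borne-W} guarantees that every division by $w$ is bounded (by $2$ and its powers), and the condition $k > N/2 + 1$ allows Sobolev embedding together with tame product (Moser) estimates to control all resulting commutator remainders by $\Sigma_\varepsilon^{k+1}$ times the $L^\infty$ quantities listed in~\eqref{der-E-j}.

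Fix a multi-index $\alpha$ with $|\alpha| = \ell-1$. Using $\partial_t w = -2\varepsilon^2 U_\varepsilon \partial_t U_\varepsilon$ and $\partial_t \sin(\Phi_\varepsilon) = \cos(\Phi_\varepsilon)\partial_t \Phi_\varepsilon$, I first split $[E_\varepsilon^\ell]'(t)$ into four ``weight-derivative'' integrals quadratic in $\partial_x^\alpha U_\varepsilon$, $\nabla\partial_x^\alpha U_\varepsilon$, $\nabla\partial_x^\alpha \Phi_\varepsilon$, $\partial_x^\alpha \sin(\Phi_\varepsilon)$ with coefficient proportional to $\varepsilon^2 U_\varepsilon\,\partial_t U_\varepsilon$ times an inverse power of $w$ (which are directly controlled by the $L^\infty$ norms via the expression of $\partial_t U_\varepsilon$), and four ``paired'' integrals, namely $2\int \partial_x^\alpha U_\varepsilon\cdot \partial_x^\alpha \partial_t U_\varepsilon$, $2\varepsilon^2\int w^{-1} \nabla\partial_x^\alpha U_\varepsilon\cdot \nabla\partial_x^\alpha \partial_t U_\varepsilon$, $2\int w\,\nabla\partial_x^\alpha \Phi_\varepsilon\cdot \nabla\partial_x^\alpha \partial_t \Phi_\varepsilon$ and $2\sigma\int w\,\partial_x^\alpha \sin(\Phi_\varepsilon)\,\partial_x^\alpha\bigl(\cos(\Phi_\varepsilon)\partial_t \Phi_\varepsilon\bigr)$. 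Commuting $\partial_x^\alpha$ through~\eqref{HLLeps} and keeping only the leading-order derivatives gives
\begin{align*}
\partial_x^\alpha \partial_t U_\varepsilon &= \div\bigl(w\,\nabla\partial_x^\alpha \Phi_\varepsilon\bigr) - \sigma w\cos(\Phi_\varepsilon)\,\partial_x^\alpha \sin(\Phi_\varepsilon) + R_U,\\
\partial_x^\alpha \partial_t \Phi_\varepsilon &= \partial_x^\alpha U_\varepsilon - \varepsilon^2 \div\bigl(\nabla\partial_x^\alpha U_\varepsilon / w\bigr) + R_\Phi,
\end{align*}
where $R_U$ and $R_\Phi$ gather all commutators (arising when $\partial_x^\alpha$ falls on $w$, $\sin(\Phi_\varepsilon)$ or $\cos(\Phi_\varepsilon)$) and the lower-order nonlinear terms such as $\varepsilon^2 U_\varepsilon|\nabla\Phi_\varepsilon|^2$ and $\varepsilon^4 U_\varepsilon|\nabla U_\varepsilon|^2/w^2$.

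Substituting and integrating by parts once reveals three principal cancellations, each of which is built into the choice of weights in~\eqref{E-k}. First, $2\int \partial_x^\alpha U_\varepsilon\,\div(w\,\nabla\partial_x^\alpha \Phi_\varepsilon) = -2\int w\,\nabla\partial_x^\alpha U_\varepsilon \cdot \nabla\partial_x^\alpha \Phi_\varepsilon$ cancels the contribution $+2\int w\,\nabla\partial_x^\alpha \Phi_\varepsilon \cdot \nabla\partial_x^\alpha U_\varepsilon$ coming from the $\partial_x^\alpha U_\varepsilon$ piece of $\partial_x^\alpha \partial_t \Phi_\varepsilon$ paired with the weight $w|\nabla\partial_x^\alpha \Phi_\varepsilon|^2$. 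Second, the two $\varepsilon^2$ contributions $2\varepsilon^2\int w^{-1}\nabla\partial_x^\alpha U_\varepsilon \cdot \nabla\div(w\,\nabla\partial_x^\alpha \Phi_\varepsilon)$ and $-2\varepsilon^2\int w\,\nabla\partial_x^\alpha \Phi_\varepsilon \cdot \nabla\div(\nabla\partial_x^\alpha U_\varepsilon/w)$ both reduce, after one further integration by parts, to $\mp 2\varepsilon^2\int \div(\nabla\partial_x^\alpha U_\varepsilon/w)\,\div(w\,\nabla\partial_x^\alpha \Phi_\varepsilon)$ and cancel. Third, the leading sine pieces $-2\sigma\int \partial_x^\alpha U_\varepsilon\,w\cos(\Phi_\varepsilon)\,\partial_x^\alpha \sin(\Phi_\varepsilon)$ (from the $\sin(2\Phi_\varepsilon) = 2\sin(\Phi_\varepsilon)\cos(\Phi_\varepsilon)$ term in $\partial_t U_\varepsilon$) and $+2\sigma\int w\cos(\Phi_\varepsilon)\,\partial_x^\alpha \sin(\Phi_\varepsilon)\,\partial_x^\alpha U_\varepsilon$ (from the $\partial_x^\alpha U_\varepsilon$ piece of $\partial_x^\alpha \partial_t \Phi_\varepsilon$ paired with $\sigma w|\partial_x^\alpha \sin(\Phi_\varepsilon)|^2$) cancel exactly.

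What remains is a sum of commutator and lower-order integrals, each of the schematic form $\int F\cdot G$ where $F$ and $G$ are products of factors drawn from $U_\varepsilon$, $\nabla U_\varepsilon$, $d^2 U_\varepsilon$, $\nabla \Phi_\varepsilon$, $d^2 \Phi_\varepsilon$, $\sin(\Phi_\varepsilon)$ and $\cos(\Phi_\varepsilon)$ together with their derivatives of total order $\leq \ell$, and multiplied by bounded inverse powers of $w$. Using the standard tame estimate $\|fg\|_{H^s} \leq C\bigl(\|f\|_{L^\infty}\|g\|_{H^s} + \|f\|_{H^s}\|g\|_{L^\infty}\bigr)$ valid for $s\geq 0$, a Faà di Bruno bound that controls $\|\nabla^j \sin(\Phi_\varepsilon)\|_{L^2}$ and $\|\nabla^j \cos(\Phi_\varepsilon)\|_{L^2}$ for $j \geq 1$ by $\|\nabla \Phi_\varepsilon\|_{H^{j-1}}$ and powers of $\|\nabla \Phi_\varepsilon\|_{L^\infty}$, and Cauchy--Schwarz, each such integral is bounded by the product of $L^\infty$ norms listed in~\eqref{der-E-j} and $\Sigma_\varepsilon^{k+1}(t)$. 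Careful tracking of the powers of $\varepsilon$ produces the prefactor $(1+\varepsilon^4)$, and summing over $|\alpha| = \ell-1$ yields~\eqref{der-E-j}. The main obstacle is the simultaneous bookkeeping of these three cancellations: without them the high-order terms in $\partial_x^\alpha \partial_t \Phi_\varepsilon$ would only be controllable in $L^2$ with a loss, and one would pay an inverse power of $\varepsilon$; recognizing the symmetry built into~\eqref{E-k} is what makes the estimate uniform in $\varepsilon$.
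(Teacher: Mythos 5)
Your overall strategy matches the paper's: differentiate $E_\varepsilon^\ell$, substitute~\eqref{HLLeps}, integrate by parts to find cancellations built into the weights $\rho_\varepsilon = 1 - \varepsilon^2 U_\varepsilon^2$ and $\sin(\Phi_\varepsilon)$, and control the rest by Moser estimates. Your three stated cancellations are real and correspond to the paper's $\boH_1 = 0$, $\boH_2 = 0$, and $\boI_{1,3} + \boI_{4,1}$.

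However, there is a genuine gap. You omit a fourth essential pairing — the one the paper labels $\boI_{3,3} + \boI_{4,2}$. The term you put in the last ``paired'' integral, namely $2\sigma\int w\,\partial_x^\alpha\sin(\Phi_\varepsilon)\,\partial_x^\alpha\bigl(\cos(\Phi_\varepsilon)\,\partial_t\Phi_\varepsilon\bigr)$, still contains, once you substitute the $-\varepsilon^2\operatorname{div}(\nabla U_\varepsilon/\rho_\varepsilon)$ part of $\partial_t\Phi_\varepsilon$, a contribution of the type $\sigma\varepsilon^2\int \rho_\varepsilon\,\partial_x^\alpha\sin(\Phi_\varepsilon)\cdot\cos(\Phi_\varepsilon)\,\rho_\varepsilon^{-1}\partial_x^\alpha\Delta U_\varepsilon$ with $|\alpha| = \ell-1$. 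For $\ell = k+1$ this requires $\|D^{k+2}U_\varepsilon\|_{L^2}$, which is \emph{not} controlled by $\Sigma_\varepsilon^{k+1}$ (the highest that appears there is $\varepsilon\|D^{k+1}U_\varepsilon\|_{L^2}$). Neither Cauchy--Schwarz nor any tame estimate saves this; one must pair it with the analogous piece of $\boI_3$ (from $\partial_t U_\varepsilon$ acting on the $\varepsilon^2\rho_\varepsilon^{-1}|\nabla\partial_x^\alpha U_\varepsilon|^2$ energy block) and integrate by parts, as the paper does. Without that cancellation, the estimate fails outright at the top level $\ell = k+1$.

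A second, related issue: your claim that ``what remains is a sum of commutator and lower-order integrals'' controllable by Moser plus Cauchy--Schwarz is too optimistic. Several of the surviving contributions — those the paper collects into $\boH_3,\dots,\boH_7$ — still carry derivatives of total order $2\ell+1$ concentrated on one factor. For instance, the piece of $-\varepsilon^2\operatorname{div}(w\nabla\partial_x^\alpha\Phi_\varepsilon)\cdot U_\varepsilon\nabla\Phi_\varepsilon\cdot\partial_x^\alpha\nabla\Phi_\varepsilon$ (coming from $\varepsilon^2 U_\varepsilon|\nabla\Phi_\varepsilon|^2$) involves $D^{\ell+1}\Phi_\varepsilon\cdot D^\ell\Phi_\varepsilon$, and for $\ell = k+1$ the energy controls $\|D^{k+1}\Phi_\varepsilon\|_{L^2}$ but not $\|D^{k+2}\Phi_\varepsilon\|_{L^2}$. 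The only way out is to recognize the structure $\nabla\Phi_\varepsilon\cdot\nabla\bigl(|\partial_x^\alpha\Phi_\varepsilon|^2\bigr)$ (up to a controllable commutator) and integrate by parts once more so that the derivative lands on the low-order prefactor, yielding $\int \operatorname{div}(\rho_\varepsilon U_\varepsilon\nabla\Phi_\varepsilon)\,|\partial_x^\alpha\Phi_\varepsilon|^2$, which is harmless. Similar maneuvers are required for the terms involving $\varepsilon^4 U_\varepsilon|\nabla U_\varepsilon|^2/\rho_\varepsilon^2$ and the derivatives of $\rho_\varepsilon$ appearing at order $\ell$. You should flag these and explain that they are not pure commutator/Moser bounds but require additional integrations by parts exploiting the antisymmetric (transport-like) structure of the highest-order quadratic pieces.
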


As a first consequence of Proposition~\ref{prop:estimate}, the maximal time $T_{\max}$ is at least of order $1/(\| U_\varepsilon^0 \|_{H^k} + \varepsilon \| \nabla U_\varepsilon^0 \|_{H^k} + \| \nabla \Phi_\varepsilon^0 \|_{H^k} + \| \sin(\Phi_\varepsilon^0) \|_{H^k})^2$, when the initial conditions $(U_\varepsilon^0, \Phi_\varepsilon^0)$ satisfy the inequality in~\eqref{cond:key}. In particular, the dependence of $T_{\max}$ on the small parameter $\varepsilon$ only results from the possible dependence of the pair $(U_\varepsilon^0, \Phi_\varepsilon^0)$ on $\varepsilon$. Choosing suitably these initial conditions, we can assume without loss of generality that $T_{\max}$ is uniformly bounded from below when $\varepsilon$ tends to $0$, so that analyzing this limit makes sense. More precisely, we deduce from Proposition~\ref{prop:estimate} the following results.

\begin{cor}
\label{cor:T}
Let $\varepsilon$ be a fixed positive number, and $k \in \N$, with $k > N/2 + 1$. There exists a positive number $C_*$, depending only on $\sigma$, $k$ and $N$, such that if an initial datum $(U_\varepsilon^0, \Phi_\varepsilon^0) \in \boN\boV^{k + 2}(\R^N)$ satisfies 
\begin{equation}
\label{CI-petit}
C_* \varepsilon \Big( \big\| U_\varepsilon^0 \big\|_{H^k} + \varepsilon \big\| \nabla U_\varepsilon^0 \big\|_{H^k} + \big\| \nabla \Phi_\varepsilon^0 \big\|_{H^k} + \big\| \sin(\Phi_\varepsilon^0) \big\|_{H^k} \Big) \leq 1, 
\end{equation}
then there exists a positive time
$$T_\varepsilon \geq \frac{1}{C_* \big( \| U_\varepsilon^0 \|_{H^k} + \varepsilon \| \nabla U_\varepsilon^0 \|_{H^k} + \| \nabla \Phi_\varepsilon^0 \|_{H^k} + \| \sin(\Phi_\varepsilon^0) \|_{H^k} \big)^2},$$
such that the unique solution $(U_\varepsilon, \Phi_\varepsilon)$ to~\eqref{HLLeps} with initial condition $(U_\varepsilon^0, \Phi_\varepsilon^0)$ satisfies the uniform bound
$$\varepsilon \big\| U_\varepsilon(\cdot, t) \big\|_{L^\infty} \leq \frac{1}{\sqrt{2}},$$
as well as the energy estimate
\begin{equation}
\label{borne-W-bis}
\begin{split}
\big\| U_\varepsilon(\cdot, t) \big\|_{H^k} + \varepsilon \big\| \nabla U_\varepsilon(\cdot, t) \big\|_{H^k} + & \big\| \nabla \Phi_\varepsilon(\cdot, t) \big\|_{H^k} + \big\| \sin(\Phi_\varepsilon(\cdot, t)) \big\|_{H^k}\\
\leq C_* \Big( & \big\| U_\varepsilon^0 \big\|_{H^k} + \varepsilon \big\| \nabla U_\varepsilon^0 \big\|_{H^k} + \big\| \nabla \Phi_\varepsilon^0 \big\|_{H^k} + \big\| \sin(\Phi_\varepsilon^0) \big\|_{H^k} \Big),
\end{split}
\end{equation}
for any $0 \leq t \leq T_\varepsilon$.
\end{cor}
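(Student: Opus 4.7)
The plan is to combine the local well-posedness from Corollary~\ref{cor:HLL-Cauchy} with the energy inequality of Proposition~\ref{prop:estimate} in a continuity/bootstrap argument, the central issue being to maintain the non-vanishing condition $1 - \varepsilon^2 U_\varepsilon^2 \geq 1/2$ over a sufficiently long time window. Given $(U_\varepsilon^0, \Phi_\varepsilon^0) \in \boN\boV^{k+2}(\R^N)$, Corollary~\ref{cor:HLL-Cauchy}$(iv)$ produces a unique maximal solution $(U_\varepsilon, \Phi_\varepsilon)$ lying in $L^\infty([0,T], \boN\boV^{k+2}(\R^N))$ for every $T < T_{\max}$. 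To justify the formal manipulations behind Proposition~\ref{prop:estimate}, I would first mollify $(U_\varepsilon^0, \Phi_\varepsilon^0)$ to data in $\boN\boV^{k+3}(\R^N)$, derive the bound for the corresponding smoother solutions, and then pass to the limit by invoking the local Lipschitz continuity of the flow at the level $\boN\boV^{k+1}(\R^N)$.

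Set
\[
\mathcal{N}_\varepsilon(t)^2 := \|U_\varepsilon(\cdot,t)\|_{H^k}^2 + \varepsilon^2 \|\nabla U_\varepsilon(\cdot,t)\|_{H^k}^2 + \|\nabla \Phi_\varepsilon(\cdot,t)\|_{H^k}^2 + \|\sin(\Phi_\varepsilon(\cdot,t))\|_{H^k}^2,
\]
so $\mathcal{N}_\varepsilon(0) \leq \boK_\varepsilon^0$. As long as $\varepsilon \|U_\varepsilon(\cdot,t)\|_{L^\infty} \leq 1/\sqrt{2}$, the weights $1 - \varepsilon^2 U_\varepsilon^2$ lie in $[1/2, 1]$, which gives $\Sigma_\varepsilon^{k+1}(t) \asymp \mathcal{N}_\varepsilon(t)^2$ with constants depending only on $\sigma$, $k$, $N$. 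Define the bootstrap time
\[
T^* := \sup \Big\{ t \in [0, T_{\max}) : \varepsilon \|U_\varepsilon(\cdot,s)\|_{L^\infty} \leq \tfrac{1}{\sqrt{2}} \text{ and } \mathcal{N}_\varepsilon(s) \leq 2 \mathcal{N}_\varepsilon(0) \text{ for all } s \in [0,t] \Big\}.
\]
By~\eqref{CI-petit} and the Sobolev embedding $H^k(\R^N) \hookrightarrow L^\infty(\R^N)$ (valid since $k > N/2$), both inequalities hold strictly at $t = 0$, so $T^* > 0$.

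On $[0, T^*]$ the hypothesis~\eqref{borne-W} of Proposition~\ref{prop:estimate} is satisfied. Using $k > N/2 + 1$, hence $H^{k-1}(\R^N) \hookrightarrow L^\infty(\R^N)$, every $L^\infty$-quantity on the right-hand side of~\eqref{der-E-j} is dominated by $\mathcal{N}_\varepsilon$: the key bounds are $\|\sin(\Phi_\varepsilon)\|_{L^\infty}^2$, $\|U_\varepsilon\|_{L^\infty}^2$, $\|\nabla \Phi_\varepsilon\|_{L^\infty}^2$, $\|\nabla U_\varepsilon\|_{L^\infty}^2$, $\|d^2 \Phi_\varepsilon\|_{L^\infty}^2 \lesssim \mathcal{N}_\varepsilon^2$, and $\varepsilon^2 \|d^2 U_\varepsilon\|_{L^\infty}^2 \lesssim \varepsilon^2 \|\nabla U_\varepsilon\|_{H^k}^2 \leq \mathcal{N}_\varepsilon^2$. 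Summing~\eqref{der-E-j} over $2 \leq \ell \leq k+1$ and adding the conservation $[E_\varepsilon^1]'(t) = 0$ (Hamiltonian nature of~\eqref{HLLeps}), I obtain
\[
\big[\Sigma_\varepsilon^{k+1}\big]'(t) \leq C_1 \big( 1 + \mathcal{N}_\varepsilon(t)^2 + \varepsilon\, \mathcal{N}_\varepsilon(t)^3 \big)\, \Sigma_\varepsilon^{k+1}(t),
\]
with $C_1$ depending only on $\sigma$, $k$, $N$.

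The main obstacle is the cubic term $\varepsilon \mathcal{N}_\varepsilon^3$ inherited from the last line of~\eqref{der-E-j}: without any smallness on $\varepsilon \mathcal{N}_\varepsilon$, it would drive the Gronwall inequality to blow up on a time scale strictly shorter than the target $1/(\boK_\varepsilon^0)^2$. This is precisely what~\eqref{CI-petit} is designed for: on $[0, T^*]$ the bootstrap gives $\varepsilon \mathcal{N}_\varepsilon(t) \leq 2 \varepsilon \boK_\varepsilon^0 \leq 2/C_*$, so choosing $C_* \geq 2$ yields $\varepsilon \mathcal{N}_\varepsilon^3 \leq \mathcal{N}_\varepsilon^2$ and the inequality reduces to $y'(t) \leq 2 C_1\, y(t)(1 + y(t))$ with $y := \Sigma_\varepsilon^{k+1}$. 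Splitting into the regimes $\boK_\varepsilon^0 \leq 1$ (where $y' \leq 4 C_1 y$ allows exponential growth on a unit time scale) and $\boK_\varepsilon^0 \geq 1$ (where $y' \leq 4 C_1 y^2$ controls the doubling time from below by $1/(8 C_1 y(0)) \gtrsim 1/(\boK_\varepsilon^0)^2$), I conclude $y(t) \leq 2 y(0)$, i.e.\ $\mathcal{N}_\varepsilon(t) \leq \sqrt{2}\,\mathcal{N}_\varepsilon(0)$, on an interval of length at least $1/(C_*(\boK_\varepsilon^0)^2)$ after taking $C_*$ large enough. This strictly improves the bootstrap bound on $\mathcal{N}_\varepsilon$, while Sobolev gives $\varepsilon \|U_\varepsilon(\cdot,t)\|_{L^\infty} \leq C_{\mathrm{Sob}}\, \varepsilon\, \mathcal{N}_\varepsilon(t) \leq 2 C_{\mathrm{Sob}}\, \varepsilon\, \boK_\varepsilon^0 \leq 1/\sqrt{2}$ provided $C_* \geq 2\sqrt{2}\, C_{\mathrm{Sob}}$, strictly improving the non-vanishing bound as well. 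A standard continuity argument then forces $T^* \geq 1/(C_*(\boK_\varepsilon^0)^2)$, which delivers both the uniform bound $\varepsilon \|U_\varepsilon\|_{L^\infty} \leq 1/\sqrt{2}$ and the energy estimate~\eqref{borne-W-bis}.
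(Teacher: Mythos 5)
Your proof follows the same overall strategy as the paper's --- mollify the data, set up a bootstrap, derive a differential inequality for $\Sigma_\varepsilon^{k+1}$ from Proposition~\ref{prop:estimate}, close by Gronwall, and invoke the blow-up criterion of Corollary~\ref{cor:HLL-Cauchy} --- but the differential inequality
\[
\big[\Sigma_\varepsilon^{k+1}\big]'(t) \leq C_1 \big( 1 + \mathcal{N}_\varepsilon(t)^2 + \varepsilon\, \mathcal{N}_\varepsilon(t)^3 \big)\, \Sigma_\varepsilon^{k+1}(t)
\]
contains a spurious additive $1$, and this is a genuine gap. After summing~\eqref{der-E-j} over $2 \leq \ell \leq k+1$ and adding the conservation $[E_\varepsilon^1]' = 0$, every factor multiplying $\Sigma_\varepsilon^{k+1}$ is either a squared $L^\infty$-norm of the solution or a cubic product of such norms; there is no constant term. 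On the bootstrap region the Sobolev embedding bounds these by $\Sigma_\varepsilon^{k+1}$ and $\varepsilon(\Sigma_\varepsilon^{k+1})^{3/2}$ respectively, so the correct inequality is purely superlinear,
\[
\big[\Sigma_\varepsilon^{k+1}\big]'(t) \lesssim \Sigma_\varepsilon^{k+1}(t)^2 + \varepsilon \, \Sigma_\varepsilon^{k+1}(t)^{5/2},
\]
with no linear term. This structure is exactly what produces a doubling time of order $1/\Sigma_\varepsilon^{k+1}(0) \asymp 1/(\boK_\varepsilon^0)^2$, which must diverge as $\boK_\varepsilon^0 \to 0$.

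Your two-case split at $\boK_\varepsilon^0 = 1$ does not rescue the argument. When $\boK_\varepsilon^0 \ll 1$ you retain only $y' \leq 4C_1 y$, whose doubling time $\ln 2/(4C_1)$ is a \emph{fixed} constant independent of $\boK_\varepsilon^0$, whereas the corollary demands $T_\varepsilon \geq 1/(C_*(\boK_\varepsilon^0)^2) \to +\infty$; no choice of $C_*$ repairs this, since enlarging $C_*$ only shrinks the required lower bound. Once the spurious $1$ is removed, the case split becomes unnecessary: the smallness~\eqref{CI-petit} together with the bootstrap gives $\varepsilon\, \Sigma_\varepsilon^{k+1}(t)^{1/2} \lesssim 1$, so the cubic term is absorbed into the quadratic one, and $y' \leq C y^2$ integrates to $y(t) \leq y(0)/(1 - C y(0) t) \leq 2 y(0)$ on $[0, 1/(2C y(0))]$ \emph{uniformly} in the size of $y(0)$. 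One minor additional point: since the flow map in Corollary~\ref{cor:HLL-Cauchy} loses one derivative and Proposition~\ref{prop:estimate} requires $(U_\varepsilon, \Phi_\varepsilon) \in \boC^0([0,T], \boN\boV^{k+3}(\R^N))$, the mollification should land in $\boN\boV^{k+4}(\R^N)$, not $\boN\boV^{k+3}(\R^N)$.
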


\begin{rem}
In the one-dimensional case, the conservation of the energy provides a much direct control on the quantity $\varepsilon \| U_\varepsilon \|_{L^\infty}$. This claim follows from the inequality
$$\varepsilon^2 \| U_\varepsilon \|_{L^\infty}^2 \leq 2 \varepsilon^2 \int_{\R} |U_\varepsilon'(x)| \, |U_\varepsilon(x)| \, dx \leq \varepsilon \int_{\R} \big( \varepsilon^2 U_\varepsilon'(x)^2 + U_\varepsilon(x)^2 \big) \, dx.$$
When $\varepsilon \| U_\varepsilon^0 \|_{L^\infty} < 1$, and the quantity $\varepsilon E_\varepsilon(0)$ is small enough, combining this inequality with the conservation of the energy $E_\varepsilon$ and performing a continuity argument give a uniform control on the function $\varepsilon U_\varepsilon$ for any possible time.
\end{rem}

As a further consequence of Proposition~\ref{prop:estimate}, Corollary~\ref{cor:T} also provides the Sobolev control in~\eqref{borne-W-bis} on the solution $(U_\varepsilon, \Phi_\varepsilon)$, which is uniform with respect to $\varepsilon$. This control is crucial in the proof of Theorem~\ref{thm:conv-SG}. As a matter of fact, the key ingredient in this proof is the consistency of~\eqref{HLLeps} with the Sine-Gordon system in the limit $\varepsilon \to 0$. Indeed, we can rewrite~\eqref{HLLeps} as
\begin{equation}
\label{HLLeps-ter}
\begin{cases} \partial_t U_\varepsilon = \Delta \Phi_\varepsilon - \frac{\sigma}{2} \sin(2 \Phi_\varepsilon) + \varepsilon^2 R_\varepsilon^U,\\[5pt]
\partial_t \Phi_\varepsilon = U_\varepsilon + \varepsilon^2 R_\varepsilon^\Phi, \end{cases}
\end{equation}
where we have set
\begin{equation}
\label{def:RepsU}
R_\varepsilon^U := - \div \big( U_\varepsilon^2 \, \nabla \Phi_\varepsilon \big) + \sigma U_\varepsilon^2 \, \sin(\Phi_\varepsilon) \, \cos(\Phi_\varepsilon),
\end{equation}
and
\begin{equation}
\label{def:RepsPhi}
R_\varepsilon^\Phi := - \sigma U_\varepsilon \, \sin^2(\Phi_\varepsilon) - \div \Big( \frac{\nabla U_\varepsilon}{1 - \varepsilon^2 U_\varepsilon^2} \Big) + \varepsilon^2 U_\varepsilon \, \frac{|\nabla U_\varepsilon|^2}{(1 - \varepsilon^2 U_\varepsilon^2)^2} - U_\varepsilon \, |\nabla \Phi_\varepsilon|^2.
\end{equation}
In view of the Sobolev control in~\eqref{borne-W-bis}, the remainder terms $R_\varepsilon^U$ and $R_\varepsilon^\Phi$ are bounded uniformly with respect to $\varepsilon$ in Sobolev spaces, with a loss of three derivatives. Due to this observation, the differences $u_\varepsilon := U_\varepsilon - U$ and $\varphi_\varepsilon := \Phi_\varepsilon - \Phi$ between a solution $(U_\varepsilon, \Phi_\varepsilon)$ to~\eqref{HLLeps} and a solution $(U, \Phi)$ to~\eqref{sys:SG} are expected to be of order $\varepsilon^2$, if the corresponding initial conditions are close enough.

The proof of this claim would be immediate if the system~\eqref{HLLeps-ter} would not contain the nonlinear term $\sin(2 \Phi_\varepsilon)$. Due to this extra term, we have to apply a Gronwall argument in order to control the differences $u_\varepsilon$ and $\varphi_\varepsilon$. Rolling out this argument requires an additional Sobolev control on the solution $(U, \Phi)$ to~\eqref{sys:SG}.

In this direction, we use the consistency of the systems~\eqref{HLLeps-ter} and~\eqref{sys:SG} so as to mimic the proof of Corollary~\ref{cor:T} for a solution $(U, \Phi)$ to~\eqref{sys:SG}. Indeed, when $\varepsilon = 0$, the quantities $E_\varepsilon^k$ in~\eqref{E-k} reduce to
$$E_{\rm SG}^k(U, \Phi) := \frac{1}{2} \sum_{|\alpha| = k - 1} \int_{\R^N} \Big( |\partial_x^\alpha U|^2 + |\partial_x^\alpha \nabla \Phi|^2 + \sigma |\partial_x^\alpha \sin(\Phi)|^2 \Big).$$
When $(U, \Phi)$ is a smooth enough solution to~\eqref{sys:SG}, we can perform energy estimates on these quantities in order to obtain

\begin{lem}
\label{lem:est-SG}
Let $k \in \N$, with $k > N/2 + 1$. There exists a positive number $A_*$, depending only on $\sigma$, $k$ and $N$, such that, given any initial datum $(U^0, \Phi^0) \in H^{k - 1}(\R^N) \times H_{\sin}^k(\R^N)$, there exists a positive time
$$T_* \geq \frac{1}{A_* \big( \| U^0 \|_{H^{k - 1}} + \| \nabla \Phi^0 \|_{H^{k - 1}} + \| \sin(\Phi^0) \|_{H^{k - 1}} \big)^2},$$
such that the unique solution $(U, \Phi)$ to~\eqref{sys:SG} with initial condition $(U^0, \Phi^0)$ satisfies the energy estimate
\begin{align*}
\| U(\cdot, t) \|_{H^{k - 1}} + \| \nabla \Phi(\cdot, t) \|_{H^{k - 1}} + & \| \sin(\Phi(\cdot, t)) \|_{H^{k - 1}}\\
\leq & A_* \Big( \| U^0 \|_{H^{k - 1}} + \| \nabla \Phi^0 \|_{H^{k - 1}} + \| \sin(\Phi^0) \|_{H^{k - 1}} \Big),
\end{align*}
for any $0 \leq t \leq T_*$.
\end{lem}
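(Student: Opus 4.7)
The plan is to transpose the symmetrized-energy method behind Proposition~\ref{prop:estimate} to the limiting system~\eqref{sys:SG}, which corresponds formally to $\varepsilon = 0$ in~\eqref{HLLeps}, and then to close the estimates by a standard continuity argument. Existence and uniqueness of a smooth local solution $(U,\Phi) = (\partial_t\Phi,\Phi)$ with $U$, $\nabla \Phi$ and $\sin(\Phi)$ in $\boC^0([0,T_{\max}^k), H^{k-1}(\R^N))$ are granted by Theorem~\ref{thm:SG-Cauchy-smooth} applied with $\phi^0 = \Phi^0$ and $\phi^1 = U^0$. Setting $\Sigma_{\rm SG}^k := \sum_{j=1}^k E_{\rm SG}^j$, a direct expansion gives
$$\tfrac{1}{2}\Sigma_{\rm SG}^k(U,\Phi) \leq \| U \|_{H^{k-1}}^2 + \| \nabla\Phi \|_{H^{k-1}}^2 + \| \sin(\Phi) \|_{H^{k-1}}^2 \leq C\, \Sigma_{\rm SG}^k(U,\Phi),$$
so it is enough to control $\Sigma_{\rm SG}^k$ on a time interval of length $\gtrsim 1/K^2$, where $K := \| U^0 \|_{H^{k-1}} + \| \nabla\Phi^0 \|_{H^{k-1}} + \| \sin(\Phi^0) \|_{H^{k-1}}$.

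The core computation is to differentiate $E_{\rm SG}^\ell$ in time for $1 \leq \ell \leq k$. For $\alpha \in \N^N$ with $|\alpha| = \ell - 1$, the equations~\eqref{sys:SG} yield $\partial_t \partial_x^\alpha U = \Delta \partial_x^\alpha \Phi - \frac{\sigma}{2}\partial_x^\alpha \sin(2\Phi)$, $\partial_t \partial_x^\alpha \nabla\Phi = \partial_x^\alpha \nabla U$, and $\partial_t \partial_x^\alpha \sin(\Phi) = \partial_x^\alpha(U\cos\Phi)$. An integration by parts makes the Laplacian contribution from $|\partial_x^\alpha U|^2$ cancel exactly the one from $|\partial_x^\alpha \nabla \Phi|^2$. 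Using $\sin(2\Phi) = 2\sin\Phi\cos\Phi$, the remaining source terms combine, after an algebraic expansion of $\partial_x^\alpha(\sin\Phi\cos\Phi)$ and $\partial_x^\alpha(U\cos\Phi)$, into
$$\sigma \sum_{|\alpha| = \ell - 1} \int_{\R^N} \Bigl( \partial_x^\alpha \sin\Phi \cdot [\partial_x^\alpha, \cos\Phi]\, U - \partial_x^\alpha U \cdot [\partial_x^\alpha, \cos\Phi]\, \sin\Phi \Bigr),$$
the top-order pieces, in which $\partial_x^\alpha$ falls entirely on $U$ or $\sin(\Phi)$, having cancelled pairwise. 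This is precisely the same symmetrization mechanism that is built into the energies~\eqref{E-k}.

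The residual commutators are estimated by standard tame Moser/Kato-Ponce inequalities, using $\nabla \cos\Phi = -\sin\Phi\,\nabla\Phi$ together with iterated composition estimates for $\sin$ and $\cos$ to obtain $\| \nabla\cos\Phi \|_{H^{j - 1}} + \| \nabla\sin\Phi \|_{H^{j - 1}} \leq C(K)$ for $1 \leq j \leq k$. Combined with the Sobolev embedding, which under $k > N/2 + 1$ yields $\| U \|_{L^\infty} + \| \nabla \Phi \|_{L^\infty} \leq C\, \Sigma_{\rm SG}^k(t)^{1/2}$ and $\| \sin \Phi \|_{L^\infty} \leq 1$, one arrives at a differential inequality of the form
$$\bigl[ E_{\rm SG}^\ell \bigr]'(t) \leq C_*\bigl( 1 + \Sigma_{\rm SG}^k(t) \bigr)\, \Sigma_{\rm SG}^k(t),$$
with $C_*$ depending only on $\sigma$, $k$ and $N$. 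Noting that $E_{\rm SG}^1$ is in fact conserved (it is the Sine-Gordon energy re-expressed via $U = \partial_t \Phi$), summing over $\ell$ and running a continuity bootstrap starting from $\Sigma_{\rm SG}^k(0) \leq C K^2$ produces $\Sigma_{\rm SG}^k(t) \leq 4 C K^2$ on $[0, T_*]$ with $T_* \geq 1/(A_* K^2)$, from which the claimed bound follows after relabelling constants.

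The main technical obstacle is not the cancellation itself, which essentially copies the one available at $\varepsilon > 0$, but the bookkeeping involved in the composition estimates: the space $H_{\sin}^k(\R^N)$ only controls $\Phi$ through $\sin(\Phi)$ and $\nabla \Phi$, so every derivative of $\cos\Phi$ must be produced by the chain rule from $\nabla \cos\Phi = -\sin\Phi\, \nabla\Phi$ and then closed by Moser estimates. Since the analogous inputs are already developed for~\eqref{HLLeps} in the proof of Proposition~\ref{prop:estimate}, the present lemma follows as a streamlined $\varepsilon = 0$ specialization of that argument.
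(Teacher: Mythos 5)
Your proposal follows essentially the same route as the paper: you differentiate the quantities $E_{\rm SG}^\ell$, integrate by parts to cancel the top-order $\Delta\Phi$/$\nabla U$ contributions, identify the residual term as a double commutator in $\cos\Phi$, close via Moser-type estimates, note the conservation of $E_{\rm SG}^1$, and integrate a Riccati-type inequality. The commutator structure you write down is exactly the one the paper records (with the $\beta = 0$ terms cancelled), and the use of Corollary~\ref{cor:cos} to generate derivatives of $\cos\Phi$ from $\nabla\cos\Phi = -\sin\Phi\,\nabla\Phi$ is precisely what the paper does.

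There is, however, a genuine flaw in the closing step. You bound $\|\sin\Phi\|_{L^\infty}\leq 1$ and obtain the differential inequality $[\Sigma_{\rm SG}^k]'(t)\leq C_*(1+\Sigma_{\rm SG}^k(t))\,\Sigma_{\rm SG}^k(t)$, whereas the paper bounds $\|\sin\Phi\|_{L^\infty}$ by $C\,\Sigma_{\rm SG}^k(t)^{1/2}$ via the Sobolev embedding $H^{k-1}(\R^N)\hookrightarrow L^\infty(\R^N)$ (valid since $k-1 > N/2$), so that all three $L^\infty$ factors are controlled by the energy and one obtains the purely quadratic inequality $[\Sigma_{\rm SG}^k]'\leq A\,(\Sigma_{\rm SG}^k)^2$. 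The distinction matters: from $\dot\Sigma\leq C_*(1+\Sigma)\Sigma$, the time on which $\Sigma$ stays comparable to $\Sigma(0)$ is of order $\frac{1}{C_*}\ln\big(1+\Sigma(0)^{-1}\big)$, which behaves like a constant (more precisely, $\log(1/\Sigma(0))$) when $\Sigma(0)=\boO(K^2)$ is small, not like $1/K^2$. Your continuity bootstrap therefore does not deliver $T_*\geq 1/(A_*K^2)$ uniformly: for small initial data, the bound degenerates to a constant time, while the lemma requires an arbitrarily long guaranteed time. The fix is exactly the one the paper uses: control $\|\sin\Phi\|_{L^\infty}$ by Sobolev embedding (like $\|U\|_{L^\infty}$ and $\|\nabla\Phi\|_{L^\infty}$), so that the prefactor $\|U\|_{L^\infty}^2+\|\nabla\Phi\|_{L^\infty}^2+\|\sin\Phi\|_{L^\infty}^2$ is entirely dominated by $\Sigma_{\rm SG}^k$, giving the genuinely quadratic Riccati and the time $T_*\sim 1/\Sigma_{\rm SG}^k(0)$.

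A secondary point worth noting: the statement is for $(U^0,\Phi^0)\in H^{k-1}\times H_{\sin}^k$, but differentiating $E_{\rm SG}^\ell$ in time and justifying the integration by parts requires one extra derivative. The paper handles this by first proving the estimate for $(U^0,\Phi^0)\in H^k\times H_{\sin}^{k+1}$ (so that $(\partial_t U,\partial_t\Phi)$ is controlled at the needed level) and then passing to the limit using the continuity of the flow from Theorem~\ref{thm:SG-Cauchy-smooth}. You should make this density step explicit rather than taking the differentiation at the rough level for granted.
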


In view of~\eqref{sys:SG} and~\eqref{HLLeps-ter}, the differences $v_\varepsilon = U_\varepsilon - U$ and $\varphi_\varepsilon = \Phi_\varepsilon - \Phi$ satisfy
\begin{equation}
\label{eq:sys-diff}
\begin{cases} \partial_t v_\varepsilon = \Delta \varphi_\varepsilon - \sigma \sin(\varphi_\varepsilon) \, \cos(\Phi_\varepsilon + \Phi) + \varepsilon^2 R_\varepsilon^U,\\
\partial_t \varphi_\varepsilon = v_\varepsilon + \varepsilon^2 R_\varepsilon^\Phi. \end{cases}
\end{equation}
With Corollary~\ref{cor:T} and Lemma~\ref{lem:est-SG} at hand, we can control these differences by performing similar energy estimates on the functionals
\begin{equation}
\label{def:gE-SG-k}
\gE_{\rm SG}^k := \frac{1}{2} \sum_{|\alpha| = k - 1} \int_{\R^N} \big( |\partial_x^\alpha v_\varepsilon|^2 + |\partial_x^\alpha \nabla \varphi_\varepsilon|^2 + \sigma |\partial_x^\alpha \sin(\varphi_\varepsilon)|^2 \big).
\end{equation}
This is enough to obtain

\begin{prop}
\label{prop:error}
Let $k \in \N$, with $k > N/2 + 1$. Given an initial condition $(U_\varepsilon^0, \Phi_\varepsilon^0) \in \boN\boV^{k + 2}(\R^N)$, assume that the unique corresponding solution $(U_\varepsilon, \Phi_\varepsilon)$ to~\eqref{HLLeps} is well-defined on a time interval $[0, T]$ for a positive number $T$, and that it satisfies the uniform bound
\begin{equation}
\label{eq:unif-bound}
\varepsilon \big\| U_\varepsilon(\cdot, t) \big\|_{L^\infty} \leq \frac{1}{\sqrt{2}},
\end{equation}
for any $t \in [0, T]$. Consider similarly an initial condition $(U^0, \Phi^0) \in L^2(\R^N) \times H_{\sin}^1(\R^N)$, and denote by $(U, \Phi) \in \boC^0(\R, L^2(\R^N) \times H_{\sin}^1(\R^N))$ the unique corresponding solution to~\eqref{sys:SG}. Set $u_\varepsilon := U_\varepsilon - U$, $\varphi_\varepsilon := \Phi_\varepsilon - \Phi$, and
$$\boK_\varepsilon(T) := \max_{t \in [0, T]} \Big( \| U_\varepsilon(\cdot, t) \|_{H^k} + \varepsilon \| \nabla U_\varepsilon(\cdot, t) \|_{H^k} + \| \nabla \Phi_\varepsilon(\cdot, t) \|_{H^k} + \| \sin(\Phi_\varepsilon(\cdot, t)) \|_{H^k} \Big).$$

$(i)$ Assume that $\Phi_\varepsilon^0 - \Phi^0 \in L^2(\R^N)$. Then, there exists a positive number $C_1$, depending only on $\sigma$ and $N$, such that
\begin{equation}
\label{error1}
\| \varphi_\varepsilon(\cdot, t) \|_{L^2} \leq C_1 \Big( \| \varphi_\varepsilon^0 \|_{L^2} + \| v_\varepsilon^0 \|_{L^2} + \varepsilon^2 \, \boK_\varepsilon(T) \, \big( 1 + \varepsilon^2 \boK_\varepsilon(T)^2 + \boK_\varepsilon(T)^3 \big) \Big) \, e^{C_1 t},
\end{equation}
for any $t \in [0, T]$.

$(ii)$ Assume that $N \geq 2$, or that $N = 1$ and $k > N/2 + 2$. Then, there exists a positive number $C_2$, depending only on $\sigma$ and $N$, such that
\begin{equation}
\label{error2}
\begin{split}
\| u_\varepsilon(\cdot, t) \|_{L^2} + \| \nabla \varphi_\varepsilon(\cdot, t) \|_{L^2} + \| \sin(\varphi_\varepsilon(\cdot, t)) & \|_{L^2} \leq C_2 \Big( \| u_\varepsilon^0 \|_{L^2} + \| \nabla \varphi_\varepsilon^0 \|_{L^2} + \| \sin(\varphi_\varepsilon^0) \|_{L^2}\\
& + \varepsilon^2 \, \boK_\varepsilon(T) \, \big( 1 + \varepsilon^2 \boK_\varepsilon(T)^2 + \boK_\varepsilon(T)^3 \big) \Big) \, e^{C_2 t},
\end{split}
\end{equation}
for any $t \in [0, T]$.

$(iii)$ Assume that $k > N/2 + 3$ and that the pair $(U, \Phi)$ belongs to $\boC^0([0, T], H^k(\R^N) \times H_{\sin}^{k + 1}(\R^N))$. Set
$$\kappa_\varepsilon(T) := \boK_\varepsilon(T) + \max_{t \in [0, T]} \Big( \| U(\cdot, t) \|_{H^k} + \| \nabla \Phi(\cdot, t) \|_{H^k} + \| \sin(\Phi(\cdot, t)) \|_{H^k} \Big).$$
Then, there exists a positive number $C_k$, depending only on $\sigma$, $k$ and $N$, such that
\begin{equation}
\label{errork}
\begin{split}
\| u_\varepsilon(\cdot, t) \|_{H^{k - 3}} + \| & \nabla \varphi_\varepsilon(\cdot, t) \|_{H^{k - 3}} + \| \sin(\varphi_\varepsilon(\cdot, t)) \|_{H^{k - 3}}\\
\leq C_k \Big( & \| u_\varepsilon^0 \|_{H^{k - 3}} + \| \nabla \varphi_\varepsilon^0 \|_{H^{k - 3}} + \| \sin(\varphi_\varepsilon^0) \|_{H^{k - 3}}\\
& + \varepsilon^2 \, \kappa_\varepsilon(T) \, \big( 1 + \varepsilon^2 \kappa_\varepsilon(T)^2 + (1 + \varepsilon^2) \kappa_\varepsilon(T)^3 \big) \Big) \, e^{C_k (1 + \kappa_\varepsilon(T)^2) t},
\end{split}
\end{equation}
for any $t \in [0, T]$.
\end{prop}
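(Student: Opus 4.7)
My plan is to prove all three estimates by computing time-derivatives of suitable energy functionals for the difference system \eqref{eq:sys-diff}, then closing via Gronwall's lemma. The functionals mimic $\gE_{\rm SG}^k$ from \eqref{def:gE-SG-k}, and the whole argument hinges on two cancellations: a symmetrization between the linear hyperbolic terms (analogous to the one used in Proposition~\ref{prop:estimate}), and a trigonometric identity rewriting the nonlinear potential contribution as a product involving sines of the two solutions.

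For part $(ii)$, I differentiate $\gE_{\rm SG}^1 = \tfrac12\int(u_\varepsilon^2 + |\nabla\varphi_\varepsilon|^2 + \sigma\sin^2(\varphi_\varepsilon))$, substitute \eqref{eq:sys-diff}, and integrate by parts. The cross-terms $\int u_\varepsilon\Delta\varphi_\varepsilon$ and $\int \nabla\varphi_\varepsilon\cdot\nabla u_\varepsilon$ cancel exactly. Applying $\cos(a-b)-\cos(a+b) = 2\sin(a)\sin(b)$ with $a = \Phi_\varepsilon$ and $b = \Phi$ reduces the potential contribution to
\begin{equation*}
\sigma\int u_\varepsilon\sin(\varphi_\varepsilon)\bigl[\cos(\varphi_\varepsilon)-\cos(\Phi_\varepsilon+\Phi)\bigr] = 2\sigma\int u_\varepsilon\sin(\varphi_\varepsilon)\sin(\Phi_\varepsilon)\sin(\Phi),
\end{equation*}
which is then bounded by $C\,\gE_{\rm SG}^1$. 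The remainder contributions are bounded by $\varepsilon^2\sqrt{\gE_{\rm SG}^1}\bigl(\|R_\varepsilon^U\|_{L^2}+\|R_\varepsilon^\Phi\|_{H^1}\bigr)$, and these last two norms are estimated by Moser-type tame product inequalities applied to \eqref{def:RepsU}--\eqref{def:RepsPhi}, using the uniform bound \eqref{eq:unif-bound}, by a polynomial in $\boK_\varepsilon(T)$ of the form in \eqref{error2}. The dimensional restriction in $(ii)$ arises precisely because the $H^1$-norm of $R_\varepsilon^\Phi$ needs three derivatives of $U_\varepsilon$ through the divergence term: this is ensured automatically by Sobolev embedding when $N\geq 2$, and forces $k>N/2+2$ when $N=1$. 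Gronwall's lemma then yields \eqref{error2}.

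Part $(i)$ bypasses this regularity restriction by replacing $\gE_{\rm SG}^1$ with the pure-$L^2$ quantity $\tfrac12\|\varphi_\varepsilon\|_{L^2}^2$, combined with the second-order wave formulation obtained by eliminating $u_\varepsilon$ from \eqref{eq:sys-diff}:
\begin{equation*}
\partial_{tt}\varphi_\varepsilon - \Delta\varphi_\varepsilon + \sigma\cos(\Phi_\varepsilon+\Phi)\sin(\varphi_\varepsilon) = \varepsilon^2\bigl(R_\varepsilon^U + \partial_t R_\varepsilon^\Phi\bigr).
\end{equation*}
Testing this wave equation against $\varphi_\varepsilon$ (and $\partial_t\varphi_\varepsilon$ for the associated $L^2$ wave-energy) and using the elementary bound $|\sin(\varphi_\varepsilon)|\leq |\varphi_\varepsilon|$, only the $L^2$-norms of $R_\varepsilon^U$ and $R_\varepsilon^\Phi$ are involved, which are controlled under the minimal assumption $k>N/2+1$. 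The $\|v_\varepsilon^0\|_{L^2}$ contribution in \eqref{error1} appears as the initial data for $\partial_t\varphi_\varepsilon$, and Gronwall yields the desired estimate in arbitrary dimension.

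Part $(iii)$ is the iterated version of $(ii)$ at the $\gE_{\rm SG}^\ell$-level for $1\leq\ell\leq k-2$: commutators of $\partial_x^\alpha$ with the sine-Gordon nonlinearity $\cos(\Phi_\varepsilon+\Phi)\sin(\varphi_\varepsilon)$ are handled by standard Moser estimates once $L^\infty$-control on $\Phi_\varepsilon$, $\Phi$, $\varphi_\varepsilon$ and their first derivatives is available, which forces $k>N/2+3$ by Sobolev embedding. The remainders must now be estimated in $H^{k-3}$, requiring $H^{k-1}$ control on $U_\varepsilon$ and $\Phi_\varepsilon$ to absorb the second-order derivatives of $R_\varepsilon^\Phi$; this is the source of the three-derivative loss. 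The needed high-order bounds come from Corollary~\ref{cor:T} for the $\varepsilon$-solution and from Lemma~\ref{lem:est-SG} together with the definition of $\kappa_\varepsilon(T)$ for the Sine-Gordon solution. The main obstacle throughout is the trigonometric identity that produces the factor $\sin(\Phi_\varepsilon)\sin(\Phi)$ in the potential contribution: it is what makes this term \emph{absorbable} into $\gE_{\rm SG}^\ell$ rather than acting as a constant forcing, and is what ultimately allows the error to be of the sharp order $\varepsilon^2$.
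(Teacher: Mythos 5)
Your plan for parts $(ii)$ and $(iii)$ matches the paper's argument essentially step for step: differentiate the functionals $\gE_{\rm SG}^\ell$, use the symmetrization cancellation $\int v_\varepsilon\Delta\varphi_\varepsilon + \int\nabla v_\varepsilon\cdot\nabla\varphi_\varepsilon = 0$, absorb the sine-Gordon nonlinearity into a Gronwall constant, and bound the remainders $R_\varepsilon^U$, $R_\varepsilon^\Phi$ via Moser estimates. Two small corrections on the framing: the trigonometric identity $\cos(\varphi_\varepsilon) - \cos(\Phi_\varepsilon + \Phi) = 2\sin(\Phi_\varepsilon)\sin(\Phi)$ is true but not what makes the potential term absorbable — the paper simply bounds the cosine difference by $2$ and absorbs $\sigma\int v_\varepsilon\sin(\varphi_\varepsilon)[\cdot]$ by Cauchy–Schwarz into $C\gE_{\rm SG}^1$; and the sharp order $\varepsilon^2$ is supplied entirely by the remainders $R_\varepsilon^U$, $R_\varepsilon^\Phi$, not by this identity. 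The dimensional restriction in $(ii)$ is correctly explained: it is just the arithmetic requirement $k - 2 \geq 1$ so that $\nabla R_\varepsilon^\Phi \in L^2$, which follows from $k > N/2 + 1$ once $N \geq 2$.

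However, your argument for part $(i)$ contains a genuine gap. You propose eliminating $v_\varepsilon$ to obtain the second-order equation
\begin{equation*}
\partial_{tt}\varphi_\varepsilon - \Delta\varphi_\varepsilon + \sigma\cos(\Phi_\varepsilon+\Phi)\sin(\varphi_\varepsilon) = \varepsilon^2\big(R_\varepsilon^U + \partial_t R_\varepsilon^\Phi\big),
\end{equation*}
and then to test against $\varphi_\varepsilon$ or $\partial_t\varphi_\varepsilon$, claiming that only the $L^2$-norms of $R_\varepsilon^U$ and $R_\varepsilon^\Phi$ are involved. That claim is not justified by the argument you describe. The right-hand side contains $\partial_t R_\varepsilon^\Phi$: if you differentiate $R_\varepsilon^\Phi$ in time and substitute the equations of motion, you introduce $\partial_t\Delta U_\varepsilon$ and higher-order spatial derivatives of $\Phi_\varepsilon$, which costs precisely the extra regularity you were trying to avoid; if instead you integrate by parts in time to move $\partial_t$ off of $R_\varepsilon^\Phi$, the resulting terms either bring in $\nabla R_\varepsilon^\Phi$ (via $\int R_\varepsilon^\Phi \Delta\varphi_\varepsilon = -\int\nabla R_\varepsilon^\Phi\cdot\nabla\varphi_\varepsilon$), which collapses $(i)$ back to the regularity regime of $(ii)$, or they must be organized very carefully to recover a closed estimate. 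The paper avoids all of this by writing the Duhamel formula for the \emph{first-order} system \eqref{eq:sys-diff}: the linear propagator then puts $R_\varepsilon^\Phi$ under the operator $\cos((t-s)D)$, which is bounded on $L^2$, so the only quantities that enter are $\|R_\varepsilon^\Phi\|_{L^2}$ and $\|R_\varepsilon^U\|_{L^2}$; no time derivative of the remainders ever appears. The resulting Volterra inequality is closed by a convexity/Gronwall argument on the auxiliary function $X(t) = \|\varphi_\varepsilon^0\|_{L^2} + C\int_0^t(t-s)\|\varphi_\varepsilon(\cdot,s)\|_{L^2}\,ds$. To repair your proposal for $(i)$, you should either pass to this first-order Duhamel formula directly, or carry out the time integration by parts explicitly and verify that the $\varepsilon^2\sin(tD)D^{-1}R_\varepsilon^\Phi(0)$ boundary terms cancel against the $\partial_t\varphi_\varepsilon(\cdot,0) = v_\varepsilon^0 + \varepsilon^2 R_\varepsilon^\Phi(\cdot,0)$ contribution, reducing to the paper's formula.
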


We are now in position to conclude the proof of Theorem~\ref{thm:conv-SG}.

\begin{proof}[Proof of Theorem~\ref{thm:conv-SG}]
In view of Corollaries~\ref{cor:HLL-Cauchy} and~\ref{cor:T}, there exists a positive number $C_*$, depending only on $\sigma$, $k$ and $N$, for which, given any initial condition $(U_\varepsilon^0, \Phi_\varepsilon^0) \in \boN\boV^{k + 2}(\R^N)$ such that~\eqref{cond:key} holds, there exists a number $T_\varepsilon$ satisfying~\eqref{cond:Teps} such that the unique solution $(U_\varepsilon, \Phi_\varepsilon)$ to~\eqref{HLLeps} with initial conditions $(U_\varepsilon^0, \Phi_\varepsilon^0)$ lies in $\boC^0([0, T_\varepsilon], \boN\boV^{k + 1}(\R^N))$. Moreover, the quantity $\boK_\varepsilon(T_\varepsilon)$ in Proposition~\ref{prop:error} is bounded by
$$\boK_\varepsilon(T_\varepsilon) \leq C_* \boK_\varepsilon(0).$$
Enlarging if necessary the value of $C_*$, we then deduce statements $(ii)$ and $(iii)$ in Theorem~\ref{thm:conv-SG} from statements $(i)$ and $(ii)$ in Proposition~\ref{prop:error}. 

Similarly, given a pair $(U^0, \Phi^0) \in H^k(\R^N) \times H_{\sin}^{k + 1}(\R^N))$, we derive from Theorem~\ref{thm:SG-Cauchy-smooth} and Lemma~\ref{lem:est-SG} the existence of a number $T_\varepsilon^*$ such that~\eqref{cond:Teps*} holds, and the unique solution $(U, \Phi)$ to~\eqref{sys:SG} with initial conditions $(U^0, \Phi^0)$ is in $\boC^0([0, T_\varepsilon^*], H^k(\R^N) \times H_{\sin}^{k + 1}(\R^N))$. Statement $(iv)$ in Theorem~\ref{thm:conv-SG} then follows from statement $(iii)$ in Proposition~\ref{prop:error}. This completes the proof of Theorem~\ref{thm:conv-SG}.
\end{proof}

%%%%%%%%%%%%%%%%%%%%%%%%%%%%%%%%%
%%%%%%%%%%%%%%%%%%%%%%%%%%%%%%%%%
\subsection{Outline of the paper}
\label{sub:plan}
%%%%%%%%%%%%%%%%%%%%%%%%%%%%%%%%%
%%%%%%%%%%%%%%%%%%%%%%%%%%%%%%%%%

The paper is organized as follows. In the next section, we gather the proofs of Theorems~\ref{thm:SG-Cauchy} and~\ref{thm:SG-Cauchy-smooth} concerning the Cauchy problem for the Sine-Gordon equation. These results are well-known by the experts, but we did not find their proofs in the literature. For the sake of completeness, we provide them below. 

Section~\ref{sec:LL-Cauchy} is devoted to the analysis of the local well-posedness of the Landau-Lifshitz equation in the original and hydrodynamical frameworks.

In Section~\ref{sec:SG-derivation}, we collect the various elements concerning the derivation of the Sine-Gordon regime in Theorem~\ref{thm:conv-SG} by addressing the proofs of Proposition~\ref{prop:estimate}, Corollary~\ref{cor:T}, Lemma~\ref{lem:est-SG} and Proposition~\ref{prop:error}.

We similarly clarify the derivation of the wave equation in Section~\ref{sec:wave}.

In Appendix~\ref{sec:Hsink}, we describe the main properties of the sets $H_{\sin}^k(\R^N)$, while Appendix~\ref{sec:tame-estimates} gathers the tame estimates that we use in our computations. Finally, Appendix~\ref{sec:solitons} contains more material about the solitons for the one-dimensional Landau-Lifshitz equations and their correspondence with the solitons for the Sine-Gordon equation.

%%%%%%%%%%%%%%%%%%%%%%%%%%%%%%%%%%%%%%%%%%%%%%%%%%%%%%%%%%%%%%%%%%%%%%%%%%%%%%%%%%%%%%%%%%%%%%%%%%%%%%%%%%%%%%
%%%%%%%%%%%%%%%%%%%%%%%%%%%%%%%%%%%%%%%%%%%%%%%%%%%%%%%%%%%%%%%%%%%%%%%%%%%%%%%%%%%%%%%%%%%%%%%%%%%%%%%%%%%%%%
%%%%%%%%%%%%%%%%%%%%%%%%%%%%%%%%%%%%%%%%%%%%%%%%%%%%%%%%%%%%%%%%%%%%%%%%%%%%%%%%%%%%%%%%%%%%%%%%%%%%%%%%%%%%%%
\numberwithin{cor}{section}
\numberwithin{lem}{section}
\numberwithin{equation}{section}
\numberwithin{prop}{section}
\numberwithin{rem}{section}
\numberwithin{thm}{section}

\section{The Cauchy problem for the Sine-Gordon equation in the product sets \texorpdfstring{$H_{\sin}^k(\R^N) \times H^{k - 1}(\R^N)$}{}}
\label{sec:SG-Cauchy}
%%%%%%%%%%%%%%%%%%%%%%%%%%%%%%%%%%%%%%%%%%%%%%%%%%%%%%%%%%%%%%%%%%%%%%%%%%%%%%%%%%%%%%%%%%%%%%%%%%%%%%%%%%%%%%
%%%%%%%%%%%%%%%%%%%%%%%%%%%%%%%%%%%%%%%%%%%%%%%%%%%%%%%%%%%%%%%%%%%%%%%%%%%%%%%%%%%%%%%%%%%%%%%%%%%%%%%%%%%%%%
%%%%%%%%%%%%%%%%%%%%%%%%%%%%%%%%%%%%%%%%%%%%%%%%%%%%%%%%%%%%%%%%%%%%%%%%%%%%%%%%%%%%%%%%%%%%%%%%%%%%%%%%%%%%%%

In this section, we assume, up to a scaling in space, that $\sigma = \pm 1$, and we refer to Appendix~\ref{sec:Hsink} for the various notations concerning the spaces $H_{\sin}^k(\R^N)$.

%%%%%%%%%%%%%%%%%%%%%%%%%%%%%%%%%%%%%%%%%%%%%%%%%
%%%%%%%%%%%%%%%%%%%%%%%%%%%%%%%%%%%%%%%%%%%%%%%%%
\subsection{Proof of Theorem~\ref{thm:SG-Cauchy}}
\label{sub:SG-Hsin1}
%%%%%%%%%%%%%%%%%%%%%%%%%%%%%%%%%%%%%%%%%%%%%%%%%
%%%%%%%%%%%%%%%%%%%%%%%%%%%%%%%%%%%%%%%%%%%%%%%%%

The proof follows from the following proposition.

\begin{prop}
\label{prop:SG-Cauchy}
Let $f \in H_{\sin}^\infty(\R^N)$. Given two functions $(\varphi^0, \phi^1) \in H^1(\R^N) \times L^2(\R^N)$,
there exists a unique function $\varphi \in \boC^0(\R, H^1(\R^N)) \cap \boC^1(\R, L^2(\R^N))$ such that $\phi = f + \varphi$ satisfies the Sine-Gordon equation with initial conditions $(\phi^0 = f + \varphi^0, \phi^1)$. Moreover, this solution satisfies the following statements.

$(i)$ For any positive number $T$, the flow map $(\varphi^0, \phi^1) \mapsto (\varphi, \partial_t \phi)$ is continuous from $H^1(\R^N) \times L^2(\R^N)$ to $\boC^0([-T, T], H^1(\R^N)) \times \boC^0([-T, T], L^2(\R^N))$.

$(ii)$ When $\varphi^0 \in H^2(\R^N)$ and $\phi^1 \in H^1(\R^N)$, the function $\varphi$ belongs to the space $\boC^0(\R, H^2(\R^N)) \cap \boC^1(\R, H^1(\R^N)) \cap \boC^2(\R, L^2(\R))$.

$(iii)$ The Sine-Gordon energy $E_{\rm SG}$ is conserved along the flow.
\end{prop}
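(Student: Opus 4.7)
The plan is to subtract off the fixed background $f$ and reduce the problem to a semilinear wave equation with Lipschitz nonlinearity on the Hilbert space $H^1(\R^N) \times L^2(\R^N)$. Setting $\varphi := \phi - f$ and using the trigonometric identity $\sin(2(f+\varphi)) - \sin(2f) = 2 \cos(2f + \varphi)\sin(\varphi)$, the Sine-Gordon equation rewrites as
\begin{equation*}
\partial_{tt} \varphi - \Delta \varphi = G(\varphi) + h, \qquad G(\varphi) := -\sigma \cos(2f+\varphi)\sin(\varphi), \qquad h := \Delta f - \frac{\sigma}{2}\sin(2f).
\end{equation*}
Since $f \in H_{\sin}^\infty(\R^N)$, the source $h$ lies in $L^2(\R^N)$ (because $\Delta f \in L^2$ and $\sin(2f) = 2\sin(f)\cos(f) \in L^2$). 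The nonlinearity $G$ is globally Lipschitz from $L^2$ into $L^2$, with $\|G(\varphi_1) - G(\varphi_2)\|_{L^2} \leq C\|\varphi_1-\varphi_2\|_{L^2}$, the constant $C$ depending only on $\|f\|_{L^\infty}$ which in turn is controlled since $\nabla f \in H^\infty \subset L^\infty$ and explicit characterizations of $H_{\sin}^\infty$ in Appendix~\ref{sec:Hsink} give an $L^\infty$ bound on $f$ modulo $\pi\Z$ (in any case only $\sin$ and $\cos$ of $f$ enter, which are automatically bounded).

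Local existence and uniqueness in $X_T := \boC^0([-T,T], H^1(\R^N)) \cap \boC^1([-T,T], L^2(\R^N))$ will follow from a Banach fixed-point argument applied to the Duhamel map
\begin{equation*}
\boT \varphi(t) := \cos\bigl(t\sqrt{-\Delta}\bigr)\varphi^0 + \frac{\sin(t\sqrt{-\Delta})}{\sqrt{-\Delta}} \phi^1 + \int_0^t \frac{\sin((t-s)\sqrt{-\Delta})}{\sqrt{-\Delta}}\bigl[G(\varphi(s)) + h\bigr]\,ds,
\end{equation*}
using the standard isometries of the free wave propagator on $H^1 \times L^2$ together with the fact that $G$ takes $L^2$ into $L^2$ Lipschitz-continuously. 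The key observation making this work on the full $H^1$ is that only $\varphi \in L^2$ is required to estimate $G(\varphi)$, since we do not need to control $\nabla G(\varphi)$ to close the fixed point at the $H^1$ level (the wave operator integrates the $L^2$ source into $H^1$). Because the Lipschitz constant of $G$ is global, the a priori energy estimate
\begin{equation*}
\|\varphi(t)\|_{H^1}^2 + \|\partial_t \varphi(t)\|_{L^2}^2 \leq C\bigl(\|\varphi^0\|_{H^1}^2 + \|\phi^1\|_{L^2}^2 + \|h\|_{L^2}^2 + 1\bigr) \, e^{Ct}
\end{equation*}
combined with a continuation argument will upgrade the local solution to a global one, giving existence and uniqueness on all of $\R$.

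For statement $(i)$, continuity of the flow will be obtained by applying the same Duhamel identity to the difference $\varphi - \tilde\varphi$ of two solutions, using the global Lipschitz bound on $G$ and a Gronwall inequality on the $H^1 \times L^2$ norm of the difference; translated back in terms of $\phi = f + \varphi$, this furnishes the bound written with the distance $d_{\sin}^1$ in Theorem~\ref{thm:SG-Cauchy}. Statement $(ii)$ (propagation of $H^2 \times H^1$ regularity) will be established by reproducing the fixed-point argument one level higher: when $\varphi^0 \in H^2$ and $\phi^1 \in H^1$, the nonlinearity $G(\varphi)$ now also belongs to $H^1$ because $\nabla G(\varphi) = \sigma \sin(2f+\varphi)(2\nabla f + \nabla\varphi)\sin(\varphi) - \sigma\cos(2f+\varphi)\cos(\varphi)\nabla\varphi$, and the factors $\nabla f \in L^\infty$, $\sin(\varphi), \cos(\varphi) \in L^\infty$, $\nabla\varphi \in L^2$ combine to give an $L^2$ bound; the wave propagator then preserves the $H^2 \times H^1$ structure. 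Finally, statement $(iii)$ on energy conservation will be proved first for $H^2 \times H^1$ initial data by direct differentiation of $E_{\rm SG}$ along the flow (which is legitimate since $\partial_{tt}\phi \in \boC^0(\R, L^2)$), and then extended to general $H_{\sin}^1 \times L^2$ data via the continuity of the flow map from statement $(i)$ together with density of $H_{\sin}^2 \times H^1$ in $H_{\sin}^1 \times L^2$.

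The main obstacle is ensuring that the whole analysis survives the quotient identification inherent to $H_{\sin}^1(\R^N)$: the distance $d_{\sin}^1$ is not a norm, so the uniqueness statement and the continuity of the flow must be phrased carefully and depend on the trick of fixing one background profile $f$ and working in the affine space $f + H^1(\R^N)$. A minor but genuine subtlety is showing that any element of $H_{\sin}^1(\R^N)$ can be written as $f + \varphi^0$ for some fixed $f \in H_{\sin}^\infty$ and $\varphi^0 \in H^1$, which relies on the characterization lemmas in Appendix~\ref{sec:Hsink} (Lemmas~\ref{lem:decompose} and~\ref{lem:carac}) and which makes the chosen $f$ irrelevant for the intrinsic statement of Theorem~\ref{thm:SG-Cauchy}.
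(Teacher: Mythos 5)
Your proposal is correct and follows essentially the same strategy as the paper: fix a smooth background $f$, write $\phi = f + \varphi$, reduce to a semilinear wave equation for $\varphi$ with an $L^2 \to L^2$ Lipschitz nonlinearity, run a Banach fixed point on the Duhamel formula using the smoothing of $\sin(tD)/D$ at the $H^1$ level, bootstrap to $(ii)$, and deduce $(iii)$ by density from the $H^2\times H^1$ case. The only cosmetic differences are that the paper keeps the nonlinearity as $H(\varphi)=-\tfrac{\sigma}{2}\sin(2f+2\varphi)$ rather than splitting off the source $-\tfrac{\sigma}{2}\sin(2f)$, and that the paper globalizes by noting the local existence time is a universal constant independent of the data and iterating, whereas you invoke a Gronwall a priori bound plus a continuation argument (either route works).
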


\begin{proof}
We decompose a solution $\phi$ to the Sine-Gordon equation as $\phi = f + \varphi$. The function $\varphi$ then solves the nonlinear wave equation
\begin{equation}
\label{eq:SG-varphi}
\partial_{t t} \varphi - \Delta \varphi = \Delta f - \frac{\sigma}{2} \sin(2 f + 2 \varphi),
\end{equation}
with a Lipschitz nonlinearity $H(\varphi) := - \sigma \sin(2 f + 2 \varphi)/2$. Therefore, we can apply the contraction mapping theorem in order to construct a unique local solution. Its global nature, the continuity of the corresponding flow and the conservation of the energy then follow from standard arguments in the context of the nonlinear wave equations. For the sake of completeness, we provide the following details.

The Duhamel formula for the nonlinear wave equation~\eqref{eq:SG-varphi} with initial conditions $(\varphi(\cdot, 0) = \varphi^0, \partial_t \varphi(\cdot, 0) = \varphi^1)$ writes as
\begin{equation}
\label{eq:Duhamel-SG}
\begin{split}
\varphi(\cdot, t) = A(\varphi)(\cdot, t) := & \cos(t D) \varphi^0 + \frac{\sin(t D)}{D} \varphi^1 - \int_0^t \sin((t - \tau) D) \, D f \, d\tau\\ & + \int_0^t \frac{\sin((t - \tau) D)}{D} \, H(\varphi)(\cdot, \tau) d\tau,
\end{split}
\end{equation}
where we set, here as in the sequel, $D := \sqrt{- \Delta}$. In order to solve this equation, we now apply the contraction mapping theorem to the functional $A$ in the function space $\boC^0([- T, T], H^1(\R^N))$ for a well-chosen positive number $T$.

Let $k \in \N$. Given a function $g \in H^k(\R^N)$, we know that
\begin{equation}
\label{reg:conv1}
\cos(t D) g \in \boC^0(\R, H^k(\R^N)) \cap \boC^1(\R, H^{k - 1}(\R^N)),
\end{equation}
and
\begin{equation}
\label{reg:conv2}
D^j \sin(t D) g \in \boC^0(\R, H^{k -j}(\R^N)) \cap \boC^1(\R, H^{k - j - 1}(\R^N)),
\end{equation}
for any integer $j \geq - 1$ (see e.g.~\cite{Treves0}). Since $D f$ is in $H^\infty(\R^N) := \cap_{\ell = 1}^\infty H^\ell(\R^N)$, the first three terms in the definition of the functional $A$ are well-defined and belong to $\boC^0(\R, H^1(\R^N)) \cap \boC^1(\R, L^2(\R^N))$.

Let $T > 0$. Since $f$ is in $H_{\rm sin}^1(\R^N)$, the function $H(v)$ belongs to $\boC^0([- T, T],H^1(\R^N))$, when $v$ lies in this space. In view of~\eqref{reg:conv1} and~\eqref{reg:conv2}, the last term in the definition of the function $A(v)$ is therefore in $\boC^0([- T, T], H^2(\R^N)) \cap \boC^1([- T, T], H^1(\R^N))$. Hence, the map $A$ is well-defined from $\boC^0([- T, T], H^1(\R^N))$ to $\boC^0([- T, T], H^1(\R^N)) \cap \boC^1([- T, T], L^2(\R^N))$, and its time derivative is given by
$$\partial_t A(v)(\cdot, t) = - \sin(t D) D \varphi^0 + \cos(t D) \varphi^1 + \int_0^t \cos((t - \tau) D) \big( H(v)(\cdot, \tau) + \Delta f \big) \, d\tau,$$
when $v \in \boC^0([- T, T], H^1(\R^N))$.

Given two functions $(v_1, v_2) \in \boC^0([- T, T], H^1(\R^N)^2$, we next have
$$A(v_1)(\cdot, t) - A(v_2)(\cdot, t) = \int_0^t \frac{\sin((t - \tau) D)}{D} \, \big(H(v_1)(\cdot, \tau) - H(v_2)(\cdot, \tau) \big) \, d\tau.$$
Since
$$\big\| H(v_1) - H(v_2) \big\|_{\boC^0([- T, T], L^2)} \leq \| v_1 - v_2 \|_{\boC^0([- T, T], L^2)},$$
and since the operators $\sin(\nu D)$ and $\sin(\nu D)/(\nu D)$ are bounded on $L^2(\R^N)$, uniformly with respect to $\nu \in \R$, we deduce that
$$\| A(v_1) - A(v_2) \|_{\boC^0([- T, T], H^1)} \leq T \sqrt{1 + T^2} \| v_1 - v_2 \|_{\boC^0([- T, T], L^2)}.$$
Taking $T = 1/2$, we conclude that the functional $A$ is a contraction on $\boC^0([- 1/2, 1/2], L^2(\R^N))$. The contraction mapping theorem provides the existence and uniqueness of a solution $\varphi \in \boC^0([- 1/2, 1/2], H^1(\R^N))$ to the equation $\varphi = A(\varphi)$, which is also in $\boC^1(\R, L^2(\R^N))$ due to this fixed-point equation. Since the existence time $T = 1/2$ is independent of the initial conditions, we can extend this unique solution on $\R$. Note finally that the function $\phi = f + \varphi$ solves the Sine-Gordon equation with initial datum $(\phi^0 = f + \varphi^0, \phi^1 = \varphi^1)$ (at least in the sense of distributions).

In order to prove $(i)$, we make explicit the dependence on the initial conditions $(\varphi^0, \varphi^1)$ of the functional $A$ by writing $A_{\varphi^0, \varphi^1}$. Given another pair of initial conditions $(\tilde{\varphi}^0, \tilde{\varphi}^1) \in H^1(\R^N) \times L^2(\R^N)$, we infer again from~\eqref{reg:conv1} and~\eqref{reg:conv2} that
\begin{align*}
\big\| A_{\varphi^0, \varphi^1}(v) - A_{\tilde{\varphi}^0, \tilde{\varphi}^1}(\tilde{v}) \big\|_{\boC^0([- T, T], H^1)} +
\big\| \partial_t A_{\varphi^0, \varphi^1}(v) - \partial_t A_{\tilde{\varphi}^0, \tilde{\varphi}^1}(\tilde{v}) \big\|_{\boC^0([- T, T ], L^2)}\\
\leq 2 \| \varphi^0 - \tilde{\varphi}^0 \|_{H^1} + \sqrt{1 + T^2} \| \varphi^1 - \tilde{\varphi}^1 \|_{L^2} + T (1 + \sqrt{1 + T^2}) \| v - \tilde{v} \|_{\boC^0([- T, T], L^2)},
\end{align*}
for any functions $(v, \tilde{v}) \in \boC^0([- T, T], H^1)^2$. Taking $T = 1/4$ so that $T (1 + \sqrt{1 + T^2}) < 1$, we deduce the existence of a universal constant $K$ such that the solutions $\varphi$ and $\tilde{\varphi}$ corresponding to the initial conditions $(\varphi^0, \varphi^1)$, respectively $(\tilde{\varphi}^0, \tilde{\varphi}^1)$, satisfy
$$\big\| \varphi - \tilde{\varphi} \big\|_{\boC^0([- \frac{1}{4}, \frac{1}{4}], H^1)} + \big\| \partial_t \varphi - \partial_t \tilde{\varphi} \big\|_{\boC^0([- \frac{1}{4}, \frac{1}{4}], L^2)} \leq K \big( \| \varphi^0 - \tilde{\varphi}^0 \|_{H^1} + \| \varphi^1 - \tilde{\varphi}^1 \|_{L^2} \big).$$
A covering argument is enough to establish the continuity of the flow with respect to the initial datum.

When the initial datum $(\varphi^0, \varphi^1)$ lies in $H^2(\R^N) \times H^1(\R^N)$, it follows from~\eqref{reg:conv1} and~\eqref{reg:conv2} that the first two terms in the definition of the functional $A$ are in $\boC^0(\R, H^2(\R^N)) \cap \boC^1(\R, H^1(\R^N))$. Due to the smoothness of the function $f$, this is also true for the third term, while we have already proved this property for the fourth term, when $\varphi$ is in $\boC^0(\R, H^1(\R^N))$. As a consequence, it only remains to invoke the equation $\varphi = A(\varphi)$ in order to prove that the solution $\varphi$ is actually in $\boC^0(\R, H^2(\R^N)) \cap \boC^1(\R, H^1(\R^N))$. Coming back to~\eqref{eq:SG-varphi}, we conclude that the solution $\varphi$ is also in $\boC^2(\R, L^2(\R^N))$.

In this situation, we are authorized to differentiate the Sine-Gordon energy and to integrate by parts in order to compute
$$\frac{d}{dt} \, E_{\rm SG}(\phi(\cdot, t)) = 0.$$
Therefore, the energy of a solution $\varphi \in \boC^0(\R, H^2(\R^N)) \cap \boC^1(\R, H^1(\R^N)) \cap \boC^0(\R, L^2(\R^N))$ is conserved along the flow. In the general situation where the solution is only in $\boC^0(\R, H^1(\R^N)) \cap \boC^1(\R, L^2(\R^N))$, the conservation of the energy follows from the continuity of the flow by applying a standard density argument.
\end{proof}

With Proposition~\ref{prop:SG-Cauchy} at hand, we are in position to show Theorem~\ref{thm:SG-Cauchy}.

\begin{proof}[Proof of Theorem~\ref{thm:SG-Cauchy}]
Consider initial conditions $(\phi^0, \phi^1) \in H_{\sin}^1(\R^N) \times H^1(\R^N)$. Lemma~\ref{lem:decompose} provides the existence of two functions $f \in H_{\sin}^\infty(\R^N)$ and $\varphi^0 \in H^1(\R^N)$ such that $\phi^0 = f + \varphi^0$. In this case, the space $\phi^0 + H^1(\R^N)$ is equal to $f + H^1(\R^N)$. Proposition~\ref{prop:SG-Cauchy} then provides the existence of a solution to the Sine-Gordon equation $\phi = f + \varphi \in \boC^0(\R, \phi^0 + H^1(\R^N))$, with $\partial_t \phi \in \boC^0(\R, L^2(\R^N))$, for the initial conditions $(\phi^0, \phi^1)$.

Concerning the uniqueness of this solution, we have to prove that it does not depend on the choice of the function $f$. Given an alternative decomposition $\phi^0 = \tilde{f} + \tilde{\varphi}^0$ and the corresponding solution $\tilde{\phi} = \tilde{f} + \tilde{\varphi}$, we observe that
$$\tilde{f} - f = \varphi^0 - \tilde{\varphi}^0 \in H^1(\R^N).$$
Hence, the function $\tilde{f} - f + \tilde{\varphi}$ is a solution in $\boC^0(\R, H^1(\R^N)) \cap \boC^1(\R, L^2(\R^N))$ to~\eqref{eq:SG-varphi} with initial conditions $(\varphi^0, \phi^1)$. Since $w$ is the unique solution of this equation, we deduce that $\varphi = \tilde{f} - f + \tilde{\varphi}$, which means exactly that $\phi = \tilde{\phi}.$

Statements $(ii)$ and $(iii)$ are then direct consequences of $(ii)$ and $(iii)$ in Proposition~\ref{prop:SG-Cauchy}. Concerning $(i)$, we come back to the contraction mapping argument in the proof of this proposition. We make explicit the dependence on the parameters in the definition of $A$ and $H$ by writing $A_{f, \varphi^0, \varphi^1}$ and $H_f$. We check that
$$\big\| H_f(v) - H_{\tilde{f}}(\tilde{v}) \big\|_{\boC^0([- T, T], L^2)} \leq \big\| \sin(f - \tilde{f}) \big\|_{L^2} + \| v - \tilde{v} \|_{\boC^0([- T, T], L^2)},$$
when $(f, \tilde{f}) \in H_{\sin}^1(\R^N)^2$ and $(v, \tilde{v}) \in \boC^0([- T, T], H^1(\R^N))^2$. Applying~\eqref{reg:conv1} and~\eqref{reg:conv2}, we obtain
\begin{align*}
\big\| A_{\varphi^0, \varphi^1, f}(v) - A_{\tilde{\varphi}^0, \tilde{\varphi}^1, \tilde{f}}(\tilde{v}) & \big\|_{\boC^0([- T, T], H^1)} + \big\| \partial_t A_{\varphi^0, \varphi^1, f}(v) - \partial_t A_{\tilde{\varphi}^0, \tilde{\varphi}^1, \tilde{f}}(\tilde{v}) \big\|_{\boC^0([- T, T ], L^2)}\\
\leq 2 \big\| \varphi^0 - \tilde{\varphi}^0 \big\|_{H^1} & + (1 + \sqrt{1 + T^2}) \big\| \varphi^1 - \tilde{\varphi}^1 \big\|_{L^2} + 2 T \big\| \nabla f - \nabla \tilde{f} \|_{H^1}\\
& + T (1 + \sqrt{1 + T^2}) \Big( \big\| \sin(f - \tilde{f}) \big\|_{L^2} + \big\| v - \tilde{v} \big\|_{\boC^0([- T, T], L^2)} \Big),
\end{align*}
for any $(\varphi^0, \tilde{\varphi}^0) \in H^1(\R^N)^2$ and $(\varphi^1, \tilde{\varphi}^1) \in L^2(\R^N)^2$. Taking $T = 1/4$, we are led to the existence of a universal constant $K$ such that the solutions $\varphi$ and $\tilde{\varphi}$ corresponding to the initial conditions $(\varphi^0, \varphi^1)$, respectively $(\tilde{\varphi}^0, \tilde{\varphi}^1)$, satisfy
\begin{align*}
\big\| \varphi - \tilde{\varphi} \big\|_{\boC^0([- \frac{1}{4}, \frac{1}{4}], H^1)} + & \big\| \partial_t \varphi - \partial_t \tilde{\varphi} \big\|_{\boC^0([- \frac{1}{4}, \frac{1}{4}], L^2)}\\
\leq & K \Big( \big\| \varphi^0 - \tilde{\varphi}^0 \big\|_{H^1} + \big\| \varphi^1 - \tilde{\varphi}^1 \big\|_{L^2} + \big\| f - \tilde{f} \big\|_{H_{\rm sin}^2} \Big).
\end{align*}
Invoking the estimates in Lemma~\ref{lem:decompose}, this inequality can be translated in terms of the functions $\phi$ and $\tilde{\phi}$ as 
\begin{equation}
\label{eq:diff}
\begin{split}
\big\| \phi - \tilde{\phi} + \tilde{f} - f \big\|_{\boC^0([- \frac{1}{4}, \frac{1}{4}], H^1)} + & \big\| \partial_t \phi - \partial_t \tilde{\phi} \big\|_{\boC^0([- \frac{1}{4}, \frac{1}{4}], L^2)}\\
\leq & K \Big( d_{\sin}^1 \big( \phi^0, \tilde{\phi}^0 \big) + \big\| \phi^1 - \tilde{\phi}^1 \big\|_{L^2} \Big).
\end{split}
\end{equation}
It remains to use the inequalities
$$\| \sin(\phi - \tilde{\phi}) \|_{L^2} \leq \| \sin(f - \tilde{f}) \|_{L^2} + \| \varphi - \tilde{\varphi} \|_{L^2} \leq d_{\sin}^1(\phi^0, \tilde{\phi}^0) + \| \varphi - \tilde{\varphi} \|_{L^2},$$
and 
$$\| \nabla \phi - \nabla \tilde{\phi} \|_{L^2} \leq \| \nabla f - \nabla \tilde{f} \|_{L^2} + \| \nabla \varphi - \nabla \tilde{\varphi} \|_{L^2} \leq d_{\sin}^1(\phi^0, \tilde{\phi}^0) + \| \nabla \varphi - \nabla \tilde{\varphi} \|_{L^2},$$
to obtain the estimate in $(i)$ for $T = 1/4$. The general case follows from a covering argument.
\end{proof}

%%%%%%%%%%%%%%%%%%%%%%%%%%%%%%%%%%%%%%%%%%%%%%%%%%%%%%%%
%%%%%%%%%%%%%%%%%%%%%%%%%%%%%%%%%%%%%%%%%%%%%%%%%%%%%%%%
\subsection{Proof of Theorem~\ref{thm:SG-Cauchy-smooth}}
\label{sub:SG-Hsink}
%%%%%%%%%%%%%%%%%%%%%%%%%%%%%%%%%%%%%%%%%%%%%%%%%%%%%%%%
%%%%%%%%%%%%%%%%%%%%%%%%%%%%%%%%%%%%%%%%%%%%%%%%%%%%%%%%

We split the proof into three steps.

\setcounter{step}{0}
\begin{step}
\label{SG1}
Local well-posedness in the product sets $H_{\sin}^k(\R^N) \times H^{k - 1}(\R^N)$.
\end{step}

Concerning the existence and uniqueness of a solution, we apply again the contraction mapping theorem. Consider initial conditions $(\phi^0, \phi^1) \in H_{\sin}^k(\R^N) \times H^{k - 1}(\R^N)$, write $\phi^0 = f + \varphi^0$, with $f \in H_{\sin}^\infty(\R^N)$ and $\varphi^0 \in H^{k - 1}(\R^N)$, and set $\varphi_1 = \phi^1$. Going back to the Duhamel formula in~\eqref{eq:Duhamel-SG}, we derive from~\eqref{reg:conv1} and~\eqref{reg:conv2} that the first three terms in the definition of the functional $A$ are in $\boC^0([0, T], H^k(\R^N)) \cap \boC^1([0, T], H^{k - 1}(\R^N))$. Concerning the last term, we invoke the Moser estimates in Corollary~\ref{cor:moser-trigo} and the Sobolev embedding theorem in order to check that
$$\| \nabla H(v) \|_{H^{k - 1}} \leq C \big( 1 + \| \nabla v \|_{L^\infty}^{k - 1} + \| \nabla f \|_{L^\infty}^{k - 1} \big) \big( \| \nabla v \|_{H^{k - 1}} + \| \nabla f \|_{H^{k - 1}} \big),$$
when $v \in H^k(\R^N)$. Here as in the sequel, the positive number $C$ only depends on $k$ and $N$. Due to the Sobolev embedding theorem, the functional $A$ is well-defined on $\boC^0([0, T], H^k(\R^N))$, with values in $\boC^0([0, T], H^k(\R^N)) \cap \boC^1([0, T], H^{k - 1}(\R^N))$. Moreover, we check that
\begin{equation}
\label{thiem}
\begin{split}
\| A(v) \|_{\boC^0([0, T], H^k)} \leq & \| \varphi^0 \|_{H^k} + (1 + T) \| \varphi^1 \|_{H^{k - 1}} + T \| f \|_{H_{\sin}^{k + 1}} + T^2 \big( \| \sin(f) \|_{L^2} + \| v \|_{L^2} \big)\\
& + C T \big( 1 + \| \nabla v \|_{L^\infty}^{k - 1} + \| \nabla f \|_{L^\infty}^{k - 1} \big) \big( \| \nabla v \|_{H^{k - 1}} + \| \nabla f \|_{H^{k - 1}} \big),
\end{split}
\end{equation}
when $v$ belongs to $\boC^0([0, T], H^k(\R^N))$. Note here that $A$ actually takes values in $\boC^0([0, T], \linebreak[0] H^{k + 1}(\R^N)) \cap \boC^1([0, T], H^k(\R^N))$, when $\phi^0 \in H_{\rm sin}^{k + 1}(\R^N)$ and $\phi^1 \in H^k(\R^N)$.

Next, we again deduce from Corollary~\ref{cor:moser-trigo} and the Sobolev embedding theorem that
\begin{align*}
\| \nabla H(v_1) - \nabla H(v_2) \|_{H^{k - 1}} = & \big\| \nabla \big( \sin(v_1 - v_2) \cos(2 f + v_1 + v_2) \big) \big\|_{H^{k - 1}}\\
\leq & C \big( 1 + \| \nabla v_1 \|_{L^\infty}^{k - 1} + \| \nabla v_2 \|_{L^\infty}^{k - 1} + \| \nabla f \|_{L^\infty}^{k - 1} \big) \times\\
& \times \big( 1 + \| \nabla v_1 \|_{H^{k - 1}} + \| \nabla v_2 \|_{H^{k - 1}} + \| \nabla f \|_{H^{k - 1}} \big) \big\| v_1 - v_2 \|_{H^k},
\end{align*}
when $(v_1, v_2) \in H^k(\R^N)^2$. When $v_1$ and $v_2$ belong to $\boC^0([0, T], H^k(\R^N))$, we obtain
\begin{equation}
\label{wawrinka}
\begin{split}
\| A(v_1) - A(v_2) & \|_{\boC^0([0, T], H^k)} \leq C T \Big( T \| v_1 - v_2 \|_{\boC^0([0, T], L^2)} + \big\| v_1 - v_2 \|_{\boC^0([0, T], H^k)} \times\\
& \times \big( 1 + \| \nabla v_1 \|_{\boC^0([0, T], H^k)}^k + \| \nabla v_2 \|_{\boC^0([0, T], H^k)}^k + \| \nabla f \|_{\boC^0([0, T], H^k)}^k \big) \Big).
\end{split}
\end{equation}

At this stage, we set $R_0 := \| \varphi^0 \|_{H^k} + \| \varphi^1 \|_{H^{k - 1}}$. In view of~\eqref{thiem} and~\eqref{wawrinka}, there exists a positive number $T_0$ such that the functional $A$ is a contraction on the closed ball
$$B_R := \big\{ v \in \boC^0([0, T], H^k(\R^N)) : \| v \|_{\boC^0([0, T], H^k)} \leq R \big\}.$$
The existence and uniqueness of a local solution $\phi = f + \varphi$ to the Sine-Gordon equation with initial conditions $(\phi^0, \phi^1)$ then follows from the contraction mapping theorem. The property that it belongs to $\boC^0([0, T_{\max}^k), \phi^0 + H^k(\R^N))$, with $\partial_t \phi \in \boC^0([0, T_{\max}^k), H^{k - 1}(\R^N))$, as well as the local Lipschitz continuous dependence on the initial datum in $(ii)$, can be derived as in the proof of Theorem~\ref{thm:SG-Cauchy} (invoking, when necessary, the Moser estimates and the Sobolev embedding theorem).

Concerning the characterization of the maximal time of existence $T_{\max}^k$, we infer by contradiction from the previous contraction argument that we have
$$\lim_{t \to T_{\max}^k} \| \varphi(\cdot, t) \|_{H^k} = \infty \quad {\rm if} \ T_{\max}^k < \infty.$$
The characterization in $(i)$ then follows from the formula $\phi(\cdot, t) = f + \varphi(\cdot, t)$, which guarantees that
$$\| \varphi(\cdot, t) \|_{H^k} - \| f \|_{H_{\sin}^k} \leq d_{\sin}^k(\phi(\cdot, t), 0) \leq \| \varphi(\cdot, t) \|_{H^k} + \| f \|_{H_{\sin}^k},$$
for any $t \in [0, T_{\max}^k)$.

When $\phi^0 \in H_{\rm sin}^{k + 1}(\R^N)$ and $\phi^1 \in H^k(\R^N)$, it follows from the fixed-point equation that the solution $\phi$ is in $\boC^0([0, T_{\max}^k), \phi^0 + H^{k + 1}(\R^N))$, with $\partial_t \phi \in \boC^0([0, T_{\max}^k), H^k(\R^N))$. By uniqueness of the solution, the function $\phi$ is the restriction to the interval $[0, T_{\max}^k)$ of the solution $\tilde{\phi}$ in $\boC^0([0, T_{\max}^{k + 1}), \phi^0 + H^{k + 1}(\R^N))$, with $\partial_t \tilde{\phi} \in \boC^0([0, T_{\max}^{k + 1}), H^k(\R^N))$. Hence, we have
$$T_{\max}^k \leq T_{\max}^{k + 1}.$$
The equality in this formula is then a consequence of the characterization in $(i)$.

In order to complete the proof of $(iii)$, note that the Moser estimates and the Sobolev embedding theorem also guarantee that the function $\sin(2 \phi)$ belongs to $\boC^0([0, T_{\max}^k), H^{k + 1}(\R^N))$. As a consequence of the Sine-Gordon equation, the function $\partial_{tt} \phi$ is therefore in $\boC^0([0, T_{\max}^k), H^{k - 1}(\R^N))$.

At this stage, it only remains to establish statement $(iv)$. We first address the question of global well-posedness.

\begin{step}
\label{SG2}
Global well-posedness when $1 \leq N \leq 3$.
\end{step}

In view of statement $(iii)$ in Theorem~\ref{thm:SG-Cauchy-smooth}, the maximal time of existence $T_{\max}^k$ is equal to $T_{\max}^{k_N}$, where $k_N$ denotes the smallest integer larger than $N/2 + 1$. As a consequence, we are allowed to reduce the proof to the case $k = k_N$. We then argue by contradiction assuming that $T_{\max}^{k_N}$ is finite, and we obtain a contradiction by controlling uniformly the quantity $d_{\sin}^{k_N}(\phi(\cdot, t), 0)$ on the time interval $[0, T_{\max}^{k_N})$.

With this goal in mind, we introduce the function
$$E_\ell(t) = \frac{1}{2} \int_{\R^N} \Big( (\partial_t D^{\ell - 1} \phi(x, t))^2 + (D^\ell \phi(x, t))^2 \Big) \, dx,$$
for any $1 \leq \ell \leq k_N$. This function is well-defined for any $t \in [0, T_{\max}^{k_N})$. By Theorem~\ref{thm:SG-Cauchy}, the solution $\phi$ belongs to $\boC^0(\R, \phi^0 + H^2(\R))$, with $\partial_t \phi \in \boC^0(\R, H^1(\R))$. Hence, there exists a positive number $\boE_2$ such that
\begin{equation}
\label{berdych}
\int_{\R^N} \sin \big( \phi(x, t) \big)^2 \, dx + E_1(t) + E_2(t) \leq \boE_2,
\end{equation}
for any $t \in [0, T_{\max}^{k_N})$. Since $k_N = 2$ when $N = 1$, the bound in~\eqref{berdych} is enough to complete the proof in dimension one. 

Assume now that $N = 2$ or $N = 3$ in case $k_N = 3$. When $\phi^0 \in H_{\rm sin}^4(\R^N)$ and $\phi^1 \in H^3(\R^N)$, the quantity $E_3(t)$ is differentiable on $[0, T_{\max}^{k_N})$ by statement $(iii)$ in Theorem~\ref{thm:SG-Cauchy-smooth}. Moreover, we can use the Sine-Gordon equation and integrate by parts in order to obtain
$$E_3'(t) = - \frac{\sigma}{2} \int_{\R^N} D^2 \sin(2 \phi)(x, t) \, \partial_t D^2 \phi(x, t) \, dx.$$
This provides the estimate
\begin{equation}
\label{mahut}
E_3'(t) \leq \frac{1}{2} E_3(t)^\frac{1}{2} \big\| D^2 \sin(2 \phi)(\cdot, t) \big\|_{L^2}.
\end{equation}
The chain rule then gives
$$\big\| D^2 \sin(2 \phi) \big\|_{L^2} \leq 2 \| D^2 \phi \|_{L^2} + 4 \| \nabla \phi \|_{L^4}^2.$$
Combining the Sobolev embedding theorem and the bound in~\eqref{berdych} is enough to obtain
$$\big\| D^2 \sin(2 \phi) \big\|_{L^2} \leq K \big( \boE_2 + \boE_2^2 \big),$$
where $K$ refers, here as in the sequel, to a universal constant. In view of~\eqref{mahut}, there exists a positive number $\boE_3$ such that
$$E_3(t) \leq \boE_3,$$
for any $t \in [0, T_{\max}^{k_N})$. By a standard density argument, this bounds remains true when $\phi^0 \in H_{\rm sin}^3(\R^N)$ and $\phi^1 \in H^2(\R^N)$. This concludes the proof of Step~\ref{SG2}.

\begin{step}
\label{SG3}
Continuity of the flow map on $H_{\sin}^2(\R^N) \times H^1(\R^N)$ when $2 \leq N \leq 3$.
\end{step}

More precisely, we fix two initial conditions $(\phi^0 = f + \varphi^0, \phi^1)$, with $f \in H_{\sin}^\infty(\R^N)$ and $\varphi^0 \in H^2(\R)$. We derive from Theorem~\ref{thm:SG-Cauchy} the existence and uniqueness of a corresponding solution $\phi = f + \varphi$ to the Sine-Gordon equation, with $\varphi \in \boC^0(\R, H^2(\R^N))$ and $\partial_t \varphi \in \boC^0(\R, H^1(\R^N))$. Given any positive number $T$, our goal is to establish that
\begin{equation}
\label{flushing}
\max_{t \in [- T, T]} \Big( d_{\sin}^2 \big( \phi_n(\cdot, t), \phi(\cdot, t) \big) + \big\| \partial_t \phi_n(\cdot, t) - \partial_t \phi(\cdot, t) \big\|_{H^1} \Big) \to 0,
\end{equation}
for any sequence of solutions $\phi_n$ to the Sine-Gordon equation corresponding to initial data $(\phi_n^0, \phi_n^1) \in H_{\sin}^2(\R^N) \times H^1(\R^N)$ such that
\begin{equation}
\label{meadows}
\| \phi_n^0 - \phi^0 \|_{H_{\sin}^2} + \| \phi_n^1 - \phi^1 \|_{H^1} \to 0,
\end{equation}
as $n \to \infty$.

In order to establish this statement, we take a sequence of initial conditions $(\tilde{\phi}_p^0 = f + \tilde{\varphi}_p^0, \tilde{\phi}_p^1)$, with $(\tilde{\varphi}_p^0, \tilde{\phi}_p^1) \in H^\infty(\R^N)^2$, such that
\begin{equation}
\label{pouille}
\tilde{\varphi}_p^0 \to \varphi^0 \ {\rm in} \ H^2(\R^N), \quad {\rm and} \quad \tilde{\phi}_p^1 \to \phi^1 \ {\rm in} \ H^1(\R^N),
\end{equation}
as $p \to \infty$. We denote by $\tilde{\phi}_p = f + \tilde{\varphi}_p$ the corresponding solutions to the Sine-Gordon equation. By Steps~\ref{SG1} and~\ref{SG2}, they belong to $\boC^0(\R, f + H^3(\R^N))$, with $\partial_t \tilde{\phi}_p \in \boC^0(\R, H^2(\R^N))$. As a consequence of the Sine-Gordon equation, the derivative $\partial_{tt} \tilde{\phi}_p$ is therefore in $\boC^0(\R, H^1(\R^N))$.

On the other hand, we deduce from the proof of Proposition~\ref{prop:SG-Cauchy} that there exists a positive number $A$, not depending on $m$ and $p$, such that
\begin{equation}
\label{nishikori}
\big\| \tilde{\varphi}_m - \tilde{\varphi}_p \big\|_{\boC^0([- T, T], H^1)} + \| \partial_t \tilde{\phi}_m - \partial_t \tilde{\phi}_p \|_{\boC^0([- T, T], L^2)} \leq A \big( \| \tilde{\varphi}_m^0 - \tilde{\varphi}_p^0 \|_{H^1} + \| \tilde{\phi}_m^1 - \tilde{\phi}_p^1 \|_{L^2} \big),
\end{equation}
for any integers $(m, p) \in \N^2$. Hence, it follows from~\eqref{pouille} that
\begin{equation}
\label{dimitrov}
\big\| \tilde{\varphi}_p \big\|_{\boC^0([- T, T], H^1)} + \| \partial_t \tilde{\phi}_p \|_{\boC^0([- T, T], L^2)} \leq A,
\end{equation}
where $A$ refers, here as in the sequel, to a further positive number not depending on $p$.

We next prove that $(\tilde{\varphi}_p)_{p \in \N}$ and $(\partial_t \tilde{\phi}_p)_{p \in \N}$ are Cauchy sequences in $\boC^0([- T, T], H^2)$, respectively $\boC^0([- T, T], H^1)$. We first establish their boundedness by arguing as in Step~\ref{SG2}. We introduce the quantities
$$E_2^p(t) = \frac{1}{2} \int_{\R^N} \Big( |\partial_t \nabla \tilde{\phi}_p(x, t)|^2 + |D^2 \tilde{\phi}_p(x, t)|^2 \Big) \, dx.$$
which are well-defined and of class $\boC^1$ on $\R$ in view of the differentiability properties of the functions $\tilde{\phi}_p$. As in Step~\ref{SG2}, we compute
$$[E_2^p]'(t) = - \sigma \int_{\R^N} \cos(2 \tilde{\phi}_p) \big\langle \nabla \tilde{\phi}_p, \partial_t \nabla \tilde{\phi}_p \big\rangle_{\R^N} \leq 2 \| \nabla \tilde{\phi}_p(\cdot, t) \|_{L^2} E_2^p(t)^\frac{1}{2}.$$
In view of~\eqref{dimitrov}, this inequality guarantees that the quantity $E_2^p$ is bounded on $[- T, T]$ uniformly with respect to the integer $p$. In turn, this proves that the sequences $(D^2 \tilde{\phi}_p)_{p \in \N}$ and $(\partial_t \nabla \tilde{\phi}_p)_{n \in \N}$ are bounded in $\boC^0([- T, T], L^2)$. Combining~\eqref{nishikori} with the Sobolev embedding theorem, we conclude that
\begin{equation}
\label{delpotro}
\big\| \partial_t \nabla \tilde{\phi}_p \big\|_{\boC^0([- T, T], L^2)} + \| D^2 \tilde{\phi}_p \|_{\boC^0([- T, T], L^2)} + \| \nabla \tilde{\phi}_p \|_{\boC^0([- T, T], L^6)} \leq A.
\end{equation}

Given two integers $(m, p) \in \N^2$, we next introduce the difference $z := \tilde{\phi}_m - \tilde{\phi}_p = \tilde{\varphi}_m - \tilde{\varphi}_p$, and we consider the quantity
$$\delta E_2(t) := \frac{1}{2} \int_{\R^N} \Big( |\partial_t \nabla z(x, t)|^2 + |D^2 z(x, t)|^2 \Big) \, dx,$$
which is well-defined and of class $\boC^1$ on $\R$. Since the difference $z$ is solution to the wave equation
$$\partial_{tt} z - \Delta z = - \sigma \sin(z) \cos(\tilde{\phi}_m + \tilde{\phi}_p),$$
we obtain by integrating by parts that
$$\delta E_2'(t) = - \sigma \int_{\R^N} \big\langle \nabla \big( \sin(z) \cos(\tilde{\phi}_m + \tilde{\phi}_p) \big), \partial_t \nabla z \big\rangle_{\R^N}.$$
This provides the estimate
$$\delta E_2'(t) \leq \delta E_2(t)^\frac{1}{2} \, \Big( \| \nabla z(\cdot, t) \|_{L^2} + \| \sin(z(\cdot, t)) \|_{L^3} \big( \| \nabla \tilde{\phi}_m(\cdot, t) \|_{L^6} + \| \nabla \tilde{\phi}_p(\cdot, t) \|_{L^6} \big) \Big),$$
which we bound by
$$\delta E_2'(t) \leq A \, \delta E_2(t)^\frac{1}{2} \, \big( \| \tilde{\varphi}_m^0 - \tilde{\varphi}_p^0 \|_{H^1} + \| \tilde{\phi}_m^1 - \tilde{\phi}_p^1 \|_{L^2} \big)^\frac{2}{3} \, \big( 1 + \| \tilde{\varphi}_m^0 - \tilde{\varphi}_p^0 \|_{H^1} + \| \tilde{\phi}_m^1 - \tilde{\phi}_p^1 \|_{L^2} \big)^\frac{1}{3},$$
in view of~\eqref{nishikori},~\eqref{delpotro} and the inequalities
$$\| \sin(z(\cdot, t)) \|_{L^3} \leq \| \sin(z(\cdot, t)) \|_{L^2}^\frac{2}{3} \leq \| z(\cdot, t) \|_{L^2}^\frac{2}{3}.$$
Finally, we are led to
\begin{equation}
\label{karlovic}
\delta E_2(t) \leq A \Big( \delta E_2(0) + \big( \| \tilde{\varphi}_m^0 - \tilde{\varphi}_p^0 \|_{H^1} + \| \tilde{\phi}_m^1 - \tilde{\phi}_p^1 \|_{L^2} \big)^\frac{4}{3} \, \big( 1 + \| \tilde{\varphi}_m^0 - \tilde{\varphi}_p^0 \|_{H^1} + \| \tilde{\phi}_m^1 - \tilde{\phi}_p^1 \|_{L^2} \big)^\frac{2}{3} \Big),
\end{equation}
for any $t \in [- T, T]$. Invoking~\eqref{nishikori}, this shows that $(\tilde{\varphi}_p)_{p \in \N}$ and $(\partial_t \tilde{\phi}_p)_{p \in \N}$ are Cauchy sequences in $\boC^0([- T, T], H^2)$, respectively $\boC^0([- T, T], H^1)$. Since their limits in $\boC^0([- T, T], H^1)$ and $\boC^0([- T, T], L^2)$ are equal to $\varphi$, respectively $\partial_t \phi$, by the continuity of the flow in these spaces, we conclude that
\begin{equation}
\label{sock}
\big\| \tilde{\varphi}_p - \varphi \big\|_{\boC^0([- T, T], H^2)} + \| \partial_t \tilde{\phi}_p - \partial_t \phi \|_{\boC^0([- T, T], H^1)} \to 0,
\end{equation}
as $p \to \infty$.

With this density property at hand, we are able to establish the continuity of the flow. We argue as in the proof of~\eqref{sock}. Coming back to~\eqref{meadows}, we find functions $f_n \in H_{\sin}^\infty(\R^N)$ and $\varphi_n^0 \in H^2(\R^N)$ such that $\phi_n^0 = f_n + \varphi_n^0$, and
\begin{equation}
\label{isner}
d_{\sin}^2(f_n, f) + \| \varphi_n^0 - \varphi^0 \|_{H^2} \leq K d_{\sin}^2(\phi_n^0, \phi^0),
\end{equation}
for a universal constant $K$. Here, we have set, as above, $\phi^0 = f + \varphi^0$. Given any fixed integer $n$, we introduce initial conditions $(\tilde{\phi}_{n, p}^0 = f_n + \tilde{\varphi}_{n, p}^0, \tilde{\phi}_{n, p}^1)$, with $(\tilde{\varphi}_{n, p}^0, \tilde{\phi}_{n, p}^1) \in H^\infty(\R^N)^2$, such that
\begin{equation}
\label{baghdatis}
\tilde{\varphi}_{n, p}^0 \to \varphi_n^0 \ {\rm in} \ H^2(\R^N), \quad {\rm and} \quad \tilde{\phi}_{n, p}^1 \to \phi_n^1 \ {\rm in} \ H^1(\R^N),
\end{equation}
as $p \to \infty$. We denote by $\tilde{\phi}_{n, p} = f_n + \tilde{\varphi}_{n, p}$ the corresponding solutions to the Sine-Gordon equation. They belong to $\boC^0(\R, f_n + H^3(\R^N))$, with $\partial_t \tilde{\phi}_{n, p} \in \boC^0(\R, H^2(\R^N))$ and $\partial_{tt} \tilde{\phi}_{n, p} \in \boC^0(\R, H^1(\R^N))$, and we also derive from~\eqref{sock} that
\begin{equation}
\label{kyrgios}
\big\| \tilde{\varphi}_{n, p} - \varphi_n \big\|_{\boC^0([- T, T], H^2)} + \| \partial_t \tilde{\phi}_{n, p} - \partial_t \phi_n \|_{\boC^0([- T, T], H^1)} \to 0,
\end{equation}
as $p \to \infty$.

Going back to~\eqref{eq:diff}, we next have
\begin{equation}
\label{youzhny}
\big\| \tilde{\varphi}_{n, p} - \tilde{\varphi}_p \big\|_{\boC^0([- T, T], H^1)} + \| \partial_t \tilde{\phi}_{n, p} - \partial_t \tilde{\phi}_p \|_{\boC^0([- T, T], L^2)} \leq A \big( d_{\sin}^1(\tilde{\phi}_{n, p}^0, \tilde{\phi}_p^0) + \| \tilde{\phi}_{n, p}^1 - \tilde{\phi}_p^1 \|_{L^2} \big),
\end{equation}
where the positive number $A$ depends, here as in the sequel, neither on $n$, nor on $p$. As for~\eqref{dimitrov}, this yields
$$\big\| \tilde{\varphi}_{n, p} \big\|_{\boC^0([- T, T], H^1)} + \| \partial_t \tilde{\phi}_{n, p} \|_{\boC^0([- T, T], L^2)} \leq A,$$
and we can derive as in the proof of~\eqref{delpotro} that
$$\big\| \partial_t \nabla \tilde{\phi}_{n, p} \big\|_{\boC^0([- T, T], L^2)} + \| D^2 \tilde{\phi}_{n, p} \|_{\boC^0([- T, T], L^2)} + \| \nabla \tilde{\phi}_{n, p} \|_{\boC^0([- T, T], L^6)} \leq A.$$
We then follow the lines of the proof of~\eqref{karlovic}. Setting $z_p = \tilde{\phi}_{n, p} - \tilde{\phi}_p$ and
$$\delta E_2^p(t) := \frac{1}{2} \int_{\R^N} \Big( |\partial_t \nabla z_p(x, t)|^2 + |D^2 z_p(x, t)|^2 \Big) \, dx,$$
we compute
$$\delta E_2^p(t)' \leq \delta E_2^p(t)^\frac{1}{2} \, \Big( \| \nabla z_p(\cdot, t) \|_{L^2} + \| \sin(z_p(\cdot, t)) \|_{L^2}^\frac{2}{3} \Big),$$
so that, by~\eqref{youzhny},
\begin{align*}
\delta E_2^p(t)' \leq & \delta E_2^p(t)^\frac{1}{2} \, \Big( \| \nabla f_n - \nabla f \|_{L^2} + \| \sin(f_n - f) \|_{L^2}^\frac{2}{3}\\
& + \| \nabla \tilde{\varphi}_{n, p}(\cdot, t) - \nabla \tilde{\varphi}_p(\cdot, t) \|_{L^2} + \| \tilde{\varphi}_{n, p}(\cdot, t) - \tilde{\varphi}_p(\cdot, t) \|_{L^2}^\frac{2}{3} \Big).
\end{align*}
In view of~\eqref{isner} and~\eqref{youzhny}, this provides the estimate
\begin{equation}
\label{cilic}
\begin{split}
\big\| D^2 \tilde{\varphi}_{n, p} - & D^2 \tilde{\varphi}_p \big\|_{\boC^0([- T, T], H^1)} + \| \partial_t \nabla \tilde{\phi}_{n, p} - \partial_t \nabla \tilde{\phi}_p \|_{\boC^0([- T, T], L^2)}\\
\leq & A \big( d_{\sin}^2(\phi_n^0, \phi^0) + d_{\sin}^2(\tilde{\phi}_{n, p}^0, \tilde{\phi}_p^0) + \| \tilde{\phi}_{n, p}^1 - \tilde{\phi}_p^1 \|_{H^1} \big)^\frac{2}{3} \times\\
& \times \big( 1 + d_{\sin}^2(\phi_n^0, \phi^0) + d_{\sin}^2(\tilde{\phi}_{n, p}^0, \tilde{\phi}_p^0) + \| \tilde{\phi}_{n, p}^1 - \tilde{\phi}_p^1 \|_{H^1} \big)^\frac{1}{3}.
\end{split}
\end{equation}
In view of~\eqref{pouille},~\eqref{sock},~\eqref{baghdatis} and~\eqref{kyrgios}, we can take the limit $p \to \infty$ in~\eqref{youzhny} and~\eqref{cilic} in order to write
\begin{align*}
& \big\| \varphi_n - \varphi \big\|_{\boC^0([- T, T], H^2)} + \| \partial_t \phi_n - \partial_t \phi \|_{\boC^0([- T, T], H^1)}\\
\leq A \big( & d_{\sin}^2(\phi_n^0, \phi^0) + \| \phi_n^1 - \phi^1 \|_{H^1} \big)^\frac{2}{3} \, \big( 1 + d_{\sin}^2(\phi_n^0, \phi^0) + \| \phi_n^1 - \phi^1 \|_{H^1} \big)^\frac{1}{3}.
\end{align*}
The convergence in~\eqref{flushing} finally results from~\eqref{meadows},~\eqref{isner} and the identity $\phi_n - \phi = f_n - f + \varphi_n - \varphi$. This concludes the proofs of Step~\ref{SG3} and of Theorem~\ref{thm:SG-Cauchy-smooth}. \qed

%%%%%%%%%%%%%%%%%%%%%%%%%%%%%%%%%%%%%%%%%%%%%%%%%%%%%%%%%%%%%
%%%%%%%%%%%%%%%%%%%%%%%%%%%%%%%%%%%%%%%%%%%%%%%%%%%%%%%%%%%%%
%%%%%%%%%%%%%%%%%%%%%%%%%%%%%%%%%%%%%%%%%%%%%%%%%%%%%%%%%%%%%
\section{The Cauchy problem for the Landau-Lifshitz equation}
\label{sec:LL-Cauchy}
%%%%%%%%%%%%%%%%%%%%%%%%%%%%%%%%%%%%%%%%%%%%%%%%%%%%%%%%%%%%%
%%%%%%%%%%%%%%%%%%%%%%%%%%%%%%%%%%%%%%%%%%%%%%%%%%%%%%%%%%%%%
%%%%%%%%%%%%%%%%%%%%%%%%%%%%%%%%%%%%%%%%%%%%%%%%%%%%%%%%%%%%%

In this section, the parameters $\lambda_1$ and $\lambda_3$ are fixed non-negative numbers.

%%%%%%%%%%%%%%%%%%%%%%%%%%%%%%%%%%%%%%%%%%%%%%%%%%%%%%%%%%%%%%%%%%%%
%%%%%%%%%%%%%%%%%%%%%%%%%%%%%%%%%%%%%%%%%%%%%%%%%%%%%%%%%%%%%%%%%%%%
\subsection{Density in the spaces \texorpdfstring{$\boE^k(\R^N)$}{}}
\label{sub:density-Ek}
%%%%%%%%%%%%%%%%%%%%%%%%%%%%%%%%%%%%%%%%%%%%%%%%%%%%%%%%%%%%%%%%%%%%
%%%%%%%%%%%%%%%%%%%%%%%%%%%%%%%%%%%%%%%%%%%%%%%%%%%%%%%%%%%%%%%%%%%%

The proof of Theorem~\ref{thm:LL-Cauchy} below relies on a compactness argument, which requires the density of smooth functions in the sets $\boE^k(\R^N)$. Recall that these sets are equal to $Z^k(\R^N, \S^2)$ for any integer $k \geq 1$, where the vector spaces $Z^k(\R^N)$ are defined as in~\eqref{def:Zk}. In particular, the sets $\boE^k(\R^N)$ are complete metric spaces for the distance corresponding to the $Z^k$-norm. Using this norm, we can generalize~\cite[Lemma A.1]{deLaGra1} to arbitrary dimensions in order to check the density of smooth functions.

\begin{lem}
\label{lem:dens-smooth}
Let $k \in \N$, with $k > \frac{N}{2}$. Given any function $m \in \boE^k(\R^N)$, there exists a sequence of smooth functions $m_n \in \boE(\R^N)$, with $\nabla m_n \in H^\infty(\R^N)$, such that the differences $m_n - m$ are in $H^k(\R^N)$, and satisfy
$$m_n - m \to 0 \quad {\rm in} \ H^k(\R^N),$$
as $n \to \infty$. In particular, we have
$$\| m_n - m \|_{Z^k} \to 0.$$
\end{lem}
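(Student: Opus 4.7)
The approach is to mollify $m$ by convolution and then project back onto $\S^2$. Let $\rho \in \boC^\infty_c(\R^N, [0, \infty))$ be a standard mollifier with $\int \rho = 1$, put $\rho_n(x) := n^N \rho(nx)$, and set $\tilde{m}_n := \rho_n \ast m$. Then $\tilde{m}_n \in \boC^\infty(\R^N)$, and by Jensen's inequality $|\tilde{m}_n| \leq 1$ pointwise. Since $k > N/2$, Sobolev embedding applied to $\nabla m \in H^{k - 1}(\R^N)$ gives $\nabla m \in L^p(\R^N)$ for some $p > N$, and Morrey's inequality then ensures that $m$ is (Hölder) uniformly continuous on $\R^N$. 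Consequently $\tilde{m}_n \to m$ uniformly, hence $|\tilde{m}_n| \to 1$ uniformly, and for $n$ large $|\tilde{m}_n| \geq 1/2$ everywhere. We then set $m_n := \tilde{m}_n / |\tilde{m}_n|$, which is smooth and $\S^2$-valued, with $\nabla m_n \in H^\infty(\R^N)$. The fact that $(m_n)_1, (m_n)_3 \in L^2(\R^N)$ follows from $\rho_n \ast m_j \in L^2$ for $j = 1, 3$ and $1/|\tilde{m}_n| \leq 2$, so that $m_n \in \boE(\R^N)$.

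The convergence of $\nabla (m_n - m) \to 0$ in $H^{k - 1}(\R^N)$ rests on the formula
\begin{equation*}
\nabla m_n = \frac{\nabla \tilde{m}_n}{|\tilde{m}_n|} - \tilde{m}_n \, \frac{\tilde{m}_n \cdot \nabla \tilde{m}_n}{|\tilde{m}_n|^3},
\end{equation*}
combined with the standard mollifier property $\nabla \tilde{m}_n \to \nabla m$ in $H^{k - 1}$, the uniform convergence $|\tilde{m}_n| \to 1$, the identity $m \cdot \nabla m \equiv 0$ coming from $|m|^2 = 1$, and the tame product/composition estimates of Appendix~\ref{sec:tame-estimates}, which apply since $1/|\tilde{m}_n|$ is uniformly bounded.

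The main obstacle is the $L^2$-convergence of $m_n - m$ itself, because $m_2 \in L^\infty(\R^N)$ carries no $L^2$ control. Decomposing
\begin{equation*}
(m_n)_j - m_j = \frac{(\tilde{m}_n)_j - m_j}{|\tilde{m}_n|} + m_j \, \frac{1 - |\tilde{m}_n|}{|\tilde{m}_n|},
\end{equation*}
the components $j \in \{1, 3\}$ go to $0$ in $L^2$ immediately, using $\rho_n \ast m_j \to m_j$ in $L^2$, the bound $m_j \in L^2$, and $\| \,|\tilde{m}_n| - 1\|_{L^\infty} \to 0$. For $j = 2$ we split $\R^N = A \cup A^c$ where $A := \{x : m_1(x)^2 + m_3(x)^2 < 1/2\}$. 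On $A^c$, the constraint $m_1^2 + m_3^2 \geq 1/2$ forces $|A^c| \leq 2 (\| m_1 \|_{L^2}^2 + \| m_3 \|_{L^2}^2) < \infty$, and since $\| m_n - m \|_{L^\infty} \to 0$ one gets $\| (m_n - m)_2 \|_{L^2(A^c)} \leq |A^c|^{1/2} \| m_n - m \|_{L^\infty} \to 0$. On $A$ we exploit the sphere constraint $|m_n|^2 = |m|^2 = 1$, which yields $(m_n - m) \cdot (m_n + m) = 0$ and hence
\begin{equation*}
(m_n - m)_2 \, (m_n + m)_2 = - \sum_{j \in \{1, 3\}} (m_n - m)_j \, (m_n + m)_j.
\end{equation*}
On $A$ one has $|m_2| > 1/\sqrt{2}$; by uniform convergence of $(m_n)_2$ to $m_2$, for $n$ large the signs of $(m_n)_2$ and $m_2$ agree pointwise on $A$, so $|(m_n + m)_2| \geq c > 0$ on $A$. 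An $L^2 \times L^\infty$ Hölder estimate with $|m_j + (m_n)_j| \leq 2$ and the already proved $L^2$-convergence $(m_n - m)_j \to 0$ for $j \in \{1,3\}$ then gives $\| (m_n - m)_2 \|_{L^2(A)} \to 0$. Combined with the gradient estimate, this proves $m_n - m \to 0$ in $H^k(\R^N)$, and the $Z^k$-convergence follows since $\| \cdot \|_{Z^k} \lesssim \| \cdot \|_{H^k}$ on $m_n - m$ once this difference lies in $H^k$, the $L^\infty$-component being controlled by Sobolev embedding for $k > N/2$.
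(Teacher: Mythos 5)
Your proof is correct in substance and follows the same mollify-and-project scheme as the paper, but the mollification step and the $L^2$-control of the troublesome second component are handled by genuinely different means, so it is worth comparing the two.

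The paper does not use a standard nonnegative mollifier. It takes a band-limited $\chi$ with $\widehat{\chi} = 1$ on $B(0,1)$ and $\widehat{\chi} = 0$ outside $B(0,2)$, and sets $\widehat{\mu_n}(\xi) = \widehat{\chi}(\xi/n)\widehat{m}(\xi)$. The point is that $\widehat{\mu_n - m}$ then vanishes identically on $B(0,n)$, so $\mu_n - m$ is supported in high frequency; combined with $\nabla m \in H^{k-1}$, this makes $\|\mu_n - m\|_{H^k} \to 0$ immediate, \emph{including} the second component $m_2$, which is only in $L^\infty$. This is precisely the step that is delicate in your approach: with a nonnegative $\rho$, $\widehat{\rho}(\xi/n) - 1$ does not vanish near $\xi = 0$, so $\rho_n * m_2 - m_2$ has nontrivial low-frequency content and its $L^2$-smallness is not automatic. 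You get around this nicely via the sphere constraint: showing $(m_n - m)_1, (m_n - m)_3 \to 0$ in $L^2$ from the $L^2$-bounds on $m_1, m_3$, and then recovering $(m_n - m)_2$ on $\{m_1^2 + m_3^2 < 1/2\}$ from the identity $(m_n - m)\cdot(m_n + m) = 0$, and on the complement (which has finite measure) from uniform convergence. This sidesteps the need to prove $\rho_n*m_2 \to m_2$ in $L^2$ entirely. Meanwhile your Jensen bound $|\tilde m_n| \le 1$ is a clean bonus that the Fourier truncation does not give, though both proofs ultimately rely only on $n$ large to guarantee $|\tilde m_n| \ge 1/2$. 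Your treatment of the gradient convergence is a correct sketch at roughly the same level of detail as the paper, which also defers the $H^k$-convergence of $m_n - m$ after projection to routine Moser estimates; if you were to fill this in, the key point is that the Moser estimate $\|fg\|_{\dot H^m} \lesssim \max\{\|f\|_{L^\infty}\|g\|_{\dot H^m}, \|f\|_{\dot H^m}\|g\|_{L^\infty}\}$ should always be arranged so that a factor that is small in $L^\infty$ or small in $\dot H^m$ appears, e.g.\ by rewriting $\tilde m_n\cdot\nabla\tilde m_n = \tfrac12 \nabla\bigl((\tilde m_n - m)\cdot(\tilde m_n + m)\bigr)$ using $m\cdot\nabla m = 0$, rather than invoking any bound on $\|\nabla\tilde m_n\|_{L^\infty}$ which need not be uniform.
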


\begin{rem}
This density result is not necessarily true when $k \leq \frac{N}{2}$ (see e.g.~\cite[Section 4]{SchoUhl1} for a discussion about this claim).
\end{rem}

\begin{proof}
The proof is reminiscent from the one of~\cite[Lemma A.1]{deLaGra1}, which relies on standard arguments introduced in~\cite{SchoUhl1}. For the sake of completeness, we recall the following details.

We consider a function $\chi \in \boC^\infty(\R^N)$, with a compactly supported Fourier transform, and such that $|\widehat{\chi}| \leq 1$, $\widehat{\chi} = 1$ on the unit ball $B(0, 1)$, and $\widehat{\chi} = 0$ outside the ball $B(0, 2)$. We set
$$\mu_n(x) = n^N \int_{\R^N} \chi(n (x - y)) \, m(y) \, dy,$$
for any $n \in \N^*$ and $x \in \R^N$. Since $\chi$ belongs to the Schwartz class, the functions $\mu_n$ are well-defined and smooth on $\R^N$, and their Fourier transforms are equal to
$$\widehat{\mu_n}(\xi) = \widehat{\chi} \Big( \frac{\xi}{n} \Big) \widehat{m}(\xi).$$
As a consequence of this identity, their gradients $\mu_n$ belong to $H^\infty(\R^N)$, and the differences $\mu_n - m$ are in $H^k(\R^N)$, with
\begin{equation}
\label{pericles}
\mu_n - m \to 0 \quad {\rm in} \ H^k(\R^N),
\end{equation}
as $n \to \infty$. However, the functions $\mu_n$ are not $\S^2$-valued, so that they do not belong to the energy space $\boE(\R^N)$.

In order to fill this gap, we deduce from~\eqref{pericles} and the Sobolev embedding theorem that
$$\big\| |\mu_n| - 1 \big\|_{L^\infty} \to 0,$$
as $n \to \infty$. Therefore, the map $m_n = \mu_n/|\mu_n|$ is well-defined for $n$ large enough, and it satisfies the conclusions of Lemma~\ref{lem:dens-smooth}.
\end{proof}

%%%%%%%%%%%%%%%%%%%%%%%%%%%%%%%%%%%%%%%%%%%%%%%%%%%%%%%%%%%%%%%
%%%%%%%%%%%%%%%%%%%%%%%%%%%%%%%%%%%%%%%%%%%%%%%%%%%%%%%%%%%%%%%
\subsection{Proof of Proposition~\ref{prop:LL-energy-estimate}}
\label{sub:LL-energy-estimate}
%%%%%%%%%%%%%%%%%%%%%%%%%%%%%%%%%%%%%%%%%%%%%%%%%%%%%%%%%%%%%%%
%%%%%%%%%%%%%%%%%%%%%%%%%%%%%%%%%%%%%%%%%%%%%%%%%%%%%%%%%%%%%%%

Let $T$ be a fixed positive number. Concerning the conservation of the Landau-Lifshitz energy, it follows from the smoothness assumptions on the solution $m$ that the function $E_{\rm LL}^1$ is of class $\boC^1$ on $[0, T]$, and that its time derivative is equal to
$$\big[ E_{\rm LL}^1 \big]'(t) = \int_{\R^N} \Big\langle \partial_t m, - \Delta m + \lambda_1 m_1 e_1 + \lambda_3 m_3 e_3 \Big\rangle_{\R^3}(x, t) \, dx.$$
In view of~\eqref{LL}, this expression identically vanishes, so that the Landau-Lifshitz energy is indeed conserved along the flow.

We now turn to the proof of~\eqref{eq:energy-estimate-LL}. Combining the assumptions in Proposition~\ref{prop:LL-energy-estimate} with the Moser estimates in Lemma~\ref{lem:moser}, we check that the second order derivative $\partial_{tt} m$ is well-defined as a function of $\boC^0([0, T], H^{k - 2}(\R^N))$. In view of~\eqref{def:E-LL-k}, the energies $E_{\rm LL}^\ell$ are of class $\boC^1$ on $[0, T]$, and we can integrate by parts in order to obtain the formula
\begin{equation}
\label{alabama}
\begin{split}
\big[ E_{\rm LL}^\ell \big]'(t) = \sum_{|\alpha| = \ell - 2} \int_{\R^N} \Big\langle \partial_t \partial_x^\alpha m, \partial_x^\alpha \Big( \partial_{tt} m + \Delta^2 m & - (\lambda_1 + \lambda_3) \big( \Delta m_1 e_1 + \Delta m_3 e_3 \big)\\
& + \lambda_1 \lambda_3 \big( m_1 e_1 + m_3 e_3 \big) \Big) \Big\rangle_{\R^3}(x, t) \, dx,
\end{split}
\end{equation}
for any $t \in [0, T]$. On the other hand, we derive from~\eqref{LL} the identity
\begin{equation}
\label{eq:second-LL}
\partial_{tt} m + \Delta^2 m - (\lambda_1 + \lambda_3) \big( \Delta m_1 e_1 + \Delta m_3 e_3 \big) + \lambda_1 \lambda_3 \big( m_1 e_1 + m_3 e_3 \big) = F(m),
\end{equation}
where we have set
\begin{equation}
\label{usa}
\begin{split}
& F(m) := \sum_{1 \leq i, j \leq N} \Big( \partial_i \big( 2 \langle \partial_i m, \partial_j m \rangle_{\R^3} \partial_j m - |\partial_j m|^2 \partial_i m \big) - 2 \partial_{ij} \big( \langle \partial_i m, \partial_j m \rangle_{\R^3} m \big) \Big)\\
& + \lambda_1 \Big( \div \big( (m_3^2 - 2 m_1^2) \nabla m + (m_1 m - m_3^2 e_1 + m_1 m_3 e_3) \nabla m_1 + (m_1 m_3 e_1 - m_3 m - m_1^2 e_3) \nabla m_3 \big)\\
& \quad \quad \quad + \nabla m_1 \cdot \big( m_1 \nabla m - m \nabla m_1 \big) + \nabla m_3 \cdot \big( m \nabla m_3 - m_3 \nabla m \big) + m_3 |\nabla m|^2 e_3\\
& \quad \quad \quad + \big( m_1 \nabla m_3 - m_3 \nabla m_1 \big) \cdot \big( \nabla m_1 e_3 - \nabla m_3 e_1 \big) + \lambda_1 m_1^2 \big( m_1 e_1 - m \big) \Big)\\
& + \lambda_3 \Big( \div \big( (m_1^2 - 2 m_3^2) \nabla m + (m_1 m_3 e_3 - m_1 m - m_3^2 e_1) \nabla m_1 + (m_3 m - m_1^2 e_3 + m_1 m_3 e_1) \nabla m_3 \big)\\
& \quad \quad \quad + \nabla m_3 \cdot \big( m_3 \nabla m - m \nabla m_3 \big) + \nabla m_1 \cdot \big( m \nabla m_1 - m_1 \nabla m \big) + m_1 |\nabla m|^2 e_1\\
& \quad \quad \quad + \big( m_1 \nabla m_3 - m_3 \nabla m_1 \big) \cdot \big( \nabla m_1 e_3 - \nabla m_3 e_1 \big) + \lambda_3 m_3^2 \big( m_3 e_3 - m \big) \Big)\\
& + \lambda_1 \lambda_3 \Big( (m_1^2 + m_3^2) m + m_1^2 m_3 e_3 + m_3^2 m_1 e_1 \Big).
\end{split}
\end{equation}
In order to derive this expression, we have used the pointwise identities
$$\langle m, \partial_i m \rangle_{\R^3} = \langle m, \partial_{ii} m \rangle_{\R^3} + |\partial_i m|^2 = \langle m, \partial_{iij} m \rangle_{\R^3} + 2 \langle \partial_i m, \partial_{ij} m \rangle_{\R^3} + \langle \partial_j m, \partial_{ii} m \rangle_{\R^3} = 0,$$
which hold for any $1 \leq i, j \leq N$, due to the property that $m$ is valued into the sphere $\S^2$.

Combining~\eqref{alabama} with~\eqref{eq:second-LL} and~\eqref{usa}, we obtain
\begin{equation}
\label{arkansas}
\big[ E_{\rm LL}^\ell \big]'(t) = \sum_{|\alpha| = \ell - 2} \int_{\R^N} \Big\langle \partial_t \partial_x^\alpha m, \partial_x^\alpha F(m) \Big\rangle_{\R^3}(x, t) \, dx,
\end{equation}
for any $t \in [0, T]$. In order to establish the bound in~\eqref{eq:energy-estimate-LL}, we need to control the derivatives $\partial_x^\alpha F(m)$ with respect to the various terms in the quantity $\Sigma_{\rm LL}^k$ by applying the Leibniz formula and the Moser estimates in Lemma~\ref{lem:moser}. We face the difficulty that the derivative $\partial_x^\alpha F(m)$ contains partial derivatives of order $\ell + 1$ of the function $m$, which cannot be a priori controlled by the quantity $\Sigma_{\rm LL}^\ell$.

In order to by-pass this difficulty, we decompose the derivative $\partial_x^\alpha F(m)$ as
\begin{equation}
\label{arizona}
\partial_x^\alpha F(m) = G^\alpha(m) - 2 \sum_{1 \leq i, j \leq N} \partial_x^\alpha \partial_{ij} \big( \langle \partial_i m, \partial_j m \rangle_{\R^3} \big) m,
\end{equation}
where the function $G^\alpha(m)$ satisfies
\begin{equation}
\label{california}
\| G^\alpha(m)(\cdot, t) \|_{L^2} \leq C_k \big( 1 + \| m_1(\cdot, t) \|_{L^\infty}^2 + \| m_3(\cdot, t) \|_{L^\infty}^2 + \| \nabla m(\cdot, t) \|_{L^\infty}^2 \big) \, \sqrt{\Sigma_{LL}^\ell(t)},
\end{equation}
for any $t \in [0, T]$. Inequality~\eqref{california} is a consequence of the Leibniz formula and the Moser estimates in Lemma~\ref{lem:moser}. The use of theses estimates is allowed by the uniform boundedness of the gradient $\nabla m$, which results from the Sobolev embedding theorem and the assumption $k > N/2 + 1$.

We then introduce the remaining term of the decomposition of $\partial_x^\alpha F(m)$ into~\eqref{arkansas}, and integrate by parts in order to write
\begin{equation}
\label{carolina}
\begin{split}
\int_{\R^N} \Big\langle \partial_t \partial_x^\alpha m, & \partial_x^\alpha \partial_{ij} \big( \langle \partial_i m, \partial_j m \rangle_{\R^3} \big) m \Big\rangle_{\R^3}\\
& = - \int_{\R^N} \partial_x^\alpha \partial_j \big( \langle \partial_i m, \partial_j m \rangle_{\R^3} \big) \Big( \big\langle \partial_t \partial_x^\alpha \partial_i m, m \big\rangle_{\R^3} + \big\langle \partial_t \partial_x^\alpha m, \partial_i m \big\rangle_{\R^3} \Big),
\end{split}
\end{equation}
for any $1 \leq i, j \leq N$. Invoking once again the Leibniz formula and Lemma~\ref{lem:moser}, we directly check that
\begin{equation}
\label{connecticut}
\bigg| \int_{\R^N} \Big( \partial_x^\alpha \partial_j \big( \langle \partial_i m, \partial_j m \rangle_{\R^3} \big) \, \big\langle \partial_t \partial_x^\alpha m, \partial_i m \big\rangle_{\R^3} \Big)(x, t) \, dx \bigg| \leq C_k \, \| \nabla m(\cdot, t) \|_{L^\infty}^2 \, \Sigma_{LL}^\ell(t).
\end{equation}
On the other hand, we can invoke the Landau-Lifshitz equation and the Leibniz formula in order to write
\begin{align*}
\partial_x^\alpha & \partial_j \big( \langle \partial_i m, \partial_j m \rangle_{\R^3} \big) \, \big\langle \partial_t \partial_x^{\alpha^*} m, m \big\rangle_{\R^3}\\
= & - \sum_{\beta \leq \alpha^*} \binom{\alpha^*}{\beta} \partial_x^\alpha \partial_j \big( \langle \partial_i m, \partial_j m \rangle_{\R^3} \big) \, \big\langle \partial_x^\beta m \times \partial_x^{\alpha^* - \beta} \big( \Delta m - \lambda_1 m_1 e_1 - \lambda_3 m_3 e_3 \big), m \big\rangle_{\R^3},
\end{align*}
where $\partial_x^{\alpha^*} := \partial_x^\alpha \partial_i$. For $\beta = 0$, the quantity in the right-hand side of this formula vanishes. This cancellation is the key point in order to infer again from Lemma~\ref{lem:moser} that
\begin{align*}
\bigg| \int_{\R^N} \Big( \partial_x^\alpha \partial_j & \big( \langle \partial_i m, \partial_j m \rangle_{\R^3} \big) \, \big\langle \partial_t \partial_x^{\alpha^*} m, m \big\rangle_{\R^3} \Big)(x, t) \, dx \bigg|\\
\leq & C_k \| \nabla m(\cdot, t) \|_{L^\infty} \, \big( \| \nabla m(\cdot, t) \|_{L^\infty} + \| m_1(\cdot, t) \|_{L^\infty} + \| m_3(\cdot, t) \|_{L^\infty} \big) \, \Sigma_{LL}^\ell(t).
\end{align*}
We finally gather this estimate with~\eqref{arkansas},~\eqref{california},~\eqref{carolina} and~\eqref{connecticut} in order to derive~\eqref{eq:energy-estimate-LL}. This completes the proof of Proposition~\ref{prop:LL-energy-estimate}. \qed

%%%%%%%%%%%%%%%%%%%%%%%%%%%%%%%%%%%%%%%%%%%%%%%%%%%%%%%%%%%%
%%%%%%%%%%%%%%%%%%%%%%%%%%%%%%%%%%%%%%%%%%%%%%%%%%%%%%%%%%%%
\subsection{Proof of Proposition~\ref{prop:LL-diff-control}}
\label{sub:LL-diff-control}
%%%%%%%%%%%%%%%%%%%%%%%%%%%%%%%%%%%%%%%%%%%%%%%%%%%%%%%%%%%%
%%%%%%%%%%%%%%%%%%%%%%%%%%%%%%%%%%%%%%%%%%%%%%%%%%%%%%%%%%%%

We first compute the equation
\begin{equation}
\label{eq:diff-LL}
\partial_t u = - v \times \big( \Delta u - J(u) \big) - u \times \big( \Delta v - J(v) \big),
\end{equation}
where we have set $J(u) := \lambda_1 u_1 e_1 + \lambda_3 u_3 e_3$ and $J(v) := \lambda_1 v_1 e_1 + \lambda_3 v_3 e_3$. Under the assumptions of Proposition~\ref{prop:LL-diff-control}, the time derivative $\partial_t u$ lies in $\boC^0([0, T], L^2(\R^N))$, so that the function $u - u^0$ belongs to the space $\boC^1([0, T], L^2(\R^N))$. Since $u_1^0$ and $u_3^0$ are in $L^2(\R^N)$, the quantity $\gE_{\rm LL}^0$ is well-defined and of class $\boC^1$ on $[0, T]$. When $1 \leq \ell \leq k - 1$, it similarly follows from the smoothness assumptions in Proposition~\ref{prop:LL-diff-control} that the quantities $\gE_{\rm LL}^\ell$ are well-defined and of class $\boC^1$ on $[0, T]$. We now split the proof of their control into three cases according to the value of $\ell$.

\begin{cas}
\label{D1}
$\ell = 0$.
\end{cas}

In view of~\eqref{def:gE0} and~\eqref{eq:diff-LL}, we obtain after integrating by parts,
\begin{align*}
\big[ \gE_{\rm LL}^0 \big]'(t) = \int_{\R^N} \Big( \sum_{j = 1}^N \big\langle \partial_j u - \partial_j u_2^0 \, e_2, v \times \partial_j u & + u \times \partial_j v \big\rangle_{\R^3}\\
& + \big\langle u - u_2^0 \, e_2, v \times J(u) + u \times J(v) \big\rangle_{\R^3} \Big)(x, t) \, dx,
\end{align*}
for any $t \in [0, T]$. The estimate in~\eqref{eq:control-diff-0-LL} is then a consequence of the H\"older inequality and the fact that $|v| \leq 1$.

\begin{cas}
\label{D2}
$\ell = 1$.
\end{cas}

We similarly derive from the definition of the function $\gE_{\rm LL}^1$ that
$$\big[ \gE_{\rm LL}^1 \big]'(t) = \int_{\R^N} \Big( - \big\langle \partial_t u, \Delta u \big\rangle_{\R^3} + \sum_{i = 1}^N \big\langle u \times \partial_i v + v \times \partial_i u, \partial_t (u \times \partial_i v + v \times \partial_i u) \big\rangle_{\R^3} \Big)(x, t) \, dx.$$
In view of~\eqref{eq:diff-LL} and after some integration by parts, the first term in the right-hand side of this formula writes as
$$- \int_{\R^N} \big\langle \partial_t u, \Delta u \big\rangle_{\R^3}(x, t) \, dx = - \sum_{i = 1}^N \int_{\R^N} \big\langle u \times \partial_i v + v \times \partial_i u, \partial_i \Delta u \big\rangle_{\R^3}(x, t) \, dx + \gI_1^1(t),$$
where we have set
$$\gI_1^1(t) := \sum_{i = 1}^N \int_{\R^N} \big\langle \partial_i u, \partial_i v \times J(u) + v \times \partial_i J(u) + u \times \partial_i J(v) \big\rangle_{\R^3}(x, t) \, dx.$$
Therefore, we have
\begin{equation}
\label{delaware}
\big[ \gE_{\rm LL}^1 \big]'(t) = \gI_1^1(t) + \sum_{i = 1}^N \int_{\R^N} \big\langle u \times \partial_i v + v \times \partial_i u, \partial_t (u \times \partial_i v + v \times \partial_i u) - \partial_i \Delta u \big\rangle_{\R^3} \Big)(x, t) \, dx.
\end{equation}

At this stage, we have to compute the time derivative in the right-hand side of~\eqref{delaware}. We first check that the function $v$ is solution to the equation
$$\partial_t v = - v \times \big( \Delta v - J(v) \big) - \frac{1}{4} u \times \big( \Delta u - J(u) \big),$$
In view of~\eqref{eq:diff-LL} and using the identities
\begin{equation}
\label{florida}
\langle u, v \rangle_{\R^3} = 0, \quad {\rm and} \quad |v|^2 + \frac{1}{4} |u|^2 = 1,
\end{equation}
we are led to the formula
\begin{equation}
\label{dakota}
\begin{split}
\partial_t (u \times \partial_i v + v \times \partial_i u) & = \partial_i \Delta u - \partial_i u \, \Big( \frac{1}{4} \langle u, \Delta u \rangle_{\R^3} + \langle v, \Delta v \rangle_{\R^3} \Big) - \partial_i v \, \big( \langle u, \Delta v \rangle_{\R^3} + \langle v, \Delta u \rangle_{\R^3} \big)\\
& + u \, \Big( \langle \partial_i v, \Delta v \rangle_{\R^3} + \frac{1}{4} \langle \partial_i u, \Delta u \rangle_{\R^3} - \langle v, \partial_i \Delta v \rangle_{\R^3} - \frac{1}{4} \langle u, \partial_i \Delta u \rangle_{\R^3} \Big)\\
& + v \, \Big( \langle \partial_i v, \Delta u \rangle_{\R^3} + \langle \partial_i u, \Delta v \rangle_{\R^3} - \langle u, \partial_i \Delta v \rangle_{\R^3} - \langle v, \partial_i \Delta u \rangle_{\R^3} \Big) + \gj_2^1(u, v).
\end{split}
\end{equation}
In this expression, the term depending on the anisotropic vectors $J(u)$ and $J(v)$ is given by
\begin{align*}
\gj_2^1(u, v) := & - \partial_i J(u) + \partial_i u \, \Big( \frac{1}{4} \langle u, J(u) \rangle_{\R^3} + \langle v, J(v) \rangle_{\R^3} \Big) + \partial_i v \, \big( \langle u, J(v) \rangle_{\R^3} + \langle v, J(u) \rangle_{\R^3} \big)\\
& + u \, \Big( - \langle \partial_i v, J(v) \rangle_{\R^3} - \frac{1}{4} \langle \partial_i u, J(u) \rangle_{\R^3} + \langle v, \partial_i J(v) \rangle_{\R^3} + \frac{1}{4} \langle u, \partial_i J(u) \rangle_{\R^3} \Big)\\
& + v \, \Big( - \langle \partial_i v, J(u) \rangle_{\R^3} - \langle \partial_i u, J(v) \rangle_{\R^3} + \langle u, \partial_i J(v) \rangle_{\R^3} + \langle v, \partial_i J(u) \rangle_{\R^3} \Big).
\end{align*}
This anisotropic term is not difficult to estimate, but we have to find a cancellation in the other terms in order to prove the bound in~\eqref{eq:control-diff-1-LL}.

In this direction, we first differentiate the identities in~\eqref{florida} in order to get
$$\langle u, \Delta v \rangle_{\R^3} + \langle \Delta u, v \rangle_{\R^3} = - 2 \sum_{j = 1}^N \langle \partial_j u, \partial_j v \rangle_{\R^3},$$
and
$$\langle v, \Delta v \rangle_{\R^3} + \frac{1}{4} \langle u, \Delta u \rangle_{\R^3} = - |\nabla v|^2 - \frac{1}{4} |\nabla u|^2.$$
Similarly, we have
$$\langle u, \partial_i \Delta v \rangle_{\R^3} + \langle \partial_i \Delta u, v \rangle_{\R^3} = - 2 \sum_{j = 1}^N \big( \langle \partial_{ij} u, \partial_j v \rangle_{\R^3} + \langle \partial_j u, \partial_{ij} v \rangle_{\R^3} \big) - \langle \partial_i u, \Delta v \rangle_{\R^3} - \langle \Delta u, \partial_i v \rangle_{\R^3},$$
and
\begin{align*}
\langle v, \partial_i \Delta v \rangle_{\R^3} + & \frac{1}{4} \langle u, \partial_i \Delta u \rangle_{\R^3}\\
& = - 2 \sum_{j = 1}^N \Big( \langle \partial_{ij} v, \partial_j v \rangle_{\R^3} + \frac{1}{4} \langle \partial_{ij} u, \partial_j u \rangle_{\R^3} \Big) - \langle \partial_i v, \Delta v \rangle_{\R^3} - \frac{1}{4} \langle \partial_i u, \Delta u \rangle_{\R^3}.
\end{align*}
Introducing these identities into~\eqref{dakota}, we get
\begin{equation}
\label{louisiana}
\begin{split}
\partial_t (u \times \partial_i v + v \times \partial_i u) & = \partial_i \Delta u + 2 v \, \sum_{j = 1}^N \partial_j \Big( \langle \partial_i v, \partial_j u \rangle_{\R^3} + \langle \partial_i u, \partial_j v \rangle_{\R^3} \Big) + \gj_2^1(u, v) + \gj_3^1(u, v),
\end{split}
\end{equation}
where we have set
\begin{align*}
\gj_3^1(u, v) := & \Big( |\nabla v|^2 + \frac{1}{4} |\nabla u|^2 \Big) \, \partial_i u + 2 \sum_{j = 1}^N \langle \partial_j u, \partial_j v \rangle_{\R^3} \, \partial_i v\\
& + 2 u \, \bigg( \langle \partial_i v, \Delta v \rangle_{\R^3} + \frac{1}{4} \langle \partial_i u, \Delta u \rangle_{\R^3} + \sum_{j = 1}^N \Big( \langle \partial_{ij} v, \partial_j v \rangle_{\R^3} + \frac{1}{4} \langle \partial_{ij} u, \partial_j u \rangle_{\R^3} \Big) \bigg).
\end{align*}

At this point, we come back to~\eqref{delaware}. We use the cancellation of the terms $\partial_i \Delta u$ in~\eqref{delaware} and~\eqref{louisiana}, and integrate by parts in order to obtain
\begin{equation}
\label{georgia}
\begin{split}
\big[ \gE_{\rm LL}^1 \big]'(t) = & \gI_1^1(t) + \sum_{i = 1}^N \int_{\R^N} \big\langle u \times \partial_i v + v \times \partial_i u, \gj_2^1(u, v) + \gj_3^1(u, v) \big\rangle(x, t) \, dx\\
& - 2 \sum_{j = 1}^N \sum_{i = 1}^N \int_{\R^N} \Big( \big( \langle \partial_j u \times \partial_i v, v \rangle_{\R^3} + \langle u \times \partial_{ij} v, v \rangle_{\R^3} + \langle u \times \partial_i v, \partial_j v \rangle_{\R^3} \big) \times\\
& \quad \quad \quad \quad \quad \quad \quad \quad \times \big( \langle \partial_i v, \partial_j u \rangle_{\R^3} + \langle \partial_i u, \partial_j v \rangle_{\R^3} \big) \Big)(x, t) \, dx.
\end{split}
\end{equation}
With this formula at hand, we can prove the bound in~\eqref{eq:control-diff-1-LL}. Indeed, we first check that the last terms in~\eqref{georgia} satisfy
\begin{align*}
\int_{\R^N} \bigg| \big( \langle \partial_j u \times \partial_i v, v \rangle_{\R^3} + & \langle u \times \partial_{ij} v, v \rangle_{\R^3} + \langle u \times \partial_i v, \partial_j v \rangle_{\R^3} \big) \, \big( \langle \partial_i v, \partial_j u \rangle_{\R^3} + \langle \partial_i u, \partial_j v \rangle_{\R^3} \big) \bigg|\\
\leq & K \| \nabla v \|_{L^\infty} \, \| \nabla u \|_{L^2} \, \Big( \| \nabla v \|_{L^\infty} \, \| \nabla u \|_{L^2} + \big( \| \nabla v \|_{L^\infty}^2 + \| v \|_{\dot{H}^2} \big) \, \| u \|_{L^\infty} \Big), 
\end{align*}
where $K$ refers, here as in the sequel, to a universal constant. Coming back to the definitions of the functions $\gj_2^1(u, v)$ and $\gj_3^1(u, v)$ and using the inequalities $|u| \leq 2$ and $|v| \leq 1$, we check that
$$\| \gj_2^1(u, v) \|_{L^2} \leq K \big( \| \nabla u \|_{L^2} + \| \nabla v \|_{L^2} \, \| u \|_{L^\infty} \big),$$
and
$$\| \gj_3^1(u, v) \|_{L^2} \leq K \Big( \big( \| \nabla v \|_{L^\infty}^2 + \| \nabla u \|_{L^\infty}^2 \big) \, \| \nabla u \|_{L^2} + \big( \| \nabla v \|_{L^\infty} \, \| v \|_{\dot{H}^2} + \| \nabla u \|_{L^\infty} \, \| u \|_{\dot{H}^2} \big) \, \| u \|_{L^\infty} \Big).$$
This provides the estimate
\begin{align*}
& \int_{\R^N} \big\langle u \times \partial_i v + v \times \partial_i u, \gj_2^1(u, v) + \gj_3^1(u, v) \big\rangle \leq K \big( \| \nabla u \|_{L^2} + \| \nabla v \|_{L^2} \, \| u \|_{L^\infty} \big) \times\\
& \times \Big( \big( 1 + \| \nabla v \|_{L^\infty}^2 + \| \nabla u \|_{L^\infty}^2 \big) \, \| \nabla u \|_{L^2} + \big( \| \nabla v \|_{L^2} + \| \nabla v \|_{L^\infty} \, \| v \|_{\dot{H}^2} + \| \nabla u \|_{L^\infty} \, \| u \|_{\dot{H}^2} \big) \, \| u \|_{L^\infty} \Big).
\end{align*}
Finally, we bound the quantity $\gI_1^1(t)$ by
$$\big| \gI_1^1(t) \big| \leq \big( \| \nabla u(\cdot, t) \|_{L^2} + \| \nabla v(\cdot, t) \|_{L^2} \, \| u(\cdot, t) \|_{L^\infty} \big) \, \| \nabla u(\cdot, t) \|_{L^2}.$$

Gathering all these estimates of~\eqref{georgia}, and recalling that
$$\| \nabla u \|_{L^\infty} + \| \nabla v \|_{L^\infty} \leq K \big( \| \nabla m \|_{L^\infty} + \| \nabla \tilde{m} \|_{L^\infty} \big),$$
and
$$\| u \|_{\dot{H}^j} + \| v \|_{\dot{H}^j} \leq K \big( \| m \|_{\dot{H}^j} + \| \tilde{m} \|_{\dot{H}^j} \big),$$
for $1 \leq j \leq 2$, we obtain~\eqref{eq:control-diff-1-LL}.

\begin{cas}
\label{D3}
$2 \leq \ell \leq k - 1$.
\end{cas}

The proof is similar to the case $\ell = 1$. We now integrate by parts in order to obtain the formula
\begin{align*}
\big[ \gE_{\rm LL}^\ell \big]'(t) = \sum_{|\alpha| = \ell - 2} \int_{\R^N} \Big\langle \partial_t \partial_x^\alpha u, \partial_x^\alpha \Big( \partial_{tt} u + \Delta^2 u & - (\lambda_1 + \lambda_3) \big( \Delta u_1 e_1 + \Delta u_3 e_3 \big)\\
& + \lambda_1 \lambda_3 \big( u_1 e_1 + u_3 e_3 \big) \Big) \Big\rangle_{\R^3}(x, t) \, dx,
\end{align*}
for any $t \in [0, T]$. Coming back to~\eqref{eq:second-LL},~\eqref{usa} and~\eqref{arizona}, we obtain
\begin{equation}
\label{colorado}
\big[ \gE_{\rm LL}^\ell \big]'(t) = \gI_1^\ell(t) + \gI_2^\ell(t),
\end{equation}
where we have set
$$\gI_1^\ell(t) := \sum_{|\alpha| = \ell - 2} \int_{\R^N} \big\langle \partial_t \partial_x^\alpha u, G^\alpha(\tilde{m}) - G^\alpha(m) \big\rangle_{\R^3}(x, t) \, dx,$$
and
$$\gI_2^\ell(t) := 2 \sum_{|\alpha| = \ell - 2} \sum_{1 \leq i, j \leq N} \int_{\R^N} \big\langle \partial_t \partial_x^\alpha u, \partial_x^\alpha \partial_{ij} \big( \langle \partial_i m, \partial_j m \rangle_{\R^3} \big) m - \partial_x^\alpha \partial_{ij} \big( \langle \partial_i \tilde{m}, \partial_j \tilde{m} \rangle_{\R^3} \big) \tilde{m} \big\rangle_{\R^3}(x, t) dx.$$

We now deal with the quantity $\gI_2^\ell$, which is the more difficult term to control in order to derive the bound in~\eqref{eq:control-diff-l-LL}. Expressing the integrand in the formula for $\gI_2^\ell(t)$ in terms of the functions $u$ and $v$, we get
\begin{align*}
\partial_x^\alpha \partial_{ij} \big( \langle \partial_i \tilde{m}, \partial_j \tilde{m} \rangle_{\R^3} \big) \tilde{m} - \partial_x^\alpha \partial_{ij} \big( \langle \partial_i m, & \partial_j m \rangle_{\R^3} \big) m\\
= & \partial_x^\alpha \partial_{ij} \big( \langle \partial_i u, \partial_j v \rangle_{\R^3} \big) v + \partial_x^\alpha \partial_{ij} \big( \langle \partial_i v, \partial_j u \rangle_{\R^3} \big) v\\
& + \partial_x^\alpha \partial_{ij} \big( \langle \partial_i v, \partial_j v \rangle_{\R^3} \big) u + \frac{1}{4} \partial_x^\alpha \partial_{ij} \big( \langle \partial_i u, \partial_j u \rangle_{\R^3} \big) u.
\end{align*}

Concerning the last two terms in this identity, we can rely on the Moser estimates in Lemma~\ref{lem:moser} in order to obtain
$$\bigg| \int_{\R^N} \big\langle \partial_t \partial_x^\alpha u, \partial_x^\alpha \partial_{ij} \big( \langle \partial_i v, \partial_j v \rangle_{\R^3} \big) u \big\rangle_{\R^3} \bigg| \leq C \| \nabla v \|_{L^\infty} \, \| \nabla v \|_{\dot{H}^\ell} \, \| u \|_{L^\infty} \, \| \partial_t \partial_x^\alpha u \|_{L^2},$$
and
$$\bigg| \int_{\R^N} \big\langle \partial_t \partial_x^\alpha u, \partial_x^\alpha \partial_{ij} \big( \langle \partial_i u, \partial_j u \rangle_{\R^3} \big) u \big\rangle_{\R^3} \bigg| \leq C \| \nabla u \|_{L^\infty} \, \| \nabla u \|_{\dot{H}^\ell} \, \| u \|_{L^\infty} \, \| \partial_t \partial_x^\alpha u \|_{L^2},$$
where $C$ refers, here as in the sequel, to a positive number depending only on $k$.

The estimates of the two other terms follow from integrating by parts and using~\eqref{eq:diff-LL}. Indeed, this provides the identity
\begin{align*}
\int_{\R^N} \big\langle \partial_t \partial_x^\alpha u, & \partial_x^\alpha \partial_{ij} \big( \langle \partial_i u, \partial_j v \rangle_{\R^3} \big) v \big\rangle_{\R^3}\\
& = \int_{\R^N} \partial_x^\alpha \partial_j \big( \langle \partial_i u, \partial_j v \rangle_{\R^3} \big) \, \big\langle \partial_x^\alpha \partial_i \big( u \times (\Delta v - J(v)) + v \times (\Delta u - J(u)) \big), v \big\rangle_{\R^3}.
\end{align*}
We then directly derive the bound
\begin{align*}
\bigg| \int_{\R^N} \partial_x^\alpha \partial_j \big( \langle \partial_i u, \partial_j v \rangle_{\R^3} \big) \, \big\langle \partial_x^\alpha \partial_i \big( & u \times (\Delta v - J(v)) \big), v \big\rangle_{\R^3} \bigg| \leq C \big( \| u \|_{L^\infty} \| \nabla v \|_{\dot{H}^\ell} + \| \nabla v \|_{L^\infty} \| u \|_{\dot{H}^\ell} \big) \times\\
& \times \Big( \big( \| \nabla v \|_{\dot{H}^{\ell - 2}} + \| \nabla v \|_{\dot{H}^\ell} \big) \, \| u \|_{L^\infty} + \| \nabla v \|_{L^\infty} \, \| u \|_{\dot{H}^\ell} + \| u \|_{\dot{H}^{\ell - 1}} \Big).
\end{align*}
On the other hand, we can invoke the Leibniz formula in order to write
\begin{align*}
\big\langle \partial_x^\alpha \partial_i \big( v \times (\Delta u - J(u)) \big), v \big\rangle_{\R^3} = \sum_{\beta \leq \alpha^*} \binom{\alpha^*}{\beta} \big\langle \partial_x^\beta v \times \partial_x^{\alpha^* - \beta} (\Delta u - J(u)), v \big\rangle_{\R^3},
\end{align*}
with $\partial_x^{\alpha^*} = \partial_x^\alpha \partial_i$, as before. As in the proof of Proposition~\ref{prop:LL-energy-estimate}, we again observe a cancellation for $\beta = 0$. This is enough to guarantee that
\begin{align*}
\bigg| & \int_{\R^N} \partial_x^\alpha \partial_j \big( \langle \partial_i u, \partial_j v \rangle_{\R^3} \big) \, \big\langle \partial_x^\alpha \partial_i \big( v \times (\Delta u - J(u)) \big), v \big\rangle_{\R^3} \bigg| \leq C \big( \| u \|_{L^\infty} \| \nabla v \|_{\dot{H}^\ell} + \| \nabla v \|_{L^\infty} \| u \|_{\dot{H}^\ell} \big) \times\\
& \times \Big( \big( \| \nabla v \|_{\dot{H}^{\ell - 2}} + \| \nabla v \|_{\dot{H}^\ell} \big) \, \| u \|_{L^\infty} + \| \nabla v \|_{L^\infty} \, \| u \|_{\dot{H}^\ell} + \| u \|_{\dot{H}^{\ell - 1}} \Big).
\end{align*}
Collecting all these estimates leads to the inequality
\begin{equation}
\label{hawai}
\begin{split}
|\gI_2^\ell & (t)| \leq C \Big( \big( \| \nabla u(\cdot, t) \|_{L^\infty} \, \| \nabla u(\cdot, t) \|_{\dot{H}^\ell} + \| \nabla v(\cdot, t) \|_{L^\infty} \, \| \nabla v(\cdot, t) \|_{\dot{H}^\ell} \big) \, \| u(\cdot, t) \|_{L^\infty} \, \| \partial_t u(\cdot, t) \|_{\dot{H}^{\ell - 2}}\\
& + \big( \| u(\cdot, t) \|_{L^\infty} \, \big( \| \nabla v(\cdot, t) \|_{\dot{H}^{\ell - 2}} + \| \nabla v(\cdot, t) \|_{\dot{H}^\ell} \big) + \| \nabla v(\cdot, t) \|_{L^\infty} \, \| u(\cdot, t) \|_{\dot{H}^\ell} + \| u(\cdot, t) \|_{\dot{H}^{\ell - 1}} \big) \times\\
& \times \big( \| u(\cdot, t) \|_{L^\infty} \, \| \nabla v(\cdot, t) \|_{\dot{H}^\ell} + \| \nabla v(\cdot, t) \|_{L^\infty} \, \| u(\cdot, t) \|_{\dot{H}^\ell} \big) \Big).
\end{split}
\end{equation}

We next turn to the quantity $\gI_1^\ell$, which we simply bound by
\begin{equation}
\label{idaho}
\gI_1^\ell(t) \leq \sum_{|\alpha| \leq \ell - 2} \big\| \partial_t \partial_x^\alpha u(\cdot, t) \big\|_{L^2} \, \big\| G^\alpha(\tilde{m})(\cdot, t) - G^\alpha(m)(\cdot, t) \big\|_{L^2}.
\end{equation}
Coming back to the definition of the nonlinearity $G^\alpha$ and using as before the Moser estimates in Lemma~\ref{lem:moser}, we can compute
\begin{align*}
\big\| G^\alpha(\tilde{m}) - G^\alpha(m) \big\|_{L^2} \leq & C \Big( \big( \| v \|_{\dot{H}^\ell} + \| \nabla u \|_{L^\infty} \, \| \nabla u \|_{\dot{H}^\ell} + \| \nabla v \|_{L^\infty} \, \| \nabla v \|_{\dot{H}^\ell} \big) \, \| u \|_{L^\infty}\\
& + \big( 1 + \| \nabla u \|_{L^\infty}^2 + \| \nabla v \|_{L^\infty}^2 \big) \, \| u \|_{\dot{H}^\ell} + \delta_{\ell \neq 2} \big( \| v \|_{\dot{H}^{\ell - 2}} \, \| u \|_{L^\infty} + \| u \|_{\dot{H}^{\ell - 2}} \big)\\
& + \delta_{\ell = 2} \big( \big( \| v_1 \|_{L^2} + \| v_3 \|_{L^2} \big) \, \| u \|_{L^\infty} + \| u_1 \|_{L^2} + \| u_3 \|_{L^2} \Big).
\end{align*}
Combining with~\eqref{colorado},~\eqref{hawai} and~\eqref{idaho}, we obtain~\eqref{eq:control-diff-l-LL}. This ends the proof of Proposition~\ref{prop:LL-diff-control}. \qed

%%%%%%%%%%%%%%%%%%%%%%%%%%%%%%%%%%%%%%%%%%%%%%%%%
%%%%%%%%%%%%%%%%%%%%%%%%%%%%%%%%%%%%%%%%%%%%%%%%%
\subsection{Proof of Theorem~\ref{thm:LL-Cauchy}}
\label{sub:LL-Cauchy}
%%%%%%%%%%%%%%%%%%%%%%%%%%%%%%%%%%%%%%%%%%%%%%%%%
%%%%%%%%%%%%%%%%%%%%%%%%%%%%%%%%%%%%%%%%%%%%%%%%%

The construction of the solutions splits into three parts. We first consider an initial datum $m^0 \in \boE(\R^N)$, with $\nabla m^0 \in H^\infty(\R^N)$, and we construct the unique corresponding maximal solution $m$ to~\eqref{LL}. We next establish that the corresponding flow map is well-defined and locally Lipschitz continuous from $\boE^{k_N}(\R^N)$ to spaces of the form $\boC^0([0, T], \boE^{k_N - 1}(\R^N))$. Here, the notation $k_N$ refers to the smallest integer such that $k_N > N/2 + 1$. In particular, we are allowed to extend uniquely the flow to the whole set $\boE^{k_N}(\R^N)$. We finally check that the corresponding solutions to~\eqref{LL} satisfy all the statements in Theorem~\ref{thm:LL-Cauchy}.

\setcounter{step}{0}
\begin{step}
\label{L1}
Construction of smooth solutions to~\eqref{LL}.
\end{step}

Let $m^0 \in \boE(\R^N)$, with $\nabla m^0 \in H^\infty(\R^N)$. Note that the existence of such initial conditions is a direct consequence of Lemma~\ref{lem:dens-smooth}.

In order to construct a solution $m$ corresponding to this initial datum, we rely on the bounds in Proposition~\ref{prop:LL-energy-estimate}. Due to the Sobolev embedding theorem, the quantity $\Sigma_{\rm LL}^{k_N}$ corresponding to a smooth enough solution $m : \R^N \times [0, T_*] \to \S^2$ satisfies the differential inequality
$$\big[ \Sigma_{\rm LL}^{k_N} \big]' (t) \leq C_{k_N} \, \big( 1 + \Sigma_{\rm LL}^{k_N}(t)^2 \big) \, \Sigma_{\rm LL}^{k_N}(t),$$
for any $t \in [0, T_*]$. Here as in the sequel, the notation $C_k$ refers to a positive number, depending only on $k$. As a consequence, we obtain the estimate
\begin{equation}
\label{indiana}
\Sigma_{\rm LL}^{k_N}(t) \leq \frac{\Sigma_{\rm LL}^{k_N}(0) \, e^{C_{k_N} t}}{\big( 1 + \Sigma_{\rm LL}^{k_N}(0)^2 - \Sigma_{\rm LL}^{k_N}(0)^2 \, e^{2 C_{k_N} t} \big)^\frac{1}{2}},
\end{equation}
when
$$t < T_*^{k_N} := \frac{1}{2 C_{k_N}} \, \ln \Big( \frac{1 + \Sigma_{\rm LL}^{k_N}(0)^2}{\Sigma_{\rm LL}^{k_N}(0)^2} \Big).$$

We next iterate this argument for any integer $k > k_N$. Given a positive number $0 < T < \min \{ T_*, T_*^{k_N} \}$, we similarly derive from Proposition~\ref{prop:LL-energy-estimate} that
$$\big[ \Sigma_{\rm LL}^k \big]' (t) \leq C_k \, \big( 1 + \Sigma_{\rm LL}^{k_N}(t)^2 \big) \, \Sigma_{\rm LL}^k(t),$$
for any $t \in [0, T]$. We then infer from~\eqref{indiana} that
\begin{equation}
\label{missouri}
\Sigma_{\rm LL}^k(t) \leq \frac{\big( 1 + \Sigma_{\rm LL}^{k_N}(0)^2 \big) \, \Sigma_{\rm LL}^k(0) \, e^{C_k t}}{1 + \Sigma_{\rm LL}^{k_N}(0)^2 - \Sigma_{\rm LL}^{k_N}(0)^2 \, e^{2 C_{k_N} T}}.
\end{equation}
In view of the definition of the quantity $\Sigma_{\rm LL}^k$, the functions $m$ and $\partial_t m$ are uniformly bounded in $Z^k(\R^N)$, respectively $H^{k - 2}(\R^N)$, on the time interval $[0, T]$.

Arguing as in~\cite{SulSuBa1}, we check that the a priori bounds in~\eqref{indiana} and~\eqref{missouri} remain available when the equation is discretized according to a finite difference scheme. The existence and uniqueness of discretized solutions follow from the standard theory of ordinary differential equations. Classical weak compactness and local strong compactness results, as well as a standard diagonal argument, provide the existence of a maximal solution $m : \R^N \times[0, T_{\max}) \to \S^2$ to~\eqref{LL} with initial datum $m^0$. This solution $m$ is in $L^\infty([0, T], Z^k(\R^N))$, with $\partial_t m \in L^\infty([0, T], H^{k - 2}(\R^N))$, for any number $0 \leq T < T_{\max}$ and any integer $k \geq k_N$. In particular, it is a smooth solution to~\eqref{LL}.

Note that its maximal time of existence $T_{\max}$ does not depend on the integer $k$. This follows from~\eqref{missouri}, which guarantees that its maximal time of existence $T_{\max}^k$ as a solution in $L^\infty([0, T], Z^k(\R^N))$, with $\partial_t m \in L^\infty([0, T], H^{k - 2}(\R^N))$, for any $0 \leq T < T_{\max}^k$, is characterized by the condition
$$\lim_{t \to T_{\max}^k} \Sigma_{\rm LL}^{k_N}(t) = \infty,$$
if this maximal time is finite. As a consequence of this property, we obtain
$$T_{\max}^k = T_{\max}^{k_N} := T_{\max},$$
for any $k \geq k_N$.

Note also that we are allowed to invoke again Proposition~\ref{prop:LL-energy-estimate} in order to prove the bound
\begin{equation}
\label{iowa}
\Sigma_{\rm LL}^k(t) \leq C_k \, \Sigma_{\rm LL}^k(0) \, e^{\int_0^t \big( 1 + \| \nabla m(\cdot, s) \|_{L^\infty}^2 \big) \, ds},
\end{equation}
for any $0 \leq t \leq T_{\max}$ and any $k \geq k_N$. Therefore, if the maximal time of existence $T_{\max}$ is finite, it satisfies the condition
$$\int_0^{T_{\max}} \| \nabla m(\cdot, t) \|_{L^\infty}^2 \, dt = \infty.$$

We finally turn to the question of the uniqueness of this solution. We fix an integer $k \geq k_N$. Given an initial condition $\tilde{m}^0 \in \boE(\R^N)$, with $\nabla \tilde{m}_0 \in H^\infty(\R^N)$, we denote by $\tilde{m} : \R^N \times [0, \tilde{T}_{\max}) \to \S^2$ a corresponding smooth solution to~\eqref{LL}. Set $T_* = \min \{ T_{\max}, \tilde{T}_{\max} \}$. The solutions $m$ and $\tilde{m}$ belong to $\boC^0([0, T_*), \boE^{k_N + 1}(\R^N))$, with $(\partial_t \tilde{m}, \partial_t m) \in \boC^0([0, T_*), H^{k_N - 1}(\R^N))^2$. Therefore, we are allowed to invoke Proposition~\ref{prop:LL-diff-control} in order to find a positive number $C_k$ for which the difference $u := \tilde{m} - m$ satisfies
$$\big[ \gS_{\rm LL}^{k - 1} \big]'(t) \leq C_k \big( 1 + \tilde{\Sigma}_{\rm LL}^k(s) + \Sigma_{\rm LL}^k(s) \big)^3 \, \big( \gS_{\rm LL}^{k - 1}(t) + \| u(\cdot, t) \|_{L^\infty}^2 + \| \nabla u_2^0 \|_{L^2}^2 \big),$$
for any $0 \leq t < T_*$. On the other hand, we infer from the Sobolev embedding theorem that
$$\| u(\cdot, t) - u_2^0 \, e_2 \|_{L^\infty}^2 \leq C_k \, \gS_{\rm LL}^{k - 1}(t).$$
This is enough to obtain the bound
\begin{align*}
\max_{t \in [0, T]} \gS_{\rm LL}^{k - 1}(t) \leq \gS_{\rm LL}^{k - 1}(0) \, & e^{\int_0^t C_k \, \big( 1 + \tilde{\Sigma}_{\rm LL}^k(s) + \Sigma_{\rm LL}^k(s) \big)^3 \, ds}\\
& + \big( \| u_2^0 \|_{L^\infty}^2 + \| \nabla u_2^0 \|_{L^2}^2 \big) \, \Big( e^{\int_0^t C_k \, \big( 1 + \tilde{\Sigma}_{\rm LL}^k(s) + \Sigma_{\rm LL}^k(s) \big)^3 \, ds} - 1 \Big),
\end{align*}
for any $0 \leq T < T_*$. Here, the quantity $\tilde{\Sigma}_{\rm LL}^k$ is defined with respect to the solution $\tilde{m}$. In view of the definition of the quantity $\gS_{\rm LL}^{k - 1}$, this provides the estimate
\begin{equation}
\label{kansas}
\begin{split}
\max_{t \in [0, T]} \Big( \| \nabla u(\cdot, t) & \|_{H^{k - 2}}^2 + \| u(\cdot, t) - u_2^0 \, e_2 \|_{L^2}^2 \Big)\\
\leq & C_k \big( \| \nabla u^0 \|_{H^{k - 2}}^2 + \| u_1^0 \|_{L^2}^2 + \| u_3^0 \|_{L^2}^2 \big) \, e^{\int_0^t C_k \, \big( 1 + \tilde{\Sigma}_{\rm LL}^k(s) + \Sigma_{\rm LL}^k(s) \big)^3 \, ds}\\
& + \| u_2^0 \|_{L^\infty}^2 \, \Big( e^{\int_0^t C_k \, \big( 1 + \tilde{\Sigma}_{\rm LL}^k(s) + \Sigma_{\rm LL}^k(s) \big)^3 \, ds} - 1 \Big).
\end{split}
\end{equation}
We conclude that the difference $u$ identically vanishes on $[0, T_*)$ when $\tilde{m}^0 = m^0$. This proves the uniqueness of the solution.

\begin{step}
\label{L2}
Unique extension of the flow map.
\end{step}

Given an integer $k \geq k_N$, we now consider an initial datum $m^0 \in \boE^k(\R^N)$. Lemma~\ref{lem:dens-smooth} provides the existence of a sequence of initial conditions $m_n^0 \in \boE(\R^N)$, with $\nabla m_n^0 \in H^\infty(\R^N)$, such that
$$m_n^0 - m^0 \to 0 \quad {\rm in} \ H^k(\R^N),$$
as $n \to \infty$. Let $m_n$ be the corresponding smooth solutions to~\eqref{LL} constructed in Step~\ref{L1} above. Combining this convergence with~\eqref{missouri}, we check that the quantities $\Sigma_{\rm LL}^{k, n}$ defined with respect to the solutions $m_n$ are bounded on the time intervals $[0, T]$ for any $0 \leq T < T_*^{k_N}$, uniformly with respect to $n$ large enough. As a consequence of~\eqref{kansas}, the sequence $(m_n)_{n \in \N}$ is a Cauchy sequence in $\boC^0([0, T], \boE^{k - 1}(\R^N))$.

Let us denote by $m$ its limit. Note first that this limit is independent on the choice of the sequence $(m_n^0)_{n \in \N}$. Note also that it is an (at least) weak solution to~\eqref{LL} with initial datum $m^0$ due to the Sobolev embedding theorem of $H^{k - 1}(\R^N)$ into $\boC^0(\R^N)$. Actually, it is the unique solution in $\boC^0([0, T], \boE^{k - 1}(\R^N))$ to~\eqref{LL} with initial datum $m^0$, which is a limit of smooth solutions to~\eqref{LL}. Finally, since the quantities $\Sigma_{\rm LL}^{k, n}$ are bounded on $[0, T]$, uniformly with respect to $n$, so is the quantity $\Sigma_{\rm LL}^k$. In particular, the function $m$ belongs to $L^\infty([0, T], \boE^k(\R^N))$, with $\partial_t m \in L^\infty([0, T], H^{k - 2}(\R^N))$. Moreover, since $k > N/2 + 1$, it follows from the Sobolev embedding theorem and standard interpolation arguments that
$$\nabla m_n \to \nabla m \quad {\rm in} \ \boC^0([0, T] \times \R^N),$$
as $n \to \infty$.

Concerning the maximal time of existence of this solution, we denote by $T_*^k$ the supremum of the positive times $T$ for which there exists a sequence of initial conditions $m_n^0 \in \boE(\R^N)$, with $\nabla m_n^0 \in H^\infty(\R^N)$, such that the corresponding solutions $m_n$ are well-defined in $L^\infty([0, T], \boE^k(\R^N))$, with $\partial_t m_n \in L^\infty([0, T], H^{k - 2}(\R^N))$, and satisfy
\begin{equation}
\label{nevada}
m_n^0 \to m^0 \ {\rm in} \ \boE^k(\R^N), \quad m_n \to m \ {\rm in} \ \boC^0([0, T], \boE^{k - 1}(\R^N)), \quad {\rm and} \quad \nabla m_n \to \nabla m \ {\rm in} \ \boC^0([0, T] \times \R^N),
\end{equation}
as $n \to \infty$.

We first claim that the solution $m$ in this statement is uniquely defined on the time interval $[0, T_*^k)$. When $T_*^k \leq T_*^{k_N}$, this follows from the previous construction. When $T_*^k > T_*^{k_N}$, we fix a number $0 < T < T_*^k$ and consider two sequences of smooth solutions $m_n$ and $\tilde{m_n}$, which satisfy the properties in~\eqref{nevada} for two possible solutions $m$ and $\tilde{m}$. The solutions $m_n$ and $\tilde{m_n}$ satisfy the bound in~\eqref{iowa} on $[0, T]$, and the left-hand side in this bound is uniformly bounded with respect to $n$ due to the convergences in~\eqref{nevada}. As a consequence of~\eqref{kansas}, the sequences $m_n$ and $\tilde{m}_n$ own a common limit $m = \tilde{m}$ in $\boC^0([0, T], \boE^{k - 1}(\R^N))$. This proves the uniqueness of the solution $m$ satisfying the properties in~\eqref{nevada}.

Our goal is now to establish that either $T_*^k = \infty$, or
$$I_*^k := \int_0^{T_*^k} \| \nabla m(\cdot, t) \|_{L^\infty}^2 \, dt = \infty.$$
Note first that $T_*^k$ is well-defined and positive due to the inequality $T_*^k \geq T_*^{k_N}$. We now argue by contradiction assuming that $T_*^k$ and the integral $I_*^k$ are finite. We again fix a number $0 < T < T_*^k$, and consider a sequence of smooth solutions $m_n$, which satisfy the properties in~\eqref{nevada}. Invoking~\eqref{iowa} and~\eqref{nevada} as before, we check that the quantities $\Sigma_{\rm LL}^{k, n}$ are bounded on $[0, T]$ by a positive number $\Sigma_*$ depending only on $\Sigma_{\rm LL}^k(0)$ and $I_*^k$. As a consequence of~\eqref{indiana} and~\eqref{missouri}, we can extend the solutions $m_n$ on a time interval of the form $[T, T + \tau_*]$, where the positive number $\tau_*$ only depends on $\Sigma_*$. Moreover, due to~\eqref{kansas}, the sequence $(m_n)_{n \in \N}$ is a Cauchy sequence in $\boC^0([0, T + \tau_*], \boE^{k - 1}(\R^N))$, and we can check as before that it satisfies the properties in~\eqref{nevada} for a solution $m_*$ to~\eqref{LL}, which is equal to $m$ due to the previous unique determination of this solution. Applying this argument to $T = T_*^k - \tau/2$ leads to a contradiction with the definition of the maximal time $T_*^k$. Hence, either $T_* = \infty$, or the integral $I_*^k$ is infinite.

Note finally that, due to this characterization, the maximal time of existence $T_*^k$ does not depend on the possible choice of the integer $k$. We denote by $T_{\max}$ this maximal time in the sequel.

\begin{step}
\label{L3}
Conclusion of the proof of Theorem~\ref{thm:LL-Cauchy}.
\end{step}

Let $k \geq k_N$. Step~\ref{L2} above provides the existence of a unique solution $m: \R^N \times [0, T_{\max}) \to \S^2$ to~\eqref{LL} corresponding to an initial datum $m^0$, which is the limit (according to the properties in~\eqref{nevada}) of smooth solutions to~\eqref{LL}. This solution is in $\boC^0([0, T_{\max}), \boE^{k - 1}(\R^N))$. Its maximal time of existence $T_{\max}$ satisfies the statement $(ii)$ in Theorem~\ref{thm:LL-Cauchy}.

Concerning statement $(i)$, we consider a sequence of smooth solutions $m_n$ converging to $m$ on a time interval $[0, T]$, with $0 < T < T_{\max}$. We then combine as before~\eqref{iowa} and~\eqref{nevada} in order to check that the quantities $\Sigma_{\rm LL}^{k, n}$ are bounded on $[0, T]$, uniformly with respect to $n$. Statement $(i)$ then follows from a standard weak compactness argument.

Statement $(iv)$ is a direct consequence of the previous construction of the solution $m$, while the conservation of the energy in $(v)$ results from a standard density argument. This property is indeed satisfied by smooth solutions in view of Proposition~\ref{prop:LL-energy-estimate}.

Concerning the local Lipschitz continuity of the flow map in statement $(iii)$, we fix a solution $m: \R^N \times [0, T_{\max}) \to \S^2$ with initial condition $m^0$ and a number $0 < T < T_{\max}$. We set
$$\Sigma_T := 2 + \Sigma_{\rm LL}^k(0) \, \Big( 1 + C_k \, e^{\int_0^T \big( 1 + \| \nabla m(\cdot, s) \|_{L^\infty}^2 \big) \, ds} \Big),$$
where $C_k$ is the constant in the right-hand side of~\eqref{iowa}. We notice that the inequality in~\eqref{indiana} remains available (for smooth solutions) when $k_N$ is replaced by $k$ for a possibly different positive number $C_k$, and we fix a positive number $\tau_k$ such that
$$\frac{(\Sigma_T - 1) \, e^{C_k \tau_k}}{\big( 1 + (\Sigma_T - 1)^2 - (\Sigma_T - 1)^2 \, e^{2 C_k \tau_k} \big)^\frac{1}{2}} \leq \Sigma_T,$$
for this further number $C_k$. Here, the number $\tau_k$ is tailored such that, if the quantity $\bar{\Sigma}_{\rm LL}^k(0)$ corresponding to a smooth solution $\bar{m}$ is less than $\Sigma_T - 1$, then the quantity $\bar{\Sigma}_{\rm LL}^k(t)$ is bounded by $\Sigma_T$ on $[0, \tau_k]$.

We finally introduce a sequence of smooth solutions $m_n : \R^N \times [0, T] \to \S^2$, which satisfy the properties in~\eqref{nevada}. Due to~\eqref{iowa}, we can also assume that the corresponding quantities $\Sigma_{\rm LL}^{k, n}$ satisfy the bound
$$\max_{t \in [0, T]} \Sigma_{\rm LL}^{k, n}(t) \leq \Sigma_T.$$
We are now in position to establish the local Lipschitz continuity of the flow.

Given a positive number $R$, we take an initial datum $\tilde{m}^0 \in \boE(\R^N)$, with $\nabla \tilde{m}^0 \in H^\infty(\R^N)$, such that
$$\big\| \tilde{m}^0 - m \big\|_{Z^k} \leq R,$$
and consider the corresponding smooth solution $\tilde{m} : \R^N \times [0, \tilde{T}_{\max}) \to \S^2$. For $R$ small enough, we have
$$\tilde{\Sigma}_{\rm LL}^k(0) \leq \Sigma_T - 1.$$
In view of~\eqref{iowa} (with $k_N$ replaced by $k$), we infer that $\tilde{T}_{\max} \geq \tau_k$, and that the quantity $\tilde{\Sigma}_{\rm LL}^k(t)$ is bounded by $\Sigma_ T$ on $[0, \tau_k]$. Invoking~\eqref{kansas}, we next find a positive number $\Lambda_T$, depending only on $k$, $T$ and $\Sigma_T$, such that
\begin{equation}
\label{ohio}
\begin{split}
\max_{t \in [0, \tau_k]} \| \tilde{m}(\cdot, t) - m_n(\cdot, t) \|_{Z^{k - 1}} \leq \Lambda_T \| \tilde{m}^0 - m_n^0 \|_{Z^{k - 1}} \leq \Lambda_T R,
\end{split}
\end{equation}
for any $n \in \N$. Taking the limit $n \to \infty$, this inequality remains true for the difference $\tilde{m} - m$. As a consequence of the Sobolev embedding theorem and standard interpolation theory, we next find two positive numbers $A_k$ and $\alpha_k$, depending only on $k$, such that
\begin{equation}
\label{oregon}
\max_{t \in [0, \tau_k]} \| \nabla \tilde{m}(\cdot, t) - \nabla m(\cdot, t) \|_{L^\infty} \leq A_k \Lambda_T^{\alpha_k} R^{\alpha_k} \Sigma_T^{1 - \alpha_k}.
\end{equation}
We finally come back to~\eqref{iowa} in order to obtain
$$\tilde{\Sigma}_k(t) \leq C_k \tilde{\Sigma}_k(0) e^{\int_0^t \big( 1 + \| \nabla m(\cdot, s) \|_{L^\infty}^2 + 2 A_k \Lambda_T^{\alpha_k} R^{\alpha_k} \Sigma_T^{1 - \alpha_k} \| \nabla m(\cdot, s) \|_{L^\infty} + 4 A_k^2 \Lambda_T^{2 \alpha_k} R^{2 \alpha_k} \Sigma_T^{2 - 2 \alpha_k} \big) \, ds},$$
for any $t \in [0, \tau_k]$. For $R$ small enough, we infer that
\begin{equation}
\label{pennsylvania}
\tilde{\Sigma}_k(t) \leq \Sigma_T - 1,
\end{equation}
for any $t \in [0, \min \{ \tau_k, T \}]$.

When $T > \tau_k$, we iterate this argument on the time interval $[0, 2 \tau_k]$. Since $\tilde{\Sigma}_k(\tau_k)$ is less than $\Sigma_T - 1$ by~\eqref{pennsylvania}, the maximal time of existence $\tilde{T}_{\max}$ is more than $2 \tau_k$, and the quantity $\tilde{\Sigma}_{\rm LL}^k(t)$ is bounded by $\Sigma_ T$ on $[0, 2 \tau_k]$. Estimates~\eqref{ohio} (with $m_n$ replaced by $m$) and~\eqref{oregon} follow for the same constants $\Lambda_T$, $A_k$ and $\alpha_k$. For $R$ small enough, we derive~\eqref{pennsylvania} on the time interval $[0, \min \{ 2 \tau_k, T \}]$.

Arguing inductively, we conclude that there exists a positive number $R$ such that, if
$$\big\| \tilde{m}^0 - m^0 \|_{Z^k} \leq R,$$
then the maximal time of existence $\tilde{T}_{\max}$ of the solution $\tilde{m}$ is larger than, or equal to $T$, and we have
$$\max_{t \in [0, T]} \| \tilde{m}(\cdot, t) - m(\cdot, t) \|_{Z^{k - 1}} \leq \Lambda_T \| \tilde{m}^0 - m^0 \|_{Z^{k - 1}}.$$
It only remains to apply a standard density argument in order to replace the smooth solution $\tilde{m}$ in this inequality by an arbitrary solution. The flow map is then well-defined and Lipschitz continuous from the ball $B(m^0, R)$ of $\boE^k(\R^N)$ towards $\boC^0([0, T], \boE^{k - 1}(\R^N)$. This concludes the proof of Theorem~\ref{thm:LL-Cauchy}. \qed

%%%%%%%%%%%%%%%%%%%%%%%%%%%%%%%%%%%%%%%%%%%%%%%%%%%%
%%%%%%%%%%%%%%%%%%%%%%%%%%%%%%%%%%%%%%%%%%%%%%%%%%%%
\subsection{Proof of Corollary~\ref{cor:HLL-Cauchy}}
\label{sub:Cauchy-smooth-HLL}
%%%%%%%%%%%%%%%%%%%%%%%%%%%%%%%%%%%%%%%%%%%%%%%%%%%%
%%%%%%%%%%%%%%%%%%%%%%%%%%%%%%%%%%%%%%%%%%%%%%%%%%%%

Consider an initial datum $(u^0, \phi^0) \in \boN\boV^k(\R^N)$ and set
$$m^0 := \big( \rho^0 \sin(\phi^0), \rho^0 \cos(\phi^0), u^0 \big),$$
with $\rho^0 := (1 - (u^0)^2)^{1/2}$. Assume first the existence of a solution $(u, \phi) : \R^N \times [0, T_{\max}) \to (- 1, 1) \times \R$ to~\eqref{HLL} with initial datum $(u^0, \phi^0)$, which satisfies the statements in Corollary~\ref{cor:HLL-Cauchy}. Let $0 < T < T_{\max}$ be fixed. Since $k - 1 > N/2$, it follows from statement $(iii)$ in Corollary~\ref{cor:HLL-Cauchy} and the Sobolev embedding theorem that $u$ is continuous from $[0, T]$ to $\boC_b^0(\R^N)$. Moreover, we claim that
\begin{equation}
\label{soie}
\eta^T := \max_{t \in [0, T]} \| u(\cdot, t) \|_{L^\infty} < 1.
\end{equation}
Indeed, due to the non-vanishing condition in~\eqref{def:NVk}, the number $\eta^T$ is less than or equal to $1$. Assume by contradiction that it is equal to $1$. In this case, there exists a number $0 \leq t_* \leq T$ such that
$$\| u(\cdot, t_*) \|_{L^\infty} = 1.$$
On the other hand, the function $u(\cdot, t_*)$ lies in $H^{k - 1}(\R^N)$. By the Sobolev embedding theorem, it converges to $0$ at infinity. As a consequence, there exists a position $x_* \in \R^N$ such that
$$|u(x_*, t_*)| = 1,$$
which contradicts the non-vanishing condition in~\eqref{def:NVk}.

Set $\rho := (1 - u^2)^{1/2}$, and
$$m := \big( \rho \sin(\phi), \rho \cos(\phi), u \big).$$
Since $\eta^T < 1$, there exists a smooth function $F : \R \to \R$ such that $F(x) = (1 - x^2)^{1/2}$ for any $|x| \leq \eta^T$. In particular, we can combine statement $(i)$ in Corollary~\ref{cor:HLL-Cauchy}, and inequality~\eqref{eq:compose-moser} for this function in order to prove that the map $\rho$ is well-defined, bounded and continuous on $\R^N \times [0, T]$, with $\partial_t \rho \in L^\infty([0, T], H^{k - 2}(\R^N))$ and $\nabla \rho \in L^\infty([0, T], H^{k - 1}(\R^N))$. Applying Lemma~\ref{lem:moser} again, and using Corollary~\ref{cor:moser-trigo}, we deduce that the function $m$ lies in $L^\infty([0, T], \boE^k(\R^N))$, with $\partial_t m \in L^\infty([0, T], H^{k - 2}(\R^N))$. A direct computation then shows that this function is a weak solution to~\eqref{LL}. As a consequence of the uniqueness property in Theorem~\ref{thm:LL-Cauchy}, it is the unique solution to~\eqref{LL} with initial datum $m^0$. This provides the uniqueness of the function $u$, which is equal to the third component $m_3$ by 
definition. Concerning the phase function $\phi$, we can combine statement $(iii)$ in Corollary~\ref{cor:HLL-Cauchy}, and Corollary~\ref{cor:carac} in order to prove that it is continuous on $\R^N \times [0, T]$. Moreover, it satisfies the identity
$$e^{i \phi} = \frac{m_2 + i m_1}{(1 - m_3^2)^\frac{1}{2}},$$
on $\R^N \times [0, T]$. Due to the continuity of the function $m$ on $\R^N \times [0, T]$, the continuous solutions of this equation are unique up to a constant number in $\pi \Z$. Since $\phi(\cdot, 0) = \phi^0$, this number is uniquely determined, so that $\phi$ is also uniquely determined. In case of existence, the solution $(u, \phi)$ is therefore the unique hydrodynamical pair corresponding to the solution $m$ to~\eqref{LL} with initial datum $m^0$ (as long as this hydrodynamical pair makes sense).

Concerning existence, we first check that $m^0$ is in $\boE^k(\R^N)$. Indeed, since $u^0 \in H^k(\R^N)$ with $k > N/2$, it follows as before from the non-vanishing condition $|u^0| < 1$ that
\begin{equation}
\label{coton}
\eta^0 := \| u^0 \|_{L^\infty} < 1.
\end{equation}
Arguing as above, this guarantees that the function $m^0$ is in $\boE^k(\R^N)$. Theorem~\ref{thm:LL-Cauchy} then provides the existence of a unique solution $m : \R^N \times [0, T_{\max}) \to \S^2$ to~\eqref{LL} with initial datum $m^0$.

Set
$$\tau_{\max} := \sup \big\{ \tau \in [0, T_{\max}) : |m_3| < 1 \ {\rm on} \ \R^N \times [0, \tau] \big\}.$$
We deduce from statement $(iii)$ in Theorem~\ref{thm:LL-Cauchy} and the Sobolev embedding theorem that the function $m_3$ is continuous from $[0, T_{\max})$ to $\boC_b^0(\R^N)$. Since $m_3^0 = u^0$, it follows from~\eqref{coton} that $\tau_{\max}$ is a positive number. Similarly, we show that
\begin{equation}
\label{laine}
\lim_{t \to \tau_{\max}} \| m_3(\cdot, t) \|_{L^\infty} = \| m_3(\cdot, \tau_{\max}) \|_{L^\infty} = 1,
\end{equation}
when $\tau_{\max} < T_{\max}$.

Let $0 < T < \tau_{\max}$. Arguing as for~\eqref{soie}, we obtain
\begin{equation}
\label{lin}
\eta^T := \max_{t \in [0, T]} \| m_3(\cdot, t) \|_{L^\infty} < 1.
\end{equation}
Set $\rho := (1 - m_3^2)^{1/2}$. As before, the function $1/\rho$ is well-defined, bounded and continuous on $\R^N \times [0, T]$, with $\partial_t (1/\rho) \in L^\infty([0, T], H^{k - 2}(\R^N))$ and $\nabla (1/\rho) \in L^\infty([0, T], H^k(\R^N))$. As a consequence, we can lift the solution $m$ as
$$m = \big( \rho \sin(\phi), \rho \cos(\phi), m_3 \big),$$
where the phase function $\phi$ is uniquely defined by the condition $\phi(\cdot, 0) = \phi^0$, and continuous from $\R^N \times [0, \tau_{\max})$ to $\R$. Since
$$\sin(\phi) = \frac{m_1}{\rho}, \quad \partial_t \phi = \frac{\langle \check{m}, i \partial_t \check{m} \rangle_\C}{\rho^2} \quad {\rm and} \quad \nabla \phi = \frac{\langle \check{m}, i \nabla \check{m} \rangle_\C}{\rho^2},$$
we also observe that $\sin(\phi)$, $\partial_t \phi$ and $\nabla \phi$ lie in $L^\infty([0, T], L^2(\R^N))$, $L^\infty([0, T], H^{k - 2}(\R^N))$, respectively $L^\infty([0, T], H^{k - 1}(\R^N))$ for any $0 < T < \tau_{\max}$. In particular, the function $\phi$ belongs to $L^\infty([0, T], H_{\sin}^k(\R^N))$.

At this stage, we set $u := m_3$ on $\R^N \times [0, \tau_{\max})$. Then, the pair $(u, \phi)$ lies in $L^\infty([0, T], \boN\boV^k(\R^N))$, with $(\partial_t u, \partial_t \phi) \in L^\infty([0, T], H^{k - 2}(\R^N))$, for any $0 < T < \tau_{\max}$. A simple computation shows that it is a strong solution to~\eqref{HLL} with initial datum $(u^0, \phi^0)$. Statements $(iv)$ and $(v)$ in Corollary~\ref{cor:HLL-Cauchy} are then direct consequences of the same statements in Theorem~\ref{thm:LL-Cauchy}.

We also derive from statement $(iii)$ in Theorem~\ref{thm:LL-Cauchy} that the functions $\rho \sin(\phi)$ and $u$ are in $\boC^0([0, T], H^{k - 1}(\R^N))$, while the function $\rho \cos(\phi)$ lies in $\boC^0([0, T], \boC_b^0(\R^N))$, with $\nabla (\rho \cos(\phi)) \in \boC^0([0, T], H^{k - 2}(\R^N))$. Combining~\eqref{lin} with Lemma~\ref{lem:moser}, we check that the function $1/\rho$ is continuous from $[0, T]$ to $\boC_b^0(\R^N)$, with $\nabla (1/\rho) \in \boC^0([0, T], H^{k - 2}(\R^N))$. By Lemma~\ref{lem:moser} again, the function $\sin(\phi)$ is in $\boC^0([0, T], H^{k - 1}(\R^N))$, therefore in $\boC^0([0, T], \boC_b^0(\R^N))$, while $\cos(\phi)$ lies in $\boC^0([0, T], \boC_b^0(\R^N))$, with $\nabla \cos(\phi) \in \boC^0([0, T], H^{k - 2}(\R^N))$. Finally, we rely on the identities
\begin{equation}
\label{elasthane}
\nabla \phi = \cos(\phi) \nabla \big( \sin(\phi) \big) - \sin(\phi) \nabla \big( \cos(\phi) \big),
\end{equation}
and
\begin{equation}
\label{viscose}
\begin{split}
\sin \big( \phi(\cdot, t_2) - \phi(\cdot, t_1) \big) = \Big( \sin \big( \phi(\cdot, t_2) \big) & - \sin \big( \phi(\cdot, t_1) \big) \Big) \cos \big( \phi(\cdot, t_1) \big)\\
& + \sin \big( \phi(\cdot, t_1) \big) \Big( \cos \big( \phi(\cdot, t_1) \big) - \cos \big( \phi(\cdot, t_2) \big) \Big),
\end{split}
\end{equation}
in order to conclude that the phase $\phi$ belongs to $\boC^0([0, T], H_{\sin}^{k - 1}(\R^N))$. Hence, the flow map $(u^0, \phi^0) \mapsto (u, \phi)$ is well-defined from $\boN\boV^k(\R^N)$ to $\boC^0([0, T], \boN\boV^{k - 1}(\R^N))$. In order to complete the proof of statement $(iii)$, it remains to address the continuity of this map. 

Consider initial data $(u_n^0, \phi_n^0) \in \boN\boV^k(\R^N)$ such that
\begin{equation}
\label{polyester}
u_n^0 \to u^0 \quad {\rm in} \ H^k(\R^N), \quad {\rm and} \quad \phi_n^0 \to \phi^0 \quad {\rm in} \ H_{\sin}^k(\R^N),
\end{equation}
as $n \to \infty$. Applying Corollary~\ref{cor:moser-trigo} provides
$$\sin(\phi_n^0 - \phi^0) \to 0 \quad {\rm in} \ H^k(\R^N),$$
so that by the Sobolev embedding theorem,
$$\sin(\phi_n^0 - \phi^0) \to 0 \quad {\rm in} \ \boC_b^0(\R^N).$$
Hence, there exists an integer $\ell$ such that
$$\phi_n^0 - \phi^0 \to \ell \pi \quad {\rm in} \ \boC_b^0(\R^N).$$
In particular, we have
$$\sin(\phi_n^0) \to (- 1)^\ell \sin(\phi^0) \quad {\rm in} \ \boC_b^0(\R^N), \quad {\rm and} \quad \cos(\phi_n^0) \to (- 1)^\ell \cos(\phi^0) \quad {\rm in} \ \boC_b^0(\R^N).$$
Combining~\eqref{polyester} with the identity
$$(- 1)^\ell \sin(\phi_n^0) - \sin(\phi^0) = (- 1)^\ell \sin(\phi_n^0 - \phi^0) \cos(\phi^0) + \sin(\phi^0) \big( (- 1)^\ell \cos(\phi_n^0 - \phi^0) - 1),$$
we are led to
$$\sin(\phi_n^0) \to (- 1)^\ell \sin(\phi^0) \quad {\rm in} \ H^k(\R^N), \quad {\rm and} \quad \nabla \cos(\phi_n^0) \to (- 1)^\ell \nabla \cos(\phi^0) \quad {\rm in} \ H^{k - 1}(\R^N).$$
Similarly, we infer from~\eqref{polyester} and Lemma~\ref{lem:moser} that
$$\rho_n^0 \to \rho^0 \quad {\rm in} \ \boC_b^0(\R^N) \quad {\rm and} \quad \nabla \rho_n^0 \to \nabla \rho^0 \quad {\rm in} \ H^{k - 1}(\R^N).$$
Setting
$$m_n^0 := \big( (- 1)^\ell \rho_n^0 \sin(\phi_n^0), (- 1)^\ell \rho_n^0 \cos(\phi_n^0), u_n^0 \big),$$
with $\rho_n^0 := (1 - (u_n^0)^2)^{1/2}$, we deduce from Lemma~\ref{lem:moser} again that
$$m_n^0 \to m^0 \quad {\rm in} \ \boE^k(\R^N),$$
as $n \to \infty$.

We now rely on statement $(iii)$ in Theorem~\ref{thm:LL-Cauchy}. For $n$ large enough, this statement guarantees that the solutions $m_n$ to~\eqref{LL} with initial data $m_n^0$ are well-defined on the time interval $[0, T]$, and that they satisfy the convergences
\begin{equation}
\label{acrylique}
\max_{t \in [0, T]} \big\| m_n(\cdot, t) - m(\cdot, t) \big\|_{Z^{k - 1}} \to 0,
\end{equation}
as $n \to \infty$. By uniqueness, the solutions $(u_n, \phi_n)$ to~\eqref{HLL} with initial data $(u_n^0, \phi_n^0)$ are well-defined on $[0, T]$. Moreover, we have
$$m_n := \big( (- 1)^\ell \rho_n \sin(\phi_n), (- 1)^\ell \rho_n \cos(\phi_n), u_n \big),$$
where we have set $\rho_n := (1 - (u_n)^2)^{1/2}$. Arguing as before, we derive from~\eqref{acrylique} that
$$u_n \to u \quad {\rm in} \ \boC^0([0, T], H^{k - 1}(\R^N)), \quad (- 1)^\ell \sin(\phi_n) \to \sin(\phi) \quad {\rm in} \ \boC^0([0, T], H^{k - 1}(\R^N)),$$
as well as
$$(- 1)^\ell \cos(\phi_n) \to \cos(\phi) \quad {\rm in} \ \boC^0([0, T], \boC_b^0(\R^N)),$$
and
$$(- 1)^\ell \nabla \cos(\phi_n) \to \nabla \cos(\phi) \quad {\rm in} \ \boC^0([0, T], H^{k - 2}(\R^N)).$$
Invoking analogues of~\eqref{elasthane} and~\eqref{viscose}, we conclude that
$$\phi_n \to \phi \quad {\rm in} \ \boC^0([0, T], H_{\sin}^{k - 1}(\R^N)).$$
This completes the proof of the continuity of the flow map.

We finally turn to the characterization of the maximal time of existence. Due to the previous correspondence between the solutions to~\eqref{LL} and~\eqref{HLL}, the maximal time of existence of the solution $(u_, \phi)$ to~\eqref{HLL} is equal to $\tau_{\max}$. When $\tau_{\max} < T_{\max}$, formula~\eqref{laine} provides the second condition of statement $(ii)$ in Corollary~\ref{cor:HLL-Cauchy}. Otherwise, the maximal time of existence of $(u_, \phi)$ is equal to $T_{\max}$, and the condition in Theorem~\ref{thm:LL-Cauchy} then translates into the first condition of statement $(ii)$ in Corollary~\ref{cor:HLL-Cauchy}. This concludes the proof of this statement, and of Corollary~\ref{cor:HLL-Cauchy}. \qed

%%%%%%%%%%%%%%%%%%%%%%%%%%%%%%%%%%%%%%%%%%%%%%%%%%%%
%%%%%%%%%%%%%%%%%%%%%%%%%%%%%%%%%%%%%%%%%%%%%%%%%%%%
%%%%%%%%%%%%%%%%%%%%%%%%%%%%%%%%%%%%%%%%%%%%%%%%%%%%
\section{The derivation of the Sine-Gordon equation}
\label{sec:SG-derivation}
%%%%%%%%%%%%%%%%%%%%%%%%%%%%%%%%%%%%%%%%%%%%%%%%%%%%
%%%%%%%%%%%%%%%%%%%%%%%%%%%%%%%%%%%%%%%%%%%%%%%%%%%%
%%%%%%%%%%%%%%%%%%%%%%%%%%%%%%%%%%%%%%%%%%%%%%%%%%%%

%%%%%%%%%%%%%%%%%%%%%%%%%%%%%%%%%%%%%%%%%%%%%%%%%%%%%
%%%%%%%%%%%%%%%%%%%%%%%%%%%%%%%%%%%%%%%%%%%%%%%%%%%%%
\subsection{Proof of Proposition~\ref{prop:estimate}}
\label{sub:energy-eps}
%%%%%%%%%%%%%%%%%%%%%%%%%%%%%%%%%%%%%%%%%%%%%%%%%%%%%
%%%%%%%%%%%%%%%%%%%%%%%%%%%%%%%%%%%%%%%%%%%%%%%%%%%%%

Set $\rho_\varepsilon := 1 - \varepsilon^2 U_\varepsilon^2$. With this notation at hand, we can rewrite~\eqref{HLLeps} as
\begin{equation}
\label{HLLeps-bis}
\begin{cases} \partial_t U_\varepsilon = \rho_\varepsilon \, \Delta \Phi_\varepsilon + \nabla \rho_\varepsilon \cdot \nabla \Phi_\varepsilon - \frac{\sigma}{2} \rho_\varepsilon \, \sin(2 \Phi_\varepsilon),\\[5pt]
\partial_t \Phi_\varepsilon = U_\varepsilon - \frac{\varepsilon^2}{\rho_\varepsilon} \, \Delta U_\varepsilon - \frac{\varepsilon^2}{2} \nabla U_\varepsilon \cdot \nabla \big( \frac{1}{\rho_\varepsilon} \big) - \varepsilon^2 U_\varepsilon \, |\nabla \Phi_\varepsilon|^2 - \sigma \varepsilon^2 U_\varepsilon \sin^2(\Phi_\varepsilon), \end{cases}
\end{equation}
while the energy of order $\ell$ is given by
\begin{equation}
\label{energy-bis}
E_\varepsilon^\ell(U_\varepsilon, \Phi_\varepsilon) = \frac{1}{2} \sum_{|\alpha| = \ell - 1} \int_{\R^N} \Big( \frac{\varepsilon^2}{\rho_\varepsilon} \, |\nabla \partial_x^\alpha U_\varepsilon|^2 + |\partial_x^\alpha U_\varepsilon|^2 + \rho_\varepsilon \, |\nabla \partial_x^\alpha \Phi_\varepsilon|^2 + \sigma \rho_\varepsilon \, |\partial_x^\alpha \sin(\Phi_\varepsilon)|^2 \Big).
\end{equation}
In view of~\eqref{HLLeps-bis}, the time derivatives $\partial_t U_\varepsilon$ and $\partial_t \Phi_\varepsilon$ belong to $\boC^0([0, T], H^{k + 1}(\R^N))$, when the pair $(U_\varepsilon, \Phi_\varepsilon)$ lies in $\boC^0([0, T], \boN\boV^{k + 3}(\R^N))$. For $2 \leq \ell \leq k + 1$, the energy $E_\varepsilon^\ell(U_\varepsilon, \Phi_\varepsilon)$ is therefore of class $\boC^1$ on $[0, T]$. In view of~\eqref{HLLeps-bis}, its time derivative can be decomposed as
$$\big[ E_\varepsilon^\ell(U_\varepsilon, \Phi_\varepsilon) \big]'(t) = \sum_{j = 1}^5 \boI_j(t),$$
where we set
\begin{align*}
\boI_1 := & \sum_{|\alpha| = \ell - 1} \int_{\R^N} \partial_x^\alpha U_\varepsilon \, \partial_x^\alpha \Big( \rho_\varepsilon \, \Delta \Phi_\varepsilon + \nabla \rho_\varepsilon \cdot \nabla \Phi_\varepsilon - \frac{\sigma}{2} \rho_\varepsilon \, \sin(2 \Phi_\varepsilon) \Big),\\
\boI_2 := & \sum_{|\alpha| = \ell} \int_{\R^N} \rho_\varepsilon \, \partial_x^\alpha \Phi_\varepsilon \, \partial_x^\alpha \Big( U_\varepsilon - \frac{\varepsilon^2}{\rho_\varepsilon} \, \Delta U_\varepsilon - \frac{\varepsilon^2}{2} \nabla U_\varepsilon \cdot \nabla \Big( \frac{1}{\rho_\varepsilon} \Big) - \varepsilon^2 U_\varepsilon \, |\nabla \Phi_\varepsilon|^2 - \sigma \varepsilon^2 U_\varepsilon \, \sin^2(\Phi_\varepsilon) \Big),\\
\boI_3 := & \sum_{|\alpha| = \ell} \int_{\R^N} \frac{\varepsilon^2}{\rho_\varepsilon} \, \partial_x^\alpha U_\varepsilon \, \partial_x^\alpha \Big( \rho_\varepsilon \, \Delta \Phi_\varepsilon + \nabla \rho_\varepsilon \cdot \nabla \Phi_\varepsilon - \frac{\sigma}{2} \rho_\varepsilon \, \sin(2 \Phi_\varepsilon) \Big),\\
\boI_4 := & \sigma \sum_{|\alpha| = \ell - 1} \int_{\R^N} \rho_\varepsilon \, \partial_x^\alpha \big( \sin(\Phi_\varepsilon) \big) \times\\
& \quad \quad \times \partial_x^\alpha \Big( \cos(\Phi_\varepsilon) \, \big( U_\varepsilon - \frac{\varepsilon^2}{\rho_\varepsilon} \, \Delta U_\varepsilon - \frac{\varepsilon^2}{2} \nabla U_\varepsilon \cdot \nabla \Big( \frac{1}{\rho_\varepsilon} \Big) - \varepsilon^2 U_\varepsilon \, |\nabla \Phi_\varepsilon|^2 - \sigma \varepsilon^2 U_\varepsilon \, \sin^2(\Phi_\varepsilon) \big) \Big),
\end{align*}
and 
$$\boI_5 := - \varepsilon^2 \sum_{|\alpha| = \ell - 1} \int_{\R^N} U_\varepsilon \, \big( \partial_t U_\varepsilon \big) \, \Big( |\nabla \partial_x^\alpha \Phi_\varepsilon|^2 - \frac{\varepsilon^2}{\rho_\varepsilon^2} |\nabla \partial_x^\alpha U_\varepsilon|^2 + \sigma |\partial_x^\alpha \sin(\Phi_\varepsilon)|^2 \Big).$$
In order to establish~\eqref{der-E-j}, we now bound all these quantities. For the sake of simplicity, we drop, here as in the sequel, the dependence on $t \in [0, T]$.

We first collect the estimates for the functions $U_\varepsilon$, $\Phi_\varepsilon$ and $\rho_\varepsilon$ that we are using for controlling the quantities $\boI_j$. Concerning the function $\rho_\varepsilon$, we infer from~\eqref{borne-W} and direct computations the uniform estimates
\begin{equation}
\label{est-W-1a}
\frac{1}{2} \leq \rho_\varepsilon \leq 1, \quad \| \nabla \rho_\varepsilon \|_{L^\infty} \leq 2 \varepsilon^2 \| U_\varepsilon \|_{L^\infty} \, \| \nabla U_\varepsilon \|_{L^\infty}, \quad {\rm and} \quad \Big\| \nabla \Big( \frac{1}{\rho_\varepsilon} \Big) \Big\|_{L^\infty} \leq 8 \varepsilon^2 \| U_\varepsilon \|_{L^\infty} \, \| \nabla U_\varepsilon \|_{L^\infty},\\
\end{equation}
as well as
\begin{equation}
\label{est-W-1b}
\| d^2 \rho_\varepsilon \|_{L^\infty} + \Big\| d^2 \Big( \frac{1}{\rho_\varepsilon} \Big) \Big\|_{L^\infty} \leq C \varepsilon^2 \big( \| \nabla U_\varepsilon \|_{L^\infty}^2 + \| U_\varepsilon \|_{L^\infty} \| d^2 U_\varepsilon \|_{L^\infty} \big).
\end{equation}
Here as in the sequel, the notation $d^2 f$ stands for the second differential of the function $f$, while $C$ refers to a positive number, possibly different from line to line, and depending on $k$ and $N$, but not on $\varepsilon$ and $\sigma$. Applying Lemma~\ref{lem:moser} with $F(x) = 1 - x$, and using~\eqref{borne-W}, we also get
\begin{equation}
\label{est-W-2}
\| \partial_x^\alpha \rho_\varepsilon \|_{L^2} \leq C \varepsilon^2 \| U_\varepsilon \|_{L^\infty} \, \| U_\varepsilon \|_{\dot{H}^m},
\end{equation}
for any $1 \leq |\alpha| = m \leq k + 1$. Similarly, we can find a smooth function $G \in \boC_0^\infty(\R)$, with $G(x) = 1/(1 - x)$ for $|x| \leq 1/2$, so that, by~\eqref{borne-W}, $1/\rho_\varepsilon = G(\varepsilon U_\varepsilon)$. Applying again Lemma~\ref{lem:moser}, we are led to
\begin{equation}
\label{est-W-3}
\Big\| \partial_x^\alpha \Big( \frac{1}{\rho_\varepsilon} \Big) \Big\|_{L^2} \leq C \varepsilon^2 \| U_\varepsilon \|_{L^\infty} \, \| U_\varepsilon \|_{\dot{H}^m},
\end{equation}
for any $1 \leq |\alpha| = m \leq k + 1$.

Coming back to the definition of $\Sigma_\varepsilon^{k + 1}$, we deduce from~\eqref{energy-bis} that 
\begin{equation}
\label{est-U1}
\| \partial_x^\alpha U_\varepsilon \|_{L^2} \leq \big( 2 \Sigma_\varepsilon^{k + 1} \big)^\frac{1}{2}, \quad \varepsilon \| \partial_x^\alpha \nabla U_\varepsilon \|_{L^2} \leq \big( 2 \Sigma_\varepsilon^{k + 1} \big)^\frac{1}{2},
\end{equation}
as well as
\begin{equation}
\label{est-U2}
\sigma^\frac{1}{2} \| \partial_x^\alpha \sin(\Phi_\varepsilon) \|_{L^2} \leq 2 \big( \Sigma_\varepsilon^{k + 1} \big)^\frac{1}{2}, \quad {\rm and} \quad \| \partial_x^\alpha \nabla \Phi_\varepsilon \|_{L^2} \leq 2 \big( \Sigma_\varepsilon^{k + 1} \big)^\frac{1}{2},
\end{equation}
for any $0 \leq |\alpha| \leq k$. It then follows from Corollary~\ref{cor:cos} that
\begin{equation}
\label{est-cos}
\min \big\{ \sigma^\frac{1}{2}, 1 \big\} \| \partial_x^\alpha \nabla \cos(\Phi_\varepsilon) \|_{L^2} \leq C \big( \| \sin(\Phi_\varepsilon) \|_{L^\infty} + \| \nabla \Phi_\varepsilon \|_{L^\infty} \big) \, \big( \Sigma_\varepsilon^{k + 1} \big)^\frac{1}{2},
\end{equation}
when $0 \leq |\alpha| \leq k$.

We first estimate the quantity $\boI_5$. In view of~\eqref{est-W-1a}, we have 
$$\big| \boI_5 \big| \leq 4 \varepsilon^2 \| U_\varepsilon \|_{L^\infty} \, \| \partial_t U_\varepsilon \|_{L^\infty} \, E_\varepsilon^\ell.$$
Since
$$\| \partial_t U_\varepsilon \| _{L^\infty} \leq \| \Delta \Phi_\varepsilon \|_{L^\infty} + 2 \varepsilon^2 \| U_\varepsilon \|_{L^\infty} \, \| \nabla U_\varepsilon \|_{L^\infty} \| \nabla \Phi_\varepsilon \|_{L^\infty} + \sigma \| \sin(\Phi_\varepsilon) \|_{L^\infty},$$
by~\eqref{HLLeps-bis} and~\eqref{est-W-1a}, this is bounded by
\begin{equation}
\label{eq:est-I5}
\big| \boI_5 \big| \leq 8 \varepsilon^2 \| U_\varepsilon \|_{L^\infty} \, \big( \| \Delta \Phi_\varepsilon \|_{L^\infty} + \varepsilon^2 \| U_\varepsilon \|_{L^\infty} \, \| \nabla U_\varepsilon \|_{L^\infty} \, \| \nabla \Phi_\varepsilon \|_{L^\infty} + \sigma \| \sin(\Phi_\varepsilon) \|_{L^\infty} \big) \, \Sigma_\varepsilon^{k + 1}.
\end{equation}

We next split the quantity $\boI_1$ as
$$\boI_1 = \sum_{m = 1}^3 \boI_{1, m},$$
with
\begin{align*}
\boI_{1, 1} := & \sum_{|\alpha| = \ell - 1} \int_{\R^N} \partial_x^\alpha U_\varepsilon \, \big( \rho_\varepsilon \, \partial_x^\alpha \Delta \Phi_\varepsilon + \nabla \rho_\varepsilon \cdot \partial_x^\alpha \nabla \Phi_\varepsilon + \partial_x^\alpha \nabla \rho_\varepsilon \cdot \nabla \Phi_\varepsilon \big),
\\
\boI_{1, 2} := & \sum_{|\alpha| = \ell - 1} \int_{\R^N} \partial_x^\alpha U_\varepsilon \, \bigg( \sum_{\substack{1 \leq |\beta|\leq \ell - 1\\ \beta \leq \alpha}} \binom{\alpha}{\beta} \partial_x^\beta \rho_\varepsilon \, \partial_x^{\alpha - \beta} \Delta \Phi_\varepsilon + \sum_{\substack{1 \leq |\beta| \leq \ell - 2\\ \beta \leq \alpha}} \binom{\alpha}{\beta} \partial_x^\beta \nabla \rho_\varepsilon \cdot \partial_x^{\alpha - \beta} \nabla \Phi_\varepsilon \bigg),\\
\boI_{1, 3} := & - \frac{\sigma}{2} \sum_{|\alpha| = \ell - 1} \int_{\R^N} \partial_x^\alpha U_\varepsilon \, \partial_x^\alpha \big( \rho_\varepsilon \, \sin(2 \Phi_\varepsilon) \big).
\end{align*}
The quantity $\boI_{1, 1}$ contains the higher order derivatives. It cannot be estimated according to~\eqref{der-E-j} without taking into account cancellations with the similar parts $\boI_{2, 1}$ and $\boI_{3, 1}$ of the quantities $\boI_2$, respectively $\boI_3$. Hence, we postpone the analysis of $\boI_{1, 1}$, and first deal with $\boI_{1, 2}$. Indeed, we can bound directly this term by combining the estimates in Lemma~\ref{lem:moser} and Corollary~\ref{cor:moser} with~\eqref{est-W-1a},~\eqref{est-W-1b} and~\eqref{est-W-2} in order to get
\begin{equation}
\label{eq:est-I12}
\begin{split}
\big| \boI_{1, 2} \big| \leq C & \, \| U_\varepsilon \|_{\dot{H}^{\ell - 1}} \big( \| \nabla \rho_\varepsilon \|_{L^\infty} \, \| \nabla \Phi_\varepsilon \|_{\dot{H}^{\ell - 1}} + \| d^2 \Phi_\varepsilon \|_{L^\infty} \, \| \nabla \rho_\varepsilon \|_{\dot{H}^{\ell - 2}} + \| d^2 \rho_\varepsilon \|_{L^\infty} \, \| \nabla \Phi_\varepsilon \|_{\dot{H}^{\ell - 2}} \big),\\
\leq C & \varepsilon^2 \Big( \| U_\varepsilon \|_{L^\infty} \, \big( \| \nabla U_\varepsilon \|_{L^\infty} + \| d^2 \Phi_\varepsilon \|_{L^\infty} + \| d^2 U_\varepsilon \|_{L^\infty} \big) + \| \nabla U_\varepsilon \|_{L^\infty}^2 \Big) \, \Sigma_\varepsilon^\ell.
\end{split}
\end{equation}

We next control $\boI_{1, 3}$ by taking advantage of a cancellation with the quantity $\boI_4$. As before, we decompose this quantity as
$$\boI_4 = \sum_{m = 1}^3 \boI_{4, m},$$
with
\begin{align*}
\boI_{4, 1} := & \sigma \sum_{|\alpha| = \ell - 1} \int_{\R^N} \rho_\varepsilon \, \partial_x^\alpha \big( \sin(\Phi_\varepsilon) \big) \, \partial_x^\alpha \big( \cos(\Phi_\varepsilon) U_\varepsilon \big),\\
\boI_{4, 2} := & -\sigma \varepsilon^2 \sum_{|\alpha| = \ell - 1} \int_{\R^N} \rho_\varepsilon \, \partial_x^\alpha \big( \sin(\Phi_\varepsilon) \big) \, \partial_x^\alpha \Big( \frac{1}{\rho_\varepsilon} \Delta U_\varepsilon \cos(\Phi_\varepsilon) \Big),\\
\boI_{4, 3} := & - \sigma \varepsilon^2 \sum_{|\alpha| = \ell - 1} \int_{\R^N} \partial_x^\alpha \big( \sin(\Phi_\varepsilon) \big) \partial_x^\alpha \Big( \cos(\Phi_\varepsilon) \Big( \nabla U_\varepsilon \cdot \nabla \Big( \frac{1}{2 \rho_\varepsilon} \Big) + U_\varepsilon |\nabla \Phi_\varepsilon|^2 + \sigma U_\varepsilon \sin^2(\Phi_\varepsilon) \Big) \Big).
\end{align*}
Applying the Leibniz rule, we observe that
\begin{align*}
\boI_{1, 3} + \boI_{4, 1} = \sigma \sum_{|\alpha| = \ell - 1} \sum_{\substack{1 \leq |\beta| \leq \ell - 1\\ \beta \leq \alpha}} \binom{\alpha}{\beta} \int_{\R^N} \Big( & \rho_\varepsilon \, \partial_x^\alpha \big( \sin(\Phi_\varepsilon) \big) \, \partial_x^\beta \big( \cos(\Phi_\varepsilon) \big) \, \partial_x^{\alpha - \beta} U_\varepsilon\\
& - \partial_x^\alpha U_\varepsilon \, \partial_x^{\alpha - \beta} \big( \sin(\Phi_\varepsilon) \big) \, \partial_x^\beta \big( \rho_\varepsilon \, \cos(\Phi_\varepsilon) \big) \Big).
\end{align*}
Hence, we deduce from~\eqref{est-W-1a},~\eqref{est-W-2},~\eqref{est-U1},~\eqref{est-U2} and~\eqref{est-cos}, as well as from Corollary~\ref{cor:moser}, that
\begin{equation}
\label{eq:est-I13}
\begin{split}
\big| \boI_{1, 3} + \boI_{4, 1} \big| & \leq C \sigma \bigg( \| \sin(\Phi_\varepsilon) \|_{\dot{H}^{\ell - 1}} \, \big( \| \nabla \cos(\Phi_\varepsilon) \|_{L^\infty} \, \| U_\varepsilon \|_{\dot{H}^{\ell - 2}} + \| \cos(\Phi_\varepsilon) \|_{\dot{H}^{\ell - 1}} \, \| U_\varepsilon \|_{L^\infty} \big)\\
& + \| U_\varepsilon \|_{\dot{H}^{\ell - 2}} \, \Big( \| \sin(\Phi_\varepsilon) \|_{\dot{H}^{\ell - 2}} \, \big( \| \nabla \rho_\varepsilon \|_{L^\infty} + \| \nabla \cos(\Phi_\varepsilon) \|_{L^\infty} \big) + \| \sin(\Phi_\varepsilon) \|_{L^\infty} \, \big( \| \rho_\varepsilon \|_{\dot{H}^{\ell - 1}}\\
& + \| \cos(\Phi_\varepsilon) \|_{\dot{H}^{\ell - 1}} + \delta_{\ell \neq 2} \| \nabla \rho_\varepsilon \|_{L^\infty} \, \| \cos(\Phi_\varepsilon) \|_{\dot{H}^{\ell - 2}} + \delta_{\ell \neq 2} \| \nabla \cos(\Phi_\varepsilon) \|_{L^\infty} \, \| \rho_\varepsilon \|_{\dot{H}^{\ell - 2}} \big) \Big) \bigg)
\\
& \leq C \max \big\{ \sigma, 1 \big\} \Big( \big( \| \sin(\Phi_\varepsilon) \|_{L^\infty} + \| \nabla \Phi_\varepsilon \|_{L^\infty} \big) \, \big( \| \sin(\Phi_\varepsilon) \|_{L^\infty} + \| U_\varepsilon \|_{L^\infty} \big)\\
& + \varepsilon^2 \| U_\varepsilon \|_{L^\infty} \, \big( \| \nabla U_\varepsilon \|_{L^\infty} + \| \sin \Phi_\varepsilon \|_{L^\infty} \big) \, \big( 1 + \| \sin(\Phi_\varepsilon) \|_{L^\infty} \, \| \nabla \Phi_\varepsilon \|_{L^\infty} \big) \Big) \, \Sigma_\varepsilon^{k + 1}.
\end{split}
\end{equation}
Note that this estimate is the only one, which contains terms without multiplicative factor $\varepsilon$.

Estimating $\boI_{4, 3}$ is also direct. In view of~\eqref{est-W-1a},~\eqref{est-W-3},~\eqref{est-U1} and~\eqref{est-U2}, we notice that
$$\Big\| \nabla U_\varepsilon \cdot \nabla \Big( \frac{1}{\rho_\varepsilon} \Big) \Big\|_{\dot{H}^{\ell - 1}} \leq C \varepsilon \| U_\varepsilon \|_{L^\infty} \, \| \nabla U_\varepsilon \|_{L^\infty} \, \big( \Sigma_\varepsilon^{k + 1} \big)^\frac{1}{2},$$
$$\big\| U_\varepsilon |\nabla \Phi_\varepsilon|^2 \big\|_{\dot{H}^{\ell - 1}} \leq C \big( \| U_\varepsilon \|_{L^\infty} \, \| \nabla \Phi_\varepsilon \|_{L^\infty} + \| \nabla \Phi_\varepsilon \|^2_{L^\infty} \big) \, \big( \Sigma_\varepsilon^{k + 1} \big)^\frac{1}{2},$$
and
$$\sigma^\frac{1}{2} \big\| U_\varepsilon \sin(\Phi_\varepsilon)^2 \big\|_{\dot{H}^{\ell - 1}} \leq C \big( \| U_\varepsilon \|_{L^\infty} \, \| \sin(\Phi_\varepsilon) \|_{L^\infty} + \sigma^\frac{1}{2} \| \sin(\Phi_\varepsilon) \|^2_{L^\infty} \big) \, \big( \Sigma_\varepsilon^{k + 1} \big)^\frac{1}{2}.$$

Using~\eqref{est-cos}, we are led to
\begin{equation}
\label{eq:est-I43}
\begin{split}
\big| & \boI_{4, 3} \big| \leq C \max \big\{ 1, \sigma^\frac{3}{2} \big\} \, \varepsilon^2 \Big( \varepsilon \|U_\varepsilon \|_{L^\infty} \, \| \nabla U_\varepsilon \|_{L^\infty} + \big( \| \sin(\Phi_\varepsilon) \|_{L^\infty} + \| \nabla \Phi_\varepsilon \|_{L^\infty} \big) \times\\
& \times \big( \| U_\varepsilon \|_{L^\infty} + \| \sin(\Phi_\varepsilon) \|_{L^\infty} + \| \nabla \Phi_\varepsilon \|_{L^\infty} + \| U_\varepsilon \|_{L^\infty} \, \| \nabla \Phi_\varepsilon \|_{L^\infty}^2 + \varepsilon^2 \| U_\varepsilon \|_{L^\infty} \, \| \nabla U_\varepsilon \|_{L^\infty}^2 \big) \Big) \, \Sigma_\varepsilon^{k + 1}.
\end{split}
\end{equation}

Concerning $\boI_{4, 2}$, we use a further cancellation with the quantity $\boI_3$. Set
$$\boI_3 = \sum_{m = 1}^3 \boI_{3, m},$$
with
\begin{align*}
\boI_{3, 1} := & \varepsilon^2 \sum_{|\alpha| = \ell} \int_{\R^N} \frac{1}{\rho_\varepsilon} \, \partial_x^\alpha U_\varepsilon \, \bigg( \nabla \rho_\varepsilon \cdot \partial_x^\alpha \nabla \Phi_\varepsilon + \partial_x^\alpha \nabla \rho_\varepsilon \cdot \nabla \Phi_\varepsilon + \sum_{\substack{0 \leq |\beta| \leq 1\\ \beta \leq \alpha }} \binom{\alpha}{\beta} \partial_x^\beta \rho_\varepsilon \, \partial_x^{\alpha - \beta} \Delta \Phi_\varepsilon \bigg),\\
\boI_{3, 2} := & \varepsilon^2 \sum_{|\alpha| = \ell} \int_{\R^N} \frac{1}{\rho_\varepsilon} \, \partial_x^\alpha U_\varepsilon\, \bigg( \sum_{\substack{2 \leq |\beta| \leq \ell\\ \beta \leq \alpha}} \binom{\alpha}{\beta} \partial_x^\beta \rho_\varepsilon \, \partial_x^{\alpha - \beta} \Delta \Phi_\varepsilon + \sum_{\substack{1 \leq |\beta| \leq \ell - 1\\ \beta \leq \alpha}} \binom{\alpha}{\beta} \partial_x^\beta \nabla \rho_\varepsilon \cdot \partial_x^{\alpha - \beta} \nabla \Phi_\varepsilon \bigg),\\
\boI_{3, 3} := & - \sigma \varepsilon^2 \sum_{|\alpha| = \ell - 1} \int_{\R^N} \frac{1}{\rho_\varepsilon} \, \partial_x^\alpha \nabla U_\varepsilon \cdot \partial_x^\alpha \nabla \big( \rho_\varepsilon \sin(\Phi_\varepsilon) \cos(\Phi_\varepsilon)\big),
\end{align*}
The identity
$$\frac{1}{\rho_\varepsilon} \, \Delta U_\varepsilon \, \cos(\Phi_\varepsilon) = \div \Big( \frac{1}{\rho_\varepsilon} \, \nabla U_\varepsilon \, \cos(\Phi_\varepsilon) \Big) - \nabla \Big( \frac{1}{\rho_\varepsilon} \, \cos(\Phi_\varepsilon) \Big) \cdot \nabla U_\varepsilon,$$
provides
\begin{align*}
\boI_{4, 2} = \sigma \varepsilon^2 \sum_{|\alpha| = \ell - 1} \int_{\R^N} \bigg( & \nabla \Big( \rho_\varepsilon \, \partial_x^\alpha \big( \sin(\Phi_\varepsilon) \big) \Big) \cdot \Big( \partial_x^\alpha \Big( \frac{1}{\rho_\varepsilon} \nabla U_\varepsilon \cos(\Phi_\varepsilon) \Big) \Big)\\
& - \rho_\varepsilon \, \partial_x^\alpha \big( \sin(\Phi_\varepsilon) \big) \,\Big( \partial_x^\alpha \Big( \nabla U_\varepsilon \cdot \nabla \Big( \frac{1}{\rho_\varepsilon} \cos(\Phi_\varepsilon) \Big) \Big) \bigg),
\end{align*}
after integrating by parts. Hence, we have
\begin{align*}
\boI_{3, 3} + \boI_{4, 2} = \sigma \varepsilon^2 \sum_{|\alpha| = \ell - 1} \int_{\R^N} & \Bigg( \partial_x^\alpha \big( \sin(\Phi_\varepsilon) \big) \times\\
& \times \bigg( \nabla \rho_\varepsilon \cdot \partial_x^\alpha \Big( \frac{1}{\rho_\varepsilon} \, \nabla U_\varepsilon \, \cos(\Phi_\varepsilon) \Big) - \rho_\varepsilon \, \partial_x^\alpha \Big( \nabla U_\varepsilon \cdot \nabla \Big( \frac{1}{\rho_\varepsilon} \,\cos(\Phi_\varepsilon) \Big) \Big) \bigg)\\
+ \sum_{\substack{1 \leq |\beta| \leq \ell - 1\\\beta \leq \alpha}} & \binom{\alpha}{\beta} \bigg( \rho_\varepsilon \, \partial_x^\beta \Big( \frac{1}{\rho_\varepsilon} \, \cos(\Phi_\varepsilon) \Big) \, \partial_x^\alpha \nabla \big( \sin(\Phi_\varepsilon) \big) \cdot \partial_x^{\alpha - \beta} \nabla U_\varepsilon\\
& - \frac{1}{\rho_\varepsilon} \, \partial_x^\beta \big( \rho_\varepsilon \, \cos(\Phi_\varepsilon) \big) \, \partial_x^\alpha \nabla U_\varepsilon \cdot \partial_x^{\alpha - \beta} \nabla \big( \sin(\Phi_\varepsilon) \big) \bigg) \Bigg).
\end{align*}
A further integration by parts then transforms the third line of this identity into
\begin{align*}
\int_{\R^N} \rho_\varepsilon \, \partial_x^\beta \Big( \frac{1}{\rho_\varepsilon} \, \cos(\Phi_\varepsilon) & \Big) \, \partial_x^\alpha \nabla \big( \sin(\Phi_\varepsilon) \big) \cdot \partial_x^{\alpha - \beta} \nabla U_\varepsilon \Big)\\
= - \int_{\R^N} \partial_x^\alpha \big( \sin(\Phi_\varepsilon) \big) \, \bigg( & \partial_x^\beta \Big( \frac{1}{\rho_\varepsilon} \, \cos(\Phi_\varepsilon) \Big) \, \nabla \rho_\varepsilon \cdot \partial_x^{\alpha - \beta} \nabla U_\varepsilon + \rho_\varepsilon \, \partial_x^\beta \nabla \Big( \frac{1}{\rho_\varepsilon} \, \cos(\Phi_\varepsilon) \Big) \cdot \partial_x^{\alpha - \beta} \nabla U_\varepsilon\\
& + \rho_\varepsilon \, \partial_x^\beta \Big( \frac{1}{\rho_\varepsilon} \, \cos(\Phi_\varepsilon) \Big) \, \partial_x^{\alpha - \beta} \Delta U_\varepsilon \bigg).
\end{align*}

In order to bound these various terms, we first infer from~\eqref{est-W-1a} and~\eqref{est-W-1b} that
$$\Big\| \nabla \big( \rho_\varepsilon \, \cos(\Phi_\varepsilon) \big) \Big\|_{L^\infty} + \Big\| \nabla \Big( \frac{1}{\rho_\varepsilon} \, \cos(\Phi_\varepsilon) \Big) \Big\|_{L^\infty} \leq C \big( \varepsilon^2 \| U_\varepsilon \|_{L^\infty} \, \| \nabla U_\varepsilon \|_{L^\infty} + \| \sin(\Phi_\varepsilon) \|_{L^\infty} \, \| \nabla \Phi_\varepsilon \|_{L^\infty} \big),$$
and
\begin{align*}
\Big\| d^2 \Big( \frac{1}{\rho_\varepsilon} \, \cos(\Phi_\varepsilon) \Big) \Big\|_{L^\infty} \leq C \Big( & \| \sin(\Phi_\varepsilon) \|_{L^\infty} \, \| d^2 \Phi_\varepsilon \|_{L^\infty} + \| \nabla \Phi_\varepsilon \|_{L^\infty}^2\\
& + \varepsilon^2 \big( \| U_\varepsilon \|_{L^\infty} \, \| d^2 U_\varepsilon \|_{L^\infty} + \big( 1 + \varepsilon^2 \| U_\varepsilon \|_{L^\infty}^2 \big) \, \| \nabla U_\varepsilon \|_{L^\infty}^2 \big) \Big).
\end{align*}
Similarly, if follows from~\eqref{est-W-2},~\eqref{est-W-3},~\eqref{est-U1} and~\eqref{est-cos} that 
\begin{align*}
\big\| \rho_\varepsilon \, \cos(\Phi_\varepsilon) \big\|_{\dot{H}^{\ell - 1}} & + \Big\| \frac{1}{\rho_\varepsilon} \, \cos(\Phi_\varepsilon) \Big\|_{\dot{H}^{\ell - 1}} + \Big\| \frac{1}{\rho_\varepsilon} \, \cos(\Phi_\varepsilon) \Big\|_{\dot{H}^\ell}\\
& \leq C \max \Big\{ 1, \frac{1}{\sigma^\frac{1}{2}} \Big\} \, \big( \| \sin(\Phi_\varepsilon) \|_{L^\infty} + \| \nabla \Phi_\varepsilon \|_{L^\infty} + \varepsilon \| U_\varepsilon \|_{L^\infty} \big) \, \big( \Sigma_\varepsilon^{k + 1} \big)^\frac{1}{2}.
\end{align*}
Hence, we derive as before that
\begin{equation}
\label{eq:est-I33}
\begin{split}
\big| \boI_{3, 3} + \boI_{4, 2} \big| \leq & C \max \big\{ 1, \sigma \big\} \varepsilon \Big( \varepsilon^3 \| \nabla U_\varepsilon \|_{L^\infty}^2 (1 + \varepsilon^2 \| U_\varepsilon \|_{L^\infty}^2) + \big( \| \sin(\Phi_\varepsilon) \|_{L^\infty} + \| \nabla \Phi_\varepsilon \|_{L^\infty} \big) \times\\
& \times \big( \| \nabla \Phi_\varepsilon \|_{L^\infty} + \varepsilon \| \nabla U_\varepsilon \|_{L^\infty} + \varepsilon \| \nabla \Phi_\varepsilon \|_{L^\infty} + \varepsilon \| d^2 U_\varepsilon \|_{L^\infty} + \varepsilon \| d^2 \Phi_\varepsilon \|_{L^\infty} \big)\\
& + \varepsilon^2 \| U_\varepsilon \|_{L^\infty} \, \big( \| \nabla U_\varepsilon \|_{L^\infty} + \| \nabla \Phi_\varepsilon \|_{L^\infty} + \varepsilon \| d^2 U_\varepsilon \|_{L^\infty} \big) + \varepsilon^3 \| U_\varepsilon \|_{L^\infty} \, \| \nabla U_\varepsilon \|_{L^\infty} \times\\
& \times \big( \| \sin(\Phi_\varepsilon) \|_{L^\infty} + \| \nabla \Phi_\varepsilon \|_{L^\infty} \big) \, \big( \| \nabla U_\varepsilon \|_{L^\infty} + \| \nabla \Phi_\varepsilon \|_{L^\infty} \big) \Big) \, \Sigma_\varepsilon^{k + 1}.
\end{split}
\end{equation}

Coming back to $\boI_{3, 2}$, a simple computation shows that
\begin{equation}
\label{eq:est-I32}
\big| \boI_{3, 2} \big| \leq C \varepsilon^2 \big( \varepsilon\| \nabla U_\varepsilon \|_{L^\infty}^2 + \varepsilon \| U_\varepsilon \|_{L^\infty} \, \| D^2 U_\varepsilon \|_{L^\infty} + \| U_\varepsilon \|_{L^\infty} \, \| D^2 \Phi_\varepsilon \|_{L^\infty} \big) \, \Sigma_\varepsilon^{k + 1}.
\end{equation}

It then remains to split the quantity $\boI_2$ as
$$\boI_2 = \sum_{m = 1}^3 \boI_{2, m},$$
\begin{align*}
\boI_{2, 1} := & \sum_{|\alpha| = \ell} \int_{\R^N} \rho_\varepsilon \, \partial_x^\alpha \Phi_\varepsilon \, \bigg( \partial_x^\alpha U_\varepsilon - \varepsilon^2 \sum_{\substack{0 \leq |\beta| \leq 1\\ \beta \leq \alpha}} \binom{\alpha}{\beta} \partial_x^\beta \Big( \frac{1}{\rho_\varepsilon} \Big) \, \partial_x^{\alpha - \beta} \Delta U_\varepsilon\\
& \quad \quad \quad \quad \quad \quad \quad \quad \quad - \frac{\varepsilon^2}{2} \Big( \nabla \Big( \frac{1}{\rho_\varepsilon} \Big) \cdot \nabla \partial_x^\alpha U_\varepsilon + \nabla \partial_x^\alpha \Big( \frac{1}{\rho_\varepsilon} \Big) \cdot \nabla U_\varepsilon + 2 U_\varepsilon \cdot \partial_x^\alpha \big( |\nabla \Phi_\varepsilon|^2 \big) \Big) \bigg),\\
\boI_{2, 2} := & - \varepsilon^2 \sum_{|\alpha| = \ell} \int_{\R^N} \rho_\varepsilon \, \partial_x^\alpha \Phi_\varepsilon \, \bigg( \sum_{\substack{2 \leq |\beta| \leq \ell\\ \beta \leq \alpha}} \binom{\alpha}{\beta} \partial_x^\beta \Big( \frac{1}{\rho_\varepsilon} \Big) \, \partial_x^{\alpha - \beta} \Delta U_\varepsilon\\
& \quad \quad \quad \quad \quad + \frac{1}{2} \sum_{\substack{1 \leq |\beta| \leq \ell - 1\\ \beta \leq \alpha }} \binom{\alpha}{\beta} \nabla \partial_x^\beta \Big( \frac{1}{\rho_\varepsilon} \Big) \cdot \nabla \partial_x^{\alpha - \beta} U_\varepsilon + \sum_{\substack{1 \leq |\beta| \leq \ell\\ \beta \leq \alpha}} \binom{\alpha}{\beta} \partial_x^\beta U_\varepsilon \, \partial_x^{\alpha - \beta} (|\nabla \Phi_\varepsilon|^2) \bigg),\\
\boI_{2, 3} := & - \sigma \varepsilon^2 \sum_{|\alpha| = \ell} \int_{\R^N} \rho_\varepsilon \, \partial_x^\alpha \Phi_\varepsilon \, \partial_x^\alpha \big( U_\varepsilon \sin^2(\Phi_\varepsilon) \big).
\end{align*}
We can derive as before the inequality
\begin{equation}
\label{eq:est-I22}
\big| \boI_{2, 2} \big| \leq C \varepsilon \big( \| \nabla \Phi_\varepsilon \|_{L^\infty}^2 + \varepsilon^2 \| U_\varepsilon \|_{L^\infty} \, \| d^2 U_\varepsilon \|_{L^\infty} + \varepsilon^2 \| \nabla U_\varepsilon \|_{L^\infty}^2 \big) \, \Sigma_\varepsilon^{k + 1}.
\end{equation}
Using the inequality
\begin{align*}
\big\| \sin(\Phi_\varepsilon)^2 \big\|_{\dot{H}^\ell} = 2 \big\| \sin(\Phi_\varepsilon) & \, \cos(\Phi_\varepsilon) \, \nabla \Phi_\varepsilon \big\|_{\dot{H}^{\ell - 1}} \leq C \Big( \| \sin(\Phi_\varepsilon) \|_{L^\infty} \, \| \Phi_\varepsilon \|_{\dot{H}^\ell}\\
& + \| \nabla \Phi_\varepsilon \|_{L^\infty} \, \big( \| \sin(\Phi_\varepsilon) \|_{\dot{H}^{\ell - 1}} + \| \sin(\Phi_\varepsilon) \|_{L^\infty} \, \| \cos(\Phi_\varepsilon) \|_{\dot{H}^{\ell - 1}} \big) \Big),
\end{align*}
we also compute
\begin{equation}
\label{eq:est-I23}
\begin{split}
\big| \boI_{2, 3} \big| \leq C \, \max \big\{ \sigma, \sigma^\frac{1}{2} \big\} \, \varepsilon \, \Big( & \| \sin(\Phi_\varepsilon) \|_{L^\infty}^2 + \varepsilon \big( \| U_\varepsilon \|_{L^\infty} \, \| \sin(\Phi_\varepsilon) \|_{L^\infty} + \| U_\varepsilon \|_{L^\infty} \, \| \nabla \Phi_\varepsilon \|_{L^\infty}\\
& + \| U_\varepsilon \|_{L^\infty} \, \| \nabla \Phi_\varepsilon \|_{L^\infty}^2 \big) \Big) \, \Sigma_\varepsilon^{k + 1}.
\end{split}
\end{equation}

At this stage, we are left with the quantity $\boH := \boI_{1, 1} + \boI_{2, 1} + \boI_{3, 1}$, which contains the higher order derivatives. We again control this quantity by integrating by parts in order to make clear the cancellations between the different terms. More precisely, we collect the terms of $\boH$ as
\begin{align*}
\boH = & \sum_{m = 1}^7 \boH_m := \int_{\R^N} \sum_{|\alpha| = \ell - 1} \Big( \partial_x^\alpha U_\varepsilon \, \big( \rho_\varepsilon \, \partial_x^\alpha \Delta \Phi_\varepsilon + \nabla \rho_\varepsilon \cdot \nabla \partial_x^\alpha \Phi_\varepsilon \big) + \rho_\varepsilon \, \partial_x^\alpha \nabla \Phi_\varepsilon \cdot \partial_x^\alpha \nabla U_\varepsilon \Big)\\
& + \varepsilon^2 \int_{\R^N} \sum_{|\alpha| = \ell} \Big( \partial_x^\alpha \Delta \Phi_\varepsilon \, \partial_x^\alpha U_\varepsilon \, - \partial_x^\alpha \Phi_\varepsilon \, \partial_x^\alpha \Delta U_\varepsilon \Big) - \varepsilon^2 \int_{\R^N} \sum_{|\alpha| = \ell} \rho_\varepsilon \, U_\varepsilon \, \partial_x^\alpha \Phi_\varepsilon \, \partial_x^\alpha (|\nabla \Phi_\varepsilon|^2)\\
& + \varepsilon^2 \int_{\R^N} \sum_{|\alpha| = \ell} \frac{1}{\rho_\varepsilon} \, \partial_x^\alpha U_\varepsilon \, \partial_x^\alpha \nabla \rho_\varepsilon \cdot \nabla \Phi_\varepsilon + \int_{\R^N} \sum_{|\alpha| = \ell - 1} \partial_x^\alpha U_\varepsilon \, \partial_x^\alpha \nabla \rho_\varepsilon \cdot \nabla \Phi_\varepsilon\\
& + \varepsilon^2 \int_{\R^N} \sum_{|\alpha| = \ell} \bigg( \frac{\nabla \rho_\varepsilon}{\rho_\varepsilon} \cdot \Big( \frac{1}{2} \partial_x^\alpha \Phi_\varepsilon \, \partial_x^\alpha \nabla U_\varepsilon + \partial_x^\alpha U_\varepsilon \, \partial_x^\alpha \nabla \Phi_\varepsilon \Big) - \frac{\rho_\varepsilon}{2} \, \partial_x^\alpha \Phi_\varepsilon \, \partial_x^\alpha \nabla \Big( \frac{1}{\rho_\varepsilon} \Big) \cdot \nabla U_\varepsilon \bigg)\\
& + \varepsilon^2 \int_{\R^N} \sum_{|\alpha| = \ell} \sum_{\substack{|\beta| = 1\\ \beta \leq \alpha}} \frac{\partial_x^\beta \rho_\varepsilon}{\rho_\varepsilon} \Big( \partial_x^{\alpha - \beta} \Delta U_\varepsilon \, \partial_x^\alpha \Phi_\varepsilon + \partial_x^\alpha U_\varepsilon \, \partial_x^{\alpha - \beta} \Delta \Phi_\varepsilon \Big).
\end{align*}
Integrating by parts directly provides
$$\boH_1 = \boH_2 = 0.$$

We next write $\boH_3$ as
$$\boH_3 = \varepsilon^2 \int_{\R^N} \sum_{|\alpha| = \ell} \rho_\varepsilon \, U_\varepsilon \, \partial_x^\alpha \Phi_\varepsilon \Big( 2 \partial_x^\alpha \nabla \Phi_\varepsilon \cdot \nabla \Phi_\varepsilon - \partial_x^\alpha (|\nabla \Phi_\varepsilon|^2) \Big) - 2 \varepsilon^2 \int_{\R^N} \sum_{|\alpha| = \ell} \rho_\varepsilon \, U_\varepsilon \, \partial_x^\alpha \Phi_\varepsilon \, \partial_x^\alpha \nabla \Phi_\varepsilon \cdot \nabla \Phi_\varepsilon.$$
Applying the Leibniz formula and using Corollary~\ref{cor:moser}, we first check that
$$\big\| 2 \partial_x^\alpha \nabla \Phi_\varepsilon \cdot \nabla \Phi_\varepsilon - \partial_x^\alpha (|\nabla \Phi_\varepsilon|^2) \big\|_{L^2} \leq C \| d^2 \Phi_\varepsilon \|_{L^\infty} \, \| \Phi_\varepsilon \|_{\dot{H}^\ell},$$
when $|\alpha| = \ell$. Hence, we derive from~\eqref{est-W-1a} and~\eqref{est-U2} that
$$\bigg| \varepsilon^2 \int_{\R^N} \sum_{|\alpha| = \ell} \rho_\varepsilon \, U_\varepsilon \, \partial_x^\alpha \Phi_\varepsilon \Big( 2 \partial_x^\alpha \nabla \Phi_\varepsilon \cdot \nabla \Phi_\varepsilon - \partial_x^\alpha (|\nabla \Phi_\varepsilon|^2) \Big) \bigg| \leq C \varepsilon^2 \| U_\varepsilon \|_{L^\infty} \, \| d^2 \Phi_\varepsilon \|_{L^\infty} \Sigma_\varepsilon^{k + 1}.$$
On the other hand, an integration by parts yields
$$- 2 \varepsilon^2 \int_{\R^N} \sum_{|\alpha| = \ell} \rho_\varepsilon \, U_\varepsilon \, \partial_x^\alpha \Phi_\varepsilon \, \partial_x^\alpha \nabla \Phi_\varepsilon \cdot \nabla \Phi_\varepsilon = \varepsilon^2 \int_{\R^N} \sum_{|\alpha| = \ell} \div \big( \rho_\varepsilon \, U_\varepsilon \, \nabla \Phi_\varepsilon \big) \, |\partial_x^\alpha \Phi_\varepsilon|^2,$$
so that we finally obtain 
\begin{equation}
\label{eq:est-H3}
|\boH_3| \leq C \varepsilon^2 \Big( \| U_\varepsilon \|_{L^\infty} \, \| d^2 \Phi_\varepsilon \|_{L^\infty} + \big( 1 + \varepsilon^2 \| U_\varepsilon \|_{L^\infty}^2 \big) \, \| \nabla U_\varepsilon \|_{L^\infty} \, \| \nabla \Phi_\varepsilon \|_{L^\infty} \Big) \, \Sigma_\varepsilon^{k + 1}.
\end{equation}

We argue similarly for the term $\boH_4$, which we decompose as
$$\boH_4 = \varepsilon^2 \int_{\R^N} \sum_{|\alpha| = \ell} \frac{\partial_x^\alpha U_\varepsilon}{\rho_\varepsilon} \, \nabla \Phi_\varepsilon \cdot \big( \partial_x^\alpha \nabla \rho_\varepsilon + 2 \varepsilon^2 U_\varepsilon \, \partial_x^\alpha \nabla U_\varepsilon \big) - 2 \varepsilon^4 \int_{\R^N} \sum_{|\alpha| = \ell} \frac{U_\varepsilon}{\rho_\varepsilon} \, \partial_x^\alpha U_\varepsilon \, \partial_x^\alpha \nabla U_\varepsilon \cdot \nabla \Phi_\varepsilon.$$
Since
$$\big\| \partial_x^\alpha \nabla \rho_\varepsilon + 2 \varepsilon^2 U_\varepsilon \partial_x^\alpha \nabla U_\varepsilon \big\|_{L^2} \leq C \varepsilon^2 \| \nabla U_\varepsilon \|_{L^\infty} \, \| U_\varepsilon \|_{\dot{H}^\ell},$$
when $|\alpha| = \ell$, and
$$2 \varepsilon^4 \int_{\R^N} \sum_{|\alpha| = \ell} \frac{U_\varepsilon}{\rho_\varepsilon} \, \partial_x^\alpha U_\varepsilon \, \partial_x^\alpha \nabla U_\varepsilon \cdot \nabla \Phi_\varepsilon = - \varepsilon^4 \int_{\R^N} \sum_{|\alpha| = \ell} \div \Big( \frac{U_\varepsilon \, \nabla \Phi_\varepsilon}{\rho_\varepsilon} \Big) \, \big( \partial_x^\alpha U_\varepsilon \big)^2,$$
we have
\begin{equation}
\label{eq:est-H4}
\big| \boH_4 \big| \leq C \varepsilon^2 \Big( \| U_\varepsilon \|_{L^\infty} \, \| d^2 \Phi_\varepsilon \|_{L^\infty} + \big( 1 + \varepsilon^2 \| U_\varepsilon \|_{L^\infty}^2 \big) \, \| \nabla U_\varepsilon \|_{L^\infty} \, \| \nabla \Phi_\varepsilon \|_{L^\infty} \Big) \, \Sigma_\varepsilon^{k + 1}.
\end{equation}

We identically estimate the term $\boH_5$ so as to get
\begin{equation}
\label{eq:est-H5}
\big| \boH_5 \big| \leq C \varepsilon^2 \big( \| U_\varepsilon \|_{L^\infty} \, \| d^2 \Phi_\varepsilon \|_{L^\infty} + \| \nabla U_\varepsilon \|_{L^\infty} \, \| \nabla \Phi_\varepsilon \|_{L^\infty} \big) \, \Sigma_\varepsilon^{k + 1}.
\end{equation}

We now turn to the term $\boH_6$. We recall that
$$\nabla \Big( \frac{1}{\rho_\varepsilon} \Big) = \frac{2 \varepsilon^2 \, U_\varepsilon \, \nabla U_\varepsilon}{\rho_\varepsilon^2}.$$
Hence, we can write $\boH_6$ as
\begin{align*}
\boH_6 = \varepsilon^2 \int_{\R^N} \sum_{|\alpha| = \ell} \bigg( \frac{\nabla \rho_\varepsilon}{\rho_\varepsilon} \cdot & \Big( \partial_x^\alpha \Phi_\varepsilon \, \partial_x^\alpha \nabla U_\varepsilon + \partial_x^\alpha U_\varepsilon \, \partial_x^\alpha \nabla \Phi_\varepsilon \Big)\\
& + \varepsilon^2 \rho_\varepsilon \, \partial_x^\alpha \Phi_\varepsilon \nabla U_\varepsilon \cdot \Big( \frac{U_\varepsilon \, \partial_x^\alpha \nabla U_\varepsilon}{\rho_\varepsilon^2} - \partial_x^\alpha \Big( \frac{U_\varepsilon \, \nabla U_\varepsilon}{\rho_\varepsilon^2} \Big) \bigg).
\end{align*}
We can integrate by parts the first line of this identity in order to obtain
$$\varepsilon^2 \int_{\R^N} \sum_{|\alpha| = \ell} \frac{\nabla \rho_\varepsilon}{\rho_\varepsilon} \cdot \Big( \partial_x^\alpha \Phi_\varepsilon \, \partial_x^\alpha \nabla U_\varepsilon + \partial_x^\alpha U_\varepsilon \, \partial_x^\alpha \nabla \Phi_\varepsilon \Big) = - \varepsilon^2 \int_{\R^N} \sum_{|\alpha| = \ell} \div \Big( \frac{\nabla \rho_\varepsilon}{\rho_\varepsilon} \Big) \, \partial_x^\alpha \Phi_\varepsilon \, \partial_x^\alpha \, U_\varepsilon.$$
Combining this formula with the estimate
$$\Big\| \frac{U_\varepsilon \, \partial_x^\alpha \nabla U_\varepsilon}{\rho_\varepsilon^2} - \partial_x^\alpha \Big( \frac{U_\varepsilon \, \nabla U_\varepsilon}{\rho_\varepsilon^2} \Big) \Big\|_{L^2} \leq C \big( 1 + \varepsilon^2 \| U_\varepsilon \|_{L^\infty}^2 \big) \, \| \nabla U_\varepsilon \|_{L^\infty} \, \| U_\varepsilon \|_{\dot{H}^\ell},$$
we are led to the bound
\begin{equation}
\label{eq:est-H6}
\big| \boH_6 \big| \leq C \varepsilon^3 \Big( \big( 1 + \varepsilon^2 \| U_\varepsilon \|_{L^\infty}^2 \big) \, \| \nabla U_\varepsilon \|_{L^\infty}^2 + \| U_\varepsilon \|_{L^\infty} \, \| d^2 U_\varepsilon \|_{L^\infty} \big) \, \Sigma_\varepsilon^{k + 1}.
\end{equation}

We finally address the term $\boH_7$. A first integration by parts gives
\begin{align*}
\boH_7 = - \varepsilon^2 \int_{\R^N} \sum_{|\alpha| = \ell} \sum_{\substack{|\beta| = 1\\ \beta \leq \alpha}} \bigg( \frac{\partial_x^\beta \rho_\varepsilon}{\rho_\varepsilon} \, \big( \partial_x^{\alpha - \beta} \nabla & U_\varepsilon \cdot \partial_x^\alpha \nabla \Phi_\varepsilon + \partial_x^\alpha \nabla U_\varepsilon \cdot \partial_x^{\alpha - \beta} \nabla \Phi_\varepsilon \big)\\
& + \nabla \Big( \frac{\partial_x^\beta \rho_\varepsilon}{\rho_\varepsilon} \Big) \cdot \big( \partial_x^{\alpha - \beta} \nabla U_\varepsilon \, \partial_x^\alpha \Phi_\varepsilon + \partial_x^\alpha U_\varepsilon \,\partial_x^{\alpha - \beta} \nabla \Phi_\varepsilon \big) \bigg).
\end{align*}
Integrating by parts once again the first line provides
\begin{align*}
\boH_7 = \varepsilon^2 \int_{\R^N} \sum_{|\alpha| = \ell} \sum_{\substack{|\beta| = 1\\ \beta \leq \alpha}} \bigg( \partial_x^\beta \Big( \frac{\partial_x^\beta \rho_\varepsilon}{\rho_\varepsilon} \Big) \, \partial_x^{\alpha - \beta} \nabla & U_\varepsilon \cdot \partial_x^{\alpha - \beta} \nabla \Phi_\varepsilon\\
& - \nabla \Big( \frac{\partial_x^\beta \rho_\varepsilon}{\rho_\varepsilon} \Big) \cdot \big( \partial_x^{\alpha - \beta} \nabla U_\varepsilon \, \partial_x^\alpha \Phi_\varepsilon + \partial_x^\alpha U_\varepsilon \,\partial_x^{\alpha - \beta} \nabla \Phi_\varepsilon \big) \bigg).
\end{align*}
In view of~\eqref{est-W-1b},~\eqref{est-W-2},~\eqref{est-U1} and~\eqref{est-U2}, this can be controlled by
\begin{equation}
\label{eq:est-H7}
\big| \boH_7 \big| \leq C \varepsilon^3 \Big( \big( 1 + \varepsilon^2 \| U_\varepsilon \|_{L^\infty}^2 \big) \, \| \nabla U_\varepsilon \|_{L^\infty}^2 + \| U_\varepsilon \|_{L^\infty} \, \| d^2 U_\varepsilon \|_{L^\infty} \big) \, \Sigma_\varepsilon^{k + 1}.
\end{equation}
We finally put together all the estimates from~\eqref{eq:est-I5} to~\eqref{eq:est-H7}. The estimate in~\eqref{der-E-j} then follows from the bounds
$$\varepsilon \| U_\varepsilon \|_{L^\infty} \leq \frac{1}{\sqrt{2}} \quad {\rm and} \quad \| \sin(\Phi_\varepsilon) \|_{L^\infty} \leq 1,$$
and the inequality $2 a b \leq a^2 + b^2$. This concludes the proof of Proposition~\ref{prop:estimate}. \qed

%%%%%%%%%%%%%%%%%%%%%%%%%%%%%%%%%%%%%%%%%%%
%%%%%%%%%%%%%%%%%%%%%%%%%%%%%%%%%%%%%%%%%%%
\subsection{Proof of Corollary~\ref{cor:T}}
\label{sub:estim-eps}
%%%%%%%%%%%%%%%%%%%%%%%%%%%%%%%%%%%%%%%%%%%
%%%%%%%%%%%%%%%%%%%%%%%%%%%%%%%%%%%%%%%%%%%

Consider first an initial condition $(U_\varepsilon^0, \Phi_\varepsilon^0) \in \boN\boV^{k + 4}(\R^N)$ such that~\eqref{CI-petit} holds for some positive number $C$ to be fixed later. In view of Corollary~\ref{cor:HLL-Cauchy}, there exists a maximal time of existence $T_{\max}$, and a unique solution $(U_\varepsilon, \Phi_\varepsilon) \in \boC^0([0, T_{\max}), \boN\boV^{k + 3}(\R^N))$ to~\eqref{HLLeps} with initial datum $(U_\varepsilon^0, \Phi_\varepsilon^0)$, which satisfies all the statements in Corollary~\ref{cor:HLL-Cauchy}.

Invoking the Sobolev embedding theorem, we can find a positive number $K_1$, depending only on $k$ and $N$, such that 
$$\varepsilon \big\| U_\varepsilon^0 \big\|_{L^\infty} \leq \varepsilon K_1 \big\| U_\varepsilon ^0 \big\|_{H^k} \leq \frac{K_1}{C_*} < \frac{1}{\sqrt{2}},$$
when $C_* > \sqrt{2} K_1$. Setting $\rho_\varepsilon := 1 - \varepsilon^2 U_\varepsilon^2$ as before, we derive from the continuity of the solution in $\boN\boV^{k - 1}(\R^N)$ that the condition~\eqref{borne-W} in Proposition~\ref{prop:estimate} is fulfilled for any time small enough. Hence, the quantity $\Sigma_\varepsilon^{k + 1}$ is well-defined, and the stopping time 
\begin{equation}
\label{def:tau*}
T_* := \sup \Big\{ t \in [0, T_{\max}) : \frac{1}{2} \leq \inf_{x \in \R^N} \rho_\varepsilon(x, \tau) \ {\rm and} \ \Sigma_\varepsilon^{k + 1}(\tau) \leq 2 \Sigma_\varepsilon^{k + 1}(0) \ {\rm for} \ {\rm any} \ \tau \in [0, t] \Big\},
\end{equation}
is positive.

Moreover, the quantity $\Sigma_\varepsilon^{k + 1}$ is of class $\boC^1$ on $[0, T_*)$, and there exists a positive number $K_2$, depending only on $\sigma$, $k$ and $N$, such that
\begin{align*}
& \big[ \Sigma_\varepsilon^{k + 1} \big]'(t) \leq K_2 \, \Sigma_\varepsilon^{k + 1}(t) \, \Big( \| \sin(\Phi_\varepsilon(\cdot, t)) \|_{L^\infty}^2 + \| U_\varepsilon(\cdot, t) \|_{L^\infty}^2 + \| \nabla \Phi_\varepsilon(\cdot, t) \|_{L^\infty}^2 + \| \nabla U_\varepsilon(\cdot, t) \|_{L^\infty}^2\\
& + \| d^2 \Phi_\varepsilon(\cdot, t) \|_{L^\infty}^2 + \varepsilon^2 \| d^2 U_\varepsilon(\cdot, t) \|_{L^\infty}^2 + \varepsilon \, \| \nabla \Phi_\varepsilon(\cdot, t) \|_{L^\infty} \, \big( \| \nabla \Phi_\varepsilon(\cdot, t) \|_{L^\infty}^2 + \| \nabla U_\varepsilon(\cdot, t) \|_{L^\infty}^2 \big) \Big),
\end{align*}
for any $0 \leq t < T_*$. Since $k + 1 > N/2 + 2$, we can again invoke the Sobolev embedding theorem and use the bounds in~\eqref{borne-W},~\eqref{est-U1} and~\eqref{est-U2} in order to find a further number $K_3$, depending only on $\sigma$, $k$ and $N$, such that
\begin{equation}
\label{eq:Linfini-bound}
\begin{split}
\| \sin(\Phi_\varepsilon(\cdot, t)) \|_{L^\infty}^2 & + \| U_\varepsilon(\cdot, t) \|_{L^\infty}^2 + \| \nabla \Phi_\varepsilon(\cdot, t) \|_{L^\infty}^2\\
& + \| \nabla U_\varepsilon(\cdot, t) \|_{L^\infty}^2 + \| d^2 \Phi_\varepsilon(\cdot, t) \|_{L^\infty}^2 + \varepsilon^2 \| d^2 U_\varepsilon(\cdot, t) \|_{L^\infty}^2 \leq K_3 \Sigma_\varepsilon^{k + 1}(t).
\end{split}
\end{equation}
This gives
$$\big[ \Sigma_\varepsilon^{k + 1} \big]'(t) \leq K_2 K_3 \, \Big( \Sigma_\varepsilon^{k + 1}(t)^2 + \varepsilon K_3^\frac{1}{2} \Sigma_\varepsilon^{k + 1}(t)^\frac{5}{2} \Big),$$
for any $0 \leq t < T_*$. Coming back to~\eqref{def:tau*}, we next simplify this inequality as
$$\big[ \Sigma_\varepsilon^{k + 1} \big]'(t) \leq K_2 K_3 \, \Big( 1 + \big( 2 \varepsilon^2 K_3 \Sigma_\varepsilon^{k + 1}(0) \big)^\frac{1}{2} \Big)\, \Sigma_\varepsilon^{k + 1}(t)^2.$$
In view of~\eqref{borne-W} and~\eqref{CI-petit}, there also exists a number $K_4$, depending only on $\sigma$, $k$ and $N$, such that
\begin{equation}
\label{eq:petitpetit}
2 \varepsilon^2 K_3 \Sigma_\varepsilon^{k + 1}(0) \leq 2 K_3 K_4 \varepsilon^2 \Big( \big\| U_\varepsilon^0 \big\|_{H^k} + \varepsilon \big\| \nabla U_\varepsilon^0 \big\|_{H^k} + \big\| \nabla \Phi_\varepsilon^0 \big\|_{H^k} + \big\| \sin(\Phi_\varepsilon^0) \big\|_{H^k} \Big)^2 < 1,
\end{equation}
when $C_* > 2 K_3 K_4$. We conclude that
$$\big[ \Sigma_\varepsilon^{k + 1} \big]'(t) \leq 2 K_2 K_3 \, \Sigma_\varepsilon^{k + 1}(t)^2.$$

At this stage, we set
\begin{equation}
\label{eq:T*}
T_\varepsilon := \frac{1}{4 K_2 K_3 \Sigma_\varepsilon^{k + 1}(0)},
\end{equation}
and we deduce from the previous inequality that
$$\Sigma_\varepsilon^{k + 1}(t) \leq \frac{\Sigma_\varepsilon^{k + 1}(0)}{1 - 2 K_2 K_3 \Sigma_\varepsilon^{k + 1}(0) t} \leq 2 \Sigma_\varepsilon^{k + 1}(0),$$
when we additionally assume that $t < T_\varepsilon$. In view of~\eqref{eq:Linfini-bound} and~\eqref{eq:petitpetit}, we also have
$$\varepsilon \| U_\varepsilon(\cdot, t) \|_{L^\infty} \leq \varepsilon K_3^\frac{1}{2} \Sigma_\varepsilon^{k + 1}(0)^\frac{1}{2} < \frac{1}{\sqrt{2}},$$
so that
$$\inf_{x \in \R^N} \rho_\varepsilon(x, t) \geq \frac{1}{2}.$$
Finally, we derive as before from the Sobolev embedding theorem that
$$\int_0^t \bigg( \varepsilon^2 \bigg\| \frac{\nabla U_\varepsilon(\cdot, s)}{\rho_\varepsilon(\cdot, s)^\frac{1}{2}} \bigg\|_{L^\infty}^2 + \big\| \rho_\varepsilon(\cdot, s)^\frac{1}{2} \nabla \Phi_\varepsilon(\cdot, s) \big\|_{L^\infty}^2 \bigg) \, ds \leq K_6 \int_0^t \Sigma_\varepsilon^{k + 1}(s) \, ds \leq \frac{K_6}{2 K_2 K_3} < + \infty.$$
In view of the characterization for the maximal time $T_{\max}$ in Corollary~\ref{cor:HLL-Cauchy}, this guarantees that the stopping time $T_*$ is at least equal to $T_\varepsilon$. We finally derive from~\eqref{borne-W},~\eqref{est-U1} and~\eqref{est-U2} the existence of a number $K_5$, depending only on $\sigma$, $k$ and $N$, such that
\begin{align*}
\big\| U_\varepsilon(\cdot, t) \big\|_{H^k} + \varepsilon \big\| \nabla U_\varepsilon(\cdot, t) \big\|_{H^k} & + \big\| \nabla \Phi_\varepsilon(\cdot, t) \big\|_{H^k} + \big\| \sin(\Phi_\varepsilon(\cdot, t)) \big\|_{H^k}\\
\leq K_5 \Sigma_\varepsilon^{k + 1}(t)^\frac{1}{2} \leq & K_5 \big( 2 \Sigma_\varepsilon^{k + 1}(0) \big)^\frac{1}{2}\\
\leq & (2 K_4)^\frac{1}{2} K_5 \Big( \big\| U_\varepsilon^0 \big\|_{H^k} + \varepsilon \big\| \nabla U_\varepsilon^0 \big\|_{H^k} + \big\| \nabla \Phi_\varepsilon^0 \big\|_{H^k} + \big\| \sin(\Phi_\varepsilon^0) \big\|_{H^k} \Big).
\end{align*}
In view of~\eqref{eq:T*}, we similarly obtain
$$T_\varepsilon \geq \frac{1}{4 K_2 K_3 K_4 \big( \| U_\varepsilon^0 \|_{H^k} + \varepsilon \| \nabla U_\varepsilon^0 \|_{H^k} + \| \nabla \Phi_\varepsilon^0 \|_{H^k} + \| \sin(\Phi_\varepsilon^0) \|_{H^k} \big)^2}.$$
It then remains to suppose additionally that the number $C_*$ satisfies the conditions $C_* > (2 K_4)^\frac{1}{2} K_5$ and $C_* > 4 K_2 K_3 K_4$ in order to complete the proof of Corollary~\ref{cor:T} when $(U_\varepsilon^0, \Phi_\varepsilon^0) \in \boN\boV^{k + 4}(\R^N)$.

We finally rely on the continuity of the flow with respect to the initial datum in Corollary~\ref{cor:HLL-Cauchy} in order to extend Corollary~\ref{cor:T} to any arbitrary initial conditions $(U_\varepsilon^0, \Phi_\varepsilon^0) \in \boN\boV^{k + 2}(\R^N)$ by a density argument. \qed

%%%%%%%%%%%%%%%%%%%%%%%%%%%%%%%%%%%%%%%%%%%%
%%%%%%%%%%%%%%%%%%%%%%%%%%%%%%%%%%%%%%%%%%%%
\subsection{Proof of Lemma~\ref{lem:est-SG}}
\label{sub:est-SG}
%%%%%%%%%%%%%%%%%%%%%%%%%%%%%%%%%%%%%%%%%%%%
%%%%%%%%%%%%%%%%%%%%%%%%%%%%%%%%%%%%%%%%%%%%

When $(U^0, \Phi^0) \in H^k(\R^N) \times H_{\sin}^{k + 1}(\R^N)$, it follows from Theorem~\ref{thm:SG-Cauchy-smooth} that there exists a maximal time of existence $T_{\max}$, and a unique solution $(U, \Phi) \in \boC^0([0, T_{\max}), H^k(\R^N) \times H_{\sin}^{k + 1}(\R^N))$ to~\eqref{sys:SG} for this initial datum, with $(\partial_t U, \partial_t \Phi) \in \boC^0([0, T_{\max}), H^{k - 1}(\R^N) \times H^k(\R^N))$. In this case, the functions $t \mapsto E_{\rm SG}^\ell(U(\cdot, t), \Phi(\cdot, t))$ are well-defined and of class $\boC^1$ on $[0, T_{\max})$ for any $1 \leq \ell \leq k$. Moreover, we deduce from~\eqref{sys:SG}, an integration by parts and the Leibniz formula that
\begin{align*}
[E_{\rm SG}^\ell(U, \Phi)]'(t) = \sigma \sum_{|\alpha| = \ell - 1} \sum_{\substack{\beta \neq 0\\ \beta \leq \alpha}} \binom{\alpha}{\beta} \int_{\R^N} \partial_x^\beta \cos(\Phi(x, t)) \, \Big( & \partial_x^{\alpha - \beta} U(x, t) \, \partial_x^\alpha \sin(\Phi(x, t))\\
& - \partial_x^\alpha U(x, t) \, \partial_x^{\alpha - \beta} \sin(\Phi(x, t)) \Big) \, dx,
\end{align*}
for any $t \in [0, T_{\max})$. In particular, the Sine-Gordon energy $E_{\rm SG}^1 = E_{\rm SG}$ is conserved along the flow. When $\ell \geq 2$, we can invoke Corollaries~\ref{cor:moser} and~\ref{cor:cos} in order to check that
\begin{align*}
[E_{\rm SG}^\ell(U, \Phi)]'(t) \leq A \Big( \| U(\cdot, t) \|_{L^\infty}^2 + \| \nabla \Phi(\cdot, t) \|_{L^\infty}^2 & + \| \sin(\Phi(\cdot, t)) \|_{L^\infty}^2 \Big) \times\\
& \times \Big( [E_{\rm SG}^\ell(U, \Phi)](t) + [E_{\rm SG}^{\ell - 1}(U, \Phi)](t) \Big).
\end{align*}
Here as in the sequel, the positive number $A$ depends only on $\sigma$, $k$ and $N$. Using the Sobolev embedding theorem, we conclude that the quantity $\Sigma_{\rm SG}^k := \sum_{\ell = 1}^k E_{\rm SG}^\ell$ satisfies the inequality
$$[\Sigma_{\rm SG}^k(U, \Phi)]'(t) \leq A [\Sigma_{\rm SG}^k(U, \Phi)](t)^2,$$
for any $t \in [0, T_{\max})$. In particular, we have
$$\Sigma_{\rm SG}^k(U, \Phi)(t) \leq 2 \Sigma_{\rm SG}^k(U, \Phi)(0),$$
when $t \leq T_* := 1/(2 A \Sigma_{\rm SG}^k(U, \Phi)(0))$. In view of statements $(i)$ and $(iii)$ in Theorem~\ref{thm:SG-Cauchy-smooth}, we infer that $T_* \leq T_{\max}$, and we also obtain the existence of a positive number $A_*$, depending only on $\sigma$, $k$ and $N$, such that the solution $(U, \Phi)$ satisfies the statements in Lemma~\ref{lem:est-SG} for the number $T_*$. 

In order to conclude the proof of Lemma~\ref{lem:est-SG}, we finally invoke the continuity of the flow with respect to the initial datum in Theorem~\ref{thm:SG-Cauchy-smooth} in order to extend Lemma~\ref{lem:est-SG} to the solutions $(U, \Phi)$ to~\eqref{sys:SG} corresponding to any arbitrary initial conditions $(U^0, \Phi^0) \in H^{k - 1}(\R^N) \times H_{\sin}^k(\R^N)$ by a density argument. \qed

%%%%%%%%%%%%%%%%%%%%%%%%%%%%%%%%%%%%%%%%%%%%%%%%%%
%%%%%%%%%%%%%%%%%%%%%%%%%%%%%%%%%%%%%%%%%%%%%%%%%%
\subsection{Proof of Proposition~\ref{prop:error}}
\label{sub:error}
%%%%%%%%%%%%%%%%%%%%%%%%%%%%%%%%%%%%%%%%%%%%%%%%%%
%%%%%%%%%%%%%%%%%%%%%%%%%%%%%%%%%%%%%%%%%%%%%%%%%%

When $(U_\varepsilon^0, \Phi_\varepsilon^0) \in \boN\boV^{k + 2}(\R^N)$, the pair $(U_\varepsilon, \Phi_\varepsilon)$ lies in $\boC^0([0, T], \boN\boV^{k + 1}(\R^N))$, so that the quantity $\boK_\varepsilon(T)$ is well-defined. Moreover, it follows from the Sobolev embedding theorem that
\begin{equation}
\label{eq:unif-eps}
\begin{split}
\max_{t \in [0, T]} \Big( \| U_\varepsilon(\cdot, t) \|_{L^\infty} + & \| \nabla U_\varepsilon(\cdot, t) \|_{L^\infty} + \varepsilon \| d^2 U_\varepsilon(\cdot, t) \|_{L^\infty}\\
& + \| \sin(\Phi_\varepsilon(\cdot, t)) \|_{L^\infty} + \| \nabla \Phi_\varepsilon(\cdot, t) \|_{L^\infty} + \| d^2 \Phi_\varepsilon(\cdot, t) \|_{L^\infty} \Big) \leq C \boK_\varepsilon(T).
\end{split}
\end{equation}
Here as in the sequel, the positive number $C$ depends only on $\sigma$, $k$ and $N$. Combining this inequality with the Moser estimates in Lemma~\ref{lem:moser} and Corollary~\ref{cor:cos}, we are led to
\begin{equation}
\label{eq:est-RepsU}
\max_{t \in [0, T]} \| R_\varepsilon^U(\cdot, t) \|_{H^{k - 1}} \leq C \boK_\varepsilon(T)^3 \, \big( 1 + \boK_\varepsilon(T) \big).
\end{equation}
Similarly, we derive
\begin{equation}
\label{eq:est-RepsPhi}
\max_{t \in [0, T]} \| R_\varepsilon^\Phi(\cdot, t) \|_{H^{k - 2}} \leq C \boK_\varepsilon(T) \, \big( 1 + (1 + \varepsilon^2) \boK_\varepsilon(T)^2 \big).
\end{equation}
Here, we have also used~\eqref{est-W-2} and~\eqref{est-W-3}, which remain available due to condition~\eqref{eq:unif-bound}. With these estimates at hand, we are in position to establish the three statements of Proposition~\ref{prop:error}.

\setcounter{step}{0}
\begin{step}
\label{E1}
Proof of~\eqref{error1}.
\end{step}

In view of statement $(iv)$ in Corollary~\ref{cor:HLL-Cauchy}, and Theorem~\ref{thm:SG-Cauchy-smooth}, the functions $\Phi_\varepsilon$ and $\Phi$ are in $\boC^0([0, T], \Phi_\varepsilon^0 + L^2(\R^N))$, respectively $\boC^0([0, T], \Phi^0 + L^2(\R^N))$. Since $\Phi_\varepsilon^0 - \Phi^0 \in L^2(\R^N)$, the function $\varphi_\varepsilon$ belongs to $\boC^0([0, T], L^2(\R^N))$. Moreover, we can write the Duhamel formula corresponding to~\eqref{eq:sys-diff} in order to obtain the identity
\begin{align*}
\varphi_\varepsilon(\cdot, t) = \cos(t D) \, \varphi_\varepsilon^0 + \frac{\sin(t D)}{D} \, v_\varepsilon^0 + & \int_0^t \bigg( \varepsilon^2 \cos((t - s) D) R_\varepsilon^\Phi(\cdot, s) + \frac{\sin((t - s) D)}{D} \times\\
& \times \Big( \varepsilon^2 R_\varepsilon^U(\cdot, s) - \sigma \sin \big( \varphi_\varepsilon(\cdot, s) \big) \, \cos \big( \Phi_\varepsilon(\cdot, s) + \Phi(\cdot, s) \big) \Big) \bigg) \, ds,
\end{align*}
for any $t \in [0, T]$. Here, we have set $D = \sqrt{- \Delta}$ as before. In view of~\eqref{eq:est-RepsU} and~\eqref{eq:est-RepsPhi}, this provides
\begin{align*}
\| \varphi_\varepsilon(\cdot, t) \|_{L^2} \leq & \| \varphi_\varepsilon^0 \|_{L^2} + C t \| v_\varepsilon^0 \|_{L^2} + C \int_0^t (t - s) \, \| \varphi_\varepsilon(\cdot, s) \|_{L^2} \, ds\\
& + C \varepsilon^2 \boK_\varepsilon(T) t \big( 1 + (1 + \varepsilon^2 + t) \boK_\varepsilon(T)^2 + t \boK_\varepsilon(T)^3 \big).
\end{align*}
Set $X(t) := \| \varphi_\varepsilon^0 \|_{L^2} + C \int_0^t (t - s) \, \| \varphi_\varepsilon(\cdot, s) \|_{L^2} \, ds$, and $F(t) := C t \| v_\varepsilon^0 \|_{L^2} + C \varepsilon^2 \boK_\varepsilon(T) t \big( 1 + (1 + \varepsilon^2 + t) \boK_\varepsilon(T)^2 + t \boK_\varepsilon(T)^3 \big)$. The function $X$ is of class $\boC^2$ on $[0, T]$, and it satisfies
$$X''(t) = C \| \varphi_\varepsilon(\cdot, t) \|_{L^2} \leq C \big( X(t) + F(t) \big),$$
for any $t \in [0, T]$. Since $X(0) = \| \varphi_\varepsilon^0 \|_{L^2}$ and $X'(0) = 0$, integrating this differential inequality yields
$$X(t) \leq \| \varphi_\varepsilon^0 \|_{L^2} \, \cosh \big( \sqrt{C} t \big) + \sqrt{C} \int_0^t F(s) \, \sinh \big( \sqrt{C}(t - s) \big) \, ds,$$
so that
$$\| \varphi_\varepsilon(\cdot, t) \|_{L^2} \leq \| \varphi_\varepsilon^0 \|_{L^2} \, \cosh \big( \sqrt{C} t \big) + F(t) + \sqrt{C} \int_0^t F(s) \, \sinh \big( \sqrt{C}(t - s) \big) \, ds.$$
Estimate~\eqref{error1} then follows from the identities
$$C \int_0^t s \, \sinh \big( \sqrt{C}(t -s) \big) \, ds = \sinh(\sqrt{C} t) - \sqrt{C} t,$$
and
$$C \sqrt{C }\int_0^t s^2 \, \sinh \big( \sqrt{C}(t -s) \big) \, ds = 2 \cosh(\sqrt{C} t) - 2 - C t^2.$$

\begin{step}
\label{E2}
Proof of~\eqref{error2}.
\end{step}

Assume first that $(U^0, \Phi^0) \in H^1(\R^N) \times H_{\sin}^2(\R^N)$. In this case, the pair $(\partial_t U, \partial_t \Phi)$ lie in $\boC^0(\R, L^2(\R^N) \times H^1(\R^N))$ by Theorem~\ref{thm:SG-Cauchy}. Since $(\partial_t U_\varepsilon, \partial_t \Phi_\varepsilon)$ belongs to $\boC^0([0, T], H^k(\R^N)^2)$ when $(U_\varepsilon^0, \Phi_\varepsilon^0) \in \boN\boV^{k + 2}(\R^N)$, it follows that the energy $\gE_{\rm SG}^1$ in~\eqref{def:gE-SG-k} is of class $\boC^1$ on $[0, T]$. Moreover, we have
\begin{align*}
& \big[ \gE_{\rm SG}^1 \big]'(t) = \sigma \int_{\R^N} v_\varepsilon(x, t) \, \sin(\varphi_\varepsilon(x, t)) \, \Big( \cos(\varphi_\varepsilon(x, t)) - \cos \big( \Phi_\varepsilon(x, t) + \Phi(x, t) \big) \Big) \, dx\\
+ & \varepsilon^2 \int_{\R^N} \big( R_\varepsilon^U(x, t) \, v_\varepsilon(x, t) + \nabla R_\varepsilon^\Phi(x, t) \cdot \nabla \varphi_\varepsilon(x, t) + \sigma \sin(\varphi_\varepsilon(x, t)) \, \cos(\varphi_\varepsilon(x, t)) \, R_\varepsilon^\Phi(x, t) \big) \, dx,
\end{align*}
for any $t \in [0, T]$. We bound the first integral in this identity by
$$\sigma \int_{\R^N} v_\varepsilon(x, t) \, \sin(\varphi_\varepsilon(x, t)) \, \big( \cos(\varphi_\varepsilon(x, t)) - \cos(\Phi_\varepsilon(x, t) + \Phi(x, t)) \big) \, dx \leq 2 \sigma^\frac{1}{2} \gE_{\rm SG}^1(t),$$
whereas the second integral is controlled by
\begin{align*}
\varepsilon^2 \int_{\R^N} \big( R_\varepsilon^U(x, t) \, v_\varepsilon(x, t) + & \nabla R_\varepsilon^\Phi(x, t) \cdot \nabla \varphi_\varepsilon(x, t) + \sigma \sin(\varphi_\varepsilon(x, t)) \, \cos(\varphi_\varepsilon(x, t)) \, R_\varepsilon^\Phi(x, t) \big) \, dx\\
\leq & \gE_{\rm SG}^1(t) + \frac{\varepsilon^4}{2} \big( \| R_\varepsilon^U(\cdot, t) \|_{L^2}^2 + \| \nabla R_\varepsilon^\Phi(\cdot, t) \|_{L^2}^2 + \sigma \| R_\varepsilon^\Phi(\cdot, t) \|_{L^2}^2 \big).
\end{align*}
We then deduce from the Gronwall lemma that
$$\gE_{\rm SG}^1(t) \leq C \Big( \gE_{\rm SG}^1(0) + \varepsilon^4 \max_{t \in [0, T]} \big( \| R_\varepsilon^U(\cdot, t) \|_{L^2}^2 + \sigma \| R_\varepsilon^\Phi(\cdot, t) \|_{L^2}^2 + \| \nabla R_\varepsilon^\Phi(\cdot, t) \|_{L^2}^2 \big) \Big) \, e^{C t}.$$
When $N \geq 2$, or $N = 1$ and $k > N/2 + 2$, we can control uniformly with respect to $\varepsilon$ the right-hand side of this inequality by~\eqref{eq:est-RepsU} and~\eqref{eq:est-RepsPhi}. This leads to the bound in~\eqref{error2} when $(U^0, \Phi^0) \in H^1(\R^N) \times H_{\sin}^2(\R^N)$. We then complete the proof of~\eqref{error2} by a standard density argument.

\begin{step}
\label{E3}
Proof of~\eqref{errork}.
\end{step}

Let $2 \leq \ell \leq k - 2$. Since the pair $(\partial_t v_\varepsilon, \partial_t \varphi_\varepsilon)$ belongs to $\boC^0([0, T], H^{k - 1}(\R^N)^2)$, we can differentiate the quantities $\gE_{\rm SG}^\ell$ in~\eqref{def:gE-SG-k} and invoke~\eqref{eq:sys-diff} in order to obtain
\begin{equation}
\label{eq:der-gE}
\begin{split}
\big[ \gE_{\rm SG}^\ell \big]'(t) & := \sigma \sum_{|\alpha| = \ell - 1} \int_{\R^N} \Big( \partial_x^\alpha \sin(\varphi_\varepsilon) \, \partial_x^\alpha \big( v_\varepsilon \, \cos(\varphi_\varepsilon) \big) - \partial_x^\alpha v_\varepsilon \, \partial_x^\alpha \big( \sin(\varphi_\varepsilon) \, \cos(\Phi_\varepsilon + \Phi) \big) \Big)(x, t) \, dx\\
+ & \varepsilon^2 \sum_{|\alpha| = \ell - 1} \int_{\R^N} \Big( \partial_x^\alpha v_\varepsilon \, \partial_x^\alpha R_\varepsilon^U + \partial_x^\alpha \nabla \varphi_\varepsilon \cdot \partial_x^\alpha \nabla R_\varepsilon^\Phi + \sigma \partial_x^\alpha \sin(\varphi_\varepsilon) \, \partial_x^\alpha \big( \cos(\varphi_\varepsilon) \, R_\varepsilon^\Phi \big) \Big)(x, t) \, dx,
\end{split}
\end{equation}
after an integration by parts. In order to bound the various terms in the right-hand side of this identity, we first apply the Sobolev embedding theorem in order to get the bound
$$\max_{t \in [0, T]} \Big( \| U(\cdot, t) \|_{L^\infty} + \| \sin(\Phi(\cdot, t)) \|_{L^\infty} + \| \nabla \Phi(\cdot, t) \|_{L^\infty} \Big) \leq C \kappa_\varepsilon(T).$$
Combining~\eqref{eq:unif-eps} with Corollary~\ref{cor:cos}, this gives
\begin{align*}
\big\| \cos(\Phi_\varepsilon(\cdot, t) \pm \Phi(\cdot, t)) \big\|_{\dot{H}^{\ell - 1}} \leq & C \Big( \big\| \sin(\Phi_\varepsilon(\cdot, t) \pm \Phi(\cdot, t)) \big\|_{L^\infty} \, \big\| \nabla \Phi_\varepsilon(\cdot, t) \pm \nabla \Phi(\cdot, t) \big\|_{\dot{H}^{\ell - 1}}\\
& + \big\| \nabla \Phi_\varepsilon(\cdot, t) \pm \nabla \Phi(\cdot, t) \big\|_{L^\infty} \, \big\| \sin(\Phi_\varepsilon(\cdot, t) \pm \Phi(\cdot, t)) \big\|_{\dot{H}^{\ell - 2}} \Big)\\
\leq & C \kappa_\varepsilon(T)^2.
\end{align*}
Setting $\gS^{k - 2} := \sum_{\ell = 1}^{k - 2} \gE_{\rm SG}^\ell$ and assuming that $k > N/2 + 3$, we also derive from the Sobolev embedding theorem that
$$\big\| v_\varepsilon(\cdot, t) \big\|_{L^\infty} + \big\| \sin(\varphi_\varepsilon(\cdot, t)) \big\|_{L^\infty} + \big\| \nabla \varphi_\varepsilon(\cdot, t) \big\|_{L^\infty} \leq C \gS^{k - 2}(t)^\frac{1}{2}.$$
As a consequence of Lemma~\ref{lem:moser}, we are led to the following estimate of the integrals in the first line of~\eqref{eq:der-gE}
\begin{align*}
\bigg| \int_{\R^N} \Big( \partial_x^\alpha \sin(\varphi_\varepsilon) \, \partial_x^\alpha \big( v_\varepsilon \, \cos(\varphi_\varepsilon) \big) - \partial_x^\alpha v_\varepsilon \, \partial_x^\alpha \big( \sin(\varphi_\varepsilon) \, \cos(\Phi_\varepsilon + & \Phi) \big) \Big)(x, t) \, dx \bigg|\\
\leq & C \big( 1 + \kappa_\varepsilon(T)^2 \big) \, \gS^{k - 2}(t).
\end{align*}
Concerning the second line, we similarly check that
\begin{align*}
& \varepsilon^2 \bigg| \int_{\R^N} \Big( \partial_x^\alpha v_\varepsilon \, \partial_x^\alpha R_\varepsilon^U + \partial_x^\alpha \nabla \varphi_\varepsilon \cdot \partial_x^\alpha \nabla R_\varepsilon^\Phi + \sigma \partial_x^\alpha \sin(\varphi_\varepsilon) \, \partial_x^\alpha \big( \cos(\varphi_\varepsilon) \, R_\varepsilon^\Phi \big) \Big)(x, t) \, dx \bigg|\\
\leq & C \Big( \gE_{\rm SG}^\ell(t) + \varepsilon^4 \big( \| R_\varepsilon^U(\cdot, t) \|_{\dot{H}^{\ell - 1}}^2 + \| \nabla R_\varepsilon^\Phi(\cdot, t) \|_{\dot{H}^{\ell - 1}}^2 + \kappa_\varepsilon(T)^4 \, \| R_\varepsilon^\Phi(\cdot, t) \|_{L^\infty}^2 + \| R_\varepsilon^\Phi(\cdot, t) \|_{\dot{H}^{\ell - 1}}^2 \Big).
\end{align*}
Using~\eqref{eq:est-RepsU},~\eqref{eq:est-RepsPhi} and the Sobolev embedding theorem, we finally obtain
$$\big[ \gE_{\rm SG}^\ell \big]'(t) \leq C \Big( \big( 1 + \kappa_\varepsilon(T)^2 \big) \gS^{k - 2}(t) + \varepsilon^4 \kappa_\varepsilon(T)^2 \big( 1 + \kappa_\varepsilon(T)^8 + \varepsilon^4 \kappa_\varepsilon(T)^4 \big( 1 + \kappa_\varepsilon(T)^4 \big) \big) \Big).$$
In view of Step~\ref{E2}, this inequality also holds for $\ell = 1$. Hence, we have
\begin{align*}
\big[ \gS^{k - 2} \big]'(t) \leq & C \big( 1 + \kappa_\varepsilon(T)^2 \big) \, \gS^{k - 2}(t)\\
& + C \varepsilon^4 \, \kappa_\varepsilon(T)^2 \, \Big( 1 + \kappa_\varepsilon(T)^8 + \varepsilon^4 \, \kappa_\varepsilon(T)^4 \, \big( 1 + \kappa_\varepsilon(T)^4 \big) \Big).
\end{align*}
Estimate~\eqref{errork} is then a direct consequence of the Gronwall lemma. This concludes the proof of Proposition~\ref{prop:error}. \qed

%%%%%%%%%%%%%%%%%%%%%%%%%%%%%%%%%%%%%%%%%%%%%
%%%%%%%%%%%%%%%%%%%%%%%%%%%%%%%%%%%%%%%%%%%%%
\section{The derivation of the wave equation}
\label{sec:wave}
%%%%%%%%%%%%%%%%%%%%%%%%%%%%%%%%%%%%%%%%%%%%%
%%%%%%%%%%%%%%%%%%%%%%%%%%%%%%%%%%%%%%%%%%%%%

Our aim is now to prove Theorem~\ref{thm:conv-wave}, which shows that the dynamics of the Landau-Lifshitz equation can be approximated by the free wave equation as $\varepsilon, \sigma \to 0$. This relies on arguments and estimates similar to the ones developed in the previous sections, with some modifications so as to take into account the smallness of the parameter $\sigma$.

First, we could use Proposition~\ref{prop:estimate} in order to control higher order derivatives. However, if $\sigma$ is small, we can obtain better estimates by considering the energy of order $k \geq 2$
$$\tilde{E}_\varepsilon^k(U_{\varepsilon, \sigma}, \Phi_{\varepsilon, \sigma}) := \frac{1}{2} \sum_{|\alpha| = k - 1} \int_{\R^N} \Big( \varepsilon^2 \frac{|\nabla \partial_x^\alpha U_{\varepsilon, \sigma}|^2}{1 - \varepsilon^2 U_{\varepsilon, \sigma}^2} + |\partial_x^\alpha U_{\varepsilon, \sigma}|^2 + (1 - \varepsilon^2 U_{\varepsilon, \sigma}^2) |\nabla \partial_x^\alpha \Phi_{\varepsilon, \sigma}|^2 \Big).$$
Setting $\tilde{E}_\varepsilon^1 := E_\varepsilon(U_{\varepsilon, \sigma}, \Phi_{\varepsilon, \sigma})$, and
\begin{equation}
\label{def:tilde-sigma}
\tilde{\Sigma}_\varepsilon^k := \sum_{j = 1}^k \tilde{E}_\varepsilon^j, 
\end{equation}
for $k \geq 1$, we are led to the following estimates.

\begin{prop}
\label{prop:est-FW}
Let $\varepsilon < 1$ and $\sigma$ be fixed positive numbers, and $k \in \N$, with $k > N/2 + 1$.
Consider a solution $(U_{\varepsilon, \sigma}, \Phi_{\varepsilon, \sigma})$ to~\eqref{HLLeps}, with $(U_{\varepsilon, \sigma}, \Phi_{\varepsilon, \sigma}) \in \boC^0([0, T], \boN\boV^{k + 3}(\R^N))$ for a fixed positive number $T$, and assume that 
\begin{equation}
\label{eq:borne-W-ter}
\inf_{\R^N \times [0, T]} 1 - \varepsilon^2 U_{\varepsilon, \sigma}^2 \geq \frac{1}{2}.
\end{equation}
There exists a positive number $C$, depending only on $k$ and $N$, such that
\begin{equation}
\label{der-E-j-bis}
\begin{split}
\big[ \tilde{E}_\varepsilon^\ell \big]'(t) \leq C \tilde{\Sigma}_\varepsilon^{k + 1}(t) \Big( & \varepsilon \| \nabla \Phi_{\varepsilon, \sigma}(\cdot, t) \|_{W^{1, \infty}}^2 + \varepsilon \| U_{\varepsilon, \sigma}(\cdot, t) \|_{W^{1, \infty}}^2\\
& + \varepsilon^3 \| d^2 U_{\varepsilon, \sigma}(\cdot, t) \|_{L^{\infty}}^2 +\sigma \big( 1 + \| \nabla \Phi_{\varepsilon, \sigma}(\cdot, t) \|_{L^\infty}^k \big) \Big),
\end{split}
\end{equation}
for any $t \in [0, T]$, and any $2 \leq \ell \leq k + 1$. 
\end{prop}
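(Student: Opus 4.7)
\emph{Proof sketch.} The strategy is to mimic the proof of Proposition~\ref{prop:estimate} line by line, but with two adaptations: the energy $\tilde{E}_\varepsilon^\ell$ omits the $\sigma \rho_\varepsilon |\partial_x^\alpha \sin(\Phi_{\varepsilon, \sigma})|^2$ piece (so no analogue of $\boI_4$ needs to be cancelled), and the dependences on $\varepsilon$ and $\sigma$ must be tracked separately, both being small parameters now. I would first differentiate $\tilde{E}_\varepsilon^\ell$ in $t$ and substitute~\eqref{HLLeps-bis}, which, after integrating by parts, produces exactly the four families of terms $\boI_1, \boI_2, \boI_3, \boI_5$ appearing in Subsection~\ref{sub:energy-eps}.

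Next, I would isolate the highest-order piece $\boH := \boI_{1,1} + \boI_{2,1} + \boI_{3,1}$ containing $(\ell+1)$-th derivatives of $\Phi_{\varepsilon, \sigma}$ and $U_{\varepsilon, \sigma}$. The same chain of integrations by parts as in Proposition~\ref{prop:estimate} produces complete cancellation of the leading linear terms $\boH_1 = \boH_2 = 0$, while the remaining subterms $\boH_3,\ldots,\boH_7$ each carry a factor $\nabla \rho_\varepsilon$ or $d^2 \rho_\varepsilon = O(\varepsilon^2)$, by~\eqref{est-W-1a}--\eqref{est-W-1b}. Combined with the standard estimates~\eqref{est-W-2},~\eqref{est-W-3},~\eqref{est-U1}, they yield a bound $C \varepsilon^2 \big( \| U_{\varepsilon, \sigma} \|_{W^{1, \infty}}^2 + \varepsilon^2 \| d^2 U_{\varepsilon, \sigma} \|_{L^\infty}^2 + \| \nabla \Phi_{\varepsilon, \sigma} \|_{W^{1, \infty}}^2 \big) \tilde{\Sigma}_\varepsilon^{k+1}$. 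Since $\varepsilon < 1$, this absorbs into the first three $\varepsilon$-terms of~\eqref{der-E-j-bis}. The commutator remainders $\boI_{1,2}, \boI_{2,2}, \boI_{3,2}$, as well as the part of $\boI_5$ coming from the non-sine components of $\partial_t U_{\varepsilon, \sigma}$, are treated identically: each derivative of $\rho_\varepsilon$ or $1/\rho_\varepsilon$ yields a factor $\varepsilon^2$ by Lemma~\ref{lem:moser}, and the corresponding bound fits into the same three $\varepsilon$-terms.

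The genuinely new step is handling the sine-contributions $\boI_{1,3}, \boI_{2,3}$ originating from $-\tfrac{\sigma}{2} \rho_\varepsilon \sin(2 \Phi_{\varepsilon, \sigma})$ and $-\sigma \varepsilon^2 U_{\varepsilon, \sigma} \sin^2(\Phi_{\varepsilon, \sigma})$ in~\eqref{HLLeps-bis}, together with the $\sigma \sin(\Phi_{\varepsilon, \sigma})$ piece of $\boI_5$. Since $\tilde{E}_\varepsilon^\ell$ no longer controls $\| \sin(\Phi_{\varepsilon, \sigma}) \|_{H^{\ell - 1}}$, the approach is to write, for any multi-index $\alpha \neq 0$, $\partial_x^\alpha \sin(2 \Phi_{\varepsilon, \sigma}) = \partial_x^{\alpha'} [ 2 \cos(2 \Phi_{\varepsilon, \sigma}) \, \nabla \Phi_{\varepsilon, \sigma} ]$ with $|\alpha'| = |\alpha| - 1$, and apply the Moser estimates of Lemma~\ref{lem:moser} (with $F = \cos$) to bound
$$\big\| \partial_x^\alpha \sin(2 \Phi_{\varepsilon, \sigma}) \big\|_{L^2} \leq C \big( 1 + \| \nabla \Phi_{\varepsilon, \sigma} \|_{L^\infty}^{|\alpha| - 1} \big) \, \| \nabla \Phi_{\varepsilon, \sigma} \|_{H^{|\alpha| - 1}}.$$
Cauchy--Schwarz against $\| \partial_x^\alpha U_{\varepsilon, \sigma} \|_{L^2}$ (respectively $\| \partial_x^\alpha \Phi_{\varepsilon, \sigma} \|_{L^2}$ in $\boI_{2,3}$, which is available because $|\alpha| \geq 1$ means only gradient-derivatives appear) produces a bound of the form $C \sigma \big( 1 + \| \nabla \Phi_{\varepsilon, \sigma} \|_{L^\infty}^{\ell - 1} \big) \tilde{\Sigma}_\varepsilon^{k + 1}$, fitting into the last term of~\eqref{der-E-j-bis}. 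The case $\alpha = 0$ (all derivatives falling on $\rho_\varepsilon$) is harmless since $\| \sin(2 \Phi_{\varepsilon, \sigma}) \|_{L^\infty} \leq 1$ and $\| \partial_x^\beta \rho_\varepsilon \|_{L^2} = O(\varepsilon^2)$ via~\eqref{est-W-2}. The $\sin^2(\Phi_{\varepsilon, \sigma})$ term is handled analogously, noting that $\nabla \sin^2 (\Phi_{\varepsilon, \sigma}) = \sin(2 \Phi_{\varepsilon, \sigma}) \nabla \Phi_{\varepsilon, \sigma}$; here the extra $\varepsilon^2$ prefactor makes it even smaller.

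The main obstacle is the top-order case $\ell = k + 1$: the Moser bound above needs $\| \nabla \Phi_{\varepsilon, \sigma} \|_{H^k}$, which is precisely why $\tilde{\Sigma}_\varepsilon^{k+1}$ (not $\tilde{\Sigma}_\varepsilon^\ell$) appears on the right of~\eqref{der-E-j-bis}, and why the power $\| \nabla \Phi_{\varepsilon, \sigma} \|_{L^\infty}^k$ is sharp. Collecting the estimates from Steps 2--4 and using $\varepsilon^2 \leq \varepsilon$ yields~\eqref{der-E-j-bis}.
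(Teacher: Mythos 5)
Your proposal is correct and follows essentially the same route as the paper: differentiate $\tilde{E}_\varepsilon^\ell$ to obtain $\boI_1 + \boI_2 + \boI_3 + \tilde{\boI}_5$ (with $\boI_4$ absent since the energy drops the $\sigma \rho_\varepsilon |\partial_x^\alpha \sin(\Phi_{\varepsilon,\sigma})|^2$ piece), re-use the cancellations and $\varepsilon$-bookkeeping from Proposition~\ref{prop:estimate} for the non-sine parts, and control $\boI_{1,3}, \boI_{2,3}, \boI_{3,3}$ by rewriting $\partial_x^\alpha \sin(2\Phi_{\varepsilon,\sigma})$ via one commuted gradient and Moser/Corollary~\ref{cor:moser-trigo}, which is what produces the $\sigma(1 + \|\nabla\Phi_{\varepsilon,\sigma}\|_{L^\infty}^k)$ factor without ever needing $\|\sin(\Phi_{\varepsilon,\sigma})\|_{H^{k}}$. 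The only small imprecision is that $\boH_3$--$\boH_5$ come with an explicit $\varepsilon^2$ prefactor rather than a factor of $\nabla\rho_\varepsilon$, but the conclusion (each contributes $\boO(\varepsilon^2)$ or $\boO(\varepsilon^3)$) is right, and the argument goes through as you describe.
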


\begin{proof}
We proceed as in the the proof of Proposition~\ref{prop:estimate} and keep the same notation, for which we have 
$$\big[ \tilde{E}_\varepsilon^\ell \big]'(t) = \boI_1 + \boI_2 + \boI_3 + \tilde{\boI}_5,$$
where $\tilde{\boI}_5$ is equal to $\boI_5$ without the last term $\sigma |\partial_x^\alpha \sin(\Phi_\varepsilon)|^2$. Using~\eqref{eq:borne-W-ter}, the inequality $2 a b \leq a^2 + b^2$, and the fact that $0 < \varepsilon < 1$, we deduce from the proof of Proposition~\ref{prop:estimate} that 
$$\big| \boI_{1, 1} + \boI_{1, 2} + \boI_{2, 1} + \boI_{2, 2} + \boI_{3, 1} + \boI_{3, 2} + \tilde{\boI}_5 \big| \leq C \Big( \varepsilon \| \nabla \Phi_{\varepsilon, \sigma} \|_{W^{1, \infty}}^2 + \varepsilon \| U_{\varepsilon, \sigma} \|_{W^{1, \infty}}^2 + \varepsilon^3 \| d^2 U_{\varepsilon, \sigma} \|_{L^{\infty}}^2 \Big) \, \tilde{\Sigma}_\varepsilon^{k + 1}.$$
In order to estimate the remaining terms, we rely on Corollary~\ref{cor:moser-trigo}. In view of Corollary~\ref{cor:moser}, we obtain 
\begin{align*}
\big| \boI_{1, 3} \big| & \leq C \sigma \Big( \| \rho_{\varepsilon, \sigma} \|_{L^\infty} \, \| \sin(2 \Phi_{\varepsilon, \sigma}) \|_{\dot{H}^{\ell - 1}} + \| \rho_{\varepsilon, \sigma} \|_{\dot{H}^{\ell - 1}} \, \| \sin(2 \Phi_{\varepsilon, \sigma}) \|_{L^\infty} \big) \, \big( \tilde{\Sigma}_\varepsilon^{k + 1} \big)^\frac{1}{2}\\
& \leq C \sigma \Big( 1 + \| \nabla \Phi_{\varepsilon, \sigma} \|_{L^\infty}^{k - 1} + \varepsilon^2 \| U_{\varepsilon, \sigma} \|_{L^\infty} \| \sin(\Phi_{\varepsilon, \sigma}) \|_{L^\infty} \Big) \, \tilde{\Sigma}_\varepsilon^{k+1}.
\end{align*}
Proceeding in a similar way, we also get
$$\big| \boI_{2, 3} \big| + \big| \boI_{3, 3} \big| \leq C \varepsilon \sigma \big( 1 + \| \nabla \Phi_{\varepsilon, \sigma} \|_{L^\infty}^k \big) \, \tilde{\Sigma}_\varepsilon^{k + 1}.$$
This completes the proof of~\eqref{der-E-j-bis}.
\end{proof}

In order to state the consequences of Proposition~\ref{prop:est-FW}, we introduce the quantity
$$\boK_{\varepsilon, \sigma}(t) := \big\| U_{\varepsilon, \sigma}(\cdot,t) \big\|_{H^k} + \varepsilon \big\| \nabla U_{\varepsilon, \sigma}(\cdot,t) \big\|_{H^k} + \big\| \nabla \Phi_{\varepsilon, \sigma}(\cdot,t) \big\|_{H^k} + \sigma^\frac{1}{2} \big\| \sin(\Phi_{\varepsilon,\sigma}(\cdot,t) ) \big\|_{L^2}.$$

\begin{cor}
\label{cor:T-bis}
Let $\varepsilon < 1$ and $\sigma$ be fixed positive numbers, and $k \in \N$, with $k > N/2 + 1$. There exists a number $A_* \geq 1$ such that, if an initial condition $(U_{\varepsilon, \sigma}^0, \Phi_{\varepsilon, \sigma}^0) \in \boN\boV^{k + 2}(\R^N)$ satisfies 
$$A_* \varepsilon \boK_{\varepsilon, \sigma}(0) \leq 1,$$
then there exists a positive time
$$T_{\varepsilon, \sigma} \geq \frac{1}{A_* \max \{ \sigma, \varepsilon \} \big( 1 + \tilde{\Sigma}_\varepsilon^{k + 1}(0) \big)^{\max \{ 2, \frac{k}{2} \}}},$$
such that the maximal time of existence of the solution $(U_{\varepsilon, \sigma}, \Phi_{\varepsilon,\sigma})$ to~\eqref{HLLeps} with initial condition $(U_{\varepsilon, \sigma}^0, \Phi_{\varepsilon, \sigma}^0)$ is greater than $T_{\varepsilon, \sigma}$. Moreover, we have
$$\sqrt{2} \varepsilon \| U_{\varepsilon, \sigma}(\cdot, t) \|_{L^\infty} \leq 1,$$
and
$$\boK_{\varepsilon, \sigma}(t) \leq A_* \boK_{\varepsilon, \sigma}(0),$$
for any $t \in [0, T_{\varepsilon, \sigma}]$.
\end{cor}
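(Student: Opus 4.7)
The strategy will parallel that of Corollary~\ref{cor:T}, with Proposition~\ref{prop:est-FW} in place of Proposition~\ref{prop:estimate}. The key advantage is that every term in~\eqref{der-E-j-bis} carries an explicit factor of $\varepsilon$ or $\sigma$, so the smallness of $\max\{\sigma,\varepsilon\}$ will govern the growth of the higher-order energies. I will first treat initial data $(U_{\varepsilon,\sigma}^0, \Phi_{\varepsilon,\sigma}^0) \in \boN\boV^{k+4}(\R^N)$, for which Corollary~\ref{cor:HLL-Cauchy} delivers a maximal solution $(U_{\varepsilon,\sigma}, \Phi_{\varepsilon,\sigma}) \in \boC^0([0, T_{\max}), \boN\boV^{k+3}(\R^N))$, and then extend to $\boN\boV^{k+2}(\R^N)$ by a density argument based on the continuous dependence of the flow. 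Once $A_*$ is taken larger than $\sqrt{2}$ times the Sobolev embedding constant, the hypothesis $A_* \varepsilon \boK_{\varepsilon,\sigma}(0) \leq 1$ forces $\varepsilon \|U_{\varepsilon,\sigma}^0\|_{L^\infty} < 1/\sqrt{2}$, so by continuity $\rho_{\varepsilon,\sigma} := 1 - \varepsilon^2 U_{\varepsilon,\sigma}^2 \geq 1/2$ on a short time interval. I then introduce the stopping time
\begin{equation*}
T_* := \sup\Big\{ t \in [0, T_{\max}) \,:\, \inf_{\R^N} \rho_{\varepsilon,\sigma}(\cdot, \tau) \geq \tfrac{1}{2} \text{ and } \tilde{\Sigma}_\varepsilon^{k+1}(\tau) \leq 2 \tilde{\Sigma}_\varepsilon^{k+1}(0),\ \forall \tau \in [0, t] \Big\},
\end{equation*}
which is therefore positive.

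On $[0, T_*)$, Proposition~\ref{prop:est-FW} applies, and since $\tilde{E}_\varepsilon^1 = E_\varepsilon(U_{\varepsilon,\sigma}, \Phi_{\varepsilon,\sigma})$ is conserved, $[\tilde{\Sigma}_\varepsilon^{k+1}]'(t) = \sum_{\ell=2}^{k+1} [\tilde{E}_\varepsilon^\ell]'(t)$. The Sobolev embedding theorem (valid since $k > N/2+1$) and the bound $\rho_{\varepsilon,\sigma} \geq 1/2$ yield
\begin{equation*}
\|\nabla \Phi_{\varepsilon,\sigma}\|_{W^{1,\infty}}^2 + \|U_{\varepsilon,\sigma}\|_{W^{1,\infty}}^2 + \varepsilon^2 \|d^2 U_{\varepsilon,\sigma}\|_{L^\infty}^2 \leq K\,\tilde{\Sigma}_\varepsilon^{k+1}, \qquad \|\nabla \Phi_{\varepsilon,\sigma}\|_{L^\infty}^k \leq K\,\big(\tilde{\Sigma}_\varepsilon^{k+1}\big)^{k/2},
\end{equation*}
where $K$ depends only on $k$ and $N$. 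Setting $y(t) := \tilde{\Sigma}_\varepsilon^{k+1}(t)$ and $M := 1 + y(0) \geq 1$, and using $y(t) \leq 2M$ on the stopping interval, the sum over~\eqref{der-E-j-bis} gives
\begin{equation*}
y'(t) \leq K'\, y(t) \big( \varepsilon\, y(t) + \sigma + \sigma\, y(t)^{k/2} \big) \leq K''\, \max\{\sigma,\varepsilon\}\, M^{\max\{2, k/2\}}\, y(t).
\end{equation*}
Gronwall's inequality then produces $y(t) \leq y(0) \exp\!\big(K'' \max\{\sigma,\varepsilon\} M^{\max\{2, k/2\}} t\big)$, so the choice
\begin{equation*}
T_{\varepsilon,\sigma} := \frac{\log 2}{K''\, \max\{\sigma,\varepsilon\}\, M^{\max\{2, k/2\}}}
\end{equation*}
ensures $y(t) \leq 2 y(0)$ for every $t \in [0, T_{\varepsilon,\sigma}]$.

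Combined with the Sobolev bound $\varepsilon \|U_{\varepsilon,\sigma}(\cdot, t)\|_{L^\infty} \leq K_1 \varepsilon \sqrt{2M}$, which is at most $1/\sqrt{2}$ after enlarging $A_*$ once more, both stopping conditions remain strictly satisfied, so $T_* > T_{\varepsilon,\sigma}$. The blow-up criterion of Corollary~\ref{cor:HLL-Cauchy}~$(ii)$ is ruled out by the resulting uniform Sobolev bounds on $[0, T_{\varepsilon,\sigma}]$, guaranteeing $T_{\max} > T_{\varepsilon,\sigma}$. The bound $\boK_{\varepsilon,\sigma}(t) \leq A_* \boK_{\varepsilon,\sigma}(0)$ then follows from the norm-by-norm equivalence $\boK_{\varepsilon,\sigma}(t)^2 \sim \tilde{\Sigma}_\varepsilon^{k+1}(t)$, which uses $1/2 \leq \rho_{\varepsilon,\sigma} \leq 1$ and the inequality $\sigma \|\sin(\Phi_{\varepsilon,\sigma})\|_{L^2}^2 \leq 2\, \tilde{E}_\varepsilon^1$. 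Finally, the extension from $\boN\boV^{k+4}(\R^N)$ to $\boN\boV^{k+2}(\R^N)$ proceeds via a standard density argument using statement~$(iii)$ of Corollary~\ref{cor:HLL-Cauchy}, exactly as in the last step of the proof of Corollary~\ref{cor:T}. The main technical point is the book-keeping at the step $y'(t) \leq K'(\varepsilon M + \sigma + \sigma M^{k/2})\, y(t)$, where both the contribution $\varepsilon M \cdot y$ and the contribution $\sigma M^{k/2} \cdot y$ must be absorbed into a single prefactor $\max\{\sigma,\varepsilon\}\, M^{\max\{2,k/2\}}$; this forces the (slightly suboptimal) exponent $\max\{2, k/2\}$ stated in the corollary, but it is precisely this uniform dependence on $\varepsilon$ and $\sigma$ that allows one to pass to the free-wave regime limit in Theorem~\ref{thm:conv-wave}.
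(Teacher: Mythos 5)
Your proposal is correct and follows essentially the same route as the paper: the paper likewise passes to $\boN\boV^{k+4}(\R^N)$ data, introduces the identical stopping time, exploits the conservation of $\tilde E_\varepsilon^1$, derives from Proposition~\ref{prop:est-FW} and the Sobolev embedding a differential inequality of the form $[\tilde\Sigma_\varepsilon^{k+1}]' \leq K\max\{\sigma,\varepsilon\}\,\tilde\Sigma_\varepsilon^{k+1}\,(1+\tilde\Sigma_\varepsilon^{k+1}(0))^{\max\{2,k/2\}}$, closes with Gronwall and the equivalence $K_1\tilde\Sigma_\varepsilon^{k+1}(0)\leq\boK_{\varepsilon,\sigma}(0)^2\leq K_2\tilde\Sigma_\varepsilon^{k+1}(0)$, and extends to $\boN\boV^{k+2}(\R^N)$ by density. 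The only cosmetic difference is that you keep the form $\varepsilon y+\sigma+\sigma y^{k/2}$ before absorbing it into $\max\{\sigma,\varepsilon\}(1+y)^{\max\{2,k/2\}}$, whereas the paper first passes through $1+y^2+y^{k/2}$; both are equivalent up to constants.
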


\begin{proof}
The proof follows the same lines as the proof of Corollary~\ref{cor:T}. Indeed, the same arguments show that the stopping time
$$\tilde{T}_* := \sup \Big\{ t \in [0, T_{\max}) : \frac{1}{2} \leq \inf_{x \in \R^N} \rho_{\varepsilon, \sigma}(x, \tau) \ {\rm and} \ \tilde{\Sigma}_\varepsilon^{k + 1}(\tau) \leq 2 \tilde{\Sigma}_\varepsilon^{k + 1}(0) \ {\rm for} \ {\rm any} \ \tau \in [0, t] \Big\},$$
is positive, and that there exist two positive numbers $K_1$ and $K_2$, depending only on $k$ and $N$, such that 
\begin{equation}
\label{equivalence}
K_1 \tilde{\Sigma}_\varepsilon^{k + 1}(0) \leq \boK_{\varepsilon, \sigma}(0)^2 \leq K_2 \tilde{\Sigma}_\varepsilon^{k + 1}(0).
\end{equation}
Since $\tilde{\Sigma}_\varepsilon^1$ is constant in time, we infer from Proposition~\ref{prop:est-FW} and~\eqref{def:tilde-sigma} that the following differential inequality holds
$$\big[ \tilde{\Sigma}_\varepsilon^{k + 1} \big]'(t) \leq K_3 \max \big\{ \sigma, \varepsilon \big\} \tilde{\Sigma}_\varepsilon^{k + 1}(t) \, \Big( 1 + \tilde{\Sigma}_\varepsilon^{k + 1}(t)^2 + \tilde{\Sigma}_\varepsilon^{k + 1}(t)^\frac{k}{2} \Big),$$
for any $t \in [0,\tilde{T}_*)$ and a further positive number $K_3$. In view of the definition of $\tilde{T}_*$ and the fact that $k \geq 2$, we can enlarge $K_3$ such that
$$\big[ \tilde{\Sigma}_\varepsilon^{k + 1} \big]'(t) \leq K_3 \max \big\{ \sigma, \varepsilon \big\} \tilde{\Sigma}_\varepsilon^{k + 1}(t) \, \Big( 1 + \tilde{\Sigma}_\varepsilon^{k + 1}(0) \Big)^{\max \{ 2, \frac{k}{2} \}},$$
for any $t\in [0, \tilde{T}_*)$. Setting
$$T_{\varepsilon, \sigma} := \frac{\ln(2)}{K_3 \max \{ \sigma, \varepsilon \} \big( 1 + \tilde{\Sigma}_\varepsilon^{k + 1}(0) \big)^{\max \{ 2, \frac{k}{2} \}}},$$
we conclude as in Corollary~\ref{cor:T} that $\tilde{T}_* \geq T_{\varepsilon, \sigma}$. Bearing in mind~\eqref{equivalence}, the other statements follow also as in Corollary~\ref{cor:T}.
\end{proof}

We now conclude the

\begin{proof}[Proof of Theorem~\ref{thm:conv-wave}]
The proof is simpler than the one of Theorem~\ref{thm:conv-SG}. Let $T_{\varepsilon, \sigma}$ be the time given by Corollary~\ref{cor:T-bis}, and fix $T \in [0, T_{\varepsilon, \sigma}]$. Setting $v_{\varepsilon, \sigma} := U_{\varepsilon, \sigma} - U$ and $\varphi_{\varepsilon, \sigma} := \Phi_{\varepsilon, \sigma} - \Phi$, we derive from~\eqref{HLLeps} and~\eqref{FW} that
\begin{equation}
\label{eq:diff-sigma}
\begin{cases}
\partial_t v_{\varepsilon, \sigma} = \Delta \varphi_{\varepsilon, \sigma} - \frac{\sigma}{2} \sin(2 \Phi_{\varepsilon, \sigma}) + \varepsilon^2 R_\varepsilon^U,\\
\partial_t \varphi_{\varepsilon, \sigma} = v_{\varepsilon, \sigma} + \varepsilon^2 R_\varepsilon^\Phi,
\end{cases}
\end{equation}
with $R_\varepsilon^U$ and $R_\varepsilon^\Phi$ as in~\eqref{def:RepsU} and~\eqref{def:RepsPhi}. We next deduce from Corollary~\ref{cor:T-bis} and the Sobolev embedding theorem that
\begin{equation}
\label{est:unif}
\begin{split}
\max_{t \in [0, T]} \Big( \big\| U_{\varepsilon, \sigma}(\cdot, t) \big\|_{L^\infty} + & \big\| \nabla U_{\varepsilon, \sigma}(\cdot, t) \big\|_{L^\infty} + \varepsilon \big\| d^2 U_{\varepsilon, \sigma}(\cdot, t) \big\|_{L^\infty} + \big\| \nabla \Phi_{\varepsilon, \sigma}(\cdot, t) \big\|_{L^\infty} \Big) \leq C \boK_{\varepsilon, \sigma}(0),
\end{split}
\end{equation}
where $C$ refers, here and in the sequel, to a positive number depending only on $N$ and $k$.

Let $0 \leq m \leq k - 2$. We have
\begin{equation}
\label{est:sin}
\sigma^\frac{1}{2} \max_{t \in [0, T]} \big\| \sin(2 \Phi_{\varepsilon, \sigma}(\cdot, t)) \big\|_{L^2} \leq \boK_{\varepsilon, \sigma}(0),
\end{equation}
and we infer from~\eqref{est:unif}, Lemma~\ref{lem:moser} and Corollary~\ref{cor:moser-trigo} that
\begin{equation}
\label{est:sin2}
\max_{t \in [0, T]} \big\| \sin(2 \Phi_{\varepsilon, \sigma}(\cdot, t)) \big\|_{\dot{H}^\ell} \leq C \big( \boK_{\varepsilon, \sigma}(0) + \boK_{\varepsilon, \sigma}(0)^\ell \big),
\end{equation}
for any $1 \leq \ell \leq m$, and
\begin{equation}
\label{est:R-phi}
\max_{t \in [0, T]} \big\| R_\varepsilon^\Phi(\cdot, t) \big\|_{H^m} \leq C \big( \boK_{\varepsilon, \sigma}(0) + \boK_{\varepsilon, \sigma}(0)^3 + \sigma \boK_{\varepsilon, \sigma}(0)^{m + 1} \big).
\end{equation}
Similarly, we have
$$\max_{t \in [0, T]} \big\| R_\varepsilon^U(\cdot, t) \big\|_{L^2} \leq C \big( \sigma \boK_{\varepsilon, \sigma}(0)^{2} + \boK_{\varepsilon, \sigma}(0)^3 \big),$$
and
$$\max_{t \in [0, T]} \big\| R_\varepsilon^U(\cdot, t) \big\|_{\dot{H}^{\ell - 1}} \leq C \big( \sigma \boK_{\varepsilon, \sigma}(0)^2 + \boK_{\varepsilon, \sigma}(0)^3 + \sigma \boK_{\varepsilon, \sigma}(0)^{\ell + 1} \big).$$
for $2 \leq \ell \leq m$. Therefore, using the embedding $L^{2}(\R^N)$ into $H^{- 1}(\R^N)$ if $m = 0$, we conclude that 
\begin{equation}
\label{est-R}
\max_{t \in [0, T]} \big\| R_\varepsilon^U(\cdot, t) \big\|_{H^{m - 1}} \leq C \big( \sigma \boK_{\varepsilon, \sigma}(0)^2 + \boK_{\varepsilon, \sigma}(0)^3 + \sigma \boK_{\varepsilon, \sigma}(0)^{m + 1} \big).
\end{equation}
In view of~\eqref{eq:diff-sigma}, the Duhamel formulation is given by
\begin{align*}
v_{\varepsilon, \sigma}(\cdot, t) & = \cos(t D) v_{\varepsilon, \sigma}^0 - D \sin(t D) \varphi_{\varepsilon,\sigma}^0\\
+ & \int_0^t \bigg( \cos((t - s) D) \Big( \varepsilon^2 R_\varepsilon^U(\cdot, s) - \frac{\sigma}{2} \sin \big( 2 \Phi_{\varepsilon,\sigma}(\cdot, s) \big) \Big) - \varepsilon^2 \sin((t - s) D) D R_\varepsilon^\Phi(\cdot, s) \bigg) \, ds,\\
\varphi_{\varepsilon, \sigma}(\cdot, t) & = \cos(t D) \varphi_{\varepsilon, \sigma}^0 + \frac{\sin(t D)}{D} v_{\varepsilon, \sigma}^0\\
+ & \int_0^t \bigg( \frac{\sin((t - s) D)}{D} \Big( \varepsilon^2 R_\varepsilon^U(\cdot, s) - \frac{\sigma}{2} \sin \big(2 \Phi_{\varepsilon, \sigma}(\cdot, s) \big) \Big) + \varepsilon^2 \cos((t - s) D) R_\varepsilon^\Phi(\cdot, s) \bigg) \, ds,
\end{align*}
for any $t \in [0, T]$. Therefore, we are led to
\begin{equation}
 \label{est:H-m}
\begin{split}
\| v_{\varepsilon, \sigma}(\cdot, t) \|_{H^{m - 1}} & + \| \varphi_{\varepsilon, \sigma}(\cdot, t) \|_{H^m} \leq C \big( 1 + t^2 \big) \, \Big( \| v_{\varepsilon, \sigma}^0 \|_{H^{m - 1}} + \| \varphi_{\varepsilon, \sigma}^0 \|_{H^m}\\
& + \max_{s \in [0, T]} \big( \sigma \| \sin(2\Phi_{\varepsilon, \sigma}(\cdot, s)) \|_{H^{m - 1}} + \varepsilon^2 \| R_\varepsilon^U(\cdot, s) \|_{H^{m - 1}} + \varepsilon^2 \| R_\varepsilon^\Phi(\cdot, s) \|_{H^m} \big) \Big).
\end{split}
\end{equation}
Thus, the estimate~\eqref{est:FW1} follows from~\eqref{est:sin},~\eqref{est:sin2},~\eqref{est:R-phi} and~\eqref{est-R}.

For $1 \leq \ell \leq m$, the estimate in the homogeneous Sobolev spaces is similar to~\eqref{est:H-m}. We only replace the norms $H^m$ and $H^{m - 1}$ by $\dot{H}^\ell$ and $\dot{H}^{\ell - 1}$, and the term $1 + t^2$ by $1 + t$ in~\eqref{est:H-m}. Then, the estimate~\eqref{est:FW2} follows as before using~\eqref{est:sin2} instead of~\eqref{est:sin}.
\end{proof}

%%%%%%%%%%%%%%%%%%%%%%%%%%%%%%%%%%%%%%%%%%%%%%%%%%%%%%%%%%%%%%%%%%%%%%
%%%%%%%%%%%%%%%%%%%%%%%%%%%%%%%%%%%%%%%%%%%%%%%%%%%%%%%%%%%%%%%%%%%%%%
%%%%%%%%%%%%%%%%%%%%%%%%%%%%%%%%%%%%%%%%%%%%%%%%%%%%%%%%%%%%%%%%%%%%%%
\appendix
\section{Properties of the sets \texorpdfstring{$H_{\sin}^k(\R^N)$}{}}
\label{sec:Hsink}
%%%%%%%%%%%%%%%%%%%%%%%%%%%%%%%%%%%%%%%%%%%%%%%%%%%%%%%%%%%%%%%%%%%%%%
%%%%%%%%%%%%%%%%%%%%%%%%%%%%%%%%%%%%%%%%%%%%%%%%%%%%%%%%%%%%%%%%%%%%%%
%%%%%%%%%%%%%%%%%%%%%%%%%%%%%%%%%%%%%%%%%%%%%%%%%%%%%%%%%%%%%%%%%%%%%%

In this first appendix, we collect some useful properties of the sets $H_{\sin}^k(\R^N)$. In particular, we underline the reasons why the Cauchy problem for the Sine-Gordon equation in the product set $H_{\sin}^k(\R^N) \times H^{k - 1}(\R^N)$ cannot be immediately reduced to the usual Sobolev framework.

Let $k \in \N^*$ be fixed. Recall first that the set $H_{\sin}^k(\R^N)$ is not a vector space. Indeed, the constant function $\pi$ belongs to this set, but not the function $\pi/2$. On the other hand, it is an additive group due to the trigonometric identities
$$\sin(- \phi) = - \sin(\phi), \quad {\rm and} \quad \sin(\phi_1 + \phi_2) = \sin(\phi_1) \cos(\phi_2) + \sin(\phi_2) \cos(\phi_1).$$
Since any function in the space $H^k(\R^N)$ belongs to $H_{\sin}^k(\R^N)$, we infer that
\begin{equation}
\label{djokovic}
H^k(\R^N) + H_{\sin}^k(\R^N) \subset H_{\sin}^k(\R^N).
\end{equation}

Concerning the topological structure of the set $H_{\sin}^k(\R^N)$, we identify this set with the quotient group $H_{\sin}^k(\R^N)/\pi \Z$, and we endow it with the metric structure provided by the distance $d_{\sin}^k$ in~\eqref{def:dsink}. 

In many places, we do not work with the distance $d_{\sin}^k$, but instead with the quantity
$$\| \phi \|_{H_{\sin}^k} := d_{\sin}^k(\phi, 0) = \Big( \| \sin(\phi) \|_{L^2}^2 + \| \nabla \phi \|_{H^{k - 1}}^2 \Big)^\frac{1}{2}.$$
This is an abuse of notation since this quantity is not a norm. However, we have the classical identity
$$d_{\sin}^k(\phi_1, \phi_2) = \| \phi_1 - \phi_2 \|_{H_{\sin}^k}.$$
so that the quantity $ \| \phi \|_{H_{\sin}^k}$ satisfies the triangle inequality. Note also the useful estimate
\begin{equation}
\label{murray}
\| \phi \|_{H_{\sin}^k} \leq \| \phi \|_{H^k},
\end{equation}
when $\phi \in H^k(\R^N)$.

Coming back to~\eqref{djokovic}, we provide a decomposition of any function $\phi \in H_{\sin}^k(\R^N)$ as a sum $\phi = f + \varphi$, with $\varphi \in H^k(\R^N)$ and $f \in H_{\sin}^\infty(\R^N)$.

\begin{lem}
\label{lem:decompose}
Given any function $\phi \in H_{\sin}^k(\R^N)$, there exist two functions $f \in H_{\sin}^\infty(\R^N)$ and $\varphi \in H^k(\R^N)$ such that $\phi = f + \varphi$,
$$\| \varphi \|_{H^k} \leq \sqrt{2} \| \nabla \phi \|_{H^{k - 1}},$$
and
$$\| f \|_{H_{\sin}^\ell} \leq A \| \phi \|_{H_{\sin}^k},$$
for any $\ell \geq 1$. The positive number $A$ in this inequality only depends on $k$ and $\ell$.
\end{lem}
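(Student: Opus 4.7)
My plan is to decompose $\phi$ by means of a low/high-frequency splitting of its gradient in Fourier space. The low-frequency part will form a smooth representative $f$ (capturing the constants at infinity modulo $\pi$), while the high-frequency part will integrate to an $H^k$ function $\varphi$. The whole construction is performed at the level of $\nabla \phi$, since $\phi$ itself need not be a tempered distribution.

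First, I fix a radial Fourier multiplier $\widehat{\rho} \in \boC_c^\infty(\R^N; [0, 1])$ with $\widehat{\rho} \equiv 1$ on $\{ |\xi| \leq 1 \}$ and supported in $\{ |\xi| \leq 2 \}$, and I write $P$ for the associated convolution operator. Since $\nabla \phi \in H^{k - 1}(\R^N)$, the vector field $v := (I - P)(\nabla \phi)$ is well-defined in $H^{k - 1}(\R^N)$ and its Fourier transform vanishes on $\{ |\xi| \leq 1 \}$. Because Fourier multipliers commute with partial derivatives, $v$ inherits the curl-free property of $\nabla \phi$, so I define
$$\widehat{\varphi}(\xi) := \frac{- i \xi \cdot \widehat{v}(\xi)}{|\xi|^2},$$
which is an $L^2$ function thanks to the spectral gap of $\widehat{v}$. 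The curl-free identities $\xi_\ell \widehat{v_j} = \xi_j \widehat{v_\ell}$ then yield $\nabla \varphi = v$ in the sense of distributions. Using the elementary bound $(1 + |\xi|^2)^k / |\xi|^2 \leq 2 (1 + |\xi|^2)^{k - 1}$ on $\{ |\xi| \geq 1 \}$, together with $\| I - P \|_{H^{k - 1} \to H^{k - 1}} \leq 1$, I obtain
$$\| \varphi \|_{H^k}^2 \leq 2 \| v \|_{H^{k - 1}}^2 \leq 2 \| \nabla \phi \|_{H^{k - 1}}^2,$$
which is exactly the first claimed estimate.

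I then set $f := \phi - \varphi$, which lies in $L_{\loc}^1(\R^N)$ since both summands do. By construction $\nabla f = \nabla \phi - v = P(\nabla \phi)$, whose Fourier transform is compactly supported, so Bernstein's inequality gives $\| \nabla f \|_{H^{\ell - 1}} \leq C_\ell \, \| \nabla \phi \|_{L^2}$ for every $\ell \geq 1$; in particular $\nabla f \in H^\infty(\R^N)$. For the trigonometric part I expand
$$\sin(f) = \sin(\phi) \, \cos(\varphi) - \cos(\phi) \, \sin(\varphi),$$
and combine $|\sin(\varphi)| \leq |\varphi|$ with $|\cos| \leq 1$ to get $\| \sin(f) \|_{L^2} \leq \| \sin(\phi) \|_{L^2} + \| \varphi \|_{L^2} \leq \| \sin(\phi) \|_{L^2} + \sqrt{2} \, \| \nabla \phi \|_{H^{k - 1}}$. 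Together with the Bernstein bound this gives $f \in H_{\sin}^\infty(\R^N)$ and the estimate $\| f \|_{H_{\sin}^\ell} \leq A \, \| \phi \|_{H_{\sin}^k}$ with $A$ depending only on $\ell$, $k$ and $N$.

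The main subtlety is that $\phi$ itself is not a priori a tempered distribution, so one cannot define $\varphi$ by a Fourier formula applied directly to $\widehat{\phi}$. The workaround is to perform the frequency splitting at the level of $\nabla \phi \in L^2$, and to exploit the curl-free constraint to produce a well-defined potential $\varphi$; only then is $f = \phi - \varphi$ defined pointwise. Once this point is settled, the remainder of the argument reduces to controlling the bandlimited smooth part via Bernstein and the trigonometric difference via the sum formula for the sine.
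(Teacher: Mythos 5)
Your proposal is correct and follows essentially the same low/high frequency cutoff used in the paper: fix a compactly supported radial multiplier equal to $1$ near the origin, set $\widehat{\varphi} = (1 - \widehat{\chi})\widehat{\phi}$ and $f = \phi - \varphi$, and bound the two pieces by Plancherel/Bernstein, with the $L^2$ bound on $\sin(f)$ coming from the sine addition formula. The only difference is cosmetic but welcome: you apply the cutoff to $\nabla\phi$ and reconstruct $\varphi$ via the curl-free constraint, which sidesteps the question (left implicit in the paper) of whether $\phi$ itself is a tempered distribution; after unwinding the Fourier algebra, your $\varphi$ and $f$ coincide with the paper's.
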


\begin{proof}
Consider a function $\chi \in \boC_c^\infty(\R^N)$, with $\supp{\widehat{\chi}} \in B(0, 2)$, and such that
$$\widehat{\chi} = 1 \ {\rm on} \ B(0,1), \quad {\rm and} \quad 0 \leq \widehat{\chi} \leq 1.$$
Set
$$\widehat{f} = \widehat{\chi} \, \widehat{\phi}, \quad {\rm and} \quad \widehat{\varphi} = \big( 1 - \widehat{\chi} \big) \, \widehat{\phi},$$
so that $\phi = f + \varphi$. By the Plancherel theorem, the function $\varphi$ is in $H^k(\R^N)$, and it satisfies
$$\| \varphi \|_{L^2} \leq \| \nabla \phi \|_{L^2}, \quad {\rm and} \quad \| \nabla \varphi \|_{H^{k - 1}} \leq \| \nabla \phi \|_{H^{k - 1}}.$$
Concerning the function $f$, we check that
$$\| \sin(f) \|_{L^2} = \| \sin(\phi - \varphi) \|_{L^2} \leq \| \sin(\phi) \|_{L^2} + \| \varphi \|_{L^2} \leq \| \sin(\phi) \|_{L^2} + \| \nabla \phi \|_{L^2},$$
and we also compute
$$\| \nabla f \|_{H^{\ell - 1}}^2 \leq A \int_{\R^N} \big( 1 + |\xi|^2 \big)^{\ell - 1} |\widehat{\chi}(\xi)|^2 |\widehat{\nabla \phi}(\xi)|^2 \, d\xi \leq A 5^{\ell - k} \, \| \nabla \phi \|_{H^{k - 1}}^2.$$
\end{proof}

This decomposition is enough to establish the density of smooth functions.

\begin{lem}
\label{lem:density}
Given any function $\phi \in H_{\sin}^k(\R^N)$, there exist functions $\phi_n \in H_{\sin}^\infty(\R^N)$, with $\phi_n - \phi \in L^2(\R^N)$, such that 
\begin{equation}
\label{nadal}
\| \phi_n - \phi \|_{H^k} \to 0,
\end{equation}
as $n \to \infty$. In particular, we have
$$d_{\sin}^k(\phi_n, \phi) = \| \phi_n - \phi \|_{H_{\sin}^k} \to 0.$$ 
\end{lem}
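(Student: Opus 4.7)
The plan is to combine Lemma~\ref{lem:decompose} with the standard density of $\boC_c^\infty(\R^N)$ in $H^k(\R^N)$. First I would invoke Lemma~\ref{lem:decompose} to decompose the given function as $\phi = f + \varphi$ with $f \in H_{\sin}^\infty(\R^N)$ and $\varphi \in H^k(\R^N)$. Since $\boC_c^\infty(\R^N)$ is dense in $H^k(\R^N)$, I can pick a sequence $(\varphi_n)_{n \in \N} \subset \boC_c^\infty(\R^N)$ satisfying $\varphi_n \to \varphi$ in $H^k(\R^N)$. I then set $\phi_n := f + \varphi_n$, and the remaining task is to check the three required properties.

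The difference $\phi_n - \phi = \varphi_n - \varphi$ lies in $H^k(\R^N) \subset L^2(\R^N)$, which immediately gives both $\phi_n - \phi \in L^2(\R^N)$ and the convergence~\eqref{nadal}. To check that $\phi_n \in H_{\sin}^\infty(\R^N)$, I first observe that $\nabla \phi_n = \nabla f + \nabla \varphi_n$ lies in $H^{\ell - 1}(\R^N)$ for every $\ell \geq 1$, because $\nabla f \in H^\infty(\R^N)$ by Lemma~\ref{lem:decompose} and $\varphi_n \in \boC_c^\infty(\R^N) \subset H^\infty(\R^N)$. For the sine part, I would use the identity
\begin{equation*}
\sin(\phi_n) = \sin(f) \cos(\varphi_n) + \cos(f) \sin(\varphi_n),
\end{equation*}
and remark that $\sin(f) \in L^2(\R^N)$, $|\cos(\varphi_n)| \leq 1$ and $|\cos(f)| \leq 1$, while $|\sin(\varphi_n)| \leq |\varphi_n|$ with $\varphi_n \in L^2(\R^N)$, so that $\sin(\phi_n) \in L^2(\R^N)$.

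The last claim of the lemma is then a direct consequence of~\eqref{murray}, which gives
\begin{equation*}
d_{\sin}^k(\phi_n, \phi) = \| \phi_n - \phi \|_{H_{\sin}^k} \leq \| \phi_n - \phi \|_{H^k} \to 0,
\end{equation*}
as $n \to \infty$. There is no real obstacle here: the only point requiring a small amount of care is the verification that $\sin(\phi_n) \in L^2(\R^N)$, which is handled by the trigonometric addition formula together with the boundedness of $\cos$ and the elementary inequality $|\sin(\varphi_n)| \leq |\varphi_n|$.
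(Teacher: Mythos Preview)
Your proof is correct and follows essentially the same approach as the paper: decompose $\phi = f + \varphi$ via Lemma~\ref{lem:decompose}, approximate $\varphi$ in $H^k$ by smooth functions $\varphi_n$, and set $\phi_n = f + \varphi_n$. The only cosmetic difference is that the paper cites the inclusion~\eqref{djokovic} to conclude $\phi_n \in H_{\sin}^\infty(\R^N)$, whereas you spell this out explicitly via the addition formula for $\sin(\phi_n)$.
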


\begin{proof}
Let us decompose the function $\phi$ as $\phi = f + \varphi$, with $f \in H_{\sin}^\infty(\R^N)$ and $\varphi \in H^k(\R^N)$. By standard density theorems in the Sobolev spaces, there exist smooth functions $\varphi_n \in H^\infty(\R^N)$ such that
$$\| \varphi_n - \varphi \|_{H^k} \to 0,$$
as $n \to \infty$. Setting $\phi_n = f + \varphi_n$, we have $\phi_n - \phi = \varphi_n - \varphi \in L^2(\R^N)$, so that
$$\| \phi_n - \phi \|_{H^k} = \| \varphi_n - \varphi \|_{H^k} \to 0.$$
Moreover, we derive from~\eqref{djokovic} that the functions $\phi_n$ are in $H_{\sin}^\infty(\R^N)$. The last convergence in Lemma~\ref{lem:density} follows from~\eqref{murray} and~\eqref{nadal}. 
\end{proof}

In our analysis of the Sine-Gordon regime, it is important to control uniformly the function $\phi \in H_{\sin}^k(\R^N)$, at least when the integer $k$ is large enough. In order to obtain such a control, we study the behaviour at infinity of the functions in the space $H_{\sin}^1(\R^N)$ in the spirit of the work by G\'erard~\cite{Gerard2} about the energy space of the Gross-Pitaevskii equation. 

\begin{lem}
\label{lem:carac}
Let $\phi \in H_{\sin}^1(\R^N)$.

$(i)$ For $N = 1$, the function $\phi$ is uniformly continuous and bounded on $\R$, and there exist two integers $(\ell^+, \ell^-) \in \Z^2$ such that
$$\phi(x) \to \ell^{\pm} \pi,$$
as $x \to \pm \infty$. Moreover, the differences $\phi - \ell^\pm \pi$ are in $L^2(\R_\pm)$.

$(ii)$ For $N \geq 2$, there exists an integer $\ell \in \Z$ such that
$$\phi - \ell \pi \in H^1(\R^N).$$
\end{lem}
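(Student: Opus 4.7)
The plan is to treat the two parts separately. Part $(i)$ is a standard one-dimensional argument; part $(ii)$ requires a sign-picking step specific to $N\ge 2$, with the critical dimension $N=2$ being the most delicate.

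For $(i)$, the Cauchy-Schwarz estimate $|\phi(x)-\phi(y)|\le \|\phi'\|_{L^2}|x-y|^{1/2}$ yields a $1/2$-H\"older continuous (hence uniformly continuous) representative of $\phi$. Setting $v:=\sin\phi$, I observe that $v\in L^2(\R)$ by hypothesis and $v'=\phi'\cos\phi$ is the product of an $L^2$ and an $L^\infty$ function, so $v\in H^1(\R)$; the one-dimensional Sobolev embedding then gives $\sin\phi(x)\to 0$ at $\pm\infty$. Consequently $\cos^2\phi\to 1$, and by continuity $\cos\phi$ has an eventual sign $\sigma^{\pm}\in\{-1,+1\}$ on suitable half-lines. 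The only zeros of $\sin$ where $\cos$ takes the value $\sigma^{\pm}$ are the multiples of $\pi$ with matching parity, so the continuity of $\phi$ together with $\sin\phi\to 0$ forces $\phi(x)\to\ell^\pm\pi$ at $\pm\infty$ for some integers $\ell^\pm$ with $(-1)^{\ell^\pm}=\sigma^{\pm}$; boundedness of $\phi$ on $\R$ follows at once. Finally, the elementary bound $|t|\le(\pi/2)|\sin t|$ on $|t|\le\pi/2$, applied to $t=\phi-\ell^\pm\pi$ on a half-line where this difference is small, combined with the boundedness of $\phi$ on the complementary compact segment, yields $\phi-\ell^\pm\pi\in L^2(\R_\pm)$.

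For $(ii)$ when $N\ge 3$, set $u:=\cos\phi$ and $v:=\sin\phi$. Since $|u|,|v|\le 1$, the identities $\nabla u=-v\nabla\phi$ and $\nabla v=u\nabla\phi$ imply $\nabla u,\nabla v\in L^2(\R^N)$. Applying the Sobolev embedding $\dot H^1(\R^N)\hookrightarrow L^{2^*}(\R^N)$ to $\phi$ modulo constants, there is a unique constant $c_\phi\in\R$ such that $\phi-c_\phi\in L^{2^*}(\R^N)$. Convergence of $\phi$ to $c_\phi$ in measure combined with $\sin\phi\in L^2(\R^N)$ forces $\sin c_\phi=0$, because otherwise $|\sin\phi|$ would stay bounded away from zero on a set of infinite measure. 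Hence $c_\phi=\ell\pi$ for some $\ell\in\Z$, and $\phi-\ell\pi\in L^{2^*}(\R^N)$. H\"older's inequality on the finite-measure set $\{|\phi-\ell\pi|>\pi/2\}$ gives $L^2$-integrability there, and the linearization $|\phi-\ell\pi|\le(\pi/2)|\sin\phi|$ from part $(i)$ yields $L^2$-integrability on the complement, so altogether $\phi-\ell\pi\in H^1(\R^N)$.

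The main obstacle is the borderline dimension $N=2$, where $\dot H^1(\R^2)$ fails to embed into any $L^p$ space and the above Sobolev argument does not apply directly. Here I would exploit that $v=\sin\phi\in H^1(\R^2)\subset\bigcap_{p<\infty}L^p(\R^2)$, so the set $E_\varepsilon:=\{|\sin\phi|>\varepsilon\}$ has finite measure with $|E_\varepsilon\cap\{|x|>R\}|\to 0$ as $R\to\infty$. On each dyadic annulus $\Omega_k:=\{2^k\le|x|<2^{k+1}\}$ with $k$ large enough that $E_\varepsilon\cap\Omega_k$ is much smaller than $|\Omega_k|$, the complement of $E_\varepsilon$ in $\Omega_k$ is connected and of nearly full measure in $\Omega_k$ (using the two-dimensional topology); on this complement $\phi$ stays within $\arcsin(\varepsilon)$ of some multiple of $\pi$, which by continuity must be a single $\ell_k\pi$. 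An overlap argument between consecutive annuli, combined with a Poincar\'e estimate controlling oscillations of $\phi$ on $\Omega_k$ by $\|\nabla\phi\|_{L^2(\Omega_k)}\to 0$, then shows that $\ell_k$ stabilizes to a single $\ell$, after which the same linearization as before yields $\phi-\ell\pi\in L^2(\R^2)$.
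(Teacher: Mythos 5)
Your part $(i)$ is correct and is essentially the paper's argument: Sobolev embedding/Cauchy--Schwarz gives uniform continuity, $\sin\phi\in H^1(\R)$ decays at infinity, continuity forces convergence to multiples of $\pi$ on each half-line, and the Jordan inequality yields the $L^2$ conclusion.

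For part $(ii)$ your treatment diverges from the paper, and the split into $N\ge 3$ and $N=2$ is where the trouble begins. Your $N\ge 3$ argument is sound: the critical embedding $\dot H^1(\R^N)\hookrightarrow L^{2^*}(\R^N)$ pins down a canonical constant $c_\phi$, the $L^2$ bound on $\sin\phi$ forces $c_\phi\in\pi\Z$, and the H\"older-plus-Jordan splitting closes the argument. This is a genuinely different route from the paper's and it does buy something: it works directly on $\phi$ without any auxiliary decomposition. But it does not extend to $N=2$, and your attempt to patch that case has a real gap. You assert that if $|E_\varepsilon\cap\Omega_k|$ is much smaller than $|\Omega_k|$ then $\Omega_k\setminus E_\varepsilon$ is connected ``using the two-dimensional topology.'' That is false on its face: a set of arbitrarily small two-dimensional measure can still contain a thin strip joining the inner and outer boundary circles of the annulus, which disconnects the complement. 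To rule this out you would need to exploit the finiteness of $\|\nabla\phi\|_{L^2(\Omega_k)}$ via a co-area or capacity argument, and nothing of the sort appears in your sketch; as written, the stabilization of the integers $\ell_k$ does not follow.

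The paper sidesteps all of this with Lemma~\ref{lem:decompose}: write $\phi=f+\varphi$ with $\varphi\in H^1(\R^N)$ and $f\in H_{\sin}^\infty(\R^N)$, so that it suffices to identify the limit of the smooth, low-frequency part $f$. Since $\nabla f\in H^{k-1}(\R^N)$ for every $k$, the function $f$ has bounded derivatives of all orders, hence $\sin^2(f)$ is uniformly continuous; being also in $L^1(\R^N)$, it tends to $0$ at infinity. The exterior $\{|x|\ge R\}$ is connected for $N\ge 2$, so the locally constant integer $\ell(x)$ with $|f(x)-\ell(x)\pi|<\pi/2$ is in fact a single $\ell$, and the Jordan inequality then gives $f-\ell\pi\in L^2$. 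No dimensional case distinction and no Sobolev criticality enters. I would encourage you to adopt this decomposition: it recovers your $N\ge 3$ result with less machinery and simultaneously settles the $N=2$ case you could not close.
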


\begin{proof}
When $N = 1$, the Sobolev embedding theorem implies the uniform continuity of $\phi$, and then of $\sin^2(\phi)$. Since this function is integrable on $\R$, it converges to $0$ as $x \to \pm\infty$. As a consequence of the continuity of the function $\phi$, there exist two integers $(\ell^+, \ell^-) \in \Z^2$ such that we have the convergences in $(i)$. In particular, the function $\phi$ is bounded. Moreover, we have the pointwise inequality $|\phi(x) - \ell^+ \pi| \leq \pi/2$ for $x$ large enough, so that the Jordan inequality gives
$$\frac{2}{\pi} |\phi(x) - \ell^+ \pi| \leq |\sin(\phi(x))|.$$
This is enough to prove that the functions $\phi - \ell^+ \pi$, and similarly $\phi - \ell^- \pi$, are in $L^2(\R_\pm)$.

The proof of $(ii)$ is similar. Let us consider two functions $f \in H_{\sin}^\infty(\R^N)$ and $\varphi \in H^1(\R^N)$ such that $\phi = f + \varphi$. By the Sobolev embedding theorem, the functions $f$ and $\sin^2(f)$ are uniformly continuous. The existence of an integer $\ell$ such that
$$f(x) \to \ell \pi,$$
as $|x| \to \infty$, follows as before. The property that $f - \ell \pi$ is square integrable results again from the Jordan inequality. In view of the decomposition for $\phi$, this is enough to guarantee that $\phi - \ell \pi$ lies in $H^1(\R^N)$.
\end{proof} 

\begin{rem}
When $N \geq 2$, the quotient group $H_{\sin}^1(\R^N)/\pi \Z$ reduces to the Sobolev space $H^1(\R^N)$. In view of~\eqref{murray}, the $H^1$-norm controls the quantity $\| \cdot \|_{H_{\rm sin}^1}$, but the opposite is false. Given a function $\chi \in \boC^\infty(\R^N)$ such that
$$\chi = 1 \ {\rm on} \ B(0,1), \quad {\rm and} \quad \chi = 0 \ {\rm outside} \ B(0, 2),$$
we can set
$$\phi_n(x) = n \chi \Big( \frac{x}{n} \Big),$$
and check the existence of a positive number $A$, depending only on $N$, such that
$$\| \phi_n \|_{L^2} \geq A n \| \phi_n \|_{H_{\sin}^1},$$
for any integer $n \geq 1$.
\end{rem}

When $N \geq 2$, we recover the uniform continuity and the boundedness of a function $\phi \in H_{\sin}^k(\R^N)$ assuming that the integer $k$ is large enough.

\begin{cor}
\label{cor:carac}
Let $N \geq 2$ and $k > N/2$. The functions $\phi \in H_{\sin}^k(\R^N)$ are uniformly continuous and bounded, and there exists an integer $\ell^\infty \in \Z$ such that
\begin{equation}
\label{phi-lim-unif}
\phi(x) \to \ell^\infty \pi,
\end{equation}
as $|x| \to \infty$. When $N \geq 3$, there exists a positive number $A$, depending only on $k$ and $N$, such that
\begin{equation}
\label{H-sin-bounded}
\| \phi - \ell^\infty \pi \|_{L^\infty} \leq A \| \nabla \phi \|_{H^{k - 1}}.
\end{equation}
\end{cor}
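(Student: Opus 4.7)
The plan is to reduce the statement to a standard Sobolev embedding by subtracting a suitable multiple of $\pi$. First, given $\phi \in H_{\sin}^k(\R^N)$, I would apply Lemma~\ref{lem:decompose} to write $\phi = f + \varphi$ with $f \in H_{\sin}^\infty(\R^N)$ and $\varphi \in H^k(\R^N)$. Since $N \geq 2$ and $f \in H_{\sin}^1(\R^N)$, Lemma~\ref{lem:carac}$(ii)$ provides an integer $\ell^\infty \in \Z$ with $f - \ell^\infty \pi \in H^1(\R^N)$. Combined with $\nabla f \in H^\infty(\R^N)$, this yields $f - \ell^\infty\pi \in H^k(\R^N)$, and hence $\phi - \ell^\infty \pi \in H^k(\R^N)$.

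Once this reduction is done, the first assertion follows directly from the Sobolev embedding $H^k(\R^N) \hookrightarrow \boC_0(\R^N)$ valid for $k > N/2$: the function $\phi - \ell^\infty \pi$ is then uniformly continuous, bounded, and tends to $0$ at infinity, which gives~\eqref{phi-lim-unif} as well as the boundedness and uniform continuity of $\phi$ itself.

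For the quantitative bound~\eqref{H-sin-bounded} in dimension $N \geq 3$, the goal is to control $\|\phi - \ell^\infty \pi\|_{L^\infty}$ purely by $\|\nabla \phi\|_{H^{k - 1}}$, that is, without appealing to the $L^2$-norm of $\phi - \ell^\infty \pi$. I would set $u := \phi - \ell^\infty \pi \in H^k(\R^N)$ and invoke the critical Sobolev inequality $\|u\|_{L^{2^*}} \leq C \|\nabla u\|_{L^2}$, with $2^* = 2N/(N - 2)$, which precisely requires $N \geq 3$. A standard Gagliardo-Nirenberg interpolation then gives
$$\|u\|_{L^\infty} \leq C \, \|u\|_{L^{2^*}}^{1 - \theta} \, \|u\|_{\dot H^k}^\theta,$$
for a suitable $\theta \in (0, 1)$ depending on $k$ and $N$ (the assumption $k > N/2$ guarantees admissibility). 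Combining these two estimates and noting that $\|u\|_{\dot H^k} = \|\nabla u\|_{\dot H^{k - 1}} = \|\nabla \phi\|_{\dot H^{k - 1}}$ and $\|\nabla u\|_{L^2} = \|\nabla \phi\|_{L^2}$ yields~\eqref{H-sin-bounded}.

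The main obstacle is clearly the dependence of the bound only on $\|\nabla \phi\|_{H^{k - 1}}$: a naive use of the Sobolev embedding only provides $\|u\|_{L^\infty} \leq C \|u\|_{H^k}$, which involves the full $L^2$-norm of $u = \phi - \ell^\infty \pi$ that cannot be controlled by $\|\nabla \phi\|_{H^{k - 1}}$ alone. This is where the restriction $N \geq 3$ becomes essential: the critical Sobolev inequality allows one to trade the $L^2$-norm for a gradient $L^2$-norm. In dimension $N = 2$ this trick breaks down, which is why the quantitative estimate is not asserted in that case.
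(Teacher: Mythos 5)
Your proof is correct, and the reduction step (using Lemma~\ref{lem:decompose} and Lemma~\ref{lem:carac}$(ii)$ to produce $\ell^\infty$ with $\phi - \ell^\infty \pi \in H^k(\R^N)$, then invoking the Sobolev embedding for the first part) matches the paper. The only place where you genuinely diverge is in the derivation of the quantitative bound~\eqref{H-sin-bounded}. Both you and the paper identify the same key step — the critical Sobolev inequality $\| \phi - \ell^\infty \pi \|_{L^{2N/(N-2)}} \leq A \| \nabla \phi \|_{L^2}$, which requires $N \geq 3$, is what lets one dispense with the $L^2$-norm of $\phi - \ell^\infty \pi$ and bound everything by $\| \nabla \phi \|_{H^{k-1}}$ alone. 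The difference is in how the $L^\infty$ estimate is then closed. The paper pairs a Morrey estimate on the modulus of continuity, $|\phi(x) - \phi(y)| \leq A |x - y|^{1 - N/q} \| \nabla \phi \|_{L^q}$ (with $q = 2N$ or $q = q_k$ chosen from $k$ and $N$), with an averaging argument over the unit ball $B(x,1)$, splitting $|\phi(x) - \ell^\infty\pi|$ into an oscillation term and an integral term controlled by H\"older and the $L^{2^*}$ bound. You instead apply the scale-invariant Gagliardo--Nirenberg interpolation $\| u \|_{L^\infty} \leq C \| u \|_{L^{2^*}}^{1 - \theta} \| u \|_{\dot H^k}^\theta$, and one checks that dimensional analysis forces $\theta = (N-2)/(2(k-1))$, which lies in $(0,1)$ exactly under the hypotheses $N \geq 3$, $k > N/2$ (the strict inequality $k > N/2$ also guarantees $a < 1$ in the exceptional case where $k - N/2$ is a non-negative integer, so the endpoint form of Gagliardo--Nirenberg applies). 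Both arguments are sound; the paper's is more elementary and self-contained, avoiding any appeal to the full Gagliardo--Nirenberg theorem, while yours is shorter and makes the role of scaling transparent — in particular it explains cleanly why the explicit dependence on $k$, $N$ enters only through the interpolation parameter $\theta$.
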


\begin{proof}
Lemma~\ref{lem:carac} provides an integer $\ell^\infty \in \Z$ such that the function $\phi - \ell^\infty \pi$ is in $H^k(\R^N)$. The uniform continuity and boundedness of $\phi$ then results from the Sobolev embedding theorem, as well as the limit in~\eqref{phi-lim-unif}.

Estimate~\eqref{H-sin-bounded} is a consequence of the Sobolev and Morrey inequalities. Set $q = 2 N$ if $k \geq N/2 + 1$, and $q = q_k$ otherwise, where the number $q_k$ is defined by the identity $1/q_k = 1/2 - (k - 1)/N$. There exists a positive number $A$, depending only on $k$ and $N$, such that
$$|\phi(x) - \phi(y)| \leq A |x - y|^{1 - \frac{N}{q}} \| \nabla \phi \|_{L^q} \leq A |x - y|^{1 - \frac{N}{q}} \| \nabla \phi \|_{H^{k - 1}},$$
for any $(x, y) \in \R^{2 N}$. On the other hand, it follows from the Sobolev embedding theorem that
$$\| \phi - \ell^\infty \pi \|_{L^{\frac{2 N}{N - 2}}} \leq A \| \nabla \phi \|_{L^2}.$$
Combining these inequalities, we deduce from the H\"older inequality that
\begin{align*}
|\phi(x) - \ell^\infty \pi| \leq \frac{1}{|B(x, 1)|} \bigg( \int_{B(x,1)} |\phi(x) - & \phi(y)| \, dy + \int_{B(x, 1)} |\phi(y) - \ell^\infty \pi| \, dy \bigg)\\
\leq & A \Big( \| \nabla \phi \|_{H^{k - 1}} + \| \phi - \ell^\infty \pi \|_{L^\frac{2 N}{N - 2}} \Big) \leq A \| \nabla \phi \|_{H^{k - 1}}.
\end{align*}
This concludes the proof of Corollary~\ref{cor:carac}.
\end{proof}

\begin{rem}
\label{rem:carac1}
For $N = 1$, though the functions $\phi \in H^1_{\sin}(\R)$ are bounded and have limits $\ell^\pm \pi$ at $\pm \infty$, there are no positive numbers $A_\pm$ such that
\begin{equation}
\label{H-sin-bounded-1} 
\| \phi - \ell^\pm \pi \|_{L^\infty(\R_\pm)} \leq A_\pm \| \phi \|_{H_{\sin}^1(\R_\pm)},
\end{equation}
for any $\phi \in H_{\sin}^1(\R)$. For instance, the even functions $\phi_n$ given by
$$\phi_n(x) = \begin{cases} n \pi & {\rm if} \ 0 \leq x \leq n \pi,\\
2 n \pi - x & {\rm if} \ n \pi \leq x \leq 2 n \pi,\\
0 & {\rm if} \ x \geq 2 n \pi, \end{cases}$$
satisfy
$$\| \phi_n - \ell_n^\pm \pi\|_{L^\infty(\R_\pm)} = n \pi, \quad {\rm and} \quad \| \phi_n \|_{H_{\sin}^1(\R_\pm)} = \sqrt{\frac{3 n \pi}{2}},$$
with $\ell_n^\pm = 0$. Inequality~\eqref{H-sin-bounded-1} cannot hold for $n$ large enough.
\end{rem}

\begin{rem}
\label{rem:carac2}
When $N = 2$ and $k > N/2$, the Sobolev embedding theorem provides the existence of a positive number $A$, depending only on $k$, such that
$$\| \phi - \ell^\infty \pi \|_{L^\infty} \leq A \| \phi - \ell^\infty \pi \|_{H^k}.$$
On the other hand, there is no positive number $A$ such that
\begin{equation}
\label{H-sin-bounded-2} 
\| \phi - \ell^\infty \pi \|_{L^\infty} \leq A \| \phi \|_{H_{\sin}^k}.
\end{equation}
Indeed, let us consider the functions $v_n$ defined by
$$v_n(r) = \begin{cases} n \pi & {\rm if} \ 0 \leq r \leq r_n := \frac{n \pi}{\ln(n)^2},\\
n \pi \Big( \frac{\ln(n \pi) - \ln(r)}{\ln(\ln(n))} - 1 \Big) & {\rm if} \ r_n \leq r \leq s_n := \frac{n \pi}{\ln(n)},\\
0 & {\rm if} \ r \geq s_n, \end{cases}$$
for any integer $ n \geq 3$. Given a non-negative and non-increasing function $\chi \in \boC^\infty(\R)$ such that
$$\chi = 1 \ {\rm on} \ (- \infty, - 1], \quad {\rm and} \quad \chi = 0 \ {\rm on} \ [1, \infty),$$
we set 
$$\phi_n(x) = n \pi \big( 1 - \chi(r_n + 2 - |x|) \big) + v_n(|x|) \chi(r_n + 2 - |x|) \chi \big( |x| - s_n + 2 \big),$$
for any $x \in \R^2$. The functions $\phi_n$ are smooth and compactly supported, so that they belong to the space $H_{\sin}^\infty(\R^2)$, with limits at infinity $\ell_n^\infty \pi = 0$. On the other hand, we check that
$$\| \phi_n \|_{L^\infty} = n \pi, \quad \| \sin(\phi_n) \|_{L^2} = \boO \bigg( \frac{n}{\sqrt{\ln(n)}} \bigg) \quad {\rm and} \quad \| \nabla \phi_n \|_{L^2} = \boO \bigg( \frac{n}{\sqrt{\ln(\ln(n))}} \bigg),$$
for $n \to \infty$, while
$$\| D^k \phi_n \|_{L^2} = \boO \bigg( \frac{\sqrt{n} \ln(n)}{\ln(\ln(n))} \bigg),$$
for any $k \geq 2$. This contradicts the existence of a positive number $A$ such that~\eqref{H-sin-bounded-2} holds.
\end{rem}

For $N = 1$, the limits at infinity $\ell^{\pm} \pi$ of a function $\phi \in H_{\rm sin}^1(\R)$ can be different. This set does not reduce to a collection of constant translations of the space $H^1(\R)$. When $N = 2$ and $k \geq 2$, the uniform norm of a function $\phi \in H_{\rm sin}^k(\R^2)$ is not controlled by the quantity $\| \phi \|_{H_{\sin}^k}$. At least in these two cases, bringing the Cauchy problem for the Sine-Gordon equation in $H_{\rm sin}^k(\R^N) \times H^{k - 1}(\R^N)$ back to a standard Sobolev setting is not immediate.

%%%%%%%%%%%%%%%%%%%%%%%%%%%%%%%%%%%%%%%%%%%%%%%%%%%%%%%%%%
%%%%%%%%%%%%%%%%%%%%%%%%%%%%%%%%%%%%%%%%%%%%%%%%%%%%%%%%%%
%%%%%%%%%%%%%%%%%%%%%%%%%%%%%%%%%%%%%%%%%%%%%%%%%%%%%%%%%%
\section{Tame estimates and composition in Sobolev spaces}
\label{sec:tame-estimates}
%%%%%%%%%%%%%%%%%%%%%%%%%%%%%%%%%%%%%%%%%%%%%%%%%%%%%%%%%%
%%%%%%%%%%%%%%%%%%%%%%%%%%%%%%%%%%%%%%%%%%%%%%%%%%%%%%%%%%
%%%%%%%%%%%%%%%%%%%%%%%%%%%%%%%%%%%%%%%%%%%%%%%%%%%%%%%%%%

For $m \in \N$, we denote by $\dot{H}^m(\R^N)$ the homogeneous Sobolev space endowed with the semi-norm
$$\| f \|_{\dot{H}^m} := \sum_{|\alpha| = m} \| \partial^\alpha f \|_{L^2}.$$
We recall the following Moser estimates (see e.g.~\cite{Moser1, Hormand0, BetDaSm1}).

\begin{lem}
\label{lem:moser}
$(i)$ Let $(f, g) \in L^\infty(\R^N)^2 \cap \dot{H}^m(\R^N)^2$. The product $f g$ is in $\dot{H}^m(\R^N)$, and there exists a positive number $C_m$, depending only on $m$, such that
$$\|f \, g \|_{\dot{H}^m} \leq C_m \max \big\{ \| f \|_{L^\infty} \, \| g \|_{\dot{H}^m}, \| f \|_{\dot{H}^m} \, \| g \|_{L^\infty} \big\} \leq C_m \big( \| f \|_{L^\infty} \, \| g \|_{\dot{H}^m} + \| f \|_{\dot{H}^m} \, \| g \|_{L^\infty}\big).$$
More generally, given any functions $(f_1, \dots ,f_j) \in L^\infty(\R^N)^j \cap \dot{H}^m(\R^N)^j$, there exists a positive number $C_{j, m}$, depending only on $j$ and $m$, such that
$$\| \partial^{\alpha_1} f_1 \cdots \partial^{\alpha_j} f_j \|_{L^2} \leq C_{j, m} \max_{1 \leq i \leq j} \prod_{\ell \neq i} \| f_\ell \|_{L^\infty} \, \| f_i \|_{\dot{H}^m},$$
for any $\alpha = (\alpha_1, \dots, \alpha_j) \in \N^N$ such that $\sum_{i = 1}^j |\alpha_i| = m$. 

$(ii)$ Let $m \in \N^*$. When $f \in L^\infty(\R^N) \cap \dot{H}^m(\R^N)$ and $F \in \boC^m(\R^N)$, the composition function $F(f)$ is in $\dot{H}^m(\R^N)$, and there exists a positive number $C_m$, depending only on $m$, such that 
\begin{equation}
\label{eq:compose-moser}
\| F(f) \|_{\dot{H}^m} \leq C_m \max_{1 \leq \ell \leq m} \| F^{(\ell)} \|_{L^\infty} \, \| f \|_{L^\infty}^{\ell - 1} \, \| f \|_{\dot{H}^m}.
\end{equation}
\end{lem}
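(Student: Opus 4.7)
\textbf{Proof plan for Lemma~\ref{lem:moser}.}

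My plan is to derive everything from a single analytic ingredient: the Gagliardo--Nirenberg interpolation inequality
\[
\|\partial^\beta g\|_{L^{2m/|\beta|}} \leq C\,\|g\|_{L^\infty}^{1-|\beta|/m}\,\|g\|_{\dot H^m}^{|\beta|/m},
\]
valid for any $g \in L^\infty(\R^N) \cap \dot H^m(\R^N)$ and any multi-index $\beta$ with $1 \leq |\beta| \leq m$. Once this is in hand, both statements reduce to H\"older's inequality with an appropriate choice of exponents.

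First I would prove the multilinear estimate in (i). Fix multi-indices $\alpha_1,\dots,\alpha_j$ with $\sum_i |\alpha_i| = m$, and let $I := \{i : |\alpha_i| \geq 1\}$. For $i \in I$ set $p_i := 2m/|\alpha_i|$, and for $i \notin I$ take $p_i = \infty$; then $\sum_i 1/p_i = 1/2$. H\"older's inequality gives
\[
\|\partial^{\alpha_1} f_1 \cdots \partial^{\alpha_j} f_j\|_{L^2}
\leq \prod_{i \notin I} \|f_i\|_{L^\infty} \,\prod_{i \in I} \|\partial^{\alpha_i} f_i\|_{L^{p_i}},
\]
and applying Gagliardo--Nirenberg to each factor with $i \in I$ yields
\[
\|\partial^{\alpha_1} f_1 \cdots \partial^{\alpha_j} f_j\|_{L^2}
\leq C_{j,m} \prod_{i \in I} \|f_i\|_{L^\infty}^{1-|\alpha_i|/m} \|f_i\|_{\dot H^m}^{|\alpha_i|/m} \prod_{i \notin I}\|f_i\|_{L^\infty}.
\]
Since $\sum_{i\in I} |\alpha_i|/m = 1$, the weighted geometric mean over $i\in I$ of the quantities $\|f_i\|_{\dot H^m}\prod_{\ell \neq i}\|f_\ell\|_{L^\infty}$ is bounded by the maximum of these quantities, which gives the asserted bound. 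The two-factor product estimate then follows by expanding $\partial^\alpha(fg)$ via the Leibniz rule and applying the multilinear bound to each term with $j=2$.

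For part (ii), I would use the Fa\`a di Bruno formula, which expresses $\partial^\alpha F(f)$ (for $|\alpha| = m \geq 1$) as a finite linear combination of terms of the form
\[
F^{(\ell)}(f)\,\partial^{\beta_1} f \cdots \partial^{\beta_\ell} f,
\]
where $1 \leq \ell \leq m$, $|\beta_k| \geq 1$ for each $k$, and $\sum_k |\beta_k| = m$. The factor $F^{(\ell)}(f)$ is bounded uniformly by $\|F^{(\ell)}\|_{L^\infty}$, and applying the multilinear estimate from (i) to the product of the $\partial^{\beta_k} f$ (taking all $f_k = f$) bounds the remaining product in $L^2$ by $C_m\,\|f\|_{L^\infty}^{\ell-1}\|f\|_{\dot H^m}$. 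Summing over the finitely many terms and taking the maximum over $\ell$ yields \eqref{eq:compose-moser}.

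The only genuine subtlety will be the careful bookkeeping of the endpoint cases in H\"older: when some $|\alpha_i|=0$ the corresponding factor simply comes out in $L^\infty$, and when a single $|\alpha_i|=m$ (all derivatives landing on one factor) Gagliardo--Nirenberg degenerates to the identity $\|\partial^\alpha f_i\|_{L^2} = \|f_i\|_{\dot H^m}$, which is exactly the ``maximum'' appearing on the right-hand side. I do not foresee any deeper obstacle; the result is a routine consequence of interpolation once the Leibniz/Fa\`a di Bruno combinatorics are in place.
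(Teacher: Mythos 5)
Your proof is correct and uses the classical route: the Gagliardo--Nirenberg interpolation inequality $\|\partial^\beta g\|_{L^{2m/|\beta|}} \leq C\,\|g\|_{L^\infty}^{1-|\beta|/m}\,\|g\|_{\dot H^m}^{|\beta|/m}$, H\"older with exponents $p_i = 2m/|\alpha_i|$ (which sum to $1/2$ because $\sum_i |\alpha_i| = m$), the weighted arithmetic--geometric mean bound to pass from the product $\prod_{i\in I} A_i^{|\alpha_i|/m}$ to $\max_i A_i$, and then Leibniz respectively Fa\`a di Bruno to reduce parts $(i)$ and $(ii)$ to the multilinear kernel estimate. The paper does not supply a proof of this lemma but instead cites the references \cite{Moser1, Hormand0, BetDaSm1}; your argument is precisely the standard one found there, and the bookkeeping of the endpoint cases ($|\alpha_i| = 0$, or one $|\alpha_i| = m$) is handled correctly. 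One tiny observation: in the paper the hypothesis is written $F \in \boC^m(\R^N)$, which is a typo for $F \in \boC^m(\R)$ (this is how the lemma is used throughout, e.g.\ with $F(x) = 1 - x^2$); your reading matches the intended statement, and the Fa\`a di Bruno expansion with $1 \leq \ell \leq m$, $|\beta_k| \geq 1$, $\sum_k |\beta_k| = m$ is exactly what is needed for $m \in \N^*$.
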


As a direct consequence of Lemma~\ref{lem:moser}, we obtain the following useful estimates.

\begin{cor}
\label{cor:moser}
Let $m \in \N$, with $m \geq 2$, and $(\alpha, \beta) \in \N^{2 N}$, with $|\alpha| = m$ and $\beta \leq \alpha$. There exists a positive number $C_m$, depending only on $m$, such that we have the following estimates.

$(i)$ If $|\beta| \geq 1$, $\nabla f \in L^\infty(\R^N) \cap \dot{H}^{m - 1}(\R^N)$ and $g \in L^\infty(\R^N) \cap \dot{H}^{m - 1}(\R^N)$, then
$$\| D^\beta f \, D^{\alpha - \beta} g \|_{L^2} \leq C_m \max \big\{ \| \nabla f \|_{L^\infty} \, \| g \|_{\dot{H}^{m - 1}}, \| f \|_{\dot{H}^m} \, \| g \|_{L^\infty} \big\}.$$

$(ii)$ If $|\beta| \geq 2$, $D^2 f \in L^\infty(\R^N) \cap \dot{H}^{m - 2}(\R^N)$ and $g \in L^\infty(\R^N) \cap \dot{H}^{m - 2}(\R^N)$, then
$$\| D^\beta f \, D^{\alpha - \beta} g \|_{L^2} \leq C_m \max \big\{ \| D^2 f \|_{L^\infty} \, \| g \|_{\dot{H}^{m - 2}}, \| f \|_{\dot{H}^m} \, \| g \|_{L^\infty} \big\}.$$

$(iii)$ If $1 \leq |\beta| \leq m -1$, $\nabla f \in L^\infty(\R^N) \cap \dot{H}^{m - 1}(\R^N)$ and $\nabla g \in L^\infty(\R^N) \cap \dot{H}^{m - 1}(\R^N)$, then
$$\| D^\beta f \, D^{\alpha - \beta} g \|_{L^2} \leq C_m \max \big\{ \| \nabla f \|_{L^\infty} \, \| g \|_{\dot{H}^{m - 1}}, \| f \|_{\dot{H}^{m - 1}} \, \| \nabla g \|_{L^\infty} \big\}.$$
\end{cor}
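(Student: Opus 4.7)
The plan is to reduce all three estimates to a single application of the ``more general'' multi-function version of Lemma~\ref{lem:moser}(i), taking advantage of the trivial identities $D^\gamma = D^{\gamma'} \partial_i$ (valid once $|\gamma| \geq 1$) and $D^\gamma = D^{\gamma''} \partial_{ij}$ (valid once $|\gamma| \geq 2$) that allow one to shift one or two derivatives from inside $D^\gamma$ onto the function being differentiated. The effect is to lower the total differentiation order appearing in the Moser product bound from $m$ to $m-1$ or $m-2$, which is exactly the gap between the hypotheses and the conclusion.

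Concretely, for part $(i)$ the hypothesis $|\beta| \geq 1$ lets me choose an index $i$ and write $D^\beta f = D^{\beta'}(\partial_i f)$ with $|\beta'| = |\beta| - 1$, so that $D^\beta f \cdot D^{\alpha - \beta} g$ is a product of derivatives of two functions of total order $|\beta'| + |\alpha - \beta| = m - 1$. The multi-function estimate in Lemma~\ref{lem:moser}(i), applied to the pair $(\partial_i f, g) \in L^\infty(\R^N)^2 \cap \dot H^{m-1}(\R^N)^2$, then bounds the $L^2$ norm by
$$C_m \max \bigl\{ \| \partial_i f \|_{L^\infty} \| g \|_{\dot H^{m-1}},\ \| \partial_i f \|_{\dot H^{m-1}} \| g \|_{L^\infty} \bigr\},$$
and using $\| \partial_i f \|_{L^\infty} \leq \| \nabla f \|_{L^\infty}$ together with $\| \partial_i f \|_{\dot H^{m-1}} \leq \| f \|_{\dot H^m}$ concludes the case. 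Part $(ii)$ is treated identically, except that the stronger hypothesis $|\beta| \geq 2$ allows the factorisation $D^\beta f = D^{\beta''} \partial_{ij} f$ with $|\beta''| = |\beta| - 2$, reducing the total order to $m - 2$ and introducing $\| D^2 f \|_{L^\infty}$ instead of $\| \nabla f \|_{L^\infty}$. Part $(iii)$ is symmetric in $f$ and $g$: the assumption $1 \leq |\beta| \leq m - 1$ ensures that both $|\beta|$ and $|\alpha - \beta|$ are positive, so I can factor one derivative out of each side, write $D^\beta f \cdot D^{\alpha - \beta} g = D^{\beta'}(\partial_i f) \cdot D^{\gamma'}(\partial_j g)$ with $|\beta'| + |\gamma'| = m - 2$, and apply Lemma~\ref{lem:moser}(i) to the pair $(\partial_i f, \partial_j g)$, then bound $\| \nabla f \|_{\dot H^{m-2}} \leq \| f \|_{\dot H^{m-1}}$ and symmetrically for $g$.

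There is no real obstacle: the three estimates are purely bookkeeping on multi-indices combined with the observation that extracting a partial derivative from the differentiated factor lowers the required homogeneous Sobolev order by one. The only care needed is to verify that the hypotheses $|\beta| \geq 1$, $|\beta| \geq 2$ and $1 \leq |\beta| \leq m - 1$ precisely match the number of derivatives one wishes to pull out in each case, which is visibly the case.
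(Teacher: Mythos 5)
Your proof is correct and is precisely the derivative-shifting argument the paper has in mind: the corollary is stated without proof as a "direct consequence" of Lemma~\ref{lem:moser}, and your reduction — pulling one (or two) partial derivatives out of $D^\beta f$ (and, in $(iii)$, out of $D^{\alpha-\beta}g$ as well) and then invoking the multi-function bound of Lemma~\ref{lem:moser}$(i)$ at the reduced order $m-1$ or $m-2$ — is exactly the bookkeeping the authors intend. The index accounting and the final elementary bounds $\|\partial_i f\|_{L^\infty}\le\|\nabla f\|_{L^\infty}$, $\|\partial_i f\|_{\dot H^{m-1}}\le\|f\|_{\dot H^m}$, etc.\ are all correct.
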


Our previous analysis of the Sine-Gordon regime of the Landau-Lifshitz equation requires bounds on the functions $\sin(f)$ and $\cos(f)$, which do not depend on the uniform bound of the function $f$. We indeed do not control this quantity in the Hamiltonian framework under our consideration.

In this direction, we derive from Lemma~\ref{lem:moser} the following bounds, which only depend on the uniform norm of the gradient $\nabla f$.

\begin{cor}
\label{cor:moser-trigo}
Let $m \in \N$, with $m \geq 2$. There exists a positive number $C_m$, depending only on $m$, such that 
\begin{equation}
\label{sin-H-k}
\| \sin(f) \|_{\dot{H}^m}+\| \cos(f) \|_{\dot{H}^m}\leq C_m (1+\| \nabla f \|_{L^\infty}^{m-1}) \| \nabla f \|_{H^{m-1}},
\end{equation}
for any function $f:\R^N \to \R$, with $\nabla f \in L^\infty(\R^N) \cap \dot{H}^{m - 1}(\R^N)$.
\end{cor}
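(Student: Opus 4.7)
The plan is to expand derivatives of $\sin(f)$ and $\cos(f)$ by the Faà di Bruno formula, observe that all derivatives of $\sin$ and $\cos$ are bounded by $1$ pointwise, and then apply the multilinear Moser estimate from Lemma~\ref{lem:moser}(i) with $\nabla f$ as the basic factor so that no $L^\infty$-norm of $f$ itself appears.

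First I would fix a multi-index $\alpha$ with $|\alpha|=m\geq 2$ and write
\begin{equation*}
\partial^\alpha \sin(f) \;=\; \sum_{j=1}^{m}\; \sum_{\substack{\alpha_1+\cdots+\alpha_j=\alpha\\|\alpha_i|\geq 1}} c_{j,\alpha_1,\dots,\alpha_j}\, \tau_j(f)\,\partial^{\alpha_1}f\cdots\partial^{\alpha_j}f,
\end{equation*}
where $\tau_j(f)\in\{\pm\sin(f),\pm\cos(f)\}$ according to the parity of $j$, and the $c_{j,\alpha_1,\dots,\alpha_j}$ are universal combinatorial coefficients; the same kind of identity holds for $\partial^\alpha \cos(f)$. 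The crucial point is that $\|\tau_j(f)\|_{L^\infty}\leq 1$ for every $j$, so this factor contributes no $L^\infty$-norm of $f$ itself.

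Next I would estimate each product $\partial^{\alpha_1}f\cdots\partial^{\alpha_j}f$ in $L^2$. Setting $\beta_i:=\alpha_i-e_{k_i}$ where $e_{k_i}$ is a unit multi-index with $k_i$ chosen so that $\beta_i\geq 0$, each factor rewrites as $\partial^{\beta_i}(\partial_{k_i}f)$ with $|\beta_i|=|\alpha_i|-1\geq 0$ and $\sum_i|\beta_i|=m-j$. Applying the product estimate in Lemma~\ref{lem:moser}(i) with $f_1=\cdots=f_j=\nabla f$ (more precisely with $f_i=\partial_{k_i}f$, each bounded in $L^\infty$ by $\|\nabla f\|_{L^\infty}$ and in $\dot H^{m-j}$ by $\|\nabla f\|_{\dot H^{m-j}}$), I obtain
\begin{equation*}
\bigl\|\partial^{\alpha_1}f\cdots\partial^{\alpha_j}f\bigr\|_{L^2}\;\leq\;C_{j,m}\,\|\nabla f\|_{L^\infty}^{\,j-1}\,\|\nabla f\|_{\dot H^{m-j}}\;\leq\;C_{j,m}\,\|\nabla f\|_{L^\infty}^{\,j-1}\,\|\nabla f\|_{H^{m-1}}.
\end{equation*}

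Summing this bound over $1\leq j\leq m$ and over the finite number of partitions of $\alpha$, and then over $|\alpha|=m$, yields
\begin{equation*}
\|\sin(f)\|_{\dot H^m}+\|\cos(f)\|_{\dot H^m}\;\leq\;C_m\sum_{j=1}^{m}\|\nabla f\|_{L^\infty}^{\,j-1}\|\nabla f\|_{H^{m-1}}\;\leq\;C_m\bigl(1+\|\nabla f\|_{L^\infty}^{\,m-1}\bigr)\|\nabla f\|_{H^{m-1}},
\end{equation*}
which is exactly~\eqref{sin-H-k}. There is no real obstacle here: the only delicate point is to note that, because we differentiate at least once in each factor, we can replace the $L^\infty$-control of $f$ required by the standard Moser estimate~\eqref{eq:compose-moser} by the $L^\infty$-control of $\nabla f$, which is the whole content of the statement. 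A parallel argument (or simply applying the same bound to $\cos(f)=\sin(f+\pi/2)$) handles the cosine term.
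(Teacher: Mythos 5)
Your proof is correct, but it is a genuinely different argument from the one in the paper. The paper proceeds by induction on $m$: the base case $m=2$ is checked directly, and the inductive step uses the chain rule $\partial\sin(f)=\cos(f)\,\nabla f$ together with only the two-factor Moser estimate of Lemma~\ref{lem:moser}(i), bounding $\|\cos(f)\,\nabla f\|_{\dot H^m}$ and then invoking the inductive hypothesis for $\|\cos(f)\|_{\dot H^m}$. Your route avoids the induction entirely by expanding $\partial^\alpha\sin(f)$ via the Fa\`a di Bruno formula and then applying the multilinear form of Lemma~\ref{lem:moser}(i) to each homogeneous product $\partial^{\alpha_1}f\cdots\partial^{\alpha_j}f$ after peeling off one derivative from each factor, which is precisely what lets $\|\nabla f\|_{L^\infty}$ replace $\|f\|_{L^\infty}$. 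Both arguments hinge on the same two ingredients (the unit bound on derivatives of $\sin$ and $\cos$, and the product Moser estimate), so they buy roughly the same thing; the paper's induction is shorter to write because it never needs the explicit combinatorics of Fa\`a di Bruno, whereas your one-shot expansion is more transparent about where each power of $\|\nabla f\|_{L^\infty}$ comes from (the term with $j$ factors contributes $\|\nabla f\|_{L^\infty}^{j-1}$). One small remark: your argument, like the paper's, implicitly uses $\nabla f\in H^{m-1}(\R^N)$ rather than merely $\nabla f\in \dot H^{m-1}(\R^N)$ — for the partition with $j=m$ you need $\|\nabla f\|_{L^2}$ finite — but this is consistent with the appearance of $\|\nabla f\|_{H^{m-1}}$ on the right-hand side and is the natural reading of the statement.
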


\begin{proof}
The proof is by induction. For $m = 2$, we directly check that
$$\| \sin(f) \|_{\dot{H}^2} + \| \cos(f) \|_{\dot{H}^2} \leq 2 \big( \| \nabla f \|_{L^2} \| \nabla f \|_{L^\infty} + \| D^2 f \|_{L^2} \big).$$
Let us assume that~\eqref{sin-H-k} holds for any $2 \leq \ell \leq m$. It results from Lemma~\ref{lem:moser} and the inductive assumption that
\begin{align*}
\| \sin(f) \|_{\dot{H}^{m + 1}} = \| \cos(f) \, \nabla f \|_{\dot{H}^m} \leq & C_m \big( \| \cos(f) \|_{L^\infty} \, \| \nabla f \|_{\dot{H}^m} + \| \cos(f) \|_{\dot{H}^m} \, \| \nabla f \|_{L^\infty} \big)\\
\leq & C_m \big( \| f \|_{\dot{H}^{m + 1}} + (1 + \| \nabla f \|_{L^\infty}^{m - 1}) \, \| \nabla f \|_{H^{m - 1}} \, \| \nabla f \|_{L^\infty} \big)\\
\leq & C_m \big( 1 + \| \nabla f \|_{L^\infty}^m \big) \, \| \nabla f \|_{H^m}.
 \end{align*}
The proof for $\cos(f)$ follows in the same manner.
\end{proof}

In some parts of our proofs, we need to avoid the polynomial growth on the norm $\| \nabla f \|_{L^\infty}$ in Corollary~\ref{cor:moser-trigo}. For this reason, we establish the next bounds with an at most linear growth in terms of the norm $\| \nabla f \|_{L^\infty}$.

\begin{cor}
\label{cor:cos}
Let $m \geq 2$. There exists a positive number $C_m$, depending only on $m$, such that
\begin{equation}
\label{ineq1}
\| \cos(f) \|_{\dot{H}^m} \leq C_m \big( \| \sin(f) \|_{L^\infty} + \| \nabla f \|_{L^\infty} \big) \max \big\{ \| \sin(f) \|_{\dot{H}^{m - 1}}, \| f \|_{\dot{H}^m} \big\},
\end{equation}
for any function $f: \R^N \to \R$ such that $\sin(f) \in H^{m - 1}(\R^N)$, $\nabla f \in L^\infty(\R^N)$ and $\nabla f \in H^{m - 1}(\R^N)$.
\end{cor}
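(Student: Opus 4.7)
The plan is to avoid the polynomial growth in $\|\nabla f\|_{L^\infty}$ that would result from a direct application of Corollary~\ref{cor:moser-trigo} to $\cos(f)$. The idea is to first take one derivative, using the chain rule to produce a product structure that already isolates $\sin(f)$ (which is uniformly bounded by $1$ for free), and only then to apply the Moser product estimate once. This keeps the growth in $\|\nabla f\|_{L^\infty}$ linear.

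Concretely, I would start from the identity $\nabla \cos(f) = - \sin(f) \nabla f$, so that
\[
\| \cos(f) \|_{\dot H^m} = \| \nabla \cos(f) \|_{\dot H^{m - 1}} = \| \sin(f) \, \nabla f \|_{\dot H^{m - 1}}.
\]
Since $m \geq 2$, we have $m - 1 \geq 1$, and both $\sin(f)$ and each component of $\nabla f$ lie in $L^\infty(\R^N) \cap \dot H^{m - 1}(\R^N)$ under the hypotheses of the corollary. Applying statement $(i)$ of Lemma~\ref{lem:moser} componentwise at level $m - 1$ yields
\[
\| \sin(f) \, \nabla f \|_{\dot H^{m - 1}} \leq C_m \Big( \| \sin(f) \|_{L^\infty} \, \| \nabla f \|_{\dot H^{m - 1}} + \| \sin(f) \|_{\dot H^{m - 1}} \, \| \nabla f \|_{L^\infty} \Big).
\]

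Finally, I would use $\| \nabla f \|_{\dot H^{m - 1}} = \| f \|_{\dot H^m}$ and observe that each of the two terms on the right is bounded by $(\| \sin(f) \|_{L^\infty} + \| \nabla f \|_{L^\infty}) \max\{\| \sin(f) \|_{\dot H^{m - 1}}, \| f \|_{\dot H^m}\}$, which gives the claimed inequality after absorbing constants into $C_m$.

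There is no significant obstacle here; the entire argument is essentially a one-line application of Moser's inequality after the right preliminary manipulation. The only subtle point is the choice to differentiate $\cos(f)$ once before invoking Moser rather than using Corollary~\ref{cor:moser-trigo} directly: the latter route would apply the chain rule $m$ times and produce unwanted powers of $\|\nabla f\|_{L^\infty}$, whereas bringing out a single $\sin(f) \nabla f$ confines the $L^\infty$-norm of $\nabla f$ to at most one appearance.
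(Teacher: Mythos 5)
Your argument is exactly the paper's: both start from $\nabla\cos(f) = -\sin(f)\,\nabla f$, reduce to $\|\sin(f)\,\nabla f\|_{\dot H^{m-1}}$, and apply the Moser product estimate of Lemma~\ref{lem:moser}~$(i)$ at level $m-1$ (the paper uses the $\max$ form, you use the equivalent sum form). The motivation you give --- differentiating once so that $\|\nabla f\|_{L^\infty}$ appears at most linearly, rather than invoking Corollary~\ref{cor:moser-trigo} directly --- is also precisely the paper's stated rationale.
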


\begin{proof}
Lemma~\ref{lem:moser} indeed provides
$$\| \cos(f) \|_{\dot{H}^m} = \| \sin(f) \, \nabla f \|_{\dot{H}^{m - 1}} \leq C_m \max \big\{ \| \sin(f) \|_{L^\infty} \, \| f \|_{\dot{H}^m}, \| \nabla f \|_{L^\infty} \, \| \sin(f) \|_{\dot{H}^{m - 1}} \big\},$$
which yields~\eqref{ineq1}.
\end{proof}

%%%%%%%%%%%%%%%%%%%%%%%%%%%%%%%%%%%%%%%%%%%%%%%%%%
%%%%%%%%%%%%%%%%%%%%%%%%%%%%%%%%%%%%%%%%%%%%%%%%%%
%%%%%%%%%%%%%%%%%%%%%%%%%%%%%%%%%%%%%%%%%%%%%%%%%%
\section{Solitons of the Landau-Lifshitz equation}
\label{sec:solitons}
%%%%%%%%%%%%%%%%%%%%%%%%%%%%%%%%%%%%%%%%%%%%%%%%%%
%%%%%%%%%%%%%%%%%%%%%%%%%%%%%%%%%%%%%%%%%%%%%%%%%%
%%%%%%%%%%%%%%%%%%%%%%%%%%%%%%%%%%%%%%%%%%%%%%%%%%

Solitons are special solutions to the one-dimensional Landau-Lifshitz equation, which take the form
$$m(x, t) = m_c(x - c t),$$
for a given speed $c \in \R$. The profile $m_c$ is solution to the ordinary differential equation
$$- c m_c' + m_c \times \big( m_c'' - \lambda_1 [m_c]_1 e_1 - \lambda_3 [m_c]_3 e_3 \big) = 0.$$
In the case of biaxial ferromagnets, we can assume, without loss of generality, that $\lambda_3 > \lambda_1 > 0$. Set $c^* := \lambda_3^{1/2} - \lambda_1^{1/2}$. For $|c| \leq c^*$, non-constant solitons $m_c$ are explicitly given by the formulae
$$m_c^\pm(x) = \bigg( \frac{a_c^\pm}{\cosh(\mu_c^\pm x)}, \, \tanh(\mu_c^\pm x), \, \frac{(1 - (a_c^\pm)^2)^\frac{1}{2}}{\cosh(\mu_c^\pm x)} \bigg),$$
up to the geometric invariances of the equation, which are the translations and the orthogonal symmetries with respect to the lines $\R e_1$, $\R e_2$ and $\R e_3$. In this formula, the values of $a_c^\pm$ and $\mu_c^\pm$ are equal to
$$a_c^\pm := \delta_c \bigg( \frac{c^2 + \lambda_3 - \lambda_1 \mp \big( (\lambda_3 + \lambda_1 - c^2)^2 - 4 \lambda_1 \lambda_3 \big)^\frac{1}{2}}{2 (\lambda_3 - \lambda_1)} \bigg)^\frac{1}{2},$$
and
$$\mu_c^\pm = \bigg( \frac{\lambda_3 + \lambda_1 - c^2 \pm \big( (\lambda_3 + \lambda_1 - c^2)^2 - 4 \lambda_1 \lambda_3 \big)^\frac{1}{2}}{2} \bigg)^\frac{1}{2},$$
with $\delta_c = 1$, if $c \geq 0$, and $\delta_c = - 1$, when $c < 0$. Note that 
$$(\mu_c^\pm)^2 = \lambda_1 (a_c^\pm)^2 + \lambda_3 (1 - (a_c^\pm)^2).$$

The Landau-Lifshitz energy of the solitons $m_c^\pm$ is equal to
$$E_{\rm LL}(m_c^\pm) = 2 \mu_c^\pm.$$ 
The solitons form two branches in the plane $(c, E_{\rm LL})$.

\begin{figure}[ht!]
\begin{center}
\includegraphics{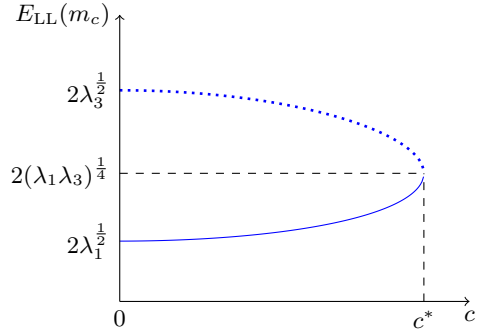}
\end{center}
\caption{The curves $E_{\rm LL}(m_c^+)$ and $E_{\rm LL}(m_c^-)$ in dotted and solid lines, respectively.}
\label{fig-E}
\end{figure}

The lower branch corresponds to the solitons $m_c^-$, and the upper one to the solitons $m_c^+$ as depicted in Figure~\ref{fig-E}. The lower branch is strictly increasing and convex with respect to $c \in [0, c^*]$,
with
$$E(m_0^-) = 2 \lambda_1^\frac{1}{2} \quad {\rm and} \quad E(m_{c^*}^-) = 2 (\lambda_1 \lambda_3)^\frac{1}{4}.$$
The upper branch is a strictly decreasing and concave function of $c \in [0, c^*]$, with
$$E(m_0^+) = 2 \lambda_3^\frac{1}{2} \quad {\rm and} \quad E(m_{c^*}^+) = 2 (\lambda_1 \lambda_3)^\frac{1}{4}.$$
The two branches meet at the common soliton $m_{c^*}^- = m_{c^*}^+$.

In the limit $\lambda_1 \to 0$, the lower branch vanishes, while the upper branch goes to the branch of solitons for the Landau-Lifshitz equation with an easy-plane anisotropy (see e.g.~\cite{deLaire3}).

In the sequel, we focus on the solitons $m_c^-$ corresponding to the lower branch. Since $a_c^-$ is a strictly decreasing and a continuous function of $c$, with
$$a_0^- = 1 \quad {\rm and} \quad a_{c^*}^- = \Big( \frac{\sqrt{\lambda_3}}{\sqrt{\lambda_1} + \sqrt{\lambda_3}} \Big)^\frac{1}{2},$$
the function $\check{m}_c^- := [m_c^-]_1 + i [m_c^-]_2$ may always be lifted as
$$\check{m}_c^- = \big( 1 - [m_c^-]_3^2 \big)^\frac{1}{2} \, \big( \sin(\varphi_c^-) + i \cos(\varphi_c^-) \big),$$
with 
$$\varphi_c^-(x) = 2 \arctan \bigg( \frac{\big( (a_c^-)^2 + \sinh^2(\mu_c^- x) \big)^\frac{1}{2} - \sinh(\mu_c^- x)}{a_c^-} \bigg).$$
When $c = 0$, we observe that
\begin{equation}
\label{phase-c-0}
\varphi_0^-(x) = 2 \arctan \Big( e^{- \lambda_1^\frac{1}{2} x} \Big).
\end{equation}

Let us now fix a number $0 < \varepsilon < 1$, and set $\lambda_1 = \nu \varepsilon$ and $\lambda_3 = 1/\varepsilon$, so that $c^* = 1/\varepsilon^{1/2} - (\nu \varepsilon)^{1/2}$. Given a number $0 \leq \tau < 1$, we are allowed to consider the solitons $m_{c_\varepsilon}^-$ with speeds $c_\varepsilon = \tau/\varepsilon^{1/2}$ when $\varepsilon$ is small enough. In this regime, the parameters $a_{c_\varepsilon}^-$ and $\mu_{c_\varepsilon}^-$ satisfy
$$a_{c_\varepsilon^-} = 1 - \frac{\nu \tau^2 \varepsilon^2}{2 (1 - \tau^2)} - \frac{\nu^2 \tau^2 (8 - 7 \tau^2 + 3 \tau^4) \varepsilon^4}{8 (1 - \tau^2)^3} + \boO \big( \varepsilon^6 \big),$$
and
$$\mu_{c_\varepsilon}^- = \Big( \frac{\nu \varepsilon}{1 - \tau^2} \Big)^\frac{1}{2} \, \Big( 1 - \frac{\nu \tau^2 \varepsilon^2}{(1 - \tau^2)^2} + \boO \big( \varepsilon^4 \big) \Big),$$
when $\varepsilon \to 0$. Here as in the sequel, the notation $\boO \big( \varepsilon^k \big)$ stands for a quantity, which is bounded by $C \varepsilon^k$, where the positive number $C$ only depends on $\nu$ and $\tau$. Coming back to the scaling performed so as to obtain~\eqref{HLLeps}, we compute
$$U_{c_\varepsilon}(x, t) := \frac{1}{\varepsilon} \Big[ m_{c_\varepsilon}^- \Big( \frac{x - \tau t}{\varepsilon^\frac{1}{2}} \Big) \Big]_3 = \frac{\nu^\frac{1}{2} \tau + \boO(\varepsilon^2)}{(1 - \tau^2)^\frac{1}{2} \, \cosh \Big( \frac{(\nu^\frac{1}{2} + \boO(\varepsilon^2)) (x - \tau t)}{(1 - \tau^2)^\frac{1}{2}} \Big)},$$
and
$$\Phi_{c_\varepsilon}(x, t) := \varphi_{c_\varepsilon}^- \Big( \frac{x - \tau t}{\varepsilon^\frac{1}{2}} \Big) = 2 \arctan \bigg( \big( 1 + \boO(\varepsilon^2) \big) \, \Big( \exp \Big( - \frac{(\nu^\frac{1}{2} + \boO(\varepsilon^2)) (x - \tau t)}{(1 - \tau^2)^\frac{1}{2}} \Big) + \boO(\varepsilon^2) \Big) \bigg).$$
In view of~\eqref{soliton-SG}, the pairs $(U_{c_\varepsilon}, \Phi_{c_\varepsilon})$ form a family of solitons for~\eqref{HLLeps} with speed $\tau$, which converge towards the soliton $(U_\tau, \Phi_\tau)$ with speed $\tau$ of the Sine-Gordon system in~\eqref{sys:SG}. Actually, the pairs $(U_{c_\varepsilon}, \Phi_{c_\varepsilon})$ and $(U_\tau, \Phi_\tau)$ are identically equal for $\tau = 0$. When $\tau \neq 0$, they satisfy the estimates
$$\| U_{c_\varepsilon} - U_\tau \|_{H^k} + \| \sin(\Phi_{c_\varepsilon} - \Phi_\tau) \|_{L^2} + \| \Phi_{c_\varepsilon} - \Phi_\tau \|_{H^{k + 1}} \leq C_k \varepsilon^2,$$
for $\varepsilon$ small enough and any integer $k$. Here, the positive numbers $C_k$ only depend on $\nu$, $\tau$ and $k$. Moreover, we can check that
$$\frac{\| U_{c_\varepsilon} - U_\tau \|_{L^2}}{\varepsilon^2} \underset{\varepsilon \to 0}{\sim} \frac{\nu^\frac{5}{4} \tau (2 - 2 \tau^2 + \tau^4)}{2 (1 - \tau^2)^\frac{9}{4}} \bigg( \int_\R \Big( \cosh(x) + \frac{2 \tau^2}{2 - 2 \tau^2 + \tau^4} x \sinh(x) \Big)^2 \, \frac{dx}{\cosh(x)^2} \bigg)^\frac{1}{2}.$$
Since the integral in the right-hand side of this formula is positive, this equivalence proves that the estimates of order $\varepsilon^2$ in Theorem~\ref{thm:conv-SG} are sharp.

\begin{merci}
The authors acknowledge support from the project ``Schr\"odinger equations and applications'' (ANR-12-JS01-0005-01) of the Agence Nationale de la Recherche, and from the grant ``Qualitative study of nonlinear dispersive equations'' (Dispeq) of the European Research Council. A.~de~Laire was partially supported by the Labex CEMPI (ANR-11-LABX-0007-01) and the MathAmSud program.
\end{merci}

\bibliographystyle{plain}
\bibliography{Bibliogr}

\end{document}